\def\caseswithdelim#1#2{\left#1\,\vcenter{\normalbaselines\m@th
  \ialign{\strut$##\hfil$&\quad##\hfil\crcr#2\crcr}}\right.}
\def\bcases#1{\caseswithdelim[{#1}}
\def\caseswithdelim#1#2{\left#1\,\vcenter{\normalbaselines\m@th
  \ialign{\strut$##\hfil$&\quad##\hfil\crcr#2\crcr}}\right.}
\def\bcases#1{\caseswithdelim[{#1}}
\theoremstyle{plain}
\newtheorem{lemma}{Lemma}[section]
\newtheorem{proposition}[lemma]{Proposition}
\newtheorem{corollary}[lemma]{Corollary}
\newtheorem{theorem}[lemma]{Theorem}
\newtheorem{observation}[lemma]{Observation}
\theoremstyle{definition}
\newtheorem{remark}[lemma]{Remark}
\newtheorem{example}[lemma]{Example}
\newcommand{\lra}{\leftrightarrows}
\newcommand{\ra}{\rightarrow}
\newcommand{\epi}{\twoheadrightarrow}
\newcommand{\inclu}{\hookrightarrow}
\newcommand{\da}{{\downarrow}}
\newcommand{\up}{{\uparrow}}
\newcommand{\tc}{\textit}
\newcommand{\mb}[1]{\mbox{#1}}
\newcommand{\ca}{\mathcal}
\newcommand{\mf}{\mathsf}
\newcommand{\mc}[1]{\mathit{#1}}
\newcommand{\mi}{\mathit}
\newcommand{\se}{\subseteq}
\newcommand{\sm}{\setminus}
\newcommand{\we}{\wedge}
\newcommand{\ve}{\vee}
\newcommand{\bwe}{\bigwedge}
\newcommand{\bve}{\bigvee}
\newcommand{\bca}{\bigcap}
\newcommand{\bcu}{\bigcup}
\newcommand{\bd}[1]{\mathbf{#1}}
\newcommand{\Fi}{\mathsf{Filt}}
\newcommand{\Fip}{\mathsf{Filt}(L^+)}
\newcommand{\Fim}{\mathsf{Filt}(L^-)}
\newcommand{\Id}{\mathsf{Idl}}
\newcommand{\less}[2]{\nabla (#1)\cap\Delta (#2)}
\newcommand{\na}{\nabla}
\newcommand{\del}{\Delta}
\newcommand{\spa}{\mathsf{sp}}
\newcommand{\bisp}{\mathsf{bisp}}
\newcommand{\bisob}{\mathsf{bisob}}
\newcommand{\va}[3]{\varphi_{\ca{#1}}^{#2}(#3^{#2})}
\newcommand{\con}{\mathsf{con}}
\newcommand{\tot}{\mathsf{tot}}
\newcommand{\id}{\mathsf{id}}
\newcommand{\fpa}[2]{(#1^+(#2^+),#1^-(#2^-))}
\newcommand{\pa}[1] {(#1^+,#1^-)}
\newcommand{\Om}{\Omega}
\newcommand{\dpt}{\mf{dpt}}
\newcommand{\bpt}{\mf{bpt}}
\newcommand{\dOm}{\mf{d}\Om}
\newcommand{\bOm}{\mf{b}\Om}
\newcommand{\bomf}{\mf{b}\Om_{\fin}}
\newcommand{\pt}{\mathsf{pt}}
\newcommand{\Lq}[1]{\ca{L}/#1}
\newcommand{\SLop}{\mathsf{S}(L)^{op}}
\newcommand{\Ad}{\mf{A}(\ca{L})}
\newcommand{\fin}{\mf{fin}}
\newcommand{\Sfop}{\mf{S}_{\fin}(L)^{op}}
\newcommand{\sy}[1]{{{\ulcorner}} #1 {{\urcorner}}}
\newcommand{\syp}[1]{{\ulcorner} #1^+{\urcorner}}
\newcommand{\sym}[1]{{\ulcorner} #1^-{\urcorner}}
\newcommand{\syw}[2]{\syp{#1}\we \sym{#2}}
\newcommand{\syv}[2]{\syp{#1}\ve \sym{#2}}
\newcommand{\cpu}[1]{#1^+\oplus #1^-}
\newcommand{\cp}[2]{#1\oplus #2}
\newcommand{\ltl}{L^+\times L^-}
\title{The category of finitary biframes as the category of pointfree bispaces}
\author{Anna Laura Suarez\fnref{address}}
\address{CMUC, Department of Mathematics, University of Coimbra, 3001-454, Coimbra, Portugal}
\address{School of Computer Science, University of Birmingham, B15 2TT, Birmingham, UK}
\ead{axs1431@cs.bham.ac.uk}
\begin{document}

\begin{frontmatter}

\begin{abstract}
The theory of finitary biframes as order-theoretical duals of bitopological spaces is explored. The category of finitary biframes is a coreflective subcategory of that of biframes. Some of the advantages of adopting finitary biframes as a pointfree notion of bispaces are studied. In particular, it is shown that for every finitary biframe there is a biframe which plays a role analogue to that of the assembly in the theory of frames: for every finitary biframe $\ca{L}$ there is a finitary biframe $\mf{A}(\ca{L})$ with a universal property analogous to that of the assembly of a frame; and such that its main component is isomorphic to the ordered collection of finitary quotients of $\ca{L}$ (i.e. its pointfree bispaces). Furthermore, in the finitary biframe duality the bispace associated with $\mf{A}(\ca{L})$ is a natural bitopological analogue of the Skula space of the bispace associated with $\ca{L}$. 
The finitary biframe duality gives us a notion of bisobriety which is weaker than pairwise Hausdorffness, incomparable with the pairwise $T_1$ axiom, and stronger than the pairwise $T_0$ axiom.
The notion of pairwise $T_D$ bispaces is introduced, as a natural point-set generalization of the classical $T_D$ axiom. It is shown that in the finitary biframe duality this axiom plays a role analogous to that of the classical $T_D$ axiom for the frame duality. 
\end{abstract}

\begin{keyword}
Frame \sep locale \sep sublocale \sep biframe \sep d-frame \sep bispace

\MSC 06D22

\end{keyword}

\end{frontmatter}

\tableofcontents

\section{Introduction}
\subsection{Bispaces}
Bitopological spaces, or \tc{bispaces}, were introduced in \cite{kelly63}. The motivation behind their introduction was the study of \tc{quasi-metric spaces}, spaces which satisfy all the same defining axioms as metric spaces except symmetry of the distance. Quasi-metric spaces are studied in detail in \cite{quasimetric31}. In \cite{quasimetricsource19}, the authors lists papers from several different mathematical fields which all use nonsymmetric distances: see \cite{OnAsymmetricDistances, quasimetric1, quasimetric2, quasimetric3, quasimetric4}. Special quasi-metric spaces that have received attention recently are \tc{quasi-Polish spaces} (see \cite{polish13}, \cite{computabilitypolish19}). Bitopological spaces generalise quasi-metric spaces in the same way in which topological spaces generalize metric ones. A \tc{bitopological space} is a triple $(X,\tau^+,\tau^-)$ in which $X$ is a set and $\tau^+$ and $\tau^-$ are both topologies on that set. A morphism between bispaces is a function between their underlying sets such that it is \tc{bicontinuous}: continuous, that is, with respect to both topologies. The category $\bd{BiTop}$ has bitopological spaces as objects and bicontinuous maps as morphisms. There is a functor $\mf{patch}:\bd{BiTop}\ra \bd{Top}$ assigning to each bitopological space $(X,\tau^+,\tau^-)$ its \tc{patch}, which is the topological space $(X,\tau^+\ve \tau^-)$, in which the join $\tau^+\ve \tau^-$ is computed in the lattice of all possible topologies on $X$, and so it is the topology generated by $\tau^+\cup \tau^-$.

\subsection{Pointfree spaces}
With his famous representation theorem, Stone proved in \cite{stone36} a result which, expressed in contemporary mathematical language, establishes a dual equivalence of categories between Boolean algebras and certain topological spaces. Stone's Representation Theorem was highly influential, and started a tradition of studying topological spaces in terms of the order theoretical properties of their lattice of open sets. After Stone, the task of the algebraic, order-theoretical study of topological spaces was undertaken by McKinsey and Tarski (see \cite{mckinsey44}), Ehresmann (see \cite{Ehresmann1958}), Papert (see \cite{papert57}), Isbell (see \cite{isbell72}), Banaschewski (see \cite{banaschewski73}), Simmons (see \cite{simmons78}), Pultr (see \cite{Pultr1984}, \cite{Pultr1984i}) and Picado (see \cite{picadopultr2011frames}). Ehresmann and B\'{e}nabou were the first to consider frames as abstract representations of topological spaces. A frame is a bounded complete lattice $L$ in which we have the following distributivity law for every $x,y_i\in L$.
\[
(\bve_i y_i)\we x=\bve_i (y_i\we x).
\]
This distributivity law captures abstractly the fact that in topological spaces arbitrary unions of opens are well-behaved, but only finite intersections are. Since Ehresmann and B\'{e}nabou put this view forward, frames are considered to be the standard order theoretical representation of topological spaces. The category of frames is called $\bd{Frm}$, and we have an adjunction $\Om:\bd{Top}\lra \bd{Frm}^{op}:\pt$ with $\Om\vdash \pt$ in which $\Om$ is a functor assigning to each space its ordered collection of open sets. It is by virtue of this adjunction that we consider frames to be pointfree versions of spaces. We refer the reader to \cite{picado2003} and to \cite{johnstone83} for more on the motivations behind pointfree topology. We only mention here the fact that many results that require the Axiom of Choice in point-set topology do not require it in pointfree topology. Among these we have Tychonoff's Theorem (see \cite{Johnstone1981}), the construction of the Stone-\v{C}ech compactification of a space (see \cite{banaschewski03}), and the construction of the Samuel compactification (see \cite{banaschewski90}).

\subsection{The assembly in the theory of pointfree spaces}

It is known that for every frame $L$ there is a frame $\mf{A}(L)$, called the \tc{assembly} of $L$, which is isomorphic to the ordered collection of all pointfree subspaces of $L$. This plays a role of great importance in pointfree topology. The assembly $\mf{A}(L)$ of a frame $L$ can be seen as a ``pointfree space of all pointfree subspaces of $L$". One aspect in which we see a crucial difference between point-set and pointfree topology is the notion of subspace: think for instance of Isbell's Density Theorem, which states that in pointfree topology every space has a smallest dense subspace (see \cite{johnstone83},\cite{isbell72}). It is not surprising, then, that the assembly has received much attention in the community and that sublocales have been often studied in terms of its structure (see, for instance, \cite{Wilson1994TheAT}, \cite{isbell72}, \cite{niefield87} and more recently \cite{picado19}, \cite{clementino18}, \cite{Avila19},\cite{avila2019frame}). In the following theorem we gather three important properties of the assembly of a frame. The first was proved in \cite{Joyal1984}, and the second and the third can be found in \cite{picadopultr2011frames}. We recall that for a space $X$ the \tc{Skula space} of $X$ is the space $Sk(X)$ such that its underlying sets is the same as that of $X$, and such that its opens are the topology generated by the opens and the closed sets of $X$.

\begin{theorem}
For a frame $L$ we have the following facts.
\begin{itemize}
    \item For every frame $L$ there is a frame injection $\na:L\ra \mf{A}(L)$ such that whenever $f:L\ra M$ is a frame map such that $f(x)$ is complemented in $M$ for every $x\in L$ there is a unique frame map $\Tilde{f}:\mf{A}(L)\ra M$ such that $\Tilde{f}\circ \na=f$.
    \item The frame $\mf{A}(L)$ is anti-isomorphic to the coframe $\mf{S}(L)$ of sublocales of $L$.
    \item There is a homeomorphism $\alpha_L:\pt(\mf{A}(L))\cong Sk(\pt(L))$, and the bijection $\alpha_L:|\pt(\mf{A}(L))|\cong |\pt(L)|$ is such that the images of closed sets of $\pt(\mf{A}(L))$ are exactly the underlying sets of the sober subspaces of $\pt(L)$.
\end{itemize}
\end{theorem}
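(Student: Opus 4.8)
The plan is to realise $\mf{A}(L)$ as the frame $N(L)$ of nuclei on $L$ ordered pointwise — equivalently, as the order-dual $\mf{S}(L)^{op}$ of the coframe of sublocales — and to use three standard facts: that $N(L)$ is a frame (meets of nuclei are computed pointwise and binary joins are not, but the lattice satisfies the frame distributive law); the \emph{fixpoint correspondence}, assigning to a nucleus $j$ its sublocale $\mathrm{Fix}(j)=\{x\in L:j(x)=x\}$, which is an order-reversing bijection $N(L)\to\mf{S}(L)$; and the \emph{representation} $j=\bve_{a\in L}\big(\mathfrak{o}(a)\we\mathfrak{c}(j(a))\big)$ of an arbitrary nucleus as a join of ``locally closed'' ones, where $\mathfrak{c}(a)=(a\ve(-))$ and $\mathfrak{o}(a)=(a\ra(-))$. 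The fixpoint correspondence is essentially the second item of the theorem, once one records that $\mf{S}(L)$ is a coframe (which is the frame structure of $N(L)$ transported across the bijection).

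For the first item I put $\na(a):=\mathfrak{c}(a)$; a short distributivity computation shows $\na$ is a frame homomorphism, injective because $\na(a)(0)=a$, and that $\na(a)$ is complemented in $N(L)$ with complement $\mathfrak{o}(a)$ (the pointwise meet $\mathfrak{c}(a)\we\mathfrak{o}(a)$ is already the identity nucleus, and the least nucleus above the pointwise join of $\mathfrak{c}(a)$ and $\mathfrak{o}(a)$ is the top nucleus). Given a frame map $f:L\to M$ sending each $a$ to a complemented element $f(a)$, with complement $f(a)^{c}$, the representation suggests defining $\tilde f(j):=\bve_{a\in L}\big(f(a)^{c}\we f(j(a))\big)$; verifying that this is well defined and a frame homomorphism is the technical heart of this item. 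Granting that, $\tilde f\circ\na=f$ is a one-line check, and uniqueness is automatic: by the representation, $\na(L)$ together with the complements $\na(a)^{c}=\mathfrak{o}(a)$ generate $N(L)$ as a frame, and any frame map extending $f$ is forced on all of these (frame maps preserve complements), hence on all of $N(L)$.

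For the third item the key move is to feed the universal property the Boolean frame $\bd{2}$: every point $q:L\to\bd{2}$ sends each element to a complemented element, so it extends uniquely to a point $\tilde q:\mf{A}(L)\to\bd{2}$, while conversely every point of $\mf{A}(L)$ restricts along $\na$ to a point of $L$; hence precomposition with $\na$ is a bijection $\alpha_L:|\pt(\mf{A}(L))|\cong|\pt(L)|$. Under $\alpha_L$ the basic open $\{p:p(\na(a))=1\}$ of $\pt(\mf{A}(L))$ corresponds to the basic open $\{q:q(a)=1\}$ of $\pt(L)$, and $\{p:p(\na(a)^{c})=1\}$ corresponds to its complement, a closed set of $\pt(L)$. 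Since $\mf{A}(L)$ is generated as a frame by the $\na(a)$ and their complements, and the opens of $\pt(L)$ are precisely the sets $\{q:q(a)=1\}$ (the frame map $L\to\Om(\pt(L))$ being onto), the topology of $\pt(\mf{A}(L))$ is exactly the one generated by the opens and the closed sets of $\pt(L)$, i.e.\ the Skula topology; this is the homeomorphism $\alpha_L:\pt(\mf{A}(L))\cong Sk(\pt(L))$.

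The main obstacle is the last clause, that the images under $\alpha_L$ of the closed sets are exactly the underlying sets of the sober subspaces of $\pt(L)$. The closed sets of $Sk(\pt(L))$ are the intersections of sets of the form $U\cup C$ with $U$ open and $C$ closed in $\pt(L)$, and I would show that every such subspace is sober and, conversely, that every sober subspace of the sober space $\pt(L)$ arises this way. The clean route passes through sublocales: via the anti-isomorphism $\mf{A}(L)\cong\mf{S}(L)^{op}$ together with $\alpha_L$, the closed sets of $\pt(\mf{A}(L))$ should match the spatial sublocales of $\Om(\pt(L))$, and one then invokes that a subspace $Y$ of a sober space is sober exactly when it coincides with the subspace induced by the sublocale it generates. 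The delicate point is that the assignment from subspaces to sublocales is not injective, so one must check that the subsets recovered as images of $Sk$-closed sets are precisely the sober representatives in the fibres of this assignment — in effect, that ``being closed in the Skula space'' is the point-set counterpart of ``being a sublocale''.
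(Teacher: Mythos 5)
The paper does not prove this theorem: it is stated in the introduction as known background, with the first item attributed to Joyal--Tierney and the second and third to Picado--Pultr, so there is no in-paper argument to compare yours against. Your outline follows the standard route of those references --- identifying $\mf{A}(L)$ with the frame of nuclei, taking $\na(a)$ to be the closed nucleus with the open nucleus as complement, using the representation of an arbitrary nucleus as a join of locally closed ones, and obtaining the point bijection by feeding $\bd{2}$ to the universal property --- and everything you actually assert and check (the complementation of $\mathfrak{c}(a)$ and $\mathfrak{o}(a)$, injectivity of $\na$, the one-line computation $\tilde f(\na(b))=f(b)$, the uniqueness argument via generation, and the identification of the topology on $\pt(\mf{A}(L))$ with the Skula topology) is correct.

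However, the two steps you explicitly defer are the real content, so as it stands the proposal is a correct skeleton rather than a proof. For the first item, you must show that $\tilde f(j)=\bve_a\bigl(f(a)^{c}\we f(j(a))\bigr)$ preserves finite meets and, above all, arbitrary joins of nuclei; since joins in the frame of nuclei are not computed pointwise, this is not a routine calculation, and the usual way to discharge it is to present $\mf{A}(L)$ by generators $\na(a),\del(a)$ subject to the complementation relations and check that $a\mapsto f(a)$, $a\mapsto f(a)^{c}$ respect those relations --- a different (and easier) organization of the same fact that you may want to adopt. For the last clause of the third item, the chain you need is: every sublocale is an intersection of sublocales of the form $\mathfrak{c}(a)\ve\mathfrak{o}(b)$, whose point sets are exactly the subbasic Skula-closed sets (closed union open); hence the Skula-closed subsets of $\pt(L)$ are exactly the point sets of sublocales of $\Om(\pt(L))$; and a subspace of a sober space is sober iff it coincides with the point set of the sublocale it generates. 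Each link is standard but none is established in your text, and the last one is where sobriety of $\pt(L)$ is genuinely used; the non-injectivity of the subspace-to-sublocale assignment that worries you is resolved precisely there, because sober subspaces are in bijection with spatial sublocales and are the canonical representatives in each fibre.
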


\subsection{Pointfree bispaces}

Stone's famous representation theorem (see \cite{stone36}, but also \cite{Stone38}) was only the first in a series of results connecting lattice theory and topology. For example, Priestley proved (see \cite{priestley70}, \cite{priestley72}) that the category of distributive lattices is dually equivalent to a category whose objects are Stone spaces equipped with an order relation on the set of points. 
One usually calls \tc{Stone-type dualities} those results establishing dual equivalences between categories of lattices and categories of spaces. Stone duality continues to receive a great deal of attention from both mathematicians and computer scientists. Consider, for instance: \cite{ernestone}, an order theoretical approach to the duality; \cite{caramello2011topostheoretic}, a topos theoretical one; \cite{bezhanishvili_holliday_2020}, a generalization of the duality which does not need the Axiom of Choice; \cite{gehrke16}, an application of the duality to automata and regular languages. Priestley spaces are Stone spaces equipped with an order on their sets of points. Sometimes Priestley duality is rephrased in terms of a duality between the category of \tc{coherent spaces} and that of \tc{coherent frames}. The category of coherent spaces is not a full subcategory of that of topological spaces: morphisms between coherent spaces are required to be such that preimages of compact open sets are still compact and open. 

In \cite{bezhanishvili10} a new, bitopological view of Priestley duality is put forward. There, it is explained that coherent spaces are nothing but ``halves" of pairwise Stone spaces. We recall that a Stone space is a topological space which is compact, Hausdorff, and zero dimensional. Using very natural bitopological generalizations of these three properties one can define \tc{pairwise Stone} bispaces. The category of pairwise Stone bispaces is equivalent to that of coherent spaces, and the equivalence is given by a functor $\pi^+:\bd{BiStone}\ra \bd{CohTop}$ which acts on objects as $(X,\Om^+(X),\Om^-(X)\mapsto (X,\Om^+(X))$, ``forgetting" about one topology. The key is that the category $\bd{BiStone}$ is a full subcategory of $\bd{BiTop}$: the need for an unnatural restriction on morphisms is eliminated: the condition that maps are well-behaved with respect to compactness translates simply to bicontinuity of the map, eliminating the need for extra constraints in order to get our duality. Not only do coherent spaces miss half of a pairwise Stone bispace, but them missing this half also means that in their category we need an extra constraint on morphisms.

It is difficult, then, to argue that one can have a full understanding of Stone-type dualities without understanding what a pointfree bispace is. If coherent spaces are missing half of the picture because they forget half a bispace, then so are coherent frames. One needs to generalize frames in the same way as we have generalized spaces with bispaces. In the literature, there are two alternative translations of the category of bispaces into pointfree language. The first of these translations in chronological order is the category of \tc{biframes}. We refer the reader to \cite{banaschewski83} and to \cite{schauerte92} for the theory of biframes. The second is the category of \tc{d-frames}. For this, we refer the reader to \cite{jung06} and to \cite{jakl2018}.

\subsection{The aim of this article}
In this article we explore a new notion of pointfree bispace is, in order to have a theory of pointfree bisubspaces which reflects the crucial aspects of the theory of pointfree subspaces as much as possible. Below, we list some discrepancies between the theories of biframes and d-frames and that of frames. The recurring theme is the following: it is a key feature of pointfree topology that the ordered collection of subspaces of a pointfree space is itself a pointfree space (with interesting connections to the original space, too), and when we move to both biframes and d-frames we lose this.
\begin{itemize}
    \item The object $\Ad$ with the universal property analogous to that of the assembly does not represent the collection of all quotients of $\ca{L}$ in any meaningful way, both in the category of d-frames and in that of biframes. 
   \item The lattice of all quotients of a d-frame is not distributive in general, which makes it difficult to work with it. The lattice of all quotients of a biframe collapses to the lattice of all quotients of its main component. This means that the notion is not bitopological: two distinct biframes with the same main component have the same lattice of quotients.
   \item One might argue that for a biframe $\ca{L}$ the biframe $(\mf{Cl}(L),\mf{Fitt}(L),\mf{A}(L))$ -- the so-called \tc{Skula biframe} of $\ca{L}$ -- is a suitable candidate to play the role of a ``pointfree bispace of all pointfree bisubspaces" of $\ca{L}$: the frame $\mf{A}(L)$ is anti-isomorphic to the ordered collection of biframe quotients of $\ca{L}$, and as this comes equipped with a natural bitopological structure as it is generated by the frame of the closed congruences together with that of the fitted one. However, the bitopological structure of $\ca{L}$ itself is completely lost here. If we took $(\mf{Cl}(L),\mf{Fitt}(L),\mf{A}(L))$ as playing this role, we would again obtain that any two biframes with the same patch have the same pointfree bisubspace of bisubspaces.  
   
   \item On the spatial side, the spatial version of the assembly is not as well-behaved as the analogous monotopological construction, either. As we have seen if we take the spectrum of the assembly $\pt(\mf{A}(L))$ for a frame $L$ we obtain a perfect encoding of spatial sublocales of $L$. In particular, the frame of opens of $\pt(\mf{A}(L))$ is anti-isomorphic to the lattice $\spa[\mf{S}(L)]$ of spatial sublocales of $L$. The analogue of this is false for both d-frames and biframes. In particular, for a d-frame $\ca{L}$, we have that the spatial d-quotients of a d-frame do not form a distributive lattice in general.
    
    \item For spaces, Hausdorffness implies sobriety. We do not know whether pairwise Hausdorffness implies d-sobriety of a bispace. In \cite{jung06} it is only shown that a strengthened version of pairwise Hausdorffness implies d-sobriety.
    
    \item The biframe version of sobriety is equivalent to sobriety of the patch of the relevant bispace. Therefore, the notion is not bitopological. For two distinct bispaces with the same patch, one is ``bisober" if and only if the other one is: the bitopological structure of the bispace is irrelevant.

\end{itemize}
In this article it will be shown that it is sufficient to modify the duality of bispaces and biframes to involve only \tc{finitary} biframes to mend all these discrepancies. This article is based on Chapter 9 of \cite{suarez20}.

\section{Preliminaries}
\subsection{Biframes and their duality}\label{bifrmdual0}

We now look at the most literal translation of the notion of bispace into pointfree language. We will introduce an adjunction similar to the spatial-sober one for frames and spaces, relating the category of bitopological spaces with a possible notion of a pointfree version of it. The theory that we describe here is introduced in \cite{banaschewski83}, and further developed in \cite{schauerte92}. A \tc{biframe} is a triple $(L^+,L^-,L)$ where $L^+,L^-,L$ are frames, together with frame injections $e_L^+:L^+\ra L$ and $e_L^-:L^-\ra L$ such that all elements of $L$ are of the form $\bve_i e_L^+(x^+_i)\we e_L^-(x^-_i)$ for families $x^+_i\in L^+$ and $x^-_i\in L^-$. In general, we denote a biframe just as a triple $(L^+,L^-,L)$, leaving it implicit that we call the two subframe inclusions $e^+_L$ and $e^-_L$ as above. Additionally, when we a biframe with a triple $(L^+,L^-,L)$ in which $L^+,L^-\se L$ we will leave it implicit that we take the frame injections are the two subset inclusions. Additionally, when we have a biframe with triple $(L^+,L^-,L)$ where $L$ is a free construction with generators from $L^+$ and $L^-$ we will leave it implicit that we will regard the syntactic map $\sy{-}:L^+\ra L$, and the analogous one on the negative component, to be the two frame injections. To simplify further, we will denote a biframe $(L^+,L^-,L)$ as $\ca{L}$, a biframe $(M^+,M^-,M)$ as $\ca{M}$, and so on. For a biframe $\ca{L}$, we call $L$ the \tc{main component} of a biframe.
We note that for any biframe $L$ there is always a congruence $C$ on $\cpu{L}$ such that we have an isomorphism $i:(\cpu{L})/C\cong L$ which is well-behaved with respect to the subframe inclusions, in the sense that the following diagram and the analogous one for the negative component commute.
\[
\begin{tikzcd}[row sep=large, column sep=large]
L^+
\arrow[d,"q_c\circ \sy{-}",swap]
\arrow{dr}{e^+_L}
\\
(\cpu{L})/C
\arrow{r}{i}
& L
\end{tikzcd}
\]
Here, $q_C:\cpu{L}\ra (\cpu{L})/C$ is the quotient map, and $\sy{-}:L^+\ra \cpu{L}$ the canonical injection of $L^+$ into the coproduct. 
For a biframe $\ca{L}$, in particular, the quotient of $\cpu{L}$ corresponding to $L$ is the quotient by the relation 
\[
R_L:=\{(\bve_i\syw{x_i}{x_i},\bve_j\syw{y_j}{y_j}):\bve_i e^+_L(x^+_i)\we e^-_L(x^-_i)\leq \bve_j e^+_L(y^+_j)\we e^-_L(y^-_j)\mb{ in }L\}.
\]
This is because all of the order structure of $L$ is determined by inequalities of the form $\bve_i e^+_L(x^+_i)\we e^-_L(x^-_i)\leq \bve_j e^+_L(y^+_j)\we e^-_L(y^-_j)$, and so the quotient $(\cpu{L})/R_L$ captures all equalities that hold in $L$, and no more than them, since all of the extra equalities are consequences of the other ones and the frame laws. 
A morphism between biframes $\ca{L}$ and $\ca{M}$ is a pair of frame maps $\pa{f}$ with $f^+:L^+\ra M^+$ and $f^-:L^-\ra M^-$ such that there is a frame map $f:L\ra M$ making the following diagram and the analogous one on the negative component both commute.
\[
\begin{tikzcd}[row sep=large, column sep=large]
L^+
\arrow{d}{e^+_L}
\arrow{r}{f^+}
& M^+
\arrow{d}{e^+_M}
\\
L
\arrow{r}{f}
& M
\end{tikzcd}
\]
In the particular case where we have $L^+,L^-\se L$ and $M^+,M^-\se M$, thus, for a pair of suitable frame maps $\pa{f}$ for it to count as a morphism $\pa{f}:\ca{L}\ra \ca{M}$ we need there to be a frame map $f:L\ra M$ that restricts to $L^+$ as $f^+$ and as $f^-$ to $M^-$. Sometimes, as an abbreviation, we will refer as $f$ to a biframe map $\pa{f}:\ca{L}\ra \ca{M}$. It will always be clear from the context whether we are speaking about the biframe map itself or the frame map $f:L\ra M$ witness of $\pa{f}$ being a biframe map. Let us see an alternative way of viewing biframe morphisms. For biframes $\ca{L}$ and $\ca{M}$, let $C_L$ be the congruence on $\cpu{L}$ such that $(\cpu{L})$ is canonically isomorphic to $C_L\cong L$.
Let $C_M$ be the analogous congruence for $\ca{M}$. Using the defining property of biframe morphisms, we obtain that a suitable pair of frame maps $\pa{f}$ counts as a biframe morphism $\pa{f}:\ca{L}\ra \ca{M}$ if and only if there is a frame map $f:(\cpu{L})/C_L\ra (\cpu{M})/C_M$ making the following diagram commute.
\[
\begin{tikzcd}[row sep=large, column sep=large]
\cpu{L}
\arrow{r}{\cpu{f}}
\arrow{d}{q_{C_L}}
& \cpu{M}
\arrow{d}{q_{C_M}}
\\
(\cpu{L})/C_L
\arrow{r}{f}
& (\cpu{M})/C_M
\end{tikzcd}
\]
We proceed by giving the definition of the two adjoint functors we are interested in, as well as the definitions of the unit and the counit of the adjunction. 

\begin{itemize}
    \item For every bispace $X$, the biframe $\mf{b}\Om(X)$ is defined to be the triple 
    \[
    (\Om^+(X),\Om^-(X),\Om(X)),
    \]
    with set inclusions as the two frame injections. For a bicontinuous map $f:X\ra Y$, we define the map $\mf{b}\Om(f)$ as the pairing of preimage maps $(f^{-1},f^{-1}):\mf{b}\Om(Y)\ra \mf{b}\Om(X)$. This defines a functor $\mf{b}\Om:\bd{BiTop}\ra \bd{BiFrm}^{op}$.
    \item For every biframe $\ca{L}$, we define the bispace $\bpt(\ca{L})$ as follows. Its underlying set of points is $\bd{BiFrm}(\ca{L},\bd{2})$, where $\bd{2}$ denotes the biframe $(2,2,2)$. A positive open of $\bpt(\ca{L})$ is any subset of $\bd{BiFrm}(\ca{L},\bd{2})$ of the form $\{\pa{f}\in \bd{BiFrm}(\ca{L}, \bd{2}): f^+(a^+)=1\}$ for some $a^+\in L^+$; negative opens are defined analogously. For every biframe map $f:\ca{L}\ra \ca{M}$ we define the bicontinuous map $\bpt(f):\bpt(\ca{M})\ra \bpt(\ca{L})$ as the pairing of precompositions $(-\circ f^+,-\circ f^-)$. This defines a functor $\bpt:\bd{BiFrm}^{op}\ra \bd{BiTop}$, right adjoint to $\bOm$.
\end{itemize}

Now, let us look at the unit and the counit.
\begin{itemize}
    \item For every bispace $X$, we define the map $\mf{b}\psi_{X}:X\ra\bpt(\bOm(X))$ as $x\mapsto \pa{N_x}$ for each $x\in X$. Here, $N^+_x:\Om^+(X)\ra 2$ is defined to be $n^+_x(U^+)=1$ of and only if $x\in U^+$ for every positive open $U^+\se X$. We call this map the \tc{positive neighborhood map} of the bispace $X$. We call \tc{negative neighborhood map} the frame map $N^-_x:\Om^-(X)\ra 2$, defined analogously as $N^+_x$. The map $\mf{b}\psi_X:X\ra \bpt(\bOm(X))$ is always bicontinuous. We call this the \tc{sobrification map} of the bispace $X$. This determines a natural transformation $\mf{b}\psi:1_{\bd{BiTop}}\ra \bpt\circ \bOm$, and this is the unit of the adjunction.
    \item For every biframe $\ca{L}$ we define the map $\mf{b}\varphi_{\ca{L}}:\ca{L}\ra \bOm(\bpt(\ca{L}))$ as the pair of maps $\pa{\mf{b}\varphi_{\ca{L}}}$ where $\varphi_{\ca{L}}^+(a^+)=\{\pa{f}\in \bd{BiFrm}(\ca{L},\bd{2}):f^+(a^+)=1\}$ for every $a^+\in L^+$. The negative component of $\varphi_{\ca{L}}$ is defined similarly. We call $\mf{b}\varphi_{\ca{L}}$ the \tc{spatialization} of $\ca{L}$. This determines a natural transformation $\mf{b}\varphi:\bOm\circ \bpt\ra 1_{\bd{BiFrm}^{op}}$, and this is the counit of the adjunction.
\end{itemize}

\begin{theorem}
There is an adjunction $\bOm:\bd{BiTop}\lra  \bd{BiFrm}^{op}:\dpt$ with $\bOm\dashv\bpt$. The unit of adjunction is $\mf{b}\psi$ and the counit is $\mf{b}\varphi$.
\end{theorem}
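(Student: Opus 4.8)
The plan is to establish a bijection
\[
\bd{BiTop}(X,\bpt(\ca{L}))\;\cong\;\bd{BiFrm}(\ca{L},\bOm(X))
\]
natural in the bispace $X$ and the biframe $\ca{L}$, and then to read off $\mf{b}\psi$ and $\mf{b}\varphi$ as its unit and counit; equivalently, since all four data are already in hand, to check naturality of $\mf{b}\psi,\mf{b}\varphi$ together with the two triangle identities. Functoriality of $\bOm$ and $\bpt$ is routine: for $\bOm$, a bicontinuous $f:X\ra Y$ induces preimage frame maps on both components, and as $f^{-1}$ is a frame map between the patches it restricts to a biframe map $\bOm(Y)\ra\bOm(X)$; for $\bpt$, precomposition with a biframe map $\pa{f}:\ca{L}\ra\ca{M}$ carries a point of $\bpt(\ca{M})$ to a point of $\bpt(\ca{L})$, and the pairing $(-\circ f^+,-\circ f^-)$ is bicontinuous since the preimage of the basic positive open $\{h:h^+(a^+)=1\}$ of $\bpt(\ca{L})$ is the basic positive open $\{g:g^+(f^+(a^+))=1\}$ of $\bpt(\ca{M})$, and symmetrically. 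The one genuinely bitopological subtlety — the recurring theme throughout — is that a biframe is not merely a pair of frames but carries a main component tying them together, so each construction on the two components must be shown to lift compatibly to the main component; the tools for this are the generation condition in the definition of a biframe and the description, recalled in the preliminaries, of the relation $R_L$ presenting $L$ as a quotient of $\cpu{L}$.

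The main point is that $\mf{b}\varphi_{\ca{L}}$ is a well-defined biframe map. Exactly as in the monotopological case, $\varphi_{\ca{L}}^+:L^+\ra\Om^+(\bpt(\ca{L}))$ and $\varphi_{\ca{L}}^-:L^-\ra\Om^-(\bpt(\ca{L}))$ are frame maps whose images generate $\Om(\bpt(\ca{L}))=\Om^+(\bpt(\ca{L}))\ve\Om^-(\bpt(\ca{L}))$; since the $\varphi_{\ca{L}}^\pm$ preserve finite meets, every open of $\bpt(\ca{L})$ has the form $\bcu_i\varphi_{\ca{L}}^+(x^+_i)\cap\varphi_{\ca{L}}^-(x^-_i)$. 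I would therefore define the main-component map $\varphi_{\ca{L}}:L\ra\Om(\bpt(\ca{L}))$ by $\bve_i e^+_L(x^+_i)\we e^-_L(x^-_i)\mapsto\bcu_i\varphi_{\ca{L}}^+(x^+_i)\cap\varphi_{\ca{L}}^-(x^-_i)$, and the work is to see this is well defined: concretely, that $\bve_i e^+_L(x^+_i)\we e^-_L(x^-_i)\leq\bve_j e^+_L(y^+_j)\we e^-_L(y^-_j)$ in $L$ forces $\bcu_i\varphi_{\ca{L}}^+(x^+_i)\cap\varphi_{\ca{L}}^-(x^-_i)\se\bcu_j\varphi_{\ca{L}}^+(y^+_j)\cap\varphi_{\ca{L}}^-(y^-_j)$, this being exactly well-definedness since (as recalled in the preliminaries) the order of $L$ is presented by such inequalities. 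Here the points of $\bpt(\ca{L})$ do the job: if a point $\pa{f}$ lies in the left-hand union, then $f^+(x^+_i)=f^-(x^-_i)=1$ for some $i$, and feeding this through the main-component frame map $f:L\ra 2$ that witnesses $\pa{f}$ being a biframe map, together with the given inequality, forces $\bve_j f^+(y^+_j)\we f^-(y^-_j)=1$ in $2$, so $\pa{f}$ lies in the right-hand union. That $\varphi_{\ca{L}}$ preserves finite meets and the top is then the standard biframe-coproduct computation (distribute a meet of two joins, using that $\varphi_{\ca{L}}^\pm$ are frame maps), and the commuting triangles with $e^+_L,e^-_L$ hold by construction; hence $\mf{b}\varphi_{\ca{L}}$ is a biframe map. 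The analogous spatial facts — that $\pa{N_x}$ is a biframe map $\bOm(X)\ra\bd{2}$ and that $\mf{b}\psi_X$ is bicontinuous — are already recorded in the preliminaries, $N_x:\Om(X)\ra 2$ being the neighbourhood map of $x$ in the patch, restricting to $N^\pm_x$ on the components.

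It remains to unwind the hom-set bijection. Declare a biframe map $h:\ca{L}\ra\bOm(X)$ and a bicontinuous $g:X\ra\bpt(\ca{L})$ to correspond when $x\in h^+(a^+)\iff g(x)^+(a^+)=1$ and $x\in h^-(a^-)\iff g(x)^-(a^-)=1$ for all $x\in X$ and $a^\pm\in L^\pm$. Componentwise this is precisely the classical frame adjunction, so on the level of underlying maps each assignment is a bijection; what must be added is that $h$ admits a (necessarily unique) main-component extension $L\ra\Om(X)$ exactly when the corresponding $g$ is bicontinuous, which follows from the generation condition on $L$, the fact that $\Om(X)$ is generated by $\Om^+(X)\cup\Om^-(X)$, and the same inequality-to-inclusion argument as above with $X$ in place of $\bpt(\ca{L})$. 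Naturality of the bijection in $X$ and in $\ca{L}$ is then a routine diagram chase, and taking $h=\id_{\bOm(X)}$ and $g=\id_{\bpt(\ca{L})}$ identifies $\mf{b}\psi$ and $\mf{b}\varphi$ as unit and counit, equivalently yields the triangle identities. I expect the main obstacle to be not depth but discipline: at each step one has to produce the main-component map alongside the two component maps and verify their compatibility, and it is precisely the generation condition and the presenting relation $R_L$ that make this possible.
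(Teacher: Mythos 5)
The paper itself gives no proof of this theorem: it is recalled as a known result from the literature on biframes (the works of Banaschewski--Br\"ummer--Hardie and Schauerte cited in Section 2.1), so there is no in-text argument to compare yours against. Judged on its own terms, your proof is correct and is the standard one. The genuinely bitopological step --- that the pair $\pa{\varphi_{\ca{L}}}$ extends to a frame map on the main component $L$, verified by testing the presenting inequalities $\bve_i e^+_L(x^+_i)\we e^-_L(x^-_i)\leq \bve_j e^+_L(y^+_j)\we e^-_L(y^-_j)$ against bipoints --- is exactly where the content lies, and your argument there is sound (it uses, correctly, that a join in $\bd{2}$ equals $1$ only if some joinand does). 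The only imprecision worth flagging is in the hom-set bijection, where you say the main-component extension of $h$ exists ``exactly when the corresponding $g$ is bicontinuous'': bicontinuity of $g$ is automatic once $g$ is well defined, since preimages of the subbasic opens of $\bpt(\ca{L})$ are the open sets $h^{\pm}(a^{\pm})$; the substantive condition is that each $g(x)$ is a bipoint, i.e.\ that $g$ lands in $\bpt(\ca{L})$ at all. Your verification in fact establishes precisely that equivalence, so this is a matter of phrasing rather than a gap.
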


\subsection{D-frames and their duality}
D-frames are quadruples $(L^+,L^-,c_L,t_L)$, where $L^+$ and $L^-$ are frames, and where $c_L,t_L\se \ltl$ satisfy certain axioms. We think of $L^+$ and $L^-$ as being two abstract topologies, $c_L$ as being an abstract disjointness relation, and $t_L$ as being an abstract covering relation. For two sets $L^+$ and $L^-$, we say that a family $\pa{x_i}\se \ltl$ is \tc{homogeneous} if it is either of the form $(x^+_i,x^-)$ or $(x^+,x^-_i)$. If $L^+$ and $L^-$ are posets, we say that a join (resp. a meet) in $\ltl$ is homogeneous when it is a join (resp. a meet) of a homogeneous family. We are now ready to give the formal definition of a d-frame.

A \tc{d-frame} is a quadruple $(L^+,L^-,c_L,t_L)$ such that $L^+$ and $L^-$ are frames, and such that $c_L$ and $t_L$ are subsets of $\ltl$ satisfying the following axioms.
\begin{enumerate}
    \item The subset $c_L$ is a downset of $\ltl$.
    \item The subset $c_L$ is closed under arbitrary homogeneous joins.
    \item The subset $t_L$ is an upset of $\ltl$.
    \item The subset $t_L$ is closed under finite homogeneous meets.
    \item Whenever we have $(a^+,a^-_1)\in c_L$ and $(a^+,a^-_2)\in t_L$, we must have $a^-_1\leq a^-_2$. Similar inequalities hold in the frame $L^+$.
\end{enumerate}

Axiom (5) is usually called \tc{balance}, and it is the pointfree version of the spatial fact that, whenever we have subsets $A,B_1,B_2\se X$ of some set $X$, if we have both $A\cap B_1=\emptyset$ and $A\cup B_2=X$, we must have that $B_1\se B_2$.

 In general, when we refer to a d-frame called $\ca{L}$, we implicitly say that its four components are called $(L^+,L^-,c_L,t_L)$. We refer to $c_L$ as the $\con$ component - or the consistency component - of the d-frame $\ca{L}$ and to $t_L$ as the $\tot$ component - or totality component - of $\ca{L}$. A morphism $f=(f^+,f^-):\ca{L}\ra \ca{M}$ in the category of d-frames consists of a pair of frame maps $f^+:L^+\ra M^+$ and $f^-:L^-\ra M^-$ such that they respect the $\con$ and the $\tot$ subsets, in the sense that $\pa{a}\in c_L$ implies that $\fpa{f}{a}\in c_M$, and similarly for the $\tot$ component. We call $\bd{dFrm}$ the category of d-frames with d-frame morphisms.

Let us now look at the dual adjunction between the category of bipaces and that of finitary biframes. We define the two functors first. 

\begin{itemize}
    \item For a bispace $X$, we 
define the d-frame $\dOm(X)$ as the quadruple
\[
(\Om^+(X),\Om^-(X),\{\pa{U}:U^+\cap U^-=\emptyset\},\{\pa{U}:U^+\cup U^-=X\}).
\]
 We extend this to a functor by defining the $\dOm$-image of a bicontinuous map $f:X\ra Y$ as the pair of preimage maps $(f^{-1},f^{-1}):\dOm(Y)\ra \dOm(X)$. 
    \item Let us call $\bd{2}$ the d-frame $(2,2,c_m,t_m)$. For clarity, we will denote the elements of the first component of this as $0^+<1^+$, and those of the negative frame component as $0^-<1^-$. For any d-frame $\ca{L}$, let us define as $\dpt(\ca{L})$ the collection of d-frame morphisms $f:\ca{L}\ra \bd{2}$. Now, let us equip this set with two topologies. Define a map $\phi_{\ca{L}}^+:L^+\ra \ca{P}(\dpt(\ca{L}))$ as 
\[
\phi_{\ca{L}}^+(a^+)=\{(f^+,f^-)\in \dpt(\ca{L}):f^+(a^+)=1^+\}.
\]
Define similarly the map $\phi^-_{\ca{L}}:L^-\ra \ca{P}(\dpt(\ca{L}))$. For a d-frame $\ca{L}$, define its \tc{d-spectrum}, or simply its \tc{spectrum}, as the bispace such that its underlying set of points is $\dpt(\ca{L})$, its positive opens are the sets of the form $\phi^+_{\ca{L}}(a^+)$ for some $a^+\in L^+$, and its negative opens are the sets of the form $\phi^-_{\ca{L}}(a^-)$ for some $a^-\in L^-$.
For a d-frame morphism $f:\ca{L}\ra \ca{M}$ we define $\dpt(f):\dpt(\ca{M})\ra \dpt(\ca{L})$ as the precomposition map $(-\circ f^+,-\circ f^-):\dpt(\ca{M})\ra  \dpt(\ca{L})$. 
\end{itemize}

Let us now define the unit and the counit of the adjunction.

\begin{itemize}
    \item For a bispace $X$, and a point $x$, define the map $N_x^+:\Om^+(X)\ra 2$ as $N^+_x(U^+)=1$ if and only if $x\in U^+$, and define similarly the negative neighborhood map $N^-_x:\Om^-(X)\ra 2$. For any bispace $X$ and any point $x\in X$, the unit of the adjunction assigns to every bispace $X$ the map 
\begin{align*}
 &   \mf{d}\psi_X:X\ra \dpt(\dOm(X))\\
 &      x\mapsto (N^+_x,N^-_x).
\end{align*}
    \item The counit of the adjunction assigns to every d-frame $\ca{L}$ the d-frame map
\begin{align*}
    & (\mf{d}\phi^+_{\ca{L}},\mf{d}\phi^-_{\ca{L}}):\ca{L}\ra \dOm (\dpt (\ca{L}))\\
    & a^+\mapsto \{\pa{f}\in \dpt(\ca{L}):f^+(a^+)=1^+\}.\\
    & a^-\mapsto \{\pa{f}\in \dpt(\ca{L}):f^-(a^-)=1^-\}.
\end{align*}
\end{itemize}

We have the following.
\begin{theorem}
There is an adjunction $\dOm:\bd{BiTop}\lra  \bd{dFrm}^{op}:\dpt$ with $\dOm\dashv\dpt$. The unit of adjunction is $\mf{d}\psi$ and the counit is $\mf{d}\varphi$.
\end{theorem}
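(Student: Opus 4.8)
The plan is to run the standard Stone-type dual-adjunction argument, with the d-frame $\bd{2}$ as the dualising object. It helps to note first that $\bd{2}$ is exactly $\dOm(\ast)$ for the one-point bispace $\ast$, so that $c_m=\{(0^+,0^-),(0^+,1^-),(1^+,0^-)\}$ and $t_m=\{(0^+,1^-),(1^+,0^-),(1^+,1^-)\}$; in particular $(1^+,1^-)\notin c_m$ and $(0^+,0^-)\notin t_m$, and these two facts drive every verification involving the extra d-frame structure. First I would check that $\dOm$ and $\dpt$ are well-defined functors. For a bispace $X$ the quadruple $\dOm(X)$ satisfies the five d-frame axioms: (1)--(4) follow at once from the behaviour of finite intersections and arbitrary unions of open sets, while balance (axiom (5)) is precisely the point-set fact recorded in the preliminaries, that $A\cap B_1=\emptyset$ and $A\cu B_2=X$ force $B_1\se B_2$. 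For a bicontinuous $f:X\ra Y$ the pair $(f^{-1},f^{-1})$ preserves $\con$ and $\tot$ because preimage commutes with intersection and union, and functoriality is immediate. Dually, for a d-frame $\ca{L}$ each $\phi^+_{\ca{L}}$ sends arbitrary joins to unions and finite meets to intersections -- each $f^+$ with $\pa{f}\in\dpt(\ca{L})$ is a frame map, and in $2$ a join equals $1$ iff some summand does -- so the sets $\phi^+_{\ca{L}}(a^+)$, and likewise the $\phi^-_{\ca{L}}(a^-)$, form genuine topologies; that $\dpt(f)$ takes values in $\dpt(\ca{L})$ and is bicontinuous follows from $\dpt(f)^{-1}(\phi^+_{\ca{L}}(a^+))=\phi^+_{\ca{M}}(f^+(a^+))$ together with the fact that a composite of d-frame morphisms is again one.

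Next I would verify that $\mf{d}\psi$ and $\mf{d}\varphi$ are natural transformations of the correct type. For the unit: each $N^+_x$ is a frame map, the pair $(N^+_x,N^-_x)$ respects $\con$ and $\tot$ (if $U^+\cap U^-=\emptyset$ then $x$ lies in at most one of $U^+,U^-$, so the pair avoids $(1^+,1^-)$; if $U^+\cu U^-=X$ then $x$ lies in at least one, so it avoids $(0^+,0^-)$), and $\mf{d}\psi_X$ is bicontinuous since $(\mf{d}\psi_X)^{-1}(\phi^+_{\dOm(X)}(U^+))=U^+$; naturality is a diagram chase through the definitions of $\dOm$ and $\dpt$ on maps. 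For the counit: the components of $\mf{d}\varphi_{\ca{L}}$ are the corestrictions of $\phi^+_{\ca{L}}$ and $\phi^-_{\ca{L}}$ to their images, hence frame maps; the pair respects $\con$ because $\pa{a}\in c_{\ca{L}}$ forces $\phi^+_{\ca{L}}(a^+)\cap\phi^-_{\ca{L}}(a^-)=\emptyset$ -- no d-frame morphism $\ca{L}\ra\bd{2}$ can send $a^+$ to $1^+$ and $a^-$ to $1^-$, since $(1^+,1^-)\notin c_m$ -- and dually it respects $\tot$ using $(0^+,0^-)\notin t_m$; naturality is again a routine unwinding.

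It then remains to check the two triangle identities. Evaluating $\dpt(\mf{d}\varphi_{\ca{L}})\circ\mf{d}\psi_{\dpt(\ca{L})}$ at a point $\pa{f}\in\dpt(\ca{L})$ yields the pair $(N^+_{\pa{f}}\circ\mf{d}\varphi^+_{\ca{L}},\,N^-_{\pa{f}}\circ\mf{d}\varphi^-_{\ca{L}})$, and since $N^+_{\pa{f}}(\mf{d}\varphi^+_{\ca{L}}(a^+))=1$ iff $\pa{f}\in\phi^+_{\ca{L}}(a^+)$ iff $f^+(a^+)=1^+$, this composite returns $\pa{f}$; hence it is the identity of $\dpt(\ca{L})$. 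For the other identity one evaluates the composite frame map on a positive open $U^+\in\Om^+(X)$, obtaining $\{x\in X:N^+_x(U^+)=1\}=U^+$, and dually on negative opens, so the composite is the identity of $\dOm(X)$. This gives $\dOm\dashv\dpt$ with unit $\mf{d}\psi$ and counit $\mf{d}\varphi$.

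The only part of this that is more than bookkeeping is the interaction with the consistency and totality structure: one keeps returning to the two ``forbidden'' pairs $(1^+,1^-)\notin c_m$ and $(0^+,0^-)\notin t_m$ of $\bd{2}$ to see that the transposed maps genuinely preserve $\con$ and $\tot$; the rest is a verbatim replay of the frame--space adjunction on each of the two halves. An equivalent packaging would be to exhibit the natural bijection $\bd{dFrm}(\ca{L},\dOm(X))\cong\bd{BiTop}(X,\dpt(\ca{L}))$ directly, sending a d-frame morphism $h$ to $x\mapsto(N^+_x\circ h^+,N^-_x\circ h^-)$ with inverse $g\mapsto(U^+\mapsto g^{-1}(\phi^+_{\ca{L}}(U^+)),\,U^-\mapsto g^{-1}(\phi^-_{\ca{L}}(U^-)))$; the well-definedness checks are the same, and the unit and counit can then be read off from the bijection.
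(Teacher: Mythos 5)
Your argument is correct, but note that the paper does not actually prove this theorem: it is stated in the preliminaries as a known result imported from the d-frame literature (Jung--Moshier and Jakl), so there is no in-paper proof to compare against. What you have written is the standard verification, and it runs exactly parallel to the proof the paper \emph{does} supply for the analogous finitary-biframe adjunction $\bomf\dashv\bpt$ (well-definedness of the functors, naturality of unit and counit, then the two triangle identities); the only genuinely d-frame-specific content is the part you correctly isolate, namely that every preservation check for $\con$ and $\tot$ reduces to the two excluded pairs $(1^+,1^-)\notin c_m$ and $(0^+,0^-)\notin t_m$ of the dualising object $\bd{2}=\dOm(\ast)$.
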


\section{Finitary biframes and their duality}

We know that for every biframe $\ca{L}$ we have a frame isomorphism $i:L\cong(\cpu{L})/C_L$ for some congruence $C_L$ on $\cpu{L}$ such that the following commutes, as well as the analogous diagram for the negative component.
\[
\begin{tikzcd}[row sep=large, column sep=large]
L^+
\arrow[d,"q_{C_L}\circ \sy{-}",swap]
\arrow{dr}{e^+_L}
\\
(\cpu{L})/C_L
\arrow{r}{i}
& L
\end{tikzcd}
\]
Here, $q_{C_L}:\cpu{L}\ra (\cpu{L})/C_L$ is the canonical quotient map, and $\sy{-}:L^+\ra \cpu{L}$ the canonical injection of $L^+$ into the coproduct. For brevity, then, we denote the element $e^+_L(x^+)$ as $\syp{x}$ for every $x^+\in L^+$, motivated by the fact that $L$ is always isomorphic to some free construction $(\cpu{L})/C_L$, and that therefore we may always think of the generators as appearing in $L$ as syntactic expressions. This is however just a notational convention: for a biframe $\ca{L}$, even though we will always have an isomorphism $L\cong (\cpu{L})/C_L$ for some congruence $C_L$ on $\cpu{L}$, we will never assume without mention that this is an equality. Whenever we have a biframe $\ca{L}$ we call $\fin(L)$ the ordered collection of elements of $L$ such that they are finite joins of finite meets of generators, i.e. those of the form $(\syw{x_1}{x_1})\ve ...\ve (\syw{x_n}{x_n})$, for $x^+_m\in L^+$ and $x^-_m\in L^-$.

We also say that a congruence on $L$ is \tc{finitary} if and only if it is generated by a relation $R\se \fin(L)\times \fin(L)$. We say that a biframe $\ca{L}$ is \tc{finitary} if and only if its main component is such that it is canonically isomorphic to $(\cpu{L}/C_L)$ with $C_L$ a finitary congruence.   

\medskip

We start from describing the spectrum of a finitary biframe. The right adjoint giving raise to the desired duality is simply the bispectrum functor $\bpt:\bd{BiFrm_{fin}}^{op}\ra \bd{BiTop}$ restricted to the finitary biframes. We present an alternative way of describing the condition of a pair of maps $\pa{f}$ being a bipoint of a finitary biframe. For a finitary biframe $(L^+,L^-,L)$, we define a pair $\pa{f}\in \pt(L^+)\times \pt(L^-)$ to be a \tc{bipoint} if and only if whenever $a^+\we a^-\leq b^+\ve b^-$ in $L$ at least one of the following four conditions holds.
\[
\bcases{ 
f^+(a^+)=0\cr 
f^-(a^-)=0\cr
f^+(b^+)=1\cr
f^-(b^-)=1}
\]
Let us show that our definition of the bipoint of a finitary biframe is equivalent to the classical one of bipoint of a biframe.

\begin{lemma}
For a finitary biframe $(L^+,L^-,L)$, a pair $\pa{f}\in \pt(L^+)\times \pt(L^-)$ is a bipoint if and only if whenever $a^+\we a^-\leq b^+\ve b^-$ in $L$ at least one of the following four conditions holds.
\[
\bcases{ 
f^+(a^+)=0\cr 
f^-(a^-)=0\cr
f^+(b^+)=1\cr
f^-(b^-)=1}
\]
\end{lemma}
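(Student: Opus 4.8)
The content of the lemma is that a pair of frame homomorphisms $\pa{f}\in\pt(L^+)\times\pt(L^-)$ underlies a biframe morphism $\ca{L}\ra\bd{2}$ — equivalently, that there is a frame map $f:L\ra 2$ with $f\circ e^+_L=f^+$ and $f\circ e^-_L=f^-$, which is the classical notion of bipoint of a biframe — exactly when it satisfies the displayed four-alternatives condition for every inequality $a^+\we a^-\leq b^+\ve b^-$ of $L$. One direction is immediate and needs nothing special: given a witnessing $f$, apply it to $e^+_L(a^+)\we e^-_L(a^-)\leq e^+_L(b^+)\ve e^-_L(b^-)$ to obtain $f^+(a^+)\we f^-(a^-)\leq f^+(b^+)\ve f^-(b^-)$ in $2$; if the left side is $0$ then $f^+(a^+)=0$ or $f^-(a^-)=0$, and otherwise the right side is $1$, so $f^+(b^+)=1$ or $f^-(b^-)=1$.

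For the converse I would build the witnessing map $f:L\ra 2$ by hand. Since every element of $L$ is of the form $\bve_i\big(e^+_L(z^+_i)\we e^-_L(z^-_i)\big)$, I set $f(u)=1$ exactly when there are $a^+\in L^+$ and $a^-\in L^-$ with $e^+_L(a^+)\we e^-_L(a^-)\leq u$ and $f^+(a^+)=f^-(a^-)=1$. This is a well-defined, monotone function $L\ra 2$, and what remains is to verify that it is a frame map and that $f\circ e^\pm_L=f^\pm$; granting these, $\pa{f}$ is a biframe morphism, hence a bipoint in the classical sense, and we are done. Two of the verifications are routine. For $f\circ e^+_L=f^+$ the inequality $f(e^+_L(a^+))\geq f^+(a^+)$ is witnessed by the pair $(a^+,1)$, while $f(e^+_L(a^+))\leq f^+(a^+)$ is a direct application of the hypothesis to $e^+_L(b^+)\we e^-_L(b^-)\leq e^+_L(a^+)\ve e^-_L(0)$ (the negative component is symmetric). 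Preservation of the top and of binary meets follows from monotonicity together with the fact that two witnessing pairs can be merged coordinatewise by meets, using that $e^\pm_L$ and $f^\pm$ preserve finite meets.

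The genuinely load-bearing step — and the main obstacle — is preservation of arbitrary joins, specifically the inequality $f\big(\bve_i u_i\big)\leq\bve_i f(u_i)$. Suppose $f\big(\bve_i u_i\big)=1$ with witness $(a^+,a^-)$. Writing each $u_i=\bve_k\big(e^+_L(z^+_{i,k})\we e^-_L(z^-_{i,k})\big)$ gives $e^+_L(a^+)\we e^-_L(a^-)\leq\bve_{(i,k)}\big(e^+_L(z^+_{i,k})\we e^-_L(z^-_{i,k})\big)$. Now partition the (possibly infinite) index set of pairs $(i,k)$ into $J_0=\{(i,k):f^+(z^+_{i,k})=0\}$ and $J_1=\{(i,k):f^+(z^+_{i,k})=1\}$, and put $b^+=\bve_{(i,k)\in J_0}z^+_{i,k}$ in $L^+$ and $b^-=\bve_{(i,k)\in J_1}z^-_{i,k}$ in $L^-$; since for $(i,k)\in J_0$ the corresponding joinand lies below $e^+_L(z^+_{i,k})\leq e^+_L(b^+)$ and for $(i,k)\in J_1$ it lies below $e^-_L(z^-_{i,k})\leq e^-_L(b^-)$, we get $e^+_L(a^+)\we e^-_L(a^-)\leq e^+_L(b^+)\ve e^-_L(b^-)$. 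Feeding this into the hypothesis, and using $f^+(a^+)=f^-(a^-)=1$ together with $f^+(b^+)=\bve_{(i,k)\in J_0}f^+(z^+_{i,k})=0$, forces $f^-(b^-)=1$, i.e.\ some $(i_0,k_0)\in J_1$ has $f^-(z^-_{i_0,k_0})=1$; as also $f^+(z^+_{i_0,k_0})=1$, the joinand $e^+_L(z^+_{i_0,k_0})\we e^-_L(z^-_{i_0,k_0})\leq u_{i_0}$ witnesses $f(u_{i_0})=1$, so $\bve_i f(u_i)=1$. This partition step — splitting a potentially infinite family of basic wedges by the value of $f^+$ on the positive coordinate and feeding the two resulting joins back into the four-alternatives hypothesis — is the heart of the proof; note that the only structure used beyond the defining property of biframes is closure of $L^+$ and $L^-$ under arbitrary joins, so finitarity of $\ca{L}$ is not actually needed for this equivalence. (Equivalently, the hypothesis says precisely that the frame map $f^+\oplus f^-:\cpu{L}\ra 2$ respects the relation $R_L$, hence factors through $(\cpu{L})/C_L\cong L$; performing the construction directly on $L$ as above merely avoids invoking a presentation.)
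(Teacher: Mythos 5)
Your proof is correct, but it takes a genuinely different route from the paper's. The paper works syntactically with the presentation of $L$: it identifies a classical bipoint with a frame map $\cpu{f}:\cpu{L}\ra 2$ that factors through the congruence $C_L$, invokes finitarity of $\ca{L}$ to reduce "factoring through $C_L$" to "respecting the generating relation between finitary elements", and then uses distributivity (and the fact that $1$ is join-prime and $0$ is meet-prime in $2$) to reduce those finitary inequalities to the basic form $\syw{a}{a}\leq\syv{b}{b}$. You instead construct the witnessing frame map $f:L\ra 2$ by hand, with the partition of the joinands by the value of $f^+$ carrying the weight of join-preservation; I have checked this step and it is sound, including the degenerate cases where one of $J_0,J_1$ is empty. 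What your approach buys is exactly what you point out in the final parenthesis: finitarity of $\ca{L}$ is never used, so the four-alternatives condition characterises the classical bipoints of an \emph{arbitrary} biframe, whereas the paper's argument only yields this for finitary ones. This is a genuine strengthening, and it has a consequence worth flagging: it implies that a pair $\pa{f}$ is a bipoint of $\ca{L}$ if and only if it is one of $\mathit{fin}(\ca{L})$, i.e.\ the inclusion $\bpt(\mf{b}\Om(X))\se\bpt(\mf{b}\Om_{\mf{fin}}(X))$ of Lemma 3.24 is in fact always an equality. You should be aware that this sits in tension with the paper's framing of bisobriety as strictly finer-grained than patch-sobriety, so if you intend to rely on the stronger statement elsewhere it deserves to be isolated and stated explicitly rather than left as an aside. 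The paper's reduction, by contrast, is shorter and stays entirely at the level of presentations, at the cost of being tied to the finitary hypothesis.
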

\begin{proof}
We identify points of $L$ with frame maps $\cpu{f}:\cpu{L}\ra 2$ that factor through $C_L$. We need to show that a pair the coproduct pairing $\cpu{f}:\cpu{L}\ra 2$ factors through $C_L$ if and only if the pair $\pa{f}$ satisfies the condition described in our claim. First, we will show that a pair $\pa{f}$ is a bipoint if and only if the pairing $\cpu{f}$ respects the inequalities of $(\cpu{L})/C_L$ of the form $\syw{a}{a}\leq \syv{b}{b}$. The pair $\pa{f}$ being a bipoint is equivalent to having that whenever $a^+\we a^-\leq b^+\ve b^-$ in the frame $L$ this implies $f^+(a^+)\we f^-(a^-)=0$ or $f^+(b^+)\ve f^-(b^-)=1$. This, in turn, is equivalent to having $f^+(a^+)\we f^-(a^-)\leq f^+(b^+)\ve f^-(b^-)$. By definition of the coproduct pairing of two frame maps, this is equivalent to $(\cpu{f})(\syw{a}{a})\leq (\cpu{f})(\syv{b}{b})$. The coproduct pairing $\cpu{f}$ respecting these inequalities is equivalent to it respecting all of the inequalities between finitary elements, since an inequality between finitary elements may always be rewritten as an inequality of the form $\bve_{x\leq m}a^+_x\we a^-_x\leq \bwe_{y\leq n}b^+_y\ve b^-_y$. 
\end{proof}

Let us describe our left adjoint. For a bispace $X$, we define its finitary frame of opens as $\Om_{\mf{fin}}(X)$, which we define to be quotient of the frame $\Om^+(X)\oplus \Om^-(X)$ by the relation 
\[
R_{\Om_{\fin}(X)}=\{(\syw{U}{U},\syv{V}{V}): U^+\cap U^-\se V^+\cup V^-\}.
\]
We then define its finitary biframe of opens pf $X$ as the triple 
\[
\bomf(X)=(\Om^+(X),\Om^-(X),\Om_{\fin}(X)).
\]
The triple is indeed a biframe. We have that the frame of the patch topology $\Om(X)$ is isomorphic to the quotient of $\cp{\Om^+(X)}{\Om^-(X)}$ by the relation encoding \tc{all} inclusions between patch-opens that hold in $X$, namely the relation
\[
R_{\Om(X)}=\{(\bve_i \syw{U_i}{U_i},\bve_j \syw{V_j}{V_j}:\bcu_i U^+_i\cap U^-_i\se \bcu_j V^+_j\se V^-_j\}.
\]
Because $(\Om^+(X),\Om^-(X),\Om(X))$ is a biframe, we know that the syntactic map $\sy{-}:\Om^+(X)\ra (\cp{\Om^+(X)}{\Om^-(X)})/R_{\Om(X)}$ is an injection, but since the relation $R_{\Om(X)}$ makes more identifications than $R_{\Om_{\fin}(X)}$ does, the syntactic map $\sy{-}:L^+\ra \Om_{\fin}(X)$ is an injection, too. Then, $\bomf(X)$ is a biframe for every bispace $X$.

\begin{remark}
Taking the finitary biframe of opens of a bispace keeps the information on all inclusions that hold in $X$ between finitary elements of the patch topology. As every inequality between finitary elements is of the form $\bcu_{x\leq m}U^+_x\cap U^-_x\se \bca_{y\leq n}V^+_y\cup V^-_y$, every such inequality reduced to a set of inequalities of the form $U^+\cap U^-\se V^+\cup V^-$.
\end{remark} On morphisms, for a bicontinuous map $f:X\ra Y$ we define $\bomf(f):\bomf(Y)\ra \bomf(X)$ to be pair of preimage maps $(f^{-1},f^{-1})$. Let us show that this is well-defined. To do this, we will use some more general results.

\begin{lemma}\label{relinclu9}
If $f:L\ra M$ is a frame map, and if $R$ is a relation on $L$ and $S$ a relation on $M$, and if $(x,y)\in R$ implies that $(f(x),f(y))\in S$, we have that there is a unique frame map $f_{RS}:L/R\ra M/S$ making the following diagram commmute.
\[
\begin{tikzcd}[row sep=large, column sep = large]
L
\arrow{r}{f} 
\arrow[swap]{d}{q_R} 
& M
\arrow{d}{q_S} \\
L/R  
\arrow{r}{f_{RS}} & 
M/S
\end{tikzcd}
\]
\end{lemma}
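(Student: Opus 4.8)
The plan is to invoke the universal property of the frame quotient by a relation. Recall that for a relation $R$ on a frame $L$, the quotient $L/R$ is by definition the quotient of $L$ by the frame congruence generated by $R$, and the quotient map $q_R:L\ra L/R$ is the universal frame map out of $L$ that identifies $R$-related elements: for every frame map $g:L\ra N$ with $g(x)=g(y)$ for all $(x,y)\in R$, there is a unique frame map $\bar g:L/R\ra N$ with $\bar g\circ q_R=g$. (The passage from ``identifies $R$'' to ``identifies the generated congruence'' is automatic from the frame laws, so this is really the universal property of the congruence quotient.)

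First I would form the composite frame map $q_S\circ f:L\ra M/S$ and check that it identifies $R$-related elements. Indeed, if $(x,y)\in R$, then by hypothesis $(f(x),f(y))\in S$, and since $q_S:M\ra M/S$ identifies $S$-related elements we get $q_S(f(x))=q_S(f(y))$, that is $(q_S\circ f)(x)=(q_S\circ f)(y)$. Applying the universal property of $q_R$ to the map $g:=q_S\circ f$ then yields a unique frame map $f_{RS}:L/R\ra M/S$ with $f_{RS}\circ q_R=q_S\circ f$, which is precisely the commutativity of the square in the statement; uniqueness of $f_{RS}$ is exactly the uniqueness clause of the universal property (equivalently, it follows from $q_R$ being an epimorphism in $\bd{Frm}$).

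There is essentially no obstacle here: the only points requiring care are that one invokes the correct universal property — that $L/R$ is initial among frame maps from $L$ identifying $R$-related elements — and that $q_S$ genuinely identifies $S$-related elements, both of which are part of the definition of the frame quotient by a relation. I would state this lemma in exactly this generality since it will be reused (for instance to check that $\bomf(f)=(f^{-1},f^{-1})$ is well defined, by taking $L=\cp{\Om^+(Y)}{\Om^-(Y)}$, $M=\cp{\Om^+(X)}{\Om^-(X)}$, $f=\cp{f^{-1}}{f^{-1}}$, $R=R_{\Om_{\fin}(Y)}$ and $S=R_{\Om_{\fin}(X)}$, after verifying the inclusion hypothesis on the generating relations).
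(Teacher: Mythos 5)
Your proof is correct and follows essentially the same route as the paper: form $q_S\circ f$, check it respects $R$ using the hypothesis, and invoke the universal property of the quotient $L/R$. The only cosmetic difference is that the paper phrases "respecting" a relation in terms of inequalities $q_S(f(x))\leq q_S(f(y))$ rather than equalities, reflecting its convention that these relations encode order requirements, but this does not affect the argument.
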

\begin{proof}
By the universal property of the quotient $L/R$, to prove our claim it suffices to show that the map $q_S\circ f$ respects the relation $R$. Indeed, if we have that $(x,y)\in R$, then by our assumption we have that $(f(x),f(y))\in S$, and so $q_S(f(x))\leq q_S(f(y))$. 
\end{proof}

For a finitary biframe $\ca{L}$, we call $R_L$ the relation $\{(a^+\we a^-,b^+\ve b^-): a^+\we a^-\leq b^+\ve b^-\mb{ in }L\}$ which generates the congruence $C_L$.

\begin{corollary}\label{pairismorbifrm91}
Suppose $\ca{L}$ and $\ca{M}$ are finitary biframes, and that we have frame maps $f^+:L^+\ra M^+$ and $f^-:L^-\ra M^-$. Suppose also that $(\syw{a}{a},\syv{b}{b})\in R_L$ implies that $((\cpu{f})(\syw{a}{a}),(\cpu{f})(\syv{b}{b}))\in R_M$. Then there is a unique frame map $f_{LM}$ making the following diagram commute
\[
\begin{tikzcd}[row sep=large, column sep = large]
\cpu{L}
\arrow{r}{\cpu{f}} 
\arrow[swap]{d}{q_L} 
& \cpu{M}
\arrow{d}{q_M} \\
L
\arrow{r}{f_{LM}} & 
M
\end{tikzcd}
\]
and this implies that $\pa{f}:\ca{L}\ra \ca{M}$ is a biframe map.
\end{corollary}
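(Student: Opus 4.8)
The plan is to obtain this as a direct instance of Lemma~\ref{relinclu9}. Apply that lemma to the frame map $\cpu{f}:\cpu{L}\ra\cpu{M}$ with $R:=R_L$ and $S:=R_M$. Every element of $R_L$ has the form $(\syw{a}{a},\syv{b}{b})$, so the standing hypothesis of the corollary is exactly the implication ``$(x,y)\in R_L$ entails $((\cpu{f})(x),(\cpu{f})(y))\in R_M$'' needed to invoke the lemma. Lemma~\ref{relinclu9} then yields a unique frame map $(\cpu{L})/R_L\ra(\cpu{M})/R_M$ commuting with the two canonical quotient maps. Since $R_L$ generates the congruence $C_L$, we have $(\cpu{L})/R_L=(\cpu{L})/C_L$, which the canonical isomorphism identifies with $L$; likewise $(\cpu{M})/R_M$ is identified with $M$. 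Transporting the map provided by the lemma along these identifications produces the frame map $f_{LM}:L\ra M$ appearing in the diagram, and $f_{LM}\circ q_L=q_M\circ\cpu{f}$ because $q_L$ and $q_M$ are by definition the canonical quotient maps followed by the identifying isomorphisms. Uniqueness of $f_{LM}$ follows either from the uniqueness clause of the lemma or directly from $q_L$ being an epimorphism.

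Next I would verify that the existence of $f_{LM}$ witnesses $\pa{f}:\ca{L}\ra\ca{M}$ being a biframe map, i.e. that $f:=f_{LM}$ satisfies $f\circ e^+_L=e^+_M\circ f^+$ together with the symmetric equation on the negative component. By the notational conventions for biframes, $e^+_L=q_L\circ\sy{-}$ where $\sy{-}:L^+\ra\cpu{L}$ is the coproduct injection, and similarly $e^+_M=q_M\circ\sy{-}$; and by definition of the coproduct pairing, $\cpu{f}\circ\sy{-}=\sy{-}\circ f^+$ on $L^+$. Hence for $x^+\in L^+$,
\[
f_{LM}(e^+_L(x^+))=f_{LM}(q_L(\syp{x}))=q_M((\cpu{f})(\syp{x}))=q_M(\sy{f^+(x^+)})=e^+_M(f^+(x^+)).
\]
The negative component is handled symmetrically, so $\pa{f}$ is indeed a biframe map.

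I do not expect a genuine obstacle here: the argument is a diagram chase together with unwinding of definitions. The one point that needs a little care is the identification of $(\cpu{L})/R_L$ (and of $(\cpu{M})/R_M$) with the main components of the biframes, which rests on reading ``quotient by a relation'' as ``quotient by the congruence it generates'' — the convention already used in Lemma~\ref{relinclu9} — and on the fact, recorded just before the statement, that $R_L$ generates $C_L$.
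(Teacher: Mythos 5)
Your proposal is correct and follows essentially the same route as the paper: the first part is a direct application of Lemma~\ref{relinclu9} to the coproduct pairing $\cpu{f}$ with $R=R_L$ and $S=R_M$, and the second part unwinds the definition of a biframe morphism via the commutativity of the square. The only difference is that you spell out the diagram chase showing $f_{LM}\circ e^+_L=e^+_M\circ f^+$ explicitly, which the paper leaves implicit; this is a harmless (and arguably welcome) addition.
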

\begin{proof}
The first part of the statement follows directly from Lemma \ref{relinclu9}. For the second part of the statement, we recall that a pair of frame maps $\pa{f}$ is a biframe map $\pa{f}:\ca{L}\ra \ca{M}$ if and only if there is a frame map $f:L\ra M$ which restricts as $f^+$ on $L^+$ and as $f^-$ on $L^-$. If the assumptions in the statement hold, then indeed there is a frame map $f_{LM}:L\ra M$ which restricts on the frame components as required by commutativity of the diagram in the statement.
\end{proof}

\begin{corollary}\label{pairismorbifrm9}
For biframes $\ca{L}$ and $\ca{M}$, with $L$ canonically isomorphic to $(\cpu{L})/R_L$ and $M$ canonically isomorphic to $(\cpu{M})/R_M$, suppose that we have frame maps $f^+:L^+\ra M^+$ and $f^-:L^-\ra M^-$. If we have that 
\begin{align*}
    & (\syw{a}{a},\syv{b}{b})\in R_L\mb{ implies }\\
    & (\sy{f^+(a^+)}\we \sy{f^-(a^-)},\sy{f^+(b^+)}\ve \sy{f^-(b^-))}\in R_M
\end{align*}
then we have that $\pa{f}:\ca{L}\ra \ca{M}$ is a biframe map.
\end{corollary}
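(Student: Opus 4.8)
The plan is to follow the proof of Corollary \ref{pairismorbifrm91} essentially verbatim, the only difference being that there finitariness of $\ca{L}$ and $\ca{M}$ is what guarantees the canonical isomorphisms $L\cong(\cpu{L})/R_L$ and $M\cong(\cpu{M})/R_M$, whereas here these isomorphisms are assumed outright. Thus everything comes down to a single application of Lemma \ref{relinclu9} to the coproduct map $\cpu{f}:\cpu{L}\ra\cpu{M}$, followed by transporting the descended map along the two canonical isomorphisms.

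Concretely, I would first note that $\cpu{f}=f^+\oplus f^-$, being the frame map determined by $\cpu{f}\circ\sy{-}=\sy{-}\circ f^+$ on the positive injections and by the analogous identity on the negative ones, satisfies $(\cpu{f})(\syw{a}{a})=\sy{f^+(a^+)}\we\sy{f^-(a^-)}$ and $(\cpu{f})(\syv{b}{b})=\sy{f^+(b^+)}\ve\sy{f^-(b^-)}$. Hence the hypothesis of the statement says precisely that $(x,y)\in R_L$ implies $((\cpu{f})(x),(\cpu{f})(y))\in R_M$. Applying Lemma \ref{relinclu9} with $f:=\cpu{f}$, $R:=R_L$ and $S:=R_M$ then yields a unique frame map $g:(\cpu{L})/R_L\ra(\cpu{M})/R_M$ with $g\circ q_{R_L}=q_{R_M}\circ\cpu{f}$, where $q_{R_L}$ and $q_{R_M}$ are the two quotient maps. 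Denoting by $i_L:(\cpu{L})/R_L\ra L$ and $i_M:(\cpu{M})/R_M\ra M$ the canonical isomorphisms of the hypothesis, I would set $f_{LM}:=i_M\circ g\circ i_L^{-1}:L\ra M$.

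It then remains to verify that $f_{LM}$ witnesses $\pa{f}$ being a biframe morphism, i.e.\ that $f_{LM}\circ e^+_L=e^+_M\circ f^+$ and $f_{LM}\circ e^-_L=e^-_M\circ f^-$; by the characterization of biframe maps recalled in Section \ref{bifrmdual0} this suffices. This is a routine diagram chase: the canonical isomorphisms respect the subframe inclusions, so $i_L\circ q_{R_L}\circ\sy{-}=e^+_L$ and similarly for $M$ and for the negative components, and combining this with $g\circ q_{R_L}=q_{R_M}\circ\cpu{f}$ and $\cpu{f}\circ\sy{-}=\sy{-}\circ f^+$ gives the two required identities. I do not anticipate any real obstacle here, since the mathematical content is already packaged in Lemma \ref{relinclu9}; the only subtlety worth flagging is that for a general (non-finitary) biframe the single-meet/single-join relation $R_L$ need not generate the kernel of $\cpu{L}\ra L$, which is exactly why the statement carries the hypothesis $L\cong(\cpu{L})/R_L$ — and under that hypothesis the finitary proof transfers with no extra work.
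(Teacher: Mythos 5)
Your proposal is correct and follows essentially the same route as the paper: the paper simply observes that, by the definition of the coproduct pairing $\cpu{f}$ on generators, your hypothesis is a restatement of the hypothesis of Corollary \ref{pairismorbifrm91}, and concludes from that corollary (which itself rests on Lemma \ref{relinclu9}). Your version just unpacks that reduction explicitly, including the transport along the canonical isomorphisms and the check of the subframe-inclusion squares, which the paper leaves implicit.
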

\begin{proof}
By definition of the coproduct pairing of two frame maps $f^+$ and $f^-$, the assumptions in Corollary \ref{pairismorbifrm91} amount to having that $(\syw{a}{a},\syv{b}{b})\in R_L$ implies $(\sy{f^+(a^+)}\we \sy{f^-(a^-)},\sy{f^+(b^+)}\ve \sy{f^-(b^-)})\in R_M$.
\end{proof}
We will use the corollary above several times, when we need to show that a certain pair of frame maps is a biframe map. Let us now use it to construct our functor $\bomf:\bd{BiTop}\ra \bd{BiFrm_{fin}}^{op}$. 

\begin{lemma}
For any bicontinuous map $f:X\ra Y$ between bispaces, the pair $(f^{-1},f^{-1}):\bomf(Y)\ra \bomf(X)$ is a biframe map.
\end{lemma}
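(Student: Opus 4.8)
The plan is to verify that the pair of preimage maps $(f^{-1},f^{-1})$ satisfies the hypotheses of Corollary~\ref{pairismorbifrm9}, applied with $\ca{L}=\bomf(Y)$ and $\ca{M}=\bomf(X)$. Recall that $\Om_{\fin}(Y)$ is canonically isomorphic to $(\Om^+(Y)\oplus \Om^-(Y))/R_{\Om_{\fin}(Y)}$, and that by the remark following its definition the relation $R_{\Om_{\fin}(Y)}$ is generated by pairs of the form $(\syw{U}{U},\syv{V}{V})$ with $U^+\cap U^-\se V^+\cup V^-$ in $Y$; the same description holds for $X$. So by Corollary~\ref{pairismorbifrm9} it suffices to check that if $U^+\cap U^-\se V^+\cup V^-$ holds in $Y$, then $f^{-1}(U^+)\cap f^{-1}(U^-)\se f^{-1}(V^+)\cup f^{-1}(V^-)$ holds in $X$.

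First I would note that $f^{-1}:\Om^+(Y)\ra \Om^+(X)$ and $f^{-1}:\Om^-(Y)\ra \Om^-(X)$ are genuine frame maps, since $f$ is bicontinuous: preimages of positive (resp.\ negative) opens of $Y$ are positive (resp.\ negative) opens of $X$, and preimage preserves arbitrary unions and finite intersections. This is exactly what is needed to even speak of the coproduct pairing $f^{-1}\oplus f^{-1}$ and to invoke the corollary.

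Next I would do the single inclusion computation. Given $U^+\cap U^-\se V^+\cup V^-$ in $Y$, preimage is monotone and commutes with finite intersection and binary union of subsets, so
\[
f^{-1}(U^+)\cap f^{-1}(U^-)=f^{-1}(U^+\cap U^-)\se f^{-1}(V^+\cup V^-)=f^{-1}(V^+)\cup f^{-1}(V^-).
\]
Writing this out on the syntactic level, the generator $(\syw{U}{U},\syv{V}{V})\in R_{\Om_{\fin}(Y)}$ is sent by the coproduct pairing to $(\sy{f^{-1}(U^+)}\we \sy{f^{-1}(U^-)},\sy{f^{-1}(V^+)}\ve \sy{f^{-1}(V^-)})$, which by the displayed inclusion lies in $R_{\Om_{\fin}(X)}$. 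Since $R_{\Om_{\fin}(Y)}$ is generated by such pairs, the hypothesis of Corollary~\ref{pairismorbifrm9} is met, and we conclude that $(f^{-1},f^{-1}):\bomf(Y)\ra \bomf(X)$ is a biframe map.

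There is essentially no obstacle here: the content is entirely the elementary fact that preimage respects finite meets and binary joins of open sets, which is what makes the generating relation of $\Om_{\fin}$ stable under pulling back along a bicontinuous map. The only thing to be careful about is to phrase the argument at the level of generators of the relations (rather than the full relations), which is legitimate precisely because Corollary~\ref{pairismorbifrm9} is stated in terms of $R_L$ and $R_M$ and these relations are, by construction of $\Om_{\fin}$, generated by the $U^+\cap U^-\se V^+\cup V^-$ inclusions.
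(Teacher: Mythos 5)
Your proof is correct and follows essentially the same route as the paper: both reduce the claim, via the corollary on relation-preserving pairs of frame maps, to the single computation that $U^+\cap U^-\se V^+\cup V^-$ in $Y$ implies $f^{-1}(U^+)\cap f^{-1}(U^-)\se f^{-1}(V^+)\cup f^{-1}(V^-)$ in $X$, which holds because preimage preserves the lattice operations. Your write-up is if anything slightly more explicit than the paper's about working at the level of generators of the relations, but the argument is the same.
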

\begin{proof}
Since $f$ is a bicontinuous map we have two frame maps $f^{-1}:\Om^+(Y)\ra \Om^+(X)$ and $f^{-1}:\Om^+(Y)\ra \Om^+(X)$. By Corollary \ref{relinclu9}, and by definition of the two relations $R_{\Om_{\fin}(X)}$ and $R_{\Om_{\fin}(Y)}$, to show that we have a biframe map $(f^{-1},f^{-1}):\bomf(Y)\ra \bomf(X)$ it suffices to show that we have 
\begin{align*}
    & C^+\cap C^-\se D^+\cup D^-\mb{ in }Y\mb{ implies }f^{-1}(C^+)\cap f^{-1}(C^-)\se f^{-1}(D^+)\cup f^{-1}(D^-)\mb{ in }X.
\end{align*}
 Since the preimage map is monotone, we have that $f^{-1}(C^+\cap C^-)\se f^{-1}(D^+\cup D^-)$. The desired result follows from the fact that the preimage map also preserves the lattice operations. We have then shown the desired inclusion of relations $R_{\Om_{\fin}(Y)}\se R_{\Om_{\fin}(X)}$.
\end{proof}

We then have the spectrum functor $\bpt:\bd{BiFrm_{fin}}^{op}\ra \bd{BiTop}$ and the finitary biframe of opens functor $\bomf:\bd{BiTop}\ra \bd{BiFrm_{fin}}^{op}$. Let us construct the unit and the counit of the desired adjunction. First, we look at the finitary biframe version of the sobrification of a space. For a bispace $X$, let us denote as $N^+_x$ and $N^-_x$ the neighborhood maps for the positive and the negative topology on $X$, respectively. Let us now define the frame map $N_x:\Om_{\fin}(X)\ra 2$ as the map defined on generators as $\syp{U}\mapsto N^+_x(U^+)$, and similarly on negative generators. To show that this is well-defined, it suffices to show that the map
\[
\cpu{N_x}:\cp{\Om^+(X)}{\Om^-(X)}\ra 2
\]
respects the relation $\{(\syw{U}{U},\syv{V}{V}):U^+\cap U^-\se V^+\cup V^-\}$. Let us show this as a lemma.
\begin{lemma}
For every bispace $X$ we have that $\pa{N_x}$ is a bipoint of $\bomf(X)$.
\end{lemma}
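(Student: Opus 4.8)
The plan is to reduce the claim, via the characterization of bipoints proved in the Lemma above, to a trivial point-set observation. Recall that $\pa{N_x}$ is a bipoint of $\bomf(X)$ exactly when, for all $U^+,V^+\in\Om^+(X)$ and $U^-,V^-\in\Om^-(X)$, the inequality $\syp{U}\we\sym{U}\leq\syp{V}\ve\sym{V}$ in $\Om_{\fin}(X)$ forces one of $N^+_x(U^+)=0$, $N^-_x(U^-)=0$, $N^+_x(V^+)=1$, $N^-_x(V^-)=1$. (That this makes sense uses the standard fact that $N^+_x$ and $N^-_x$ are frame maps into $2$, namely the points of $\Om^+(X)$ and $\Om^-(X)$ picked out by $x$.) Equivalently --- and this is the form I would argue, matching the discussion preceding the statement --- since $\bomf(X)$ is a finitary biframe it suffices, by the universal property of the quotient defining $\Om_{\fin}(X)$, to check that the coproduct pairing $\cpu{N_x}:\cp{\Om^+(X)}{\Om^-(X)}\ra 2$ respects the generating relation $R_{\Om_{\fin}(X)}$; this is precisely the condition that $\cpu{N_x}$ factor through $C_{\Om_{\fin}(X)}$, i.e. that $\pa{N_x}$ be a bipoint.

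So the work is to show that $U^+\cap U^-\se V^+\cup V^-$ in $X$ implies $\cpu{N_x}(\syp{U}\we\sym{U})\leq\cpu{N_x}(\syp{V}\ve\sym{V})$. By definition of the coproduct pairing of two frame maps the left-hand side is $N^+_x(U^+)\we N^-_x(U^-)$ and the right-hand side is $N^+_x(V^+)\ve N^-_x(V^-)$, with the meet and join computed in $2$; so it remains to prove $N^+_x(U^+)\we N^-_x(U^-)\leq N^+_x(V^+)\ve N^-_x(V^-)$. If the left-hand side equals $1$ then $x\in U^+$ and $x\in U^-$, whence $x\in U^+\cap U^-\se V^+\cup V^-$, so $x$ lies in $V^+$ or in $V^-$ and the right-hand side equals $1$. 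This is exactly the pointwise-at-$x$ reading of the set-theoretic remark recorded just after the definition of $R_{\Om_{\fin}(X)}$.

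I do not expect a genuine obstacle: after the two reductions above, the statement collapses to a one-line fact about subsets of $X$. The only care needed is bookkeeping --- correctly identifying the image of a meet, resp. join, of generators under the coproduct pairing, and (should one prefer to argue directly from the bipoint condition of the Lemma rather than from well-definedness) justifying the passage from ``$\syp{U}\we\sym{U}\leq\syp{V}\ve\sym{V}$ holds in $\Om_{\fin}(X)$'' to ``$U^+\cap U^-\se V^+\cup V^-$ holds in $X$'', which one gets by applying the canonical frame surjection $\Om_{\fin}(X)\epi\Om(X)$ onto the patch, under which $\syp{U}\we\sym{U}\mapsto U^+\cap U^-$ and $\syp{V}\ve\sym{V}\mapsto V^+\cup V^-$.
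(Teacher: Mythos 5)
Your proposal is correct and follows essentially the same route as the paper: reduce to checking that the coproduct pairing $\cpu{N_x}$ respects the generating relation $R_{\Om_{\fin}(X)}$, and then observe that $x\in U^+\cap U^-\se V^+\cup V^-$ forces $x\in V^+$ or $x\in V^-$ (the paper phrases this last step contrapositively, as a contradiction, but it is the same one-line set-theoretic fact).
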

\begin{proof}
By definition of bipoint of a finitary biframe, it suffices to show that whenever $U^+\cap U^-\se V^+\cup V^-$ in $X$ at least one of the following conditions holds for every point $x\in X$. 
\[
\bcases{ 
N^+_x(U^+)=0\cr 
N^-_x(U^-)=0\cr
N^+_x(V^+)=1\cr
N^-_x(V^-)=1}
\]
We show that we reach a contradiction if we assume that none of these conditions holds. If we have a point $x\in X$ such that $N_x^+(U^+)=N^-_x(U^-)=1$ and that $N^+_x(V^+)=
N^-_x(V^-)=0$, by definition of the neighborhood maps this means that $x\in U^+\cap U^-$ but $x\notin V^+\cup V^-$, contradicting the set inclusion $U^+\cap U^-\se V^+\cup V^-$. We must then have that $\pa{N_x}$ is a bipoint of $\bomf(X)$.
\end{proof}
Then, for a bispace $X$ may define a map 
\begin{align*}
    & \psi_X:X\ra \bpt(\bomf(X))\\
    & x\mapsto \pa{N_x}.
\end{align*}
A typical positive open of $\bpt(\bomf(X))$ is a set of the form $\{\pa{f}\in \bpt(\bomf(X)):f^+(U^+)=1\}$ for some positive open $U^+\se X$. Its preimage under the map $\psi_X$, then, is a set of the form $\{x\in X:\pa{N_x}\mb{ is such that }N^+_x(U^+)=1\}$ which is just the set $U^+$ since $N^+_x(U^+)=1$ if and only if $x\in U^+$, by definition of the neighborhood map. Then, the map $\psi_X$ is bicontinuous for every bispace $X$. Let us show that the assignments $X\mapsto \psi_X$ can be pasted together to obtain a natural transformation.
\begin{proposition}
The assignment $\mf{Obj}(\bd{BiTop})\ra \mf{Mor}(\bd{BiTop})$ defined as $X\mapsto \psi_X$ is a natural transformation $1_{\bd{BiTop}}\ra \bpt\circ \bomf$.
\end{proposition}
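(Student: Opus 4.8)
The plan is to fix a bicontinuous map $f:X\ra Y$ and to verify that the square
\[
\begin{tikzcd}[row sep=large, column sep=large]
X
\arrow{r}{f}
\arrow[swap]{d}{\psi_X}
& Y
\arrow{d}{\psi_Y}
\\
\bpt(\bomf(X))
\arrow{r}{\bpt(\bomf(f))}
& \bpt(\bomf(Y))
\end{tikzcd}
\]
commutes in $\bd{BiTop}$; doing this for every such $f$ is precisely the naturality condition for the family $(\psi_X)_X$. Both composites $\bpt(\bomf(f))\circ \psi_X$ and $\psi_Y\circ f$ are bicontinuous maps $X\ra \bpt(\bomf(Y))$, and morphisms of $\bd{BiTop}$ are just bicontinuous functions, so it suffices to prove that they agree as functions on the underlying set of $X$.

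First I would unwind the two composites at a point $x\in X$. By the lemma characterising the bipoints of a finitary biframe, $\psi_X(x)=\pa{N_x}$ is completely determined by the pair of frame maps $N^+_x:\Om^+(X)\ra 2$ and $N^-_x:\Om^-(X)\ra 2$, and similarly $\psi_Y(f(x))=\pa{N_{f(x)}}$ is determined by $N^+_{f(x)}:\Om^+(Y)\ra 2$ and $N^-_{f(x)}:\Om^-(Y)\ra 2$. On the other side, $\bomf(f)$ is the biframe map $(f^{-1},f^{-1}):\bomf(Y)\ra\bomf(X)$ --- well-defined by the previous lemma --- and $\bpt$ sends a biframe map to the pairing of the precompositions with its two components, so
\[
\bpt(\bomf(f))(\psi_X(x))=(N^+_x\circ f^{-1},\ N^-_x\circ f^{-1}),
\]
which is again a bipoint of $\bomf(Y)$, determined by the pair of frame maps $N^+_x\circ f^{-1}:\Om^+(Y)\ra 2$ and $N^-_x\circ f^{-1}:\Om^-(Y)\ra 2$. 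Hence, by the same characterisation, the desired equality reduces to checking $N^+_x\circ f^{-1}=N^+_{f(x)}$ and $N^-_x\circ f^{-1}=N^-_{f(x)}$ as maps out of $\Om^+(Y)$ and $\Om^-(Y)$ respectively.

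The remaining step is the pointwise evaluation. For a positive open $V^+\se Y$ one has, straight from the definitions of the neighbourhood maps, $(N^+_x\circ f^{-1})(V^+)=1$ iff $x\in f^{-1}(V^+)$ iff $f(x)\in V^+$ iff $N^+_{f(x)}(V^+)=1$; the negative case is identical. This gives $\bpt(\bomf(f))\circ \psi_X=\psi_Y\circ f$ and hence naturality. I do not expect a genuine obstacle here: the only care needed is bookkeeping --- invoking the earlier lemma so that equality of bipoints may be tested componentwise on opens, recalling the precise action of $\bpt$ on morphisms, and using that $\bomf(f)=(f^{-1},f^{-1})$ is a legitimate biframe map --- after which the computation is exactly the one that appears in the monotopological spatial--sober adjunction.
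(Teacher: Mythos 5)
Your proof is correct and follows essentially the same route as the paper: both reduce naturality to the componentwise equality of bipoints $(N^+_x\circ f^{-1},N^-_x\circ f^{-1})=(N^+_{f(x)},N^-_{f(x)})$ and verify it by the chain $x\in f^{-1}(V^+)$ iff $f(x)\in V^+$. The only difference is presentational (you draw the transposed but equivalent naturality square and spell out the bookkeeping about $\bpt$ acting by precomposition slightly more explicitly).
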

\begin{proof}
Suppose that $f:X\ra Y$ is a bicontinuous map between bispaces. The naturality square we need to check is the following. We have used the fact that on morphisms $\bomf$ sends each bicontinuous map $f$ to the pair $(f^{-1},f^{-1})$, and $\bpt$ assigns to each pair of frame maps the respective precomposition map.
\[
\begin{tikzcd}[row sep=large, column sep = large]
X
\arrow{r}{\psi_X} 
\arrow[swap]{d}{f} 
&\bpt(\bomf(X))
\arrow{d}{(-\circ f^{-1},-\circ f^{-1})} \\
Y 
\arrow{r}{\psi_Y} & 
\bpt(\bomf(Y))
\end{tikzcd}
\]
Suppose that $x\in X$. We need to show that we have an equality of biframe maps $(N^+_x\circ f^{-1},N^-_x\circ f^{-1})=(N^+_{f(x)},N^-_{f(x)})$. Let us show that the two bipoints agree on the first component. Suppose that $V^+\in \Om^+(Y)$. We have the following chain of equivalences.
\begin{align*}
    & N^+_x(f^{-1}(U^+))=1\mb{ iff }\\
    & x\in f^{-1}(U^+)\mb{ iff }\\
    & f(x)\in U^+\mb{ iff }\\
    & N^+_{f(x)}(U^+)=1.\qedhere
\end{align*}
\end{proof}
Let us now look at the bispatialization of a finitary biframe. For a finitary biframe $\ca{L}$, let us define the map $\pa{\varphi_{\ca{L}}}:\ca{L}\ra \bomf(\bpt(\ca{L}))$ as the same pair of frame maps constituting the spatialization map for biframes. Let us check that the bispatialization of a finitary biframe is a biframe map.
\begin{lemma}
For any finitary biframe $\ca{L}$, the map $\pa{\varphi_{\ca{L}}}:\ca{L}\ra \bomf(\bpt(\ca{L}))$ is a biframe map.
\end{lemma}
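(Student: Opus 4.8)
The plan is to apply Corollary~\ref{pairismorbifrm9} with $\ca{M}=\bomf(\bpt(\ca{L}))$ and with the pair of frame maps $(\varphi^+_{\ca{L}},\varphi^-_{\ca{L}})$ already known to be frame maps $L^+\ra \Om^+(\bpt(\ca{L}))$ and $L^-\ra \Om^-(\bpt(\ca{L}))$ from the biframe adjunction (the functor $\bpt$ restricted to finitary biframes uses the same underlying point-set and the same positive and negative opens as the ordinary bispectrum, so the two component maps are unchanged). Thus the only thing to verify is the hypothesis of Corollary~\ref{pairismorbifrm9}: that $(\syw{a}{a},\syv{b}{b})\in R_L$ implies $(\sy{\varphi^+_{\ca{L}}(a^+)}\we \sy{\varphi^-_{\ca{L}}(a^-)},\sy{\varphi^+_{\ca{L}}(b^+)}\ve \sy{\varphi^-_{\ca{L}}(b^-)})\in R_{\Om_{\fin}(\bpt(\ca{L}))}$; that is, that $a^+\we a^-\leq b^+\ve b^-$ in $L$ implies $\varphi^+_{\ca{L}}(a^+)\cap \varphi^-_{\ca{L}}(a^-)\se \varphi^+_{\ca{L}}(b^+)\cup \varphi^-_{\ca{L}}(b^-)$ in $\bpt(\ca{L})$.

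First I would fix a bipoint $\pa{f}\in \bpt(\ca{L})$ lying in $\varphi^+_{\ca{L}}(a^+)\cap \varphi^-_{\ca{L}}(a^-)$, so that $f^+(a^+)=1$ and $f^-(a^-)=1$; I must show $\pa{f}\in \varphi^+_{\ca{L}}(b^+)\cup \varphi^-_{\ca{L}}(b^-)$, i.e.\ $f^+(b^+)=1$ or $f^-(b^-)=1$. Here is where the finitary description of bipoints from Lemma~1.9 does all the work: since $\ca{L}$ is finitary and $a^+\we a^-\leq b^+\ve b^-$ holds in $L$, being a bipoint forces at least one of $f^+(a^+)=0$, $f^-(a^-)=0$, $f^+(b^+)=1$, $f^-(b^-)=1$; the first two are excluded by our assumption on $\pa{f}$, so one of the last two holds, which is exactly what we want. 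This gives the inclusion of relations $R_L\se$ (the preimage under $\cpu{\varphi_{\ca{L}}}$ of) $R_{\Om_{\fin}(\bpt(\ca{L}))}$, and Corollary~\ref{pairismorbifrm9} then yields that $\pa{\varphi_{\ca{L}}}:\ca{L}\ra \bomf(\bpt(\ca{L}))$ is a biframe map.

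I do not expect a serious obstacle here: the content is entirely packaged into Lemma~1.9, which translates the single-inequality condition defining bipoints of a finitary biframe into exactly the form needed to match the generators of $R_{\Om_{\fin}}$. The only point requiring a word of care is the bookkeeping identifying the two component maps $\varphi^+_{\ca{L}},\varphi^-_{\ca{L}}$ of the finitary bispatialization with those of the ordinary biframe spatialization, and noting that the positive and negative opens of $\bpt(\ca{L})$ do not depend on whether we view $\ca{L}$ as a biframe or as a finitary biframe — only the main component (and hence which pairs $(a^+,a^-)$, $(b^+,b^-)$ we must check) changes, and that change is precisely accommodated by the finitary bipoint condition.
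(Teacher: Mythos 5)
Your proposal is correct and follows essentially the same route as the paper: both reduce the claim, via the corollary on pairs of frame maps respecting the generating relations, to checking that $a^+\we a^-\leq b^+\ve b^-$ in $L$ forces $\va{L}{+}{a}\cap \va{L}{-}{a}\se \va{L}{+}{b}\cup \va{L}{-}{b}$ in $\bpt(\ca{L})$, and both discharge that by the four-alternative defining condition of a bipoint of a finitary biframe. Your extra remark that the component maps and the positive/negative opens of $\bpt(\ca{L})$ are unchanged from the ordinary biframe spectrum is exactly the bookkeeping the paper leaves implicit.
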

\begin{proof}
We have seen already that both components of $\pa{\varphi_{\ca{L}}}$ are frame maps. Say that $L$ is canonically isomorphic to the coproduct $\cpu{L}$ quotiented by the relation $R_L$. We have that $\Om_{\fin}(\bpt(\ca{L}))$ is isomorphic to the coproduct $\Om^+(\bpt(\ca{L}))\times \Om^-(\bpt(\ca{L}))$ quotiented by the relation representing the inequalities of the form $U^+\cap U^-\se V^+\cup V^-$ in $\bpt(\ca{L})$. By Corollary \ref{relinclu9}, and by definition of the relation $R_{\Om_{\fin}(\bpt(\ca{L}))}$, we have to show that whenever we have $(\syw{a}{a},\syv{b}{b})\in R_L$ then we also have the subset inclusion $\va{L}{+}{a}\cap \va{L}{-}{a}\se \va{L}{+}{b}\cup \va{L}{-}{b}$ in $\bpt(\ca{L})$.
Suppose, then, that $(\syw{a}{a},\syv{b}{b})\in R_L$, that is, that $a^+\we a^-\leq b^+\ve b^-$ in $L$. 
Indeed, if $\pa{f}$ is a bipoint, and if both $f^+(a^+)=1$ and $f^-(a^-)=1$, then by definition of bipoint we must have that either $f^+(b^+)=1$ or $f^-(b^-)=1$. So, we have the desired subset inclusion in $\bpt(\ca{L})$.
\end{proof}
The lemma above shows that we have a well-defined assignment $\varphi:\mf{Obj}(\bd{BiFrm_{fin}})\ra \mf{Mor}(\bd{BiFrm_{fin}})$. Let us show that this is a natural transformation.
\begin{proposition}
The assignment $\mf{Obj}(\bd{BiFrm_{fin}})\ra \mf{Mor}(\bd{BiFrm_{fin}})$ defined as $\ca{L}\mapsto \varphi_{\ca{L}}$ is a natural transformation $1_{\bd{BiFrm}}\ra \bomf\circ \bpt$.
\end{proposition}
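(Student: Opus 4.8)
The plan is to check the naturality square directly, unwinding the definition of the bispectrum and of the inverse-image action of $\bomf$; everything reduces to the positive frame component. First I would fix a biframe map $\pa{g}\colon\ca{L}\ra\ca{M}$ in $\bd{BiFrm_{fin}}$. Since $\bpt$ and $\bomf$ are both contravariant, their composite is a covariant endofunctor of $\bd{BiFrm_{fin}}$, and the square to verify is
\[
\begin{tikzcd}[row sep=large, column sep=large]
\ca{L}
\arrow{r}{\varphi_{\ca{L}}}
\arrow[swap]{d}{\pa{g}}
& \bomf(\bpt(\ca{L}))
\arrow{d}{\bomf(\bpt(\pa{g}))} \\
\ca{M}
\arrow{r}{\varphi_{\ca{M}}} &
\bomf(\bpt(\ca{M})).
\end{tikzcd}
\]
Note that $\bpt(\pa{g})\colon\bpt(\ca{M})\ra\bpt(\ca{L})$ is bicontinuous because $\bpt$ is a functor into $\bd{BiTop}$, so by the lemma that $\bomf$ carries bicontinuous maps to biframe maps, $\bomf(\bpt(\pa{g}))$ is a genuine morphism of $\bd{BiFrm_{fin}}$; both composites around the square are therefore biframe maps, and a biframe map is determined by its two frame components.

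Next I would reduce to the positive component, the negative one being entirely symmetric. Recall that $\bpt(\pa{g})$ sends a bipoint $\pa{h}$ of $\ca{M}$ to $(h^+\circ g^+,h^-\circ g^-)$, and that the positive component of $\bomf(\bpt(\pa{g}))$ is the inverse-image map $\bpt(\pa{g})^{-1}\colon\Om^+(\bpt(\ca{L}))\ra\Om^+(\bpt(\ca{M}))$. Then for $a^+\in L^+$,
\begin{align*}
\bpt(\pa{g})^{-1}(\va{L}{+}{a}) & = \{\pa{h}\in \bpt(\ca{M}) : (h^+\circ g^+, h^-\circ g^-)\in \va{L}{+}{a}\} \\
& = \{\pa{h}\in \bpt(\ca{M}) : h^+(g^+(a^+))=1\} \\
& = \varphi_{\ca{M}}^+(g^+(a^+)),
\end{align*}
and $\varphi_{\ca{M}}^+(g^+(a^+))$ is precisely the value at $a^+$ of the positive component of $\varphi_{\ca{M}}\circ\pa{g}$. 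Hence the positive components of $\bomf(\bpt(\pa{g}))\circ\varphi_{\ca{L}}$ and of $\varphi_{\ca{M}}\circ\pa{g}$ coincide; by the symmetric argument so do the negative components, so the square commutes and the assignment $\ca{L}\mapsto\varphi_{\ca{L}}$ is natural.

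I do not expect a real obstacle here: the core is the one-line computation above, which is just the statement that membership of a bipoint in a subbasic open for $\bpt(\ca{L})$ pulls back, along $\bpt(\pa{g})$, to membership in the corresponding subbasic open for $\bpt(\ca{M})$. The only thing needing care is the bookkeeping of variance — making sure $\bpt(\pa{g})$ runs from $\bpt(\ca{M})$ to $\bpt(\ca{L})$ and hence $\bomf(\bpt(\pa{g}))$ runs from $\bomf(\bpt(\ca{L}))$ to $\bomf(\bpt(\ca{M}))$, so that the square typechecks at all — together with invoking the earlier lemma so that $\bomf(\bpt(\pa{g}))$ is recognized as an honest biframe map rather than merely a pair of frame maps.
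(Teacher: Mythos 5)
Your proposal is correct and follows essentially the same route as the paper: both reduce naturality to the positive frame component and verify it by the one-line computation that the preimage of $\va{L}{+}{a}$ under the precomposition map $\bpt(\pa{g})$ is exactly $\varphi^+_{\ca{M}}(g^+(a^+))$. Your extra care about variance and about invoking the earlier lemma to see that $\bomf(\bpt(\pa{g}))$ is an honest biframe map is sound but does not change the substance of the argument.
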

\begin{proof}
Suppose that $f:\ca{L}\ra \ca{M}$ is a biframe map and that $\ca{L}$ and $\ca{M}$ are finitary biframes. Naturality amounts to the commutativity of the following diagram in $\bd{Frm}$ and the analogous one for the negative component.
\[
\begin{tikzcd}[row sep=large, column sep = large]
\ca{L}
\arrow{r}{\varphi^+_{\ca{L}}} 
\arrow[swap]{d}{f^+} 
&\bomf(\bpt(\ca{L}))
\arrow{d}{\bomf(-\circ f^+)} \\
\ca{M} 
\arrow{r}{\varphi^+_{\ca{M}}} 
& \bomf(\bpt(\ca{M}))
\end{tikzcd}
\]
Suppose that $a^+\in L^+$. We have the following chain of equalities. We have used the fact that $\bomf$ acts on pairs of maps by assigning them to the corresponding pairs of preimage maps.
\begin{align*}
    & \bomf(-\circ f)(\varphi^+_{\ca{L}}(a^+))=\\
    & =\{g\in \bpt(\ca{M}): g\circ f\in \varphi^+_{\ca{L}}(a^+)\}=\\
    & =\{g\in \bpt(\ca{M}):g^+(f^+(a^+))=1\}=\\
    &=\varphi^+_{\ca{M}}(f^+(a^+)).\qedhere
\end{align*}
\end{proof}
Finally, let us check that the data that we have gathered up to now comes together to form an adjunction.

\begin{theorem}
There is an adjunction $\Om_{\mf{fin}}:\bd{BiFrm_{fin}}^{op}\lra \bd{BiTop}:\mf{bpt}$ with $\bomf\dashv \bpt$, with unit $\psi$ and counit $\varphi$.
\end{theorem}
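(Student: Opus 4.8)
The plan is to verify the triangle (zig-zag) identities for the data we have already assembled, invoking the standard fact that two functors equipped with natural transformations satisfying these identities form an adjunction. We have the functors $\bomf\colon\bd{BiTop}\to\bd{BiFrm_{fin}}^{op}$ and $\bpt\colon\bd{BiFrm_{fin}}^{op}\to\bd{BiTop}$, and the natural transformations $\psi\colon 1_{\bd{BiTop}}\to\bpt\circ\bomf$ and $\varphi$, the latter read in the opposite category as $\bomf\circ\bpt\to 1_{\bd{BiFrm_{fin}}^{op}}$ (a component $\varphi_{\ca{L}}$ is a biframe map $\ca{L}\to\bomf(\bpt(\ca{L}))$, hence a $\bd{BiFrm_{fin}}^{op}$-morphism $\bomf(\bpt(\ca{L}))\to\ca{L}$). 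It then remains to check, for every bispace $X$ and every finitary biframe $\ca{L}$, that $\bpt(\varphi_{\ca{L}})\circ\psi_{\bpt(\ca{L})}=1_{\bpt(\ca{L})}$ in $\bd{BiTop}$ and that $\varphi_{\bomf(X)}\circ\bomf(\psi_X)=1_{\bomf(X)}$ in $\bd{BiFrm_{fin}}^{op}$.

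For the first identity I would argue pointwise on $\bpt(\ca{L})$. A point is a bipoint $\pa{f}$; the map $\psi_{\bpt(\ca{L})}$ sends it to the pair of neighbourhood maps $(N^+_{\pa{f}},N^-_{\pa{f}})$ of the bispace $\bpt(\ca{L})$, which by the definition of the opens of $\bpt(\ca{L})$ satisfy $N^+_{\pa{f}}(\varphi^+_{\ca{L}}(a^+))=1$ iff $\pa{f}\in\varphi^+_{\ca{L}}(a^+)$ iff $f^+(a^+)=1$, and symmetrically on the negative side. The map $\bpt(\varphi_{\ca{L}})$ is the pair of precomposition maps $(-\circ\varphi^+_{\ca{L}},-\circ\varphi^-_{\ca{L}})$, so it carries $(N^+_{\pa{f}},N^-_{\pa{f}})$ to $(N^+_{\pa{f}}\circ\varphi^+_{\ca{L}},N^-_{\pa{f}}\circ\varphi^-_{\ca{L}})$; since a frame map into $2$ is determined by the elements it sends to $1$, the equivalences just recorded give $N^+_{\pa{f}}\circ\varphi^+_{\ca{L}}=f^+$ and $N^-_{\pa{f}}\circ\varphi^-_{\ca{L}}=f^-$. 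Hence the composite fixes every point of $\bpt(\ca{L})$ and is therefore the identity bicontinuous map.

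For the second identity, in $\bd{BiFrm}$ it reads $\bomf(\psi_X)\circ\varphi_{\bomf(X)}=1_{\bomf(X)}$, and since a biframe map is precisely a pair of frame maps on the two components (and the composite is a biframe map, being a composite of such), it suffices to check it on $\Om^+(X)$ and $\Om^-(X)$. For a positive open $U^+\in\Om^+(X)$ we have $\varphi^+_{\bomf(X)}(U^+)=\{\pa{f}\in\bpt(\bomf(X)):f^+(U^+)=1\}$, and applying $\bomf(\psi_X)=(\psi_X^{-1},\psi_X^{-1})$ gives $\psi_X^{-1}(\varphi^+_{\bomf(X)}(U^+))=\{x\in X:N^+_x(U^+)=1\}=U^+$, which is exactly the computation already carried out immediately after the definition of $\psi_X$. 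The negative component is symmetric, so both components of the composite are identities and the composite is $1_{\bomf(X)}$.

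Naturality of $\psi$ and of $\varphi$ has already been established in the two Propositions above (and naturality of $\varphi$ in $\bd{BiFrm_{fin}}^{op}$ is the same statement as its naturality in $\bd{BiFrm}$), so the quadruple $(\bomf,\bpt,\psi,\varphi)$ satisfies all the requirements for an adjunction $\bomf\dashv\bpt$. There is no genuine obstacle here: the substance lies entirely in the earlier lemmas showing that $\bomf$, $\psi$ and $\varphi$ are well defined and natural, and the only point requiring care is the bookkeeping with the opposite category $\bd{BiFrm_{fin}}^{op}$ --- keeping straight in which category each composite is formed and in which direction each $\varphi_{\ca{L}}$ points --- after which the two verifications reduce to the short computations above.
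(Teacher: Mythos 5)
Your proposal is correct and follows essentially the same route as the paper: both verify the two unit--counit triangle identities, reducing the first to the pointwise computation $N^+_{\pa{f}}\circ\varphi^+_{\ca{L}}=f^+$ via the chain of equivalences $N^+_{\pa{f}}(\varphi^+_{\ca{L}}(a^+))=1$ iff $\pa{f}\in\varphi^+_{\ca{L}}(a^+)$ iff $f^+(a^+)=1$, and the second to the computation $\psi_X^{-1}(\varphi^+_{\bomf(X)}(U^+))=U^+$ on positive opens. The only (welcome) addition on your part is the explicit bookkeeping about which category each composite lives in, which the paper leaves implicit.
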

\begin{proof}
Let us check that the two unit-counit triangles commute.
\begin{itemize}
    \item Let $\ca{L}$ be a biframe. First, we check that the following commutes in the category of bispaces.
\[
\begin{tikzcd}
{} 
& \bpt(\bomf(\bpt(\ca{L}))) 
\arrow{dr}{\bpt(\varphi_{\ca{L}})} \\
\bpt(\ca{L}) 
\arrow{ur}{\psi_{\bpt(\ca{L})}} 
\arrow{rr}{1_{\bpt(\ca{L})}} 
&& \bpt(\ca{L})
\end{tikzcd}
\]
Suppose that $\pa{f}\in \bpt(\ca{L})$. Recall that $\bpt$ maps a pair of frame maps to the respective precomposition pair. Then, commutativity of the triangle amounts to having $N^+_{\pa{f}}\circ \varphi^+_{\ca{L}}=f^+$, and similarly on the negative component. For $a^+\in L^+$, we have the following chain of equivalences.
\begin{align*}
    & N^+_{\pa{f}}(\va{L}{+}{a})=1\mb{ iff }\\
    & \pa{f}\in \va{L}{+}{a}\mb{ iff }\\
    & f^+(a^+)=1.
\end{align*}
We have obtained the desired equality of bipoints.
\item Let $X$ be a bispace. Commutativity of the second triangle amounts to commutativity of the following triangle in the category of frames, and the analogous one on the negative component.
\[
\begin{tikzcd}
{} 
& \bomf^+(\bpt(\bomf(X))) 
\arrow{dr}{\bomf(\psi_{X})^+} \\
\bomf^+(X) 
\arrow{ur}{\varphi^+_{\bomf(X)}} 
\arrow{rr}{1_{\bomf(X)}} 
&& \bomf^+(X)
\end{tikzcd}
\]
Recall that by definition of the functor $\bomf$ we have that $\bomf(\psi_X)^+=\psi_X^{-1}$. Suppose that $U^+$ is a positive open of the bispace $X$. We have the following chain of equalities. 
\begin{align*}
    & \psi_X^{-1}(\varphi^+_{\bomf(X)}(U^+))=\\
    & =\{x\in X:N^+_x\in \varphi^+_{\bomf(X)}(U^+))\}=\\
    & =\{x\in X:N^+_x(U^+)=1\}=\\
    & =U^+.\qedhere
\end{align*}
\end{itemize}
\end{proof}

We say that a bispace $X$ is \tc{bisober} if the unit $\psi_X$ is a bihomeomorphism. To avoid ambiguity with the biframe duality, we will refer simply as ``patch-sober" bispaces to those bispaces $X$ which are bihomeomorphic to the bispectrum of their biframe of opens $\bOm(X)$.
We say that a finitary biframe is \tc{bispatial} if the counit $\eta_{\ca{L}}$ is an isomorphism. We will explore these notions more in detail at the end of the next section.

\subsection{The finitary biframe duality compared with the d-frame and the biframe duality}

In this section, we will show that the category of finitary biframes is coreflective in $\bd{BiFrm}$. We will also see that the functor $\bomf$ taking a bispace to its finitary biframe of opens equals the composition $\mc{fin}\circ \bOm$ of the biframe of opens functor composed with the coreflector $\mc{fin}:\bd{BiFrm}\ra \bd{BiFrm_{fin}}$. We will relate the duality of finitary biframes in a similar way to the duality of d-frames, and show the precise sense in which taking the biframes of opens, finitary biframes of opens, and d-frames of opens are a series of functors in which each forgets more than the previous one.

We already know that we have a full inclusion $i:\bd{BiFrm_{fin}}\inclu \bd{BiFrm}$. Let us work towards defining the coreflector associated with this inclusion. For a biframe $\ca{L}$ and a congruence $C$ on $L$, we define its \tc{finitary interior} as the congruence on $L$ generated by 
\[
C\cap (\fin(L)\times \fin(L)),
\]
and we denote it as $\mc{fin}(C)$. We say that a relation on $L$ is \tc{finitary} if it is only constituted of pairs of finitary elements. Let us check the finitary interior indeed is an interior operator on the collection of all congruences on $L$.

\begin{lemma}
For any biframe $\ca{L}$, the map $\mc{fin}:\mf{A}(L)\ra \mf{A}(L)$ is an interior operator.
\end{lemma}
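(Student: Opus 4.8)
The plan is to verify the three defining properties of an interior operator on the poset $(\mf{A}(L),\se)$ — that $\mc{fin}$ is deflationary, monotone, and idempotent — each of which follows from the behaviour of the operation ``generate a congruence from a relation'' together with the fact that intersecting with $\fin(L)\times \fin(L)$ only shrinks relations.

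First I would check deflationarity. For any congruence $C$ on $L$ we have $C\cap(\fin(L)\times \fin(L))\se C$; since $C$ is already a congruence, the congruence generated by this subrelation is contained in $C$, so $\mc{fin}(C)\se C$. Next, monotonicity: if $C\se D$ are congruences on $L$, then $C\cap(\fin(L)\times \fin(L))\se D\cap(\fin(L)\times \fin(L))$, and passing to the congruences generated by each side gives $\mc{fin}(C)\se \mc{fin}(D)$.

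The remaining property, idempotency, is the only one that needs a small argument beyond manipulating inclusions. One inclusion, $\mc{fin}(\mc{fin}(C))\se \mc{fin}(C)$, is immediate by applying deflationarity to the congruence $\mc{fin}(C)$. For the reverse inclusion, the key observation is that the relation generating $\mc{fin}(C)$, namely $C\cap(\fin(L)\times \fin(L))$, is a finitary relation that is contained in $\mc{fin}(C)$ (a generating set always lies inside the structure it generates). Hence $C\cap(\fin(L)\times \fin(L))\se \mc{fin}(C)\cap(\fin(L)\times \fin(L))$, and taking the congruences generated by both sides yields $\mc{fin}(C)\se \mc{fin}(\mc{fin}(C))$. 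Combining the two inclusions gives $\mc{fin}(\mc{fin}(C))=\mc{fin}(C)$, completing the proof that $\mc{fin}$ is an interior operator.

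I do not expect any genuine obstacle here: the proof is entirely formal. The one place to be slightly careful is the reverse inclusion in idempotency, where one must not confuse the generating relation of $\mc{fin}(C)$ with $\mc{fin}(C)$ itself; the point is precisely that the former sits inside the latter and consists of finitary pairs, so intersecting with $\fin(L)\times \fin(L)$ does not discard it.
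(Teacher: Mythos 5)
Your proof is correct and follows essentially the same route as the paper: deflationarity and monotonicity from monotonicity of the congruence-generation closure, and idempotency via the observation that $C\cap(\fin(L)\times\fin(L))$ is a finitary relation contained in $\mc{fin}(C)$, hence contained in $\mc{fin}(C)\cap(\fin(L)\times\fin(L))$. The only cosmetic difference is that you spell out both inclusions for idempotency where the paper notes that the easy one follows from deflationarity.
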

\begin{proof}
We need to check that the assignment is monotone, deflationary, and idempotent. Denote as $\overline{(-)}:\ca{P}(\ltl)\ra \ca{P}(\ltl)$ the closure operator mapping each relation on $L$ to the smallest congruence containing it. For monotonicity we notice that if we have two congruences $C\se D$ on $L$ then any pair of finitary elements in the first is in the second. Then $C\cap (\fin(L)\times \fin(L))\se D\cap (\fin(L)\times \fin(L))$. Since $\overline{(-)}$ is monotone, this implies $\mc{fin}(C)\se \mc{fin}(D)$. The assignment is also deflationary: for a congruence $C$, we have that $\mc{fin}(C)$ is generated by a relation $R\se C$, and by monotonicity and idempotency of $\overline{(-)}$ this implies $\mc{fin}(C)\se C$. Finally, let us check that $\mf{fin}$ is idempotent. For this, it suffices to show that for any congruence $C$ we have $\mc{fin}(C)\se \mc{fin}(\mc{fin}(C))$. We have that both $C\cap (\fin(L)\times \fin(L))\se \mc{fin}(C)$ and $C\cap (\fin(L)\times \fin(L))\se \fin(L)\times \fin(L)$, and so also 
\[
C\cap (\fin(L)\times \fin(L))\se \mc{fin}(C)\cap (\fin(L)\times \fin(L)).
\]
Taking the closure $\overline{(-)}$ on both sides yields $\mc{fin}(C)\se \mc{fin}(\mc{fin}(C))$.
\end{proof}
For our aims, given a biframe $\ca{L}$ we will be especially interested in the finitary interior of the congruence $C_L$ on $\cpu{L}$.
\begin{lemma}\label{alsorel9}
For a biframe $\ca{L}$, the congruence $\mc{fin}(C_L)$ on $\cpu{L}$ is that generated by the relation
\[
\mc{fin}(R_L)=\{(\syw{a}{a},\syv{b}{b}):a^+\we a^-\leq b^+\ve b^-\mb{ in }L\}
\]
\end{lemma}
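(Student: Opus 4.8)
The plan is to establish the two inclusions between $\mc{fin}(C_L)$ and $\overline{\mc{fin}(R_L)}$ separately, recalling that by definition $\mc{fin}(C_L)=\overline{C_L\cap(\fin(\cpu{L})\times\fin(\cpu{L}))}$. Write $q_{C_L}:\cpu{L}\epi L$ for the canonical quotient map; it is a frame homomorphism restricting on generators to the subframe inclusions $e^+_L,e^-_L$, so a pair of elements of $\cpu{L}$ lies in $C_L$ exactly when $q_{C_L}$ sends it to a comparable pair in $L$. Put $D:=\overline{\mc{fin}(R_L)}$ with quotient map $q_D:\cpu{L}\epi(\cpu{L})/D$. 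One inclusion is then immediate: a pair $(\syw{a}{a},\syv{b}{b})\in\mc{fin}(R_L)$ has both entries equal to finite joins of finite meets of generators, hence lying in $\fin(\cpu{L})$, and $q_{C_L}$ carries it to $(a^+\we a^-,\,b^+\ve b^-)$, which is comparable by the defining hypothesis $a^+\we a^-\le b^+\ve b^-$; so $\mc{fin}(R_L)\se C_L\cap(\fin(\cpu{L})\times\fin(\cpu{L}))$, and monotonicity of the closure operator $\overline{(-)}$ yields $D\se\mc{fin}(C_L)$.

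For the reverse inclusion it suffices to show $C_L\cap(\fin(\cpu{L})\times\fin(\cpu{L}))\se D$, i.e. that $q_D(u)\le q_D(v)$ whenever $u,v\in\fin(\cpu{L})$ satisfy $q_{C_L}(u)\le q_{C_L}(v)$ in $L$. Write $u=\bve_{i\le m}(\syw{a_i}{a_i})$ and $v=\bve_{j\le n}(\syw{b_j}{b_j})$. Since $q_{C_L}$ is a frame map the hypothesis gives $a^+_i\we a^-_i\le\bve_{j\le n}(b^+_j\we b^-_j)$ in $L$ for every $i$, and since $q_D$ preserves finite joins it is enough to prove, for each such $i$, that $q_D(\syw{a_i}{a_i})\le q_D(v)$. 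The key move is to rewrite $\bve_{j\le n}(b^+_j\we b^-_j)$ --- which is not itself of the form $c^+\ve c^-$ --- as a finite meet of elements that are: ranging over the finitely many maps $\sigma:\{1,\dots,n\}\ra\{+,-\}$, put $c^+_\sigma:=\bve_{\sigma(j)=+}b^+_j\in L^+$ and $c^-_\sigma:=\bve_{\sigma(j)=-}b^-_j\in L^-$; the frame distributive law then gives $\bve_{j\le n}(b^+_j\we b^-_j)=\bwe_\sigma(c^+_\sigma\ve c^-_\sigma)$ in $L$. Hence $a^+_i\we a^-_i\le c^+_\sigma\ve c^-_\sigma$ in $L$ for every $\sigma$, so $(\syw{a_i}{a_i},\,\sy{c^+_\sigma}\ve\sy{c^-_\sigma})$ is literally a member of $\mc{fin}(R_L)$ and is therefore respected by $q_D$. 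Meeting these finitely many inequalities --- using that $q_D$ preserves finite meets --- and observing that the identical computation carried out in the frame $\cpu{L}$, where the coprojection $L^+\ra\cpu{L}$ preserves finite joins so that $\bve_{\sigma(j)=+}\sy{b^+_j}=\sy{c^+_\sigma}$ and similarly on the negative side, gives $\bwe_\sigma(\sy{c^+_\sigma}\ve\sy{c^-_\sigma})=\bve_{j\le n}(\syw{b_j}{b_j})=v$, one obtains $q_D(\syw{a_i}{a_i})\le q_D(v)$. Joining over $i\le m$ then gives $q_D(u)\le q_D(v)$, completing the argument.

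The only real friction I anticipate is this distributive-law step: one must verify that every factor $c^+_\sigma\ve c^-_\sigma$ genuinely has the shape $\syv{b}{b}$ with $b^+\in L^+$, $b^-\in L^-$ that $\mc{fin}(R_L)$ recognises --- including the degenerate $\sigma$ (constant on one side, or the empty map when $n=0$) for which $c^+_\sigma$ or $c^-_\sigma$ is the bottom element, which still counts --- and that the expansion $\bve_{j\le n}(b^+_j\we b^-_j)=\bwe_\sigma(c^+_\sigma\ve c^-_\sigma)$ holds verbatim in both $L$ and $\cpu{L}$. Both points rest only on $e^+_L,e^-_L$ and the two coprojections being frame homomorphisms and on $L$ and $\cpu{L}$ being frames; everything else is routine bookkeeping with quotient maps.
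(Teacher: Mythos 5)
Your proof is correct and follows essentially the same route as the paper's: one inclusion is immediate because $\mc{fin}(R_L)$ consists of comparable pairs of finitary elements of $\cpu{L}$, and the other reduces an arbitrary inequality between finitary elements to the basic ones of the form $\syw{a}{a}\leq\syv{b}{b}$ via the distributive law. The only difference is that you carry out explicitly (through the $\sigma$-indexed conversion of a finite join of meets into a finite meet of joins, valid verbatim in both $L$ and $\cpu{L}$) the normal-form step that the paper dispatches with the phrase ``by the frame laws''.
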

\begin{proof}
Every finitary element of $L$ can be written either as a finite join of binary meets of generators or as a finite meet of binary joins of generators. The congruence $\mc{fin}(L)$, then, by definition is that generated by the relation
\[
\mc{fin}(R_L)'=\{(\bve_{x\leq m}\syw{a_x}{a_x},\bwe_{y\leq n}\syw{b_y}{b_y}):a^+_x\we a^-_x\leq b^+_y\ve b^-_y\}.
\]
By definition we then have that $\mc{fin}(R_L)\se \mc{fin}(R_L)'\se \mc{fin}(C_L)$. So, the congruence $\overline{\mc{fin}(R_L)}$ generated by $\mc{fin}(R_L)$ is included in $\mc{fin}(C_L)$. For the other set inclusion, we notice that by the frame laws any congruence which determines the inequalities represented by the relation $\mc{fin}(R_L)$ also determines those represented by the relation $\mc{fin}(R_L)'$, and so we also have $\mc{fin}(C_L)=\overline{\mc{fin}(R_L)'}\se \overline{\mc{fin}(R_L)}$.
\end{proof}
We keep the definition that we used in the lemma above, and for a biframe $\ca{L}$ we denote as $\mc{fin}(R_L)$ the relation $\{(\syw{a}{a},\syv{b}{b}):a^+\we a^-\leq b^+\ve b^-\mb{ in }L\}$ on the coproduct $\cpu{L}$.
For a biframe $\ca{L}$ we define as $\mc{fin}(L)$ the frame $(\cpu{L})/\mc{fin}(C_L)$. By Lemma \ref{alsorel9}, this is the same as the frame $(\cpu{L})/\mc{fin}(R_L)$. Finally, we define
\[
\mathit{fin}(\ca{L})=(L^+,L^-,\mathit{fin}(L)).
\]
We know that this is a biframe, as the map $\sy{-}:L^+\ra \mc{fin}(L)$ must be an injection: this holds because the congruence $\mc{fin}(C_L)$ in general identifies fewer elements than $C_L$ does, and we already know that the syntactic map $\sy{-}:L^+\ra (\cpu{L})/C_L\cong L$ is an injection, since by assumption $\ca{L}$ is a biframe.
Let us work towards defining $\mc{fin}$ on morphisms.

For a morphism $f:\ca{L}\ra \ca{M}$ of biframes, let us define the map $\mathit{fin}(f):\mathit{fin}(\ca{L})\ra \mathit{fin}(\ca{M})$ as being the same pair of frame maps. To see that this really is a biframe map, we need to once again appeal to Corollary \ref{pairismorbifrm9}. We need to show that there is a frame map $f:L\ra M$ making the following diagram commute.
\[
\begin{tikzcd}[row sep=large, column sep = large]
\cpu{L}
\arrow{r}{\cpu{f}} 
\arrow[swap]{d}{q_{\fin(C_L)}} 
& \cpu{M}
\arrow{d}{q_{\fin(C_M)}} \\
\mathit{fin}(L)
\arrow{r}{f} 
& \mathit{fin}(M)
\end{tikzcd}
\]
As usual, the vertical arrows are the two obvious quotients. By Corollary \ref{pairismorbifrm9}, to see that such a frame morphism exists, it suffices to know that if $(\syw{a}{a},\syv{b}{b})\in \mc{fin}(R_L)$ then we have $(\sy{f^+(a^+)},\sy{f^-(a^-)})\in \mc{fin}(R_M)$. Let us show this as a lemma.
\begin{lemma}
For a map $f:\ca{L}\ra \ca{M}$ of biframes, we have that $(\syw{a}{a},\syv{b}{b})\in \mc{fin}(R_L)$ implies $(\sy{f^+(a^+)},\sy{f^-(a^-)})\in \mc{fin}(R_M)$.
\end{lemma}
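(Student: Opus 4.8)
The plan is to unwind both sides of the implication using Lemma~\ref{alsorel9} and then push a single inequality across a frame homomorphism. Since $f:\ca{L}\ra\ca{M}$ is a biframe map, by definition there is a frame map $f:L\ra M$ with $f\circ e^+_L=e^+_M\circ f^+$ and $f\circ e^-_L=e^-_M\circ f^-$; this is the only structural input we need. Recalling the notational convention that $\syp{a}$ denotes $e^+_L(a^+)$ and $\sym{a}$ denotes $e^-_L(a^-)$ (and likewise in $\ca{M}$), the hypothesis $(\syw{a}{a},\syv{b}{b})\in\mc{fin}(R_L)$ unwinds, via the description of $\mc{fin}(R_L)$ in Lemma~\ref{alsorel9}, to the single inequality $e^+_L(a^+)\we e^-_L(a^-)\leq e^+_L(b^+)\ve e^-_L(b^-)$ holding in $L$; and the conclusion we must reach (the one actually needed to invoke Corollary~\ref{pairismorbifrm9} for $\mathit{fin}(f)$) is $(\sy{f^+(a^+)}\we\sy{f^-(a^-)},\sy{f^+(b^+)}\ve\sy{f^-(b^-)})\in\mc{fin}(R_M)$, i.e.\ $e^+_M(f^+(a^+))\we e^-_M(f^-(a^-))\leq e^+_M(f^+(b^+))\ve e^-_M(f^-(b^-))$ in $M$.

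First I would apply the witnessing frame map $f:L\ra M$ to the inequality holding in $L$: monotonicity gives $f\bigl(e^+_L(a^+)\we e^-_L(a^-)\bigr)\leq f\bigl(e^+_L(b^+)\ve e^-_L(b^-)\bigr)$. Next, since $f$ preserves binary meets and binary joins, the left side rewrites as $f(e^+_L(a^+))\we f(e^-_L(a^-))$ and the right side as $f(e^+_L(b^+))\ve f(e^-_L(b^-))$. Finally, applying the two commuting squares $f\circ e^\pm_L=e^\pm_M\circ f^\pm$ termwise turns this into $e^+_M(f^+(a^+))\we e^-_M(f^-(a^-))\leq e^+_M(f^+(b^+))\ve e^-_M(f^-(b^-))$, which is exactly membership of the required pair in $\mc{fin}(R_M)$.

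There is essentially no hard step here: the argument is just transporting one inequality along a frame homomorphism, using monotonicity together with preservation of the two binary lattice operations. The only points demanding care are bookkeeping ones — that the frame map $f:L\ra M$ witnessing the biframe morphism is genuinely at our disposal (it is, by the very definition of a biframe map), and that by Lemma~\ref{alsorel9} membership in $\mc{fin}(R_L)$ is literally the displayed inequality between the corresponding finitary elements, so no further reduction of joins of meets is needed. Once this lemma is in place, it is precisely the hypothesis required by Corollary~\ref{pairismorbifrm9}, and hence yields that $\mathit{fin}(f):\mathit{fin}(\ca{L})\ra\mathit{fin}(\ca{M})$ is a well-defined biframe map.
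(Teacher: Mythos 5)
Your proof is correct and follows essentially the same route as the paper's: unwind membership in $\mc{fin}(R_L)$ to the single inequality in $L$, transport it along the witnessing frame map $f:L\ra M$ using monotonicity, preservation of binary meets and joins, and the commuting squares, and read off membership in $\mc{fin}(R_M)$. The paper compresses all of this into the one sentence ``because $\pa{f}$ is a biframe map,'' so your version simply makes that step explicit (and correctly identifies that the conclusion in the lemma's statement should read $(\sy{f^+(a^+)}\we\sy{f^-(a^-)},\sy{f^+(b^+)}\ve\sy{f^-(b^-)})\in\mc{fin}(R_M)$, as needed for Corollary~\ref{pairismorbifrm9}).
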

\begin{proof}
Suppose that $\pa{f}:\ca{L}\ra \ca{M}$ is a map of biframes, and that $(\syw{a}{a},\syv{b}{b})\in \mc{fin}(R_L)$, so that $\sy{a^+}\we\sy{ a^-}\leq \sy{b^+}\ve \sy{b^-}$ in $L$. Because $\pa{f}$ is assumed to be a biframe map, we know that $\sy{f^+(a^+)}\we \sy{f^-(a^-)}\leq \sy{f^+(b^+)}\ve \sy{f^-(b^-)}$ in $M$. By definition of $R_M$, then, we have the desired result. 
\end{proof}

Now, let us define the counit of our adjunction. For a biframe $\ca{L}$, we know that $\mc{fin}(C_L)$ makes fewer identifications than $C_L$ does, and this means that we a frame surjection $\mc{fin}(L)\ra L$, as $L$ is canonically isomorphic to $(\cpu{L})/C_L$. This surjection acts on positive generators as $\syp{a}\mapsto a^+$ and similarly on negative ones. This means that we have a map of biframes $i(\mc{fin}(\ca{L}))\ra \ca{L}$ which is the pair of the identity frame maps. We call this map $\varepsilon_{\ca{L}}:i(\mc{fin}(\ca{L}))\ra \ca{L}$ and take it to be the definition of the counit of our adjunction evaluated at $\ca{L}$.   

\begin{lemma}\label{squareistrivial}
The assignment $\varepsilon:\mf{Obj}(\bd{BiFrm})\ra \mf{Mor}(\bd{BiFrm})$ is a natural transformation.
\end{lemma}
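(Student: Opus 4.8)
The plan is to show that $\varepsilon$ is a natural transformation $i\circ \mathit{fin} \Rightarrow 1_{\bd{BiFrm}}$, i.e. that for every biframe map $f:\ca{L}\ra \ca{M}$ the naturality square
\[
\begin{tikzcd}[row sep=large, column sep=large]
i(\mathit{fin}(\ca{L}))
\arrow{r}{\varepsilon_{\ca{L}}}
\arrow[swap]{d}{i(\mathit{fin}(f))}
& \ca{L}
\arrow{d}{f}
\\
i(\mathit{fin}(\ca{M}))
\arrow{r}{\varepsilon_{\ca{M}}}
& \ca{M}
\end{tikzcd}
\]
commutes in $\bd{BiFrm}$. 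Since biframe maps are determined by their two frame-component maps, and both $\varepsilon_{\ca{L}}$, $\varepsilon_{\ca{M}}$ are by construction the pairs of identity frame maps on the positive and negative components, while $\mathit{fin}(f)$ is by construction the same pair of frame maps $(f^+,f^-)$ as $f$ itself, commutativity of the square on the positive component reads $f^+ \circ \id_{L^+} = \id_{M^+}\circ f^+$, which holds trivially, and likewise on the negative component. Hence the square commutes as a diagram of pairs of frame maps.

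First I would recall from the discussion preceding the statement the definitions of the four maps involved: $\varepsilon_{\ca{L}}:i(\mathit{fin}(\ca{L}))\ra \ca{L}$ is the pair of identity maps $(\id_{L^+},\id_{L^-})$, witnessed by the canonical frame surjection $\mathit{fin}(L)=(\cpu{L})/\mathit{fin}(C_L)\ra (\cpu{L})/C_L\cong L$ induced by the inclusion of congruences $\mathit{fin}(C_L)\se C_L$; and $\mathit{fin}(f)$ is, as established in the lemma immediately above, the pair $(f^+,f^-)$, witnessed by the induced frame map $\mathit{fin}(L)\ra \mathit{fin}(M)$. I would then observe that naturality of $\varepsilon$ as a transformation of biframe maps is equivalent to naturality of each component, and that the positive-component square is literally the commuting square of frame maps asserting $f^+ = f^+$.

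The only genuine content — and the one place where one might worry there is something to check — is that the square also commutes as a diagram in $\bd{BiFrm}$ once we pass to the \emph{main components}, i.e. that the frame map $f:L\ra M$ witnessing $f$ as a biframe map agrees with the composite $L\cong (\cpu{L})/C_L \xleftarrow{} \mathit{fin}(L)\ra \mathit{fin}(M)\ra (\cpu{M})/C_M\cong M$. But this too is immediate: all of these maps are induced on quotients of $\cpu{L}$ (resp. $\cpu{M}$) by $\cpu{f}=\cp{f^+}{f^-}$, so by the uniqueness clause of Lemma \ref{relinclu9} (applied repeatedly, or simply by the universal property of $q_{C_L}$) there is exactly one frame map $L\ra M$ through which $q_{C_M}\circ \cpu{f}$ factors, and both ways around the square induce it. Thus there is no real obstacle here; the statement is true essentially by construction, and the proof amounts to unwinding the definitions of $\varepsilon$ and of $\mathit{fin}$ on morphisms and noting that both legs of the square are the identity-plus-$\cpu{f}$ data.
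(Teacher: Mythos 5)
Your proposal is correct and follows essentially the same route as the paper: both reduce naturality to the componentwise squares $f^+\circ \id_{L^+}=\id_{M^+}\circ f^+$ (and its negative analogue), which commute trivially because $\varepsilon$ is the pair of identities and $\mathit{fin}$ acts as the identity on pairs of frame maps. Your additional check that the witnessing maps on the main components agree (via uniqueness of the factorization through $q_{C_L}$) is a reasonable extra precaution that the paper leaves implicit, but it does not change the argument.
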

\begin{proof}
Suppose that $f:\ca{L}\ra \ca{M}$ is a biframe morphism. Commutativity of the naturality square amounts to commutativity of the following one in $\bd{Frm}$, and the analogous one on the negative component. We have used the fact that the positive component of $i(\mc{fin}(\ca{L}))$ is $L^+$, and the fact that the functor $\mc{fin}$ takes a pair of frame morphisms to itself.
\[
\begin{tikzcd}[row sep=large, column sep = large]
L^+
\arrow{r}{\id^+_L} 
\arrow[swap]{d}{f^+} 
& L^+
\arrow{d}{f^+} \\
M^+
\arrow{r}{\id^+_M} 
& M^+
\end{tikzcd}
\]
The fact that this commutes is trivial.
\end{proof}

For every finitary biframe $\ca{L}$, the congruence $\mc{fin}(C_L)$ already is finitary, and so it is a fixpoint of the interior operator $\mc{fin}:\mf{A}(L)\ra \mf{A}(L)$. This means that it is its own finitary interior, and so we have an isomorphism of frames $L\cong \mc{fin}(L)$, acting on positive generators as $a^+\mapsto \syp{a}$, and similarly on the negative ones. This means that the pair of identity maps $(\id^+,\id^-):\ca{L}\ra \mc{fin}(i(\ca{L}))$ is always an isomorphism of biframes. We call this isomorphism $\eta_{\ca{L}}:\ca{L}\ra \mc{fin}(i(\ca{L}))$ and take this to be the definition of the unit of the adjunction evaluated at $\ca{L}$.

\begin{lemma}
The assignment $\eta:\mf{Obj}(\bd{BiFrm_{fin}})\ra \mf{Mor}(\bd{BiFrm_{fin}})$ is a natural transformation.
\end{lemma}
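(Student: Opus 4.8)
The plan is to follow exactly the pattern of Lemma \ref{squareistrivial}: reduce naturality of $\eta$ to the commutativity of a square of frame maps in each component, and then observe that this square is trivial because every map occurring in it is either $f^+$ (resp.\ $f^-$) or an identity. Recall that the relevant endofunctor is $\mc{fin}\circ i:\bd{BiFrm_{fin}}\ra\bd{BiFrm_{fin}}$, where $i:\bd{BiFrm_{fin}}\inclu\bd{BiFrm}$ is the full inclusion and $\mc{fin}:\bd{BiFrm}\ra\bd{BiFrm_{fin}}$ is the coreflector, and that $\eta_{\ca{L}}=(\id^+,\id^-):\ca{L}\ra\mc{fin}(i(\ca{L}))$ is the pair of identity frame maps, which is an isomorphism of biframes because $\mc{fin}(C_L)$ is already finitary when $\ca{L}$ is.

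First I would fix a biframe morphism $f:\ca{L}\ra\ca{M}$ between finitary biframes and write out the naturality square for $\eta$:
\[
\begin{tikzcd}[row sep=large, column sep=large]
\ca{L}
\arrow{r}{\eta_{\ca{L}}}
\arrow[swap]{d}{f}
& \mc{fin}(i(\ca{L}))
\arrow{d}{\mc{fin}(i(f))} \\
\ca{M}
\arrow{r}{\eta_{\ca{M}}}
& \mc{fin}(i(\ca{M}))
\end{tikzcd}
\]
Here I would invoke three facts already recorded in the text: the inclusion $i$ sends a biframe morphism to itself as a pair of frame maps; the functor $\mc{fin}$ likewise takes a pair of frame morphisms to itself; and for a finitary biframe $\ca{L}$ the positive component of $\mc{fin}(i(\ca{L}))$ is again $L^+$ (and the negative one $L^-$), with $\eta_{\ca{L}}$ the pair of identity frame maps.

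Since a biframe morphism is determined by its pair of component frame maps, it then suffices to check commutativity on the positive component (the negative one being symmetric). By the three facts above, the positive component of the square becomes
\[
\begin{tikzcd}[row sep=large, column sep=large]
L^+
\arrow{r}{\id^+_L}
\arrow[swap]{d}{f^+}
& L^+
\arrow{d}{f^+} \\
M^+
\arrow{r}{\id^+_M}
& M^+
\end{tikzcd}
\]
which visibly commutes, and identically on the negative component; hence the original square commutes and $\eta$ is natural.

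I do not expect any genuine obstacle here: the only points needing care are bookkeeping ones, namely that $\mc{fin}\circ i$ is really a functor (immediate from $\mc{fin}$ and $i$ being functors) and that each $\eta_{\ca{L}}$ is a well-defined biframe isomorphism — both of which were settled just before the statement, the latter using that $\mc{fin}(C_L)$ is a fixpoint of the interior operator $\mc{fin}$ on $\mf{A}(L)$ for $\ca{L}$ finitary. Once these are in hand, the verification is exactly as trivial as the one for $\varepsilon$ in Lemma \ref{squareistrivial}.
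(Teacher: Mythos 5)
Your proposal is correct and follows exactly the route the paper takes: the paper's own proof simply says that, by an argument analogous to that for $\varepsilon$ in Lemma \ref{squareistrivial}, the naturality square for $\eta$ reduces to trivial commuting squares in $\bd{Frm}$. You have merely written out that reduction explicitly, using the same facts (that $i$ and $\mc{fin}$ act as the identity on pairs of frame maps and that $\eta_{\ca{L}}$ is the pair of identities), so there is nothing to add.
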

\begin{proof}
By an argument analogous to that of Lemma \ref{squareistrivial}, the naturality square amount to the commutativity of trivial squares in the category of frames.
\end{proof}
Finally, we prove the desired result that $\mi{fin}:\bd{BiFrm}\ra \bd{BiFrm_{fin}}$ is the desired coreflector. 
\begin{proposition}
We have an adjunction $\mathit{fin}:\bd{BiFrm}\lra \bd{BiFrm_{fin}}:i$ with $i\dashv \mathit{fin}$, with unit the $\eta$ and counit $\varepsilon$.
\end{proposition}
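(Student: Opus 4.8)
Since the functors $\mathit{fin}$ and $i$, the candidate unit $\eta$ and counit $\varepsilon$, and the naturality of both transformations have already been established above, the plan is to finish the proof by verifying the two triangle identities for the adjunction $i\dashv\mathit{fin}$: that $\varepsilon_{i(\ca{L})}\circ i(\eta_{\ca{L}})=\id_{i(\ca{L})}$ for every finitary biframe $\ca{L}$, and that $\mathit{fin}(\varepsilon_{\ca{L}})\circ\eta_{\mathit{fin}(\ca{L})}=\id_{\mathit{fin}(\ca{L})}$ for every biframe $\ca{L}$. Once these hold, the adjunction with the stated unit and counit follows from the standard characterization of adjunctions via the triangle identities.

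The mechanism that makes both checks immediate is that a biframe morphism is completely determined by its pair of component frame maps: the frame map on the main component witnessing a pair $\pa{f}$ as a biframe morphism is unique when it exists, since every element of $L$ is a join of meets of the form $e^+_L(x^+)\we e^-_L(x^-)$ and frame maps preserve joins and meets, so an equality of biframe morphisms can be tested componentwise. By definition $\eta_{\ca{L}}$ and $\varepsilon_{\ca{L}}$ are each the pair of identity frame maps $(\id^+,\id^-)$; moreover $i$ leaves morphisms unchanged and $\mathit{fin}$ sends a pair of frame maps to itself. Hence in each triangle the composite is, on the positive component, $\id\circ\id=\id$, and likewise on the negative component, which is exactly the componentwise description of the relevant identity biframe morphism. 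This verifies both identities.

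It is worth recording what the triangle identities say on the main components, to confirm the bookkeeping. For a biframe $\ca{L}$ the counit $\varepsilon_{\ca{L}}$ is witnessed by the canonical frame surjection $\mathit{fin}(L)=(\cpu{L})/\mathit{fin}(C_L)\epi(\cpu{L})/C_L\cong L$ arising from $\mathit{fin}(C_L)\se C_L$, sending each generator $\syp{a}$ to $a^+$; and for a finitary biframe $\ca{M}$ the unit $\eta_{\ca{M}}$ is witnessed by the canonical isomorphism $M\cong(\cpu{M})/C_M=\mathit{fin}(M)$, using that $C_M$ is its own finitary interior since $\ca{M}$ is finitary. Along each triangle these main-component maps compose to the identity, consistently with the componentwise computation. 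I do not expect any genuine obstacle here: the substantive content of the statement was already discharged in the construction of $\mathit{fin}$ on morphisms and in the naturality lemmas for $\eta$ and $\varepsilon$, and what remains collapses because every morphism in sight is a pair of identity frame maps; the only point requiring a moment's care is the appeal to uniqueness of the witnessing main-component map, which licenses the componentwise comparison.
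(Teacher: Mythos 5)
Your proof is correct and takes essentially the same route as the paper's: both verify the two unit--counit triangle identities by reducing them to the positive and negative frame components, where every arrow involved is an identity map. The additional point you make about uniqueness of the witnessing main-component frame map (which licenses the componentwise comparison) is left implicit in the paper but is a harmless and legitimate clarification.
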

\begin{proof}
Let us show that the two unit-counit triangles commute.
\begin{itemize}
    \item Suppose that $\ca{L}$ is a finitary biframe. The first triangle is as follows.
  \[
 \begin{tikzcd}
 {} 
 & i(\mc{fin}(L))
\arrow{dr}{i(\varepsilon_{\ca{L}})} \\
i(\ca{L})
\arrow{ur}{\eta_{i(\ca{L})}} 
\arrow{rr}{1_{i(\ca{L})}} 
&& i(\ca{L})
\end{tikzcd}
\]
 Commutativity of the first triangle amounts to commutativity of the following triangle in $\bd{Frm}$ and the analogous one for the negative component.
   \[
 \begin{tikzcd}
 {} 
 & L^+
\arrow{dr}{i(\varepsilon_{\ca{L}})^+} \\
L^+
\arrow{ur}{\eta_{i(\ca{L})}^+} 
\arrow{rr}{\id^+_{i(\ca{L})}} 
&& L^+
\end{tikzcd}
\]
    By definition of the functors $\mc{fin}$ and $i$, and by definition of the unit and the counit of the adjunction, all three arrows in this diagram are the identity on $L^+$.
    \item Commutativity of the second triangle is trivial, too. Similarly as in the first item, its commutativity amounts to commutativity of two triangles in $\bd{Frm}$, and by definitions of our functors, and the unit and counit of the adjunctions, in both triangles all three arrows are identities.\qedhere
\end{itemize}
\end{proof}
\begin{corollary}
The category $\bd{BiFrm_{fin}}$ is coreflective in $\bd{BiFrm}$.
\end{corollary}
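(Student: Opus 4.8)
The plan is to obtain the corollary directly from the adjunction established in the preceding Proposition, since there is essentially nothing more to prove. Recall that a full subcategory $\mathcal{A}$ of a category $\mathcal{B}$ is called \emph{coreflective} precisely when the inclusion functor $\mathcal{A}\inclu \mathcal{B}$ admits a right adjoint, the so-called coreflector. So the task reduces to checking that the data assembled in the previous Proposition fit this definition.

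First I would recall, as already noted in the excerpt, that $i:\bd{BiFrm_{fin}}\inclu \bd{BiFrm}$ is a full inclusion: a morphism of biframes between two finitary biframes is, by definition of $\bd{BiFrm_{fin}}$, simply a morphism of biframes, so no morphisms are lost or added. Second, the preceding Proposition exhibits an adjunction $\mathit{fin}:\bd{BiFrm}\lra \bd{BiFrm_{fin}}:i$ with $i\dashv \mathit{fin}$. Putting these two observations together, the inclusion functor $i$ has a right adjoint, namely $\mathit{fin}$, and this is exactly the statement that $\bd{BiFrm_{fin}}$ is coreflective in $\bd{BiFrm}$, with coreflector $\mathit{fin}$.

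I do not expect any genuine obstacle: the corollary is a pure unwinding of terminology, contingent only on the correctness of the adjunction proved just above. If a little more detail were wanted, one could additionally observe that the unit $\eta$ of this adjunction is a natural isomorphism (each component $\eta_{\ca{L}}$ is a pair of identity frame maps, hence an isomorphism of biframes), which records the familiar fact that the coreflector restricts to an equivalence --- indeed to the identity --- on the coreflective subcategory; but this refinement is not needed for the bare statement of the corollary.
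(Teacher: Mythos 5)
Your proposal is correct and matches the paper, which states this corollary without further proof as an immediate consequence of the preceding proposition establishing the adjunction $i\dashv \mathit{fin}$. Unwinding the definition of coreflective subcategory as ``full inclusion with a right adjoint'' is exactly what is intended, and your extra remark about the unit being an isomorphism is consistent with the paper's earlier observation.
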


Having now proved that we have the desired adjunction, we may express the connection between the duality of finitary biframes and that of biframes and d-frames. The claim we wish to prove is that we have that $\bomf=\mc{fin}\circ \bOm$, and that $\dOm=\Delta\circ \mc{fin}\circ \bOm$, so that we may see the three bitopological dualities that we have seen in this thesis as being part of the same big picture, as follows.

\begin{tikzcd}[column sep=large, row sep=large,
]
{\bd{BiTop}} 
\arrow[r,"{\mf{b}\Om}"{name=omm},left, shift left=2.6]
& \bd{BiFrm}^{op}
\arrow[r,"\mathit{fin}"{name=fin},left, shift left=2.6]
\arrow[l,"\mf{bpt}"{name=ptt},left, shift left=2.6]
\arrow[phantom,from=ptt,to=omm,"\dashv" rotate=-90]
& \bd{BiFrm_{fin}}^{op}
\arrow[r,"\Delta"{name=k},left, shift left=2.6]
\arrow[l,""{name=inclu},left, shift left=2.6,hookrightarrow]
\arrow[phantom,from=ptt,to=omm,"\dashv" rotate=-90]
\arrow[phantom,from=fin,to=inclu,"\dashv" rotate=-90]
& {\bd{dFrm}^{op}} 
\arrow[l,"\Gamma"{name=idl},left, shift left=2.6]
\arrow[phantom,from=k,to=idl,"\dashv" rotate=-90]
\end{tikzcd}

We will see that in this diagram all arrows from $\bd{BiTop}$ to any of the other categories are the open sets functors relative to that duality, and - similarly - all arrows from any of the pointfree categories to $\bd{BiTop}$ are the spectrum functors.

\begin{proposition}\label{gammadelta}
We have that $\del\circ \mf{b}\Om=\dOm$ and $\mf{bpt}\circ \Gamma=\dpt$
\end{proposition}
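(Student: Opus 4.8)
The plan is to prove each equality by showing that the two functors agree on objects and on morphisms; the morphism parts will be immediate in both cases, since $\del$ and $\Gamma$ each act on a biframe map, resp.\ a d-frame map, by keeping the underlying pair of frame maps, while $\bomf,\bOm,\dOm$ send a bicontinuous map to the pair of its preimage maps and $\bpt,\dpt$ send a pointfree map to the pair of its precomposition maps; so once the object parts are settled the morphism parts read off. Recall the functors involved: $\del$ takes a finitary biframe $\ca{L}$ to the d-frame with frame components $L^+,L^-$, consistency component $\{\pa{a}\in\ltl:\syw{a}{a}=0\text{ in }L\}$ and totality component $\{\pa{a}\in\ltl:\syv{a}{a}=1\text{ in }L\}$; and $\Gamma$ takes a d-frame $\ca{L}$ to the finitary biframe $(L^+,L^-,M_{\ca{L}})$, where $M_{\ca{L}}$ is the quotient of $\cpu{L}$ by the finitary congruence generated by
\[
R_{\ca{L}}=\{(\syw{a}{a},0):\pa{a}\in c_L\}\cu\{(\syp{1},\syv{b}{b}):\pa{b}\in t_L\}.
\]

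For $\del\circ\bomf=\dOm$ --- which, by the earlier identity $\bomf=\mc{fin}\circ\bOm$, is the same as $\del\circ\mc{fin}\circ\bOm=\dOm$ --- I would begin from $\bomf(X)=(\Om^+(X),\Om^-(X),\Om_{\fin}(X))$. The relation defining $\Om_{\fin}(X)$ consists of exactly the pairs $(\syw{U}{U},\syv{V}{V})$ with $U^+\cap U^-\se V^+\cu V^-$, and $\Om_{\fin}(X)$ surjects onto the patch frame $\Om(X)$ (so no further such inequalities are forced); hence $\syw{U}{U}\le\syv{V}{V}$ holds in $\Om_{\fin}(X)$ precisely when $U^+\cap U^-\se V^+\cu V^-$. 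Instantiating this at $V^{\pm}=\emptyset$ shows $\syw{U}{U}=0$ precisely when $U^+\cap U^-=\emptyset$, and at $U^{\pm}=X$ shows $\syv{V}{V}=1$ precisely when $V^+\cu V^-=X$; so the consistency and totality components of $\del(\bomf(X))$ are exactly those of $\dOm(X)$, and the two d-frames coincide. The same computation carried out directly in $\Om(X)$, where meets and joins of opens are intersections and unions, also gives $\del\circ\bOm=\dOm$.

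For $\bpt\circ\Gamma=\dpt$, fix a d-frame $\ca{L}$ and write $\Gamma(\ca{L})=(L^+,L^-,M_{\ca{L}})$. By the lemma characterising bipoints of a finitary biframe, a pair $\pa{f}\in\pt(L^+)\times\pt(L^-)$ is a bipoint of $\Gamma(\ca{L})$ exactly when the coproduct pairing $\cpu{f}\colon\cpu{L}\ra 2$ respects every inequality $\syw{a}{a}\le\syv{b}{b}$ holding in $M_{\ca{L}}$. Since $M_{\ca{L}}$ is presented by the congruence generated by $R_{\ca{L}}$, and the remaining such inequalities are frame-law consequences of the ones in $R_{\ca{L}}$, this amounts to $\cpu{f}$ respecting $R_{\ca{L}}$, i.e.\ to $f^+(a^+)\we f^-(a^-)=0$ for every $\pa{a}\in c_L$ and $f^+(b^+)\ve f^-(b^-)=1$ for every $\pa{b}\in t_L$. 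Recalling that the consistency component of $\bd{2}$ is $\{(0^+,0^-),(0^+,1^-),(1^+,0^-)\}$ and its totality component is $\{(1^+,1^-),(1^+,0^-),(0^+,1^-)\}$, this last condition says precisely that $\pa{f}$ is a d-frame morphism $\ca{L}\ra\bd{2}$. Hence $\bpt(\Gamma(\ca{L}))$ and $\dpt(\ca{L})$ have the same underlying set; and the basic positive, resp.\ negative, opens of $\bpt(\Gamma(\ca{L}))$ are the sets $\{\pa{f}:f^+(a^+)=1\}$ for $a^+\in L^+$, resp.\ $\{\pa{f}:f^-(a^-)=1\}$ for $a^-\in L^-$, which are literally the sets $\phi_{\ca{L}}^+(a^+)$, resp.\ $\phi_{\ca{L}}^-(a^-)$, generating the two topologies of $\dpt(\ca{L})$. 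So $\bpt(\Gamma(\ca{L}))=\dpt(\ca{L})$ as bispaces, and the morphism part follows as above.

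I expect the main obstacle to be the object part of the second equality, and within it the careful use of the bipoint lemma: one must track that ``$\pa{f}$ is a bipoint of $\Gamma(\ca{L})$'' first unfolds to ``$\cpu{f}$ respects all inequalities $\syw{a}{a}\le\syv{b}{b}$ holding in the main component'' and then reduces, via the frame laws, to ``$\cpu{f}$ respects the generating relation $R_{\ca{L}}$'' before it can be compared with the definition of a d-frame map into $\bd{2}$; this comparison also requires the explicit $\con$ and $\tot$ components of $\bd{2}$. The object part of the first equality, the comparison of topologies, and both morphism parts are then direct unwindings of the definitions of $\bomf,\bpt,\dOm,\dpt,\del$ and $\Gamma$.
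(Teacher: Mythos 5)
Your proof is correct and follows essentially the same route as the paper's: unwind the definitions to see that the $\con$ and $\tot$ components of $\del(\bOm(X))$ are exactly the disjoint and covering pairs, identify the bipoints of $\Gamma(\ca{L})$ with the d-frame morphisms $\ca{L}\ra\bd{2}$ by reducing to the generating relation, and observe that the morphism parts are immediate. The only cosmetic difference is that you route the first equality through $\bomf$ before noting the direct computation in $\Om(X)$, whereas the paper (whose statement concerns $\del\circ\bOm$) works in $\Om(X)$ from the start; your extra care about why no unintended collapses occur in $\Om_{\fin}(X)$, and your explicit description of $c_m$ and $t_m$, only make the argument more complete.
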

\begin{proof}
The two frame components of the d-frames $\dOm(X)$ and $\del(\mf{b}\Om(X))$ are clearly the same. The pairs in $\con_{\mf{b}\Om(X)_{\we}}$ are the disjoint pairs, by definition of $\mf{b}\Om(X)$, and similarly one sees that the totality components of $\del(\mf{b}\Om(X))$ and $\dOm(X)$ coincide. For a morphism $f:X\ra Y$ we have that $\del(\mf{b}\Om(f))=\del((f^{-1},f^{-1},f^{-1}))=(f^{-1},f^{-1})=\dOm(f)$. For a biframe $\ca{L}$ and a pair $f^+:L^+\ra 2$ and $f^-:L^-\ra 2$, below, we denote as $f:L\ra 2$ the unique frame map that coincides with $f^+$ when restricted to $L^+$, and with $f^-$ when restricted to $L^-$. For any d-frame $\ca{L}$, we have the following.
\begin{align*}
    &\mf{bpt}(\Gamma(\ca{L}))=\\
    &=\{\pa{f}\in \pt(L^+)\times \pt(L^-): (f^+,f^-,f):\Gamma(\ca{L})\ra \bd{2}\mb{ is a biframe map}\}=\\
    &=\{\pa{f}\in\pt(L^+)\times \pt(L^-):f^+(a^+)\we f^-(a^-)=0\mb{ when $\pa{a}\in c_L$}\\
    & \mb{and }f^+(a^+)\ve f^-(a^-)=1\mb{ when $\pa{a}\in t_L$}\}=\\
    & \{\pa{f}\in\pt(L^+)\times \pt(L^-):f^+(a^+)=0\mb{ or } f^-(a^-)=0\mb{ when $\pa{a}\in c_L$,}\\
    & \mb{and }f^+(a^+)=1\mb{ or }f^-(a^-)=1\mb{ when $\pa{a}\in t_L$}\}=\\
    & =\dpt(\ca{L}).
\end{align*}
Showing that the positive and the negative topologies of the two bispaces $\mf{bpt}(\Gamma(\ca{L}))$ and $\dpt(\ca{L})$ are the same is easy; one does it by showing that this bijection is a bihomeomorphism. For a d-frame morphism $f:\ca{L}\ra \ca{M}$, we have that $\mf{bpt}(\Gamma(f))=\mf{bpt}((f^+,f^-,f))=(-\circ f,-\circ f)=\dpt(f)$.
\end{proof}

\begin{lemma}\label{finitaryconnect}
We have the following facts.
\begin{itemize}
    \item $\bomf=\mc{fin}\circ \bOm$.
    \item $\dOm=\Delta\circ \mc{fin}\circ \bOm$.
    \item $\dpt= \bpt \circ\Gamma$.
    \item For a d-frame $\ca{L}$, the biframe $\Gamma(\ca{L})$ is finitary.
\end{itemize}
\end{lemma}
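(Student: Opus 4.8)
The plan is to dispatch the four items by reducing each to an earlier result or to the defining presentations of $\Delta$ and $\Gamma$. For $\bomf=\mc{fin}\circ\bOm$, I would compare, for a bispace $X$, the biframes $\bomf(X)$ and $\mc{fin}(\bOm(X))$. They have the same pair of component frames $\Om^+(X),\Om^-(X)$, so only the main component is at issue. The main component of $\bOm(X)$ is $\cp{\Om^+(X)}{\Om^-(X)}$ modulo the congruence $C_{\Om(X)}$, and by Lemma \ref{alsorel9} its finitary interior $\mc{fin}(C_{\Om(X)})$ is generated by the relation $\{(\syw{U}{U},\syv{V}{V}):U^+\cap U^-\se V^+\cup V^-\}$; this is verbatim the relation $R_{\Om_{\fin}(X)}$ presenting $\Om_{\fin}(X)$, so the two main components coincide (canonically). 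On morphisms $\bomf$, $\mc{fin}$ and $\bOm$ all send a bicontinuous $f$ to $(f^{-1},f^{-1})$, so the two functors agree.

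For $\dOm=\Delta\circ\mc{fin}\circ\bOm$ I would combine the previous item with Proposition \ref{gammadelta}, which already gives $\Delta\circ\bOm=\dOm$; so it suffices to check that $\Delta\circ\mc{fin}=\Delta$ on biframes. This holds because the consistency and totality components of $\Delta(\ca{M})$ only record which pairs $\pa{a}$ satisfy $\syw{a}{a}=0$, respectively $\syv{a}{a}=1$, in the main component $M$, and these are equalities between finitary elements: since $\mc{fin}(C_M)\se C_M$ and any pair of finitary elements in $C_M$ lies in $C_M\cap(\fin(M)\times\fin(M))\se\mc{fin}(C_M)$, the congruences $C_M$ and $\mc{fin}(C_M)$ identify exactly the same pairs of finitary elements, hence induce the same $\con$ and $\tot$ components. (Equivalently, one may recompute these components of $\Delta(\bomf(X))$ by hand exactly as in the proof of Proposition \ref{gammadelta}: $\syw{U}{U}=0$ in $\Om_{\fin}(X)$ iff $U^+\cap U^-=\emptyset$, and $\syv{U}{U}=1$ iff $U^+\cup U^-=X$, which are precisely the defining relations of $\dOm(X)$.)

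The third item, $\dpt=\bpt\circ\Gamma$, is nothing but the second equality of Proposition \ref{gammadelta}, once one notes that the restriction of $\bpt$ to finitary biframes is the restriction of the biframe spectrum functor, and that $\Gamma(\ca{L})$ is a finitary biframe by the fourth item, so the composite is well typed. For the fourth item I would unfold the construction of $\Gamma$: for a d-frame $\ca{L}$ the main component of $\Gamma(\ca{L})$ is the coproduct $\cpu{L}$ quotiented by the congruence generated by the relations $\syw{a}{a}=0$ (for $\pa{a}\in c_L$) and $\syv{a}{a}=1$ (for $\pa{a}\in t_L$). Each element occurring here --- a binary meet of generators, a binary join of generators, the bottom (an empty join) and the top (an empty meet) --- is finitary, so this generating relation consists of pairs of finitary elements; hence the congruence it generates is finitary and $\Gamma(\ca{L})$ is a finitary biframe.

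Of the four, the only step needing genuine care is the second: one must make precise that $\Delta$ sees only the finitary part of a biframe --- that passing to the finitary interior of the main congruence leaves the consistency and totality relations untouched --- or, equivalently, repeat the component-by-component verification of Proposition \ref{gammadelta} with $\Om_{\fin}(X)$ in place of $\Om(X)$. The other three items are essentially bookkeeping against definitions and results already in place.
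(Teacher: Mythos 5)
Your proof is correct and follows essentially the same route as the paper: item (1) by matching the presentation of $\mc{fin}(\Om(X))$ coming from Lemma \ref{alsorel9} against the relation $R_{\Om_{\fin}(X)}$, items (2) and (3) by reduction to Proposition \ref{gammadelta}, and item (4) by inspecting the generating relation of the main component of $\Gamma(\ca{L})$. The only divergence is that the paper's proof of item (2) just cites item (1) and Proposition \ref{gammadelta} without spelling out why $\Delta\circ\mc{fin}=\Delta$; your explicit check that $C_M$ and $\mc{fin}(C_M)$ identify exactly the same pairs of finitary elements, so the $\con$ and $\tot$ components are unchanged, fills in a step the paper leaves implicit.
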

\begin{proof}
Let us prove each item in turn.
\begin{itemize}
    \item To see that the two functors agree on objects, consider a bispace $X$. We have that $\mc{fin}(\bOm(X))$ is the biframe $(\Om^+(X),\Om^-(X),\mc{fin}(\Om(X)))$, but the frame $\mc{fin}(\Om(X))$ is by definition the same as the frame $\bomf(\Om(X))$. On morphisms, $\bOm$ and $\bomf$ both assign to each bicontinuous map $f$ the pairing $(f^{-1},f^{-1})$, and the functor $\mc{fin}$ leaves biframe maps unaltered.
    \item This follows from the first item and from Proposition \ref{gammadelta}.
    \item This is part of Proposition \ref{gammadelta}.
    \item For a d-frame $\ca{L}$, the biframe $\Gamma(\ca{L})$ has as its main component the relation
    \[
    \{(\syw{x}{x},0):\pa{x}\in c_L\}\cup \{(1,\syv{x}{x}):\pa{x}\in t_L\}
    \]
    on the coproduct $\cpu{L}$, and this is clearly a finitary relation.\qedhere
\end{itemize}
\end{proof}
Something stronger can be proven. We now would like to show that the adjunction $\dOm\dashv \dpt$ is the composition of the adjunction $\bOm_{\fin}\dashv \bpt$ and the adjunction $\Gamma\vdash \Delta$; and that similarly the adjunction $\bOm_{\fin}\dashv \bpt$ is the composition of the adjunction $\bOm\dashv \bpt$ and the adjunction $i\vdash \mi{fin}$. We denote an adjunction $F\vdash G$ with unit $\eta$ and counit $\varepsilon$ as the quadruple $(F,G,\eta,\varepsilon)$. It is a well-known fact that adjunctions can be composed, provided that their source and target categories agree. In particular, in \cite{maclane71} it is proven (Theorem 1.8 of Chapter IV) that whenever we have two adjunctions as follows

\begin{tikzcd}[column sep=large, row sep=large,
]
\ca{C}
\arrow[r,"F"{name=omm},left, shift left=2.6]
& \ca{D}
\arrow[r,"H"{name=fin},left, shift left=2.6]
\arrow[l,"G"{name=ptt},left, shift left=2.6]
\arrow[phantom,from=ptt,to=omm,"\dashv" rotate=-90]
& \ca{E}
\arrow[l,"I"{name=inclu},left, shift left=2.6]
\arrow[phantom,from=ptt,to=omm,"\dashv" rotate=-90]
\arrow[phantom,from=fin,to=inclu,"\dashv" rotate=-90]
\end{tikzcd}

such that the unit-counit pair of $F\dashv G$ is $(\eta^A,\varepsilon^A)$ and that of $H\dashv I$ is $(\eta^B,\varepsilon^B)$, respectively. We may compose them to obtain an adjunction
\[
(H,I,\eta^B,\varepsilon^B)\circ (F,G,\eta^A,\varepsilon^A)=(H\circ F,G\circ I,\eta^B\circ \eta^A,\varepsilon^B\circ \varepsilon^A).
\]
The unit and the counit of the composite adjunction are the horizontal compositions of the two original units and the two original counits, respectively. Explicitly, for an object $X\in \mf{Obj}(\ca{C})$ the unit is the morphism $G(\eta^B_{F(X)})\circ \eta^A_X$. For an object $Y\in \mf{Obj}(\ca{E})$, the counit of the composite adjunction is $\varepsilon^B_Y\circ H(\varepsilon^A_{I(Y)})$. Let us use the notion of composition of adjunctions to relate between each other the three bitopological spatial-sober adjunctions that we have seen. Below, the quadruples $(\bOm,\bpt,\mf{b}\psi,\mf{b}\varphi)$, $(\bomf,\bpt,\mf{bf}\psi,\mf{bf}\varphi)$, and $(\dOm,\dpt,\mf{d}\psi,\mf{d}\varphi)$ denote the obvious adjunctions.

\begin{theorem}
The adjunction $(\bomf,\bpt,\mf{bf}\psi,\mf{bf}\varphi)$ is the composition of adjunctions $(\bOm,\bpt,\mf{b}\psi,\mf{b}\varphi)\circ (\mc{fin},i,\eta,\varepsilon)$. The adjunction $(\dOm,\dpt,\mf{d}\psi,\mf{d}\varphi)$ is the composition of adjunctions $(\Delta,\Gamma,\delta,\gamma)\circ (\mc{fin},i,\eta,\varepsilon)\circ (\bOm,\bpt,\mf{b}\psi,\mf{b}\varphi)$.
\end{theorem}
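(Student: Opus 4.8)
The plan is to apply the composition-of-adjunctions theorem recalled above (Theorem IV.8.1 of \cite{maclane71}) and then to identify the left adjoint, the right adjoint, the unit and the counit of the resulting composite with those of the finitary-biframe adjunction (respectively, of the d-frame adjunction): the functor identifications come straight from Lemma \ref{finitaryconnect}, and the identifications of the natural transformations are routine component checks. The hypotheses of the composition theorem are immediate, since $\bOm$ lands in $\bd{BiFrm}^{op}$, which is the source of $\mc{fin}$, and $\mc{fin}$ lands in $\bd{BiFrm_{fin}}^{op}$, which is the source of $\del$. One should only keep in mind that $(\mc{fin},i,\eta,\varepsilon)$ and $(\del,\Gamma,\delta,\gamma)$ here denote the coreflection $i\dashv\mc{fin}$ and its d-frame analogue read in the opposite categories -- where they become $\mc{fin}\dashv i$ and $\del\dashv\Gamma$ -- so that, in particular, the unit of $(\mc{fin},i,\eta,\varepsilon)$ has components $\varepsilon_{\ca{M}}:i(\mc{fin}(\ca{M}))\ra\ca{M}$ and its counit has components $\eta_{\ca{L}}$.

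Identifying the functors is the easy part. By the composition formula the left adjoint of the first composite is $\mc{fin}\circ\bOm$, which equals $\bomf$ by Lemma \ref{finitaryconnect}, and its right adjoint is $\bpt\circ i$; but this is precisely the spectrum functor of the finitary-biframe duality, which was defined as the restriction of $\bpt$ along $i$. Composing once more with $\del\dashv\Gamma$, the left adjoint of the three-fold composite is $\del\circ\mc{fin}\circ\bOm=\dOm$ and its right adjoint is $\bpt\circ i\circ\Gamma=\dpt$, again by Lemma \ref{finitaryconnect} (recalling that $\bpt\circ i$ is the finitary spectrum functor and $\dpt=\bpt\circ\Gamma$).

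It remains to match the units and counits, which is a calculation with the explicit formulas of the composition theorem. For a bispace $X$, the unit of the composite of $\bOm\dashv\bpt$ followed by $\mc{fin}\dashv i$, evaluated at $X$, is $\bpt(\varepsilon_{\bOm(X)})\circ\mf{b}\psi_X$. Since $\varepsilon_{\bOm(X)}:\mc{fin}(\bOm(X))\ra\bOm(X)$ is carried by the identity frame maps on $\Om^+(X)$ and $\Om^-(X)$, the functor $\bpt$ sends it to the map acting on underlying points as $(g^+,g^-)\mapsto(g^+,g^-)$, so the composite sends $x$ to the bipoint $\pa{N_x}$ of $\bomf(X)$ -- which is exactly $\mf{bf}\psi_X(x)$. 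Dually, for a finitary biframe $\ca{L}$ the counit of that composite at $\ca{L}$ is $\eta_{\ca{L}}\circ\mc{fin}(\mf{b}\varphi_{i(\ca{L})})$; since $\eta_{\ca{L}}$ is carried by identity maps and $\mc{fin}$ acts as the identity on the two frame components, this is carried by the pair of frame maps defining $\mf{b}\varphi_{\ca{L}}$, hence it is $\mf{bf}\varphi_{\ca{L}}$. This establishes the first claim. For the second, associativity of the composition of adjunctions together with the first claim reduces matters to checking that composing $(\bomf,\bpt,\mf{bf}\psi,\mf{bf}\varphi)$ with $\del\dashv\Gamma$ gives $(\dOm,\dpt,\mf{d}\psi,\mf{d}\varphi)$; the functors were matched above, and exactly the same component computation -- run now for $\del\dashv\Gamma$ in place of $\mc{fin}\dashv i$ -- yields unit $x\mapsto(N^+_x,N^-_x)=\mf{d}\psi_X(x)$ at a bispace $X$ and counit the pair of frame maps defining $\mf{d}\varphi_{\ca{L}}$ at a d-frame $\ca{L}$.

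The argument has no genuinely hard step; the difficulty is entirely in the bookkeeping. The two things to be scrupulous about are: (i) the directions of all arrows once the opposite categories $\bd{BiFrm}^{op}$, $\bd{BiFrm_{fin}}^{op}$, $\bd{dFrm}^{op}$ are in play, so that left and right adjoints, and units and counits, are not inadvertently interchanged; and (ii) the fact that the symbols $\bpt$, $\psi$, $\varphi$ denote formally different functors and natural transformations in the three dualities -- related by restriction along $i$ and along $\Gamma$ -- so that the whole content of the theorem is precisely that these restrictions are what the composition-of-adjunctions formula produces.
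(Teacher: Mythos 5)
Your proposal is correct and follows essentially the same route as the paper: both invoke Lemma \ref{finitaryconnect} to identify the composite functors, then verify that the horizontally composed units and counits agree with $\mf{bf}\psi,\mf{bf}\varphi$ (and then $\mf{d}\psi,\mf{d}\varphi$) by observing that $\eta$, $\varepsilon$, $\delta$, $\gamma$ are all carried by identity frame maps, reducing the triangles to trivial equalities. Your explicit flagging of the opposite-category bookkeeping is a helpful addition but does not change the substance of the argument.
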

\begin{proof}
By Lemma \ref{finitaryconnect}, it suffices to show that the compositions of the units and the counits give the desired result. First, consider a bispace $X$. We need to check that the following diagram commutes.
\[
\begin{tikzcd}[row sep=large, column sep = large]
X
\arrow{r}{\mf{bf}\psi_X}
\arrow{d}{\mf{b}\psi_X}
& \bpt(i(\mc{fin}(\bOm(X)))) \\
\bpt(\bOm(X)) 
\arrow[ur,swap,"\bpt(\varepsilon_{\bOm(X)})"] 
\end{tikzcd}
\]
We recall that for any biframe $\ca{L}$ the map $\varepsilon_{\ca{L}}:i(\mc{fin}(\ca{L}))\ra \ca{L}$ is the identity on both components, and that $\bpt$ acts on morphisms by sending each pair of frame maps to the corresponding pair of preimage maps, and so $\bpt(\varepsilon_{\ca{L}})=(\id^+,\id^-)$. Then, commutativity of the diagram amounts to having $\pa{N_x}=\pa{\id}\circ \pa{N_x}$ for every $x\in X$, and this is trivially true. Now for the counits of the first composition of adjunctions. Suppose that $\ca{L}$ is a finitary biframe. We need to check that the following diagram commutes.
\[
\begin{tikzcd}[row sep=large, column sep = large]
\ca{L}
\arrow{r}{\mf{bf}\varphi_{\ca{L}}}
\arrow{d}{\eta_{\ca{L}}}
& \mc{fin}(\bOm(\bpt(i(\ca{L})))) \\
\mc{fin}(i(\ca{L}))
\arrow[ur,swap,"\mc{fin}(\mf{b}\varphi_{i(\ca{L})})"] 
\end{tikzcd}
\]
We recall that the map $\eta_{\ca{L}}:\ca{L}\ra \mc{fin}(i(\ca{L}))$ is the identity map on both components, and that $\mc{fin}$ maps a pair of frame maps to itself. Then, commutativity of the triangle above amounts to having $\mf{b}\varphi_{i(\ca{L})}=\mf{bf}\varphi_{\ca{L}}$, and these holds by definition of these two maps.
Let us now check the second composition of adjunctions. By the results we have obtained so far, it suffices to show that $(\dOm,\dpt,\mf{d}\psi,\mf{d}\varphi)=(\Delta,\Gamma,\delta,\gamma)\circ (\bomf,\bpt,\mf{bf}\psi,\mf{bf}\varphi)$. Let us start with the units of the adjunctions. Suppose that $X$ is a bispace. We check that the following commutes.
\[
\begin{tikzcd}[row sep=large, column sep = large]
X
\arrow{r}{\mf{d}\psi_X}
\arrow{d}{\mf{bf}\psi_X}
& \bpt(\Gamma(\Delta(\bomf(X)))) \\
\bpt(\bomf(X)) 
\arrow[ur,swap,"\bpt(\gamma_{\bomf(X)})"] 
\end{tikzcd}
\]
We recall that for every biframe $\ca{L}$ the biframe map $\gamma:\Gamma(\Delta(\ca{L}))\ra \ca{L}$ is defined as the identity on both components. Commutativity of the diagram, then, amounts to the equality $\pa{\id}\circ \pa{N_x}=\pa{N_x}$ for every $x\in X$, which holds trivially. Finally, for a d-frame $\ca{L}$, we check that the following triangle commutes.
\[
\begin{tikzcd}[row sep=large, column sep = large]
\ca{L}
\arrow{r}{\mf{d}\varphi_{\ca{L}}}
\arrow{d}{\delta_{\ca{L}}}
& \Delta(\bomf(\bpt(\Gamma(\ca{L})))) \\
\Delta(\Gamma(\ca{L}))
\arrow[ur,swap,"\Delta(\mf{bf}\varphi_{\Gamma(\ca{L})})"] 
\end{tikzcd}
\]
We recall that for a d-frame $\ca{L}$ the d-frame map $\delta:\ca{L}\ra \Delta(\Gamma(\ca{L}))$ is defined as the identity on both frame components. The functor $\Delta$ maps each pair of frame maps to itself. Then, commutativity of the diagram amounts to having the equality $\mf{d}\varphi_{\ca{L}}=\mf{bf}(\varphi_{\ca{L}})$. This holds by definition of these two maps: we know that the target d-frame of opens is the same, and it is the d-frame of opens of $\dpt(\ca{L}))$ by Lemma \ref{finitaryconnect}, and each of these maps assigns to each $a^+\in L^+$ the set of pairs $\pa{f}$ with $f^+(a^+)=1$.
\end{proof}

From this we learn that the finitary biframe duality is a middle ground between the biframe duality and the d-frame duality. We have made precise the intuition that taking the finitary biframe of opens of a bispace preserves less information than taking its d-frame, but more information than taking its d-frame. To conclude this section, we look at some other more concrete facts relating bisobriety and bispatiality with the analogous notions for the biframe and the d-frame dualities.
\begin{lemma}\label{preinclu9}
We have the following facts.
\begin{itemize}
    \item For bispace $X$, we have a chain of bisubspace inclusions
    \[
  X\se \bpt(\bOm(X))\se \bpt(\bomf(X))\se \dpt(\dOm(X)),
    \]
    where the first only holds up to bihomeomorphism.
    \item For every biframe $\ca{L}$, we have a chain of biframe surjections
    \[
    \ca{L}\epi \bomf(\bpt(\ca{L}))\epi \bOm(\bpt(\ca{L})),
    \]
    and these are $\mf{bf}\varphi_{\ca{L}}$ and $\varepsilon_{\bOm(\bpt(\ca{L}))}$, respectively.
\end{itemize}
\end{lemma}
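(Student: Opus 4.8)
The plan is to read both chains off Lemma~\ref{finitaryconnect} together with the three adjunctions already in place (the biframe duality $\bOm\dashv\bpt$, the coreflection $i\dashv\mc{fin}$, and the $\Gamma\dashv\Delta$ adjunction between finitary biframes and d-frames), using repeatedly the following elementary observation: if $q\colon\ca{M}\to\ca{N}$ is a biframe map that is the identity on both the positive and the negative components (so that $M^{\pm}=N^{\pm}$), then $\bpt(q)$ realises $\bpt(\ca{N})$ as a bisubspace of $\bpt(\ca{M})$. Indeed $\bpt(q)=(-\circ\mathrm{id},-\circ\mathrm{id})$ sends a bipoint $\pa{f}$ of $\ca{N}$ to the bipoint $\pa{f}$ of $\ca{M}$ --- the same pair of component maps, now merely required to extend over $\ca{M}$ rather than over $\ca{N}$ --- so on underlying sets it is the inclusion of the component pairs that extend over $\ca{N}$ into those that extend over $\ca{M}$; and since the opens of $\bpt(\ca{N})$ and of $\bpt(\ca{M})$ are indexed by the common frames $M^{\pm}=N^{\pm}$ by literally the same formulas, this inclusion carries the subspace topology, hence is a bisubspace embedding.

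For the first bullet I would treat the three inclusions in turn. The inclusion $X\se\bpt(\bOm(X))$ is witnessed by the sobrification unit $\mf{b}\psi_X$, which is bicontinuous, whose image carries the subspace topology (the preimage of $\varphi^{+}_{\bOm(X)}(U^{+})$ is $U^{+}$), and which is a bihomeomorphism onto that image exactly when $X$ is patch-$T_0$; this is why the inclusion is only asserted up to bihomeomorphism. For $\bpt(\bOm(X))\se\bpt(\bomf(X))$: by Lemma~\ref{finitaryconnect}, $\bomf(X)=\mc{fin}(\bOm(X))$, and the coreflection counit $\varepsilon_{\bOm(X)}\colon\bomf(X)\epi\bOm(X)$ is a biframe map that is the identity on components, so the observation applies. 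For $\bpt(\bomf(X))\se\dpt(\dOm(X))$ I would argue directly from the point descriptions: a bipoint of the finitary biframe $\bomf(X)$ is a pair $\pa{f}$ such that $U^{+}\cap U^{-}\se V^{+}\cu V^{-}$ in $X$ forces one of $f^{+}(U^{+})=0$, $f^{-}(U^{-})=0$, $f^{+}(V^{+})=1$, $f^{-}(V^{-})=1$; specialising to $V^{+}=V^{-}=\emptyset$ and to $U^{+}=U^{-}=X$ yields exactly the two conditions that make $\pa{f}$ a d-frame map $\dOm(X)\to\bd{2}$, so every bipoint of $\bomf(X)$ is a point of $\dpt(\dOm(X))$, and again the opens agree on the nose. (Alternatively, $\dOm=\Delta\circ\bomf$ and $\dpt=\bpt\circ\Gamma$ by Lemma~\ref{finitaryconnect}, so $\dpt(\dOm(X))=\bpt(\Gamma(\Delta(\bomf(X))))$, and one applies the observation to the counit $\gamma_{\bomf(X)}$, which is again identity on components.)

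For the second bullet, take $\ca{L}$ to be a finitary biframe (the case of a general biframe reducing to this after coreflecting). That $\bomf(\bpt(\ca{L}))\epi\bOm(\bpt(\ca{L}))$ is exactly $\varepsilon_{\bOm(\bpt(\ca{L}))}$ is immediate from $\bomf(\bpt(\ca{L}))=\mc{fin}(\bOm(\bpt(\ca{L})))$ and the fact that the coreflection counit is always the identity on components and a surjection on the main component (its kernel congruence being coarser than that of $\bOm(\bpt(\ca{L}))$). That $\ca{L}\epi\bomf(\bpt(\ca{L}))$ is the bispatialization $\mf{bf}\varphi_{\ca{L}}$: its components $\varphi^{\pm}_{\ca{L}}\colon L^{\pm}\to\Omega^{\pm}(\bpt(\ca{L}))$ are surjective, since every open of the bispectrum is by definition of the form $\varphi^{\pm}_{\ca{L}}(a^{\pm})$; and the main component $\Omega_{\fin}(\bpt(\ca{L}))$ is generated as a frame by the elements $\sy{\varphi^{+}_{\ca{L}}(a^{+})}$ and $\sy{\varphi^{-}_{\ca{L}}(a^{-})}$, which are precisely the values taken by the main component of $\mf{bf}\varphi_{\ca{L}}$ (which is a frame map, by the lemma that $\mf{bf}\varphi_{\ca{L}}$ is a biframe map) on the generators $e^{\pm}_{L}(a^{\pm})$ of $L$; hence the image of that main component is a subframe containing a generating set, so it is onto.

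I do not expect a genuine obstacle: everything reduces to Lemma~\ref{finitaryconnect}, the coreflection, and the point characterisations already at hand. What does need care is bookkeeping of direction --- biframe surjections correspond to bisubspace inclusions, so the chain of quotients in the second bullet is the dual of the chain of inclusions in the first --- and, for each inclusion of the first bullet, verifying that the smaller bispace carries the \emph{subspace} topology and not merely sits inside the larger one as a set, which in every instance reduces to the remark that the two bispaces in question have their opens indexed by literally the same frames. The sole place where an honest set-inclusion fails is $X\se\bpt(\bOm(X))$, which is the usual $T_0$ phenomenon flagged by ``up to bihomeomorphism''.
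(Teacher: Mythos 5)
Your proof is correct and follows essentially the same route as the paper: both reduce the first chain, via Lemma~\ref{finitaryconnect}, to the observation that the three pointfree structures differ only by connecting maps that are the identity on the frame components (equivalently, that the defining relations satisfy $\Delta(R_{\Om(X)})\se\mc{fin}(R_{\Om(X)})\se R_{\Om(X)}$), so that the point sets are nested and the topologies, being indexed by the same component frames, agree; and both obtain the second chain from surjectivity of the spatialization components together with the coreflection counit being a quotient on the main component. Your packaging via the general ``identity on components'' observation and the direct point-level check of the third inclusion are only cosmetic variations on the paper's argument.
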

\begin{proof}
Let us prove the two items in turn.
\begin{itemize}
    \item By Lemma \ref{finitaryconnect}, it suffices to show that we have subspace inclusions
     \[
     X\se \bpt(\bOm(X))\se \bpt(\mc{fin}(\bOm(X))\se \bpt(\Gamma(\Delta(\mc{fin}(\bOm(X))).
     \]
     We know from the introduction that (up to isomorphism) we have a bisubspace inclusion $X\se \bpt(\bOm(X))$. For the other two inclusions, let us notice that each of the three spaces involved is defined to be the set of pairs $\pa{f}\in \pt(\Om^+(X))\times \pt(\Om^-(X))$ such that their coproduct pairing $\cpu{f}:\Om^+(X)\oplus \Om^-(X)\ra 2$ respects a certain relation. For the space $\bpt(\Om(X))$, the pair $\pa{f}$ needs to respect the relation $R_{\Om(X)}$ encoding all inequalities of the patch-opens of $X$. For the space $\bpt(\mc{fin}(\bOm(X)))$, it will have to only respect the finitary interior $\mc{fin}(R_{\Om(X)})$ of this relation. Finally, for the space $\bpt(\Gamma(\Delta(\mc{fin}(\bOm(X)))$ the pairing $\cpu{f}$ will only have to respect the relation $\Delta(R_{\Om(X)})$ encoding only the information on disjoint and covering pairs. As all pairs here are finitary, we have $\Delta(R_{\Om(X)})\se \mc{fin}(R_{\Om(X)})\se R_{\Om(X)}$, and so we have the chain of set inclusions
     \begin{align*}
         &   \{\pa{f}\in \pt(\Om^+(X))\times \pt(\Om^-(X)):\cpu{f}\mb{ respects }R_{\Om(X)}\}\se \\
         & \se \{\pa{f}\in \pt(\Om^+(X))\times \pt(\Om^-(X)):\cpu{f}\mb{ respects }\mc{fin}(R_{\Om(X)})\}\se\\
         & \se  \{\pa{f}\in \pt(\Om^+(X))\times \pt(\Om^-(X)):\cpu{f}\mb{ respects }\Delta(R_{\Om(X)})\}.
     \end{align*}

     The underlying sets of the three bispaces we are considering are then in the desired relation. The fact that these also are bisubspace inclusions follows from the fact that our three bitopologies are defined similarly.
    \item By definition of the topology on $\bpt(\ca{L})$, the bispatialization map $\varphi_{\ca{L}}$ is always a pair of frame surjections. We also have a frame surjection 
    \[
    (\Om^+(X)\oplus \Om^-(X))/\mc{fin}(R_{\Om(X)})\epi (\Om^+(X)\oplus \Om^-(X))/R_{\Om(X)})\cong \Om(X)
    \]
    as the relation $\mc{fin}(R_{\Om(X)})$ induces fewer identifications than $R_{\Om(X)}$. This means that the map $\varepsilon_{\bOm(\bpt(\ca{L}))}=\pa{\id}:\bomf(\bpt(\ca{L}))\ra \bOm(\bpt(\ca{L}))$ is a biframe surjection. Then, there is a biframe surjection $\bomf(\bpt(\ca{L}))\epi \bOm(\bpt(\ca{L}))$.\qedhere
\end{itemize}
\end{proof}

\begin{corollary}\label{composeadjconseq}
We have the following facts.
\begin{itemize}
    \item Every d-sober space is bisober, and every bisober space is patch-sober.
    \item If $\ca{L}$ is a spatial finitary biframe, then it is bispatial.
\end{itemize}
\end{corollary}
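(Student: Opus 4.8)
The plan is to read both statements off Lemma~\ref{preinclu9}, which provides, for each bispace $X$, a chain of bisubspace inclusions, and, for each biframe $\ca{L}$, a chain of biframe surjections; no substantial further machinery is needed beyond this and the observation that the three sobrification units are given by a single common formula.

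\emph{First item.} For a bispace $X$ the maps $\mf{b}\psi_X$, $\psi_X=\mf{bf}\psi_X$ and $\mf{d}\psi_X$ all send a point $x$ to the pair $\pa{N_x}$, so, viewing their codomains as subsets of $\pt(\Om^+(X))\times\pt(\Om^-(X))$, they are literally the same function with progressively larger targets, and by Lemma~\ref{preinclu9} these targets form a chain of bisubspace inclusions
\[
X\se\bpt(\bOm(X))\se\bpt(\bomf(X))\se\dpt(\dOm(X))
\]
(the leftmost only up to bihomeomorphism). If $X$ is d-sober then $\mf{d}\psi_X$ is a bijection of $|X|$ onto $|\dpt(\dOm(X))|$, so the displayed inclusions collapse; in particular $|X|=|\bpt(\bomf(X))|$, and since this is a bisubspace inclusion the bijection $\psi_X$ is a bihomeomorphism, i.e.\ $X$ is bisober. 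The identical squeeze applied to the shorter chain $X\se\bpt(\bOm(X))\se\bpt(\bomf(X))$ shows that bisobriety of $X$ forces $|X|=|\bpt(\bOm(X))|$, so $\mf{b}\psi_X$ is a bihomeomorphism and $X$ is patch-sober.

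\emph{Second item.} Let $\ca{L}$ be a finitary biframe that is spatial when regarded as a biframe, i.e.\ $\mf{b}\varphi_{\ca{L}}\colon\ca{L}\to\bOm(\bpt(\ca{L}))$ is an isomorphism. Since $\mf{bf}\varphi_{\ca{L}}$ is the same pair of frame maps as $\mf{b}\varphi_{\ca{L}}$ and $\varepsilon_{\bOm(\bpt(\ca{L}))}$ is the identity on the two side components, Lemma~\ref{preinclu9} yields the factorisation
\[
\mf{b}\varphi_{\ca{L}}=\varepsilon_{\bOm(\bpt(\ca{L}))}\circ\mf{bf}\varphi_{\ca{L}}\colon\ \ca{L}\epi\bomf(\bpt(\ca{L}))\epi\bOm(\bpt(\ca{L}))
\]
as a composite of biframe surjections. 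As the composite is an isomorphism, $\mf{bf}\varphi_{\ca{L}}$ admits a left inverse (namely $(\mf{b}\varphi_{\ca{L}})^{-1}\circ\varepsilon_{\bOm(\bpt(\ca{L}))}$), hence is injective on each of its three frame components; being also surjective on each component by Lemma~\ref{preinclu9}, it is an isomorphism of biframes, which is precisely the statement that $\ca{L}$ is bispatial. (Equivalently: since $\mf{b}\varphi_{\ca{L}}$ is an isomorphism the biframe $\bOm(\bpt(\ca{L}))$ is itself finitary, so it coincides with $\bomf(\bpt(\ca{L}))$, the map $\varepsilon_{\bOm(\bpt(\ca{L}))}$ is an isomorphism, and $\mf{bf}\varphi_{\ca{L}}=(\varepsilon_{\bOm(\bpt(\ca{L}))})^{-1}\circ\mf{b}\varphi_{\ca{L}}$ is an isomorphism directly.)

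I do not expect a genuine obstacle here; the only points needing a line of care are the two elementary facts used implicitly -- that a bijective bisubspace inclusion is a bihomeomorphism, and that a biframe morphism which both admits a left inverse and is componentwise surjective is an isomorphism -- together with, if one wants to be fully rigorous about which concrete maps realise the chains of Lemma~\ref{preinclu9}, a glance back at the composition-of-adjunctions theorem to confirm that $\mf{d}\psi_X$ and $\mf{b}\varphi_{\ca{L}}$ really are the composites written above.
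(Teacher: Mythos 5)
Your proposal is correct and follows essentially the same route as the paper: both items are read off the two chains of Lemma~\ref{preinclu9}, squeezing the intermediate inclusion (resp.\ factoring the composite surjection) when the outer map is an isomorphism. You simply make explicit the details the paper leaves implicit (that the units are the same function with larger targets, and that a split mono which is also epi is an iso).
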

\begin{proof}
Let us prove the two items in turn.
\begin{itemize}
    \item By the first item of Lemma \ref{preinclu9}, if the bisubspace inclusion $X\se \dpt(\dOm(X))$ is an equality already, then so is $X\se \bpt(\bomf(X))$, and if $X\se \bpt(\bomf(X))$ is, then so is $X\se \bpt(\bOm(X))$.
    \item By the second item of Lemma \ref{preinclu9}, if the map $\mf{b}\varphi_{\ca{L}}:\ca{L}\epi \bOm(\bpt(\ca{L}))$, then so is the map $\mf{bf}\varphi_{\ca{L}}:\ca{L}\epi \bomf(\bpt(\ca{L}))$. \qedhere
\end{itemize}
\end{proof}
Let us now explore bispatiality and bisobriety taking into account these facts. We recall that given an adjunction $F:\ca{C}\lra \ca{D}:G$ with $F\dashv G$, unit $\eta$, and counit $\varepsilon$ we can describe the largest equivalence of categories which it restricts to. This equivalence relates the full subcategory $\mf{Fix}_{G\circ F}(\ca{C})$ of $\ca{C}$ determined by the objects $\{X\in \mf{Obj}(\ca{C}):\eta_X\mb{ is an isomorphism}\}$, and the analogous full subcategory of $\ca{D}$. We abuse notation by referring to this collection of objects, too, as $\mf{Fix}_{G\circ F}(\ca{C})$. We recall that the bispace-biframe adjunction is such that the set of objects of $\bd{BiFrm}$ which is a fixpoint of $\mf{b}\Om\circ \bpt$ coincides with $\{\ca{L}\in \mf{Obj}(\bd{BiFrm}):\ca{L}\cong \mf{b}\Om(X)\mb{ for some bispace }X\}$. Similarly, the bispaces which are fixpoints of $\bpt\circ \mf{b}\Om$ are those which are isomorphic to $\bpt(\ca{L})$ for some biframe $\ca{L}$.   
\begin{lemma}\label{functoriso}
Let $\ca{C}$ be a category. We have the following facts about isomorphisms.
\begin{itemize}
    \item If $F$ is a functor then $F(i)$ is an isomorphism for any isomorphism $i\in \mf{Mor}(\ca{C})$.
    \item If $i:X\cong Y$ and $j:Y\cong Z$ are isomorphisms in $\ca{C}$, then so is $j\circ i$, and its inverse is $i^{-1}\circ j^{-1}$.
\end{itemize}
\end{lemma}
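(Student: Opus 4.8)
The plan is to verify each of the two items directly from the definitions of isomorphism, functor, and composition in a category; both are standard and the argument is purely formal, so no auxiliary constructions are needed.

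For the first item I would begin with an isomorphism $i:X\to Y$ in $\ca{C}$, so that by definition there is a morphism $i^{-1}:Y\to X$ with $i^{-1}\circ i=1_X$ and $i\circ i^{-1}=1_Y$. Applying the functor $F$ and using that functors preserve composition and identities gives $F(i^{-1})\circ F(i)=F(i^{-1}\circ i)=F(1_X)=1_{F(X)}$ and symmetrically $F(i)\circ F(i^{-1})=F(1_Y)=1_{F(Y)}$. Hence $F(i)$ is an isomorphism, with inverse $F(i^{-1})$.

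For the second item, given isomorphisms $i:X\to Y$ and $j:Y\to Z$ with inverses $i^{-1}$ and $j^{-1}$, I would propose $i^{-1}\circ j^{-1}:Z\to X$ as the candidate inverse of $j\circ i$ and check the two composites using associativity and the unit laws: $(i^{-1}\circ j^{-1})\circ (j\circ i)=i^{-1}\circ (j^{-1}\circ j)\circ i=i^{-1}\circ 1_Y\circ i=1_X$, and dually $(j\circ i)\circ (i^{-1}\circ j^{-1})=j\circ (i\circ i^{-1})\circ j^{-1}=j\circ 1_Y\circ j^{-1}=1_Z$. Thus $j\circ i$ is an isomorphism with inverse $i^{-1}\circ j^{-1}$, as claimed. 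There is no real obstacle here; the only point requiring a little care is bookkeeping of the hom-sets so that the cancellations $j^{-1}\circ j=1_Y$ and $i\circ i^{-1}=1_Y$ are inserted at the right stage, but the category axioms handle this automatically.
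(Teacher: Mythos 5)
Your proposal is correct and follows exactly the same route as the paper's proof: both items are verified directly from the functor axioms and the associativity/unit laws, with $F(i^{-1})$ and $i^{-1}\circ j^{-1}$ as the explicit inverses. Nothing is missing.
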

\begin{proof}
For the first item, we observe that since functors respect identities and compositions we have that $F(i^{-1})\circ F(i)=F(i^{-1}\circ i)=F(1_{X})=1_{F(X)}$, and one shows similarly that $F(i)\circ F(i^{-1})=1_{F(Y)}$. For the second item, suppose that we have isomorphisms $i:X\cong Y$ and $j:Y\cong Z$ in $\ca{C}$. We have $i^{-1}\circ j^{-1}\circ j \circ i=i^{-1}\circ 1_{Y}\circ i=1_X$, and one shows similarly that $j\circ i\circ i^{-1}\circ j^{-1}=1_Z$.
\end{proof}
\begin{lemma}\label{fixiso9}
Suppose that we have an adjunction $F:\ca{C}\lra \ca{D}:G$ with $F\dashv G$, unit $\eta$, and counit $\varepsilon$. If $X\cong Y$ are isomorphic objects of $\ca{C}$ and if $X\in \mf{Fix}_{G\circ F}(\ca{C})$, then $Y\in \mf{Fix}_{G\circ F}(\ca{C})$, too. 
\end{lemma}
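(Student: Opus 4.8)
The plan is to run a short diagram chase powered by naturality of the unit $\eta$ together with the two bookkeeping facts in Lemma \ref{functoriso}. First I would fix an isomorphism $i\colon X\cong Y$ in $\ca{C}$. Applying naturality of $\eta\colon 1_{\ca{C}}\ra G\circ F$ to the morphism $i$ produces the commuting square which says precisely that $\eta_Y\circ i=(G\circ F)(i)\circ \eta_X$.

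Next I would observe that the right-hand side of this equation is an isomorphism: since $G\circ F$ is a functor and $i$ is an isomorphism, the first item of Lemma \ref{functoriso} gives that $(G\circ F)(i)$ is an isomorphism; and $\eta_X$ is an isomorphism by the hypothesis $X\in \mf{Fix}_{G\circ F}(\ca{C})$. By the second item of Lemma \ref{functoriso}, the composite $(G\circ F)(i)\circ \eta_X$ is then an isomorphism, hence so is $\eta_Y\circ i$.

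Finally, from $\eta_Y\circ i$ being an isomorphism and $i$ being an isomorphism I would conclude that $\eta_Y=(\eta_Y\circ i)\circ i^{-1}$ is a composite of isomorphisms, so by Lemma \ref{functoriso} once more $\eta_Y$ is an isomorphism; that is, $Y\in \mf{Fix}_{G\circ F}(\ca{C})$. There is no genuine obstacle here — the entire argument is routine — and the only points requiring a little care are orienting the naturality square in the right direction and remembering to apply the first item of Lemma \ref{functoriso} to the composite functor $G\circ F$ rather than to $F$ or $G$ individually.
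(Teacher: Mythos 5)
Your proof is correct and follows essentially the same route as the paper's: both hinge on the naturality square $\eta_Y\circ i=(G\circ F)(i)\circ\eta_X$ together with the two facts of Lemma \ref{functoriso}. The only difference is cosmetic — the paper explicitly exhibits $i\circ\eta_X^{-1}\circ G(F(i))^{-1}$ as the inverse of $\eta_Y$ and verifies both composites, whereas you more economically observe that $\eta_Y=((G\circ F)(i)\circ\eta_X)\circ i^{-1}$ is a composite of isomorphisms.
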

\begin{proof}
Suppose that we have $X,Y\in \mf{Obj}(\ca{C})$ and an isomorphism $i:X\cong Y$ in $\ca{C}$. Suppose also that $\eta_X:X\ra G(F(X))$ is an isomorphism. By \ref{functoriso}, we have an isomorphism $G(F(i)):G(F(X))\cong G(F(Y))$. Since $\eta$ is a natural transformation, furthermore, the following commutes.
\[
\begin{tikzcd}[row sep=large, column sep = large]
X
\arrow{r}{i} 
\arrow[swap]{d}{\eta_X} 
& Y
\arrow{d}{\eta_Y} \\
G(F(X)) 
\arrow{r}{G(F(i))} 
& G(F(Y))
\end{tikzcd}
\]
We claim that the inverse of $\eta_Y$ is $i\circ \eta_X^{-1}\circ G(F(i))^{-1}$. On the one hand, we can use commutativity of the diagram above and the second item of Lemma \ref{functoriso} to deduce that we have $\eta_Y\circ i\circ \eta_X^{-1}\circ G(F(i))^{-1}=G(F(i))\circ \eta_X\circ \eta_X^{-1}\circ G(F(i))^{-1}=G(F(i))\circ 1_{G(F(X))}\circ G(F(i))^{-1}=1_{G(F(Y))}$. On the other hand, we may use the same facts to also deduce $i\circ \eta_X^{-1}\circ G(F(i))^{-1}\circ \eta_Y=i\circ (G(F(i))\circ \eta_X)^{-1}\circ \eta_Y=i\circ (\eta_Y\circ i)^{-1}\circ \eta_Y=i\circ (\eta_Y\circ i)^{-1}\circ \eta_Y\circ i \circ i^{-1}=i\circ 1_{G(F(Y))}\circ i^{-1}=1_Y$. Then, indeed, $\eta_Y$ is an isomorphism.
\end{proof}

\begin{lemma}\label{composeandinclu9}
If two adjunctions $(F,G,\eta^A,\varepsilon^A)$ and $(H,I\eta^B,\varepsilon^B)$ -- composable as depicted below -- are such that we have the inclusion $\{F(X):X\in \mf{Obj}(\ca{C})\}\se \mf{Fix}_{F\circ G}(\ca{D})$ and the inclusion $\{H(X):X\in \mf{Obj}(\ca{D})\}\se \mf{Fix}_{H\circ I}(\ca{E})$, then we also have $\{H(F(X)):X\in \mf{Obj}(\ca{C})\}\se \mf{Fix}_{H\circ F\circ G\circ I}(\ca{E})$. An analogous result holds for objects of $\ca{C}$.

\begin{tikzcd}[column sep=large, row sep=large,
]
\ca{C}
\arrow[r,"F"{name=omm},left, shift left=2.6]
& \ca{D}
\arrow[r,"H"{name=fin},left, shift left=2.6]
\arrow[l,"G"{name=ptt},left, shift left=2.6]
\arrow[phantom,from=ptt,to=omm,"\dashv" rotate=-90]
& \ca{E}
\arrow[l,"I"{name=inclu},left, shift left=2.6]
\arrow[phantom,from=ptt,to=omm,"\dashv" rotate=-90]
\arrow[phantom,from=fin,to=inclu,"\dashv" rotate=-90]
\end{tikzcd}
\end{lemma}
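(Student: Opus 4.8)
The conclusion $\{H(F(X)):X\in\mf{Obj}(\ca C)\}\se\mf{Fix}_{H\circ F\circ G\circ I}(\ca E)$ is exactly the statement that the counit of the composite adjunction $(H\circ F)\dashv(G\circ I)$ is an isomorphism at every object of the form $H(F(X))$. So the plan is to start from the description of that counit recalled just before the statement: at an object $Z\in\mf{Obj}(\ca E)$ it is $\varepsilon^B_Z\circ H(\varepsilon^A_{I(Z)})$, so at $Z=H(F(X))$ it is
\[
\varepsilon^B_{H(F(X))}\circ H\bigl(\varepsilon^A_{I(H(F(X)))}\bigr)\colon H(F(G(I(H(F(X))))))\ra H(F(X)),
\]
and we must show this composite is invertible.

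First I would dispose of the left-hand factor. The component $\varepsilon^B_{H(F(X))}$ is the counit of $H\dashv I$ evaluated at $H(F(X))$, that is, at $H$ applied to the object $F(X)\in\mf{Obj}(\ca D)$; the second inclusion hypothesis, $\{H(Y):Y\in\mf{Obj}(\ca D)\}\se\mf{Fix}_{H\circ I}(\ca E)$, says exactly that this component is an isomorphism. Since composing with an isomorphism on the left does not change invertibility (Lemma \ref{functoriso}), the composite above is an isomorphism if and only if $H\bigl(\varepsilon^A_{I(H(F(X)))}\bigr)$ is. Applying the first item of Lemma \ref{functoriso} once more, it is enough to show that $\varepsilon^A_{I(H(F(X)))}$ is an isomorphism, i.e. that $I(H(F(X)))\in\mf{Fix}_{F\circ G}(\ca D)$.

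This last point is the crux, and the main obstacle. The first inclusion hypothesis gives $F(X)\in\mf{Fix}_{F\circ G}(\ca D)$ immediately, but $I(H(F(X)))$ is not visibly of the form $F(-)$. The plan is to show that the subcategory $\mf{Fix}_{F\circ G}(\ca D)$ is stable under the endofunctor $I\circ H$ of $\ca D$; granting this, applying $I\circ H$ to $F(X)\in\mf{Fix}_{F\circ G}(\ca D)$ lands $I(H(F(X)))$ in $\mf{Fix}_{F\circ G}(\ca D)$, as needed. I would establish the stability by a diagram chase with the triangle identities of the two adjunctions (using that both hypotheses amount to idempotency of the respective adjunction, so that the relevant counit and unit components at objects in the image of the relevant adjoint are invertible) together with naturality of $\varepsilon^A$; for the concrete adjunctions to which the lemma is applied it can also be checked directly from the explicit descriptions of the functors. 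Finally, the asserted analogous statement for objects of $\ca C$ is obtained by the formally dual argument: use the unit $G(\eta^B_{F(-)})\circ\eta^A_{(-)}$ of the composite adjunction in place of its counit, replace $\mf{Fix}_{F\circ G}(\ca D)$ and $\mf{Fix}_{H\circ I}(\ca E)$ by $\mf{Fix}_{G\circ F}(\ca C)$ and $\mf{Fix}_{I\circ H}(\ca D)$, and correspondingly show $\mf{Fix}_{I\circ H}(\ca D)$ is stable under $G\circ F$.
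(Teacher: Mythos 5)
Your decomposition of the composite counit and your treatment of the factor $\varepsilon^B_{H(F(X))}$ match the paper's proof exactly, and you have correctly put your finger on the one step that the paper glosses over: the paper's own proof simply asserts that $\varepsilon^A_{I(H(F(X)))}$ is an isomorphism ``by assumption'', but the assumption only concerns objects of the form $F(X)$, and $I(H(F(X)))$ is not of that form. So your diagnosis of where the difficulty sits is more careful than the source.

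However, the repair you propose --- establishing by a general diagram chase with the triangle identities that $\mf{Fix}_{F\circ G}(\ca{D})$ is stable under the endofunctor $I\circ H$ --- cannot be carried out, because the lemma is false at this level of generality. Take $\ca{C}=\bd{Set}$, $\ca{D}=\bd{Top}$, $\ca{E}$ the category of compact Hausdorff spaces, $F$ the discrete-space functor with $G$ the underlying-set functor, and $H=\beta$ the Stone--\v{C}ech reflection with $I$ the inclusion. Both adjunctions satisfy the hypotheses: $\varepsilon^A_{F(X)}$ is the identity of a discrete space, and $\varepsilon^B$ is a natural isomorphism because $I$ is fully faithful. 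The composite is the ultrafilter adjunction, whose counit at $H(F(X))=\beta(X_{\mathrm{disc}})$ is the map $\beta\bigl(|\beta(X_{\mathrm{disc}})|_{\mathrm{disc}}\bigr)\ra \beta(X_{\mathrm{disc}})$ sending an ultrafilter to its limit point; for infinite $X$ this is not injective (already on cardinality grounds), hence not an isomorphism. In this example $I(H(F(X)))=\beta(X_{\mathrm{disc}})$ indeed fails to lie in $\mf{Fix}_{F\circ G}(\ca{D})$, and even the weaker requirement that only $H(\varepsilon^A_{I(H(F(X)))})$ be invertible --- which is all the argument really needs, a point your reduction slightly overshoots --- fails as well. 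The only viable route is therefore your fallback: verify the needed invertibility directly for the concrete adjunctions to which the lemma is applied (where the relevant units and counits are pairs of identity maps), or add to the statement an explicit hypothesis such as the stability of $\mf{Fix}_{F\circ G}(\ca{D})$ under $I\circ H$, to be checked in each application.
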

\begin{proof}
Suppose that we have the two inclusions in the statement. Let $X\in \mf{Obj}(\ca{C})$. We need to show that the counit $\varepsilon^B\circ \varepsilon^A$ evaluated at $H(F(X))$ is an isomorphism. Explicitly, this is  $\varepsilon^B_{H(F(X))}\circ H(\varepsilon^A_{I(H(F(X)))})$. By assumption, the counit $\varepsilon^B_{H(F(X))}$ is an isomorphism, and so is the counit $\varepsilon^A_{I(H(F(X)))}$. By the first item of Lemma \ref{functoriso}, the morphism $H(\varepsilon^A_{I(H(F(X)))})$ is an isomorphism, too. Then, the counit $\varepsilon^B\circ \varepsilon^A$ evaluated at $H(F(X))$ is a composition of two isomorphisms, and this means that it is an isomorphism, by the second item of Lemma \ref{functoriso}.
\end{proof}

Consider an adjunction $F:\ca{C}\lra \ca{D}:G$ with $F\dashv G$, unit $\eta$, and counit $\varepsilon$. In general, we always have that $\mf{Fix}_{G\circ F}(\ca{C})\se \{X\in \mf{Obj}(\ca{C}):X\cong G(Y)\mb{ for some }Y\in \mf{Obj}(\ca{D})\}$. What we usually do not have is the reverse set inclusion. If for an adjunction this reverse set inclusion holds, too, for both categories, we say that the adjunction has the \tc{idempotency} property. Notice that, because of Lemma \ref{fixiso9}, in order for our adjunction $F\dashv G$ to have the idempotency property it suffices to have the inclusion $\{F(X):X\in \mf{Obj}(\ca{C})\}\se \mf{Fix}_{F\circ G}(\ca{D})$, and the analogous one for the other category. We call this property ``idempotency" because it implies that the composition $G\circ F$ is idempotent in the sense that we have an isomorphism $G(F(X))\cong G(F(G(F(X))))$ for every object $X\in \mf{Obj}(\ca{C})$, given by the unit $\eta_{G(F(X))}$. The idempotency property is equivalent to having that units of the form $\eta_{G(F(X))}$ and counits of the form $\varepsilon_{F(G(Y))}$ are all isomorphisms. 

\begin{lemma}\label{idempotencyadjsuff}
For an adjunction $(F,G,\eta,\varepsilon)$ with $F:\ca{C}\lra \ca{D}:G$, we have that if $\{G(Y):Y\in \mf{Obj}(\ca{D})\}\se \mf{Fix}_{G\circ F}(\ca{C})$, and if we have the similar subset inclusion for objects of $\ca{D}$, then the adjunction has the idempotency property.
\end{lemma}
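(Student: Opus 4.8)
The plan is to read off this lemma directly from Lemma~\ref{fixiso9}, which says precisely that membership in the fixpoint subcategory of an adjunction is invariant under isomorphism. Recall that, by definition, the adjunction $(F,G,\eta,\varepsilon)$ has the idempotency property exactly when both reverse inclusions hold, namely
\[
\{X\in \mf{Obj}(\ca{C}):X\cong G(Y)\mb{ for some }Y\in \mf{Obj}(\ca{D})\}\se \mf{Fix}_{G\circ F}(\ca{C})
\]
together with the analogous inclusion
\[
\{Y\in \mf{Obj}(\ca{D}):Y\cong F(X)\mb{ for some }X\in \mf{Obj}(\ca{C})\}\se \mf{Fix}_{F\circ G}(\ca{D}).
\]

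For the first inclusion, I would fix $X\in \mf{Obj}(\ca{C})$ together with an isomorphism $i:X\cong G(Y)$ for some $Y\in \mf{Obj}(\ca{D})$. The hypothesis $\{G(Y):Y\in \mf{Obj}(\ca{D})\}\se \mf{Fix}_{G\circ F}(\ca{C})$ gives $G(Y)\in \mf{Fix}_{G\circ F}(\ca{C})$, i.e. $\eta_{G(Y)}$ is an isomorphism, and then Lemma~\ref{fixiso9}, applied to the adjunction $(F,G,\eta,\varepsilon)$ and to the isomorphism $i$, yields $X\in \mf{Fix}_{G\circ F}(\ca{C})$.

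For the second inclusion the argument is identical, once we note that Lemma~\ref{fixiso9} transfers to the counit side: applied to the opposite adjunction $G^{op}\dashv F^{op}$ between $\ca{D}^{op}$ and $\ca{C}^{op}$ --- whose unit is $\varepsilon^{op}$, so that its fixpoint subcategory in $\ca{D}^{op}$ is exactly $\mf{Fix}_{F\circ G}(\ca{D})$, and whose isomorphisms are those of $\ca{D}$ --- it becomes the assertion that $\mf{Fix}_{F\circ G}(\ca{D})$ is closed under isomorphism in $\ca{D}$. Hence, given $Y\cong F(X)$ with $X\in \mf{Obj}(\ca{C})$, the hypothesis $\{F(X):X\in \mf{Obj}(\ca{C})\}\se \mf{Fix}_{F\circ G}(\ca{D})$ combined with this closure property gives $Y\in \mf{Fix}_{F\circ G}(\ca{D})$. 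Both reverse inclusions hold, which is exactly the idempotency property.

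I do not expect a real obstacle: the proof is essentially a two-line consequence of Lemma~\ref{fixiso9}. The single point deserving a sentence of care is the observation just made --- that Lemma~\ref{fixiso9}, although phrased in terms of the unit $\eta$ and the category $\ca{C}$, transfers verbatim to the counit $\varepsilon$ and the category $\ca{D}$ via the opposite adjunction --- so that the same argument settles the $\ca{D}$ side.
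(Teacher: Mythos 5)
Your proposal is correct and follows exactly the paper's own argument: both reduce the lemma to Lemma~\ref{fixiso9} (closure of the fixpoint subcategory under isomorphism), with the paper simply declaring the $\ca{D}$-side "analogous" where you spell out the transfer via the opposite adjunction.
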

\begin{proof}
If an adjunction satisfies the inclusion $\{G(Y):Y\in \mf{Obj}(\ca{D})\}\se \mf{Fix}_{G\circ F}(\ca{C})$ and the analogous one for the objects of the category $\ca{D}$, then whenever we have an object $X\in \mf{Obj}(\ca{C})$ which is isomorphic to $G(Y)$ for some $Y\in \mf{Obj}(\ca{D})$, by Lemma \ref{fixiso9} it is also in $\mf{Fix}_{G\circ F}(\ca{C})$. The argument for the inclusion for the category $\ca{D}$ is analogous.
\end{proof}

\begin{lemma}\label{composeidemadj}
Adjunctions with the idempotency property are closed under composition.
\end{lemma}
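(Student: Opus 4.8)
The plan is to unpack the idempotency property of each of the two component adjunctions into the inclusion-form used in Lemma~\ref{composeandinclu9}, to apply that lemma once in each direction, and then to read off the outcome as precisely the sufficient condition for idempotency of the composite recorded in the remark preceding Lemma~\ref{idempotencyadjsuff}.

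Concretely, let $(F,G,\eta^A,\varepsilon^A)$ with $F:\ca{C}\lra \ca{D}:G$ and $(H,I,\eta^B,\varepsilon^B)$ with $H:\ca{D}\lra \ca{E}:I$ be composable adjunctions, each having the idempotency property, and form the composite adjunction $(H\circ F,\,G\circ I,\,\eta^B\circ \eta^A,\,\varepsilon^B\circ \varepsilon^A)$. First I would observe that idempotency of $(F,G,\eta^A,\varepsilon^A)$ yields both $\{F(X):X\in \mf{Obj}(\ca{C})\}\se \mf{Fix}_{F\circ G}(\ca{D})$ and $\{G(Y):Y\in \mf{Obj}(\ca{D})\}\se \mf{Fix}_{G\circ F}(\ca{C})$ (each object of the form $F(X)$ is in particular isomorphic to some $F(X')$, hence lies in the fixpoint class, and dually for $G$), and similarly idempotency of $(H,I,\eta^B,\varepsilon^B)$ gives $\{H(Y):Y\in \mf{Obj}(\ca{D})\}\se \mf{Fix}_{H\circ I}(\ca{E})$ together with $\{I(Z):Z\in \mf{Obj}(\ca{E})\}\se \mf{Fix}_{I\circ H}(\ca{D})$.

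Next I would feed the pair of inclusions $\{F(X):X\in \mf{Obj}(\ca{C})\}\se \mf{Fix}_{F\circ G}(\ca{D})$ and $\{H(Y):Y\in \mf{Obj}(\ca{D})\}\se \mf{Fix}_{H\circ I}(\ca{E})$ into Lemma~\ref{composeandinclu9}, obtaining $\{H(F(X)):X\in \mf{Obj}(\ca{C})\}\se \mf{Fix}_{H\circ F\circ G\circ I}(\ca{E})$. Since $H\circ F\circ G\circ I=(H\circ F)\circ (G\circ I)$ and, by the explicit formula for the counit of a composite adjunction, the fixpoint class appearing here is precisely $\{Z\in \mf{Obj}(\ca{E}):(\varepsilon^B\circ \varepsilon^A)_Z\mb{ is an isomorphism}\}$, this says exactly that $\{(H\circ F)(X):X\in \mf{Obj}(\ca{C})\}\se \mf{Fix}_{(H\circ F)\circ (G\circ I)}(\ca{E})$. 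Dually, applying the ``analogous'' half of Lemma~\ref{composeandinclu9} to the inclusions $\{I(Z):Z\in \mf{Obj}(\ca{E})\}\se \mf{Fix}_{I\circ H}(\ca{D})$ and $\{G(Y):Y\in \mf{Obj}(\ca{D})\}\se \mf{Fix}_{G\circ F}(\ca{C})$ gives $\{(G\circ I)(Z):Z\in \mf{Obj}(\ca{E})\}\se \mf{Fix}_{(G\circ I)\circ (H\circ F)}(\ca{C})$.

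Finally I would invoke the remark preceding Lemma~\ref{idempotencyadjsuff}, now applied to the composite adjunction $H\circ F\dashv G\circ I$: the two inclusions just obtained are exactly ``the image of the left adjoint lies in the relevant fixpoint class'' and its analogue for the other category, and these suffice for the composite to have the idempotency property. I do not expect a genuinely hard step here; the only point requiring care is the bookkeeping identification in the previous paragraph of the fixpoint class named in Lemma~\ref{composeandinclu9} with the counit-fixpoints of the composite adjunction (and dually on the $\ca{C}$ side), which is immediate from the description of the unit and counit of a composite of adjunctions recalled just before the statement.
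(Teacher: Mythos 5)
Your proposal is correct and follows essentially the same route as the paper, whose proof is just the one-line instruction to combine Lemma~\ref{idempotencyadjsuff} with Lemma~\ref{composeandinclu9}; you have simply filled in the bookkeeping (translating idempotency into the inclusion form, applying Lemma~\ref{composeandinclu9} in both directions, and identifying the resulting fixpoint classes with those of the composite adjunction). No gaps.
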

\begin{proof}
To see this, we combine Lemma \ref{idempotencyadjsuff} with Lemma \ref{composeandinclu9}.
\end{proof}

\begin{proposition}\label{finitaryadjidempotency}
The adjunction $\mf{b}\Om_{\fin}:\bd{BiTop}_\lra \bd{BiFrm_{fin}}^{op}:\bpt$ satisfies the idempotency property. In particular, the bispatial finitary biframes are exactly those of the form $\bomf(X)$ for a bispace $X$; and the bisober bispaces are exactly those of the form $\bpt(\ca{L})$ for some finitary biframe $\ca{L}$.
\end{proposition}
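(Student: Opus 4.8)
The plan is to deduce this formally: $(\bomf,\bpt,\mf{bf}\psi,\mf{bf}\varphi)$ has already been exhibited as a composite of two adjunctions, so I would just check that each factor is idempotent and then invoke Lemma~\ref{composeidemadj}.

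First I recall from the theorem above exhibiting $(\bomf,\bpt,\mf{bf}\psi,\mf{bf}\varphi)$ as a composite that it is the composition of the classical biframe adjunction $(\bOm,\bpt,\mf{b}\psi,\mf{b}\varphi)$ with the adjunction coming from the coreflection $i\dashv\mc{fin}$ (read in the opposite categories). By Lemma~\ref{composeidemadj} it then suffices to show that each of these two factor adjunctions has the idempotency property. For the biframe adjunction this is exactly the pair of facts recalled just before the statement: the fixpoints of $\bOm\circ\bpt$ in $\bd{BiFrm}^{op}$ are precisely the biframes isomorphic to some $\bOm(X)$, and the fixpoints of $\bpt\circ\bOm$ in $\bd{BiTop}$ are precisely the bispaces isomorphic to some $\bpt(\ca{L})$; in particular $\{\bpt(\ca{L}):\ca{L}\in\bd{BiFrm}\}\se\mf{Fix}_{\bpt\circ\bOm}(\bd{BiTop})$ and $\{\bOm(X):X\in\bd{BiTop}\}\se\mf{Fix}_{\bOm\circ\bpt}(\bd{BiFrm}^{op})$, so Lemma~\ref{idempotencyadjsuff} applies.

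For the coreflection $i\dashv\mc{fin}$ I would use that its unit $\eta$ is a natural isomorphism — each finitary biframe $\ca{L}$ satisfies $\ca{L}\cong\mc{fin}(i(\ca{L}))$ via the pair of identity maps, as noted when $\eta$ was defined — so that $\mf{Fix}_{\mc{fin}\circ i}(\bd{BiFrm_{fin}})$ is the whole of $\bd{BiFrm_{fin}}$; and that the counit $\varepsilon_{\ca{L}}:i(\mc{fin}(\ca{L}))\ra\ca{L}$ is an isomorphism precisely when $C_L$ equals its own finitary interior, i.e. precisely when $\ca{L}$ is finitary, so that $\{i(\ca{M}):\ca{M}\in\bd{BiFrm_{fin}}\}\se\mf{Fix}_{i\circ\mc{fin}}(\bd{BiFrm})$. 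Again Lemma~\ref{idempotencyadjsuff} applies, so the coreflection is idempotent. (Equivalently, any coreflective subcategory inclusion yields an idempotent adjunction.) Hence, by Lemma~\ref{composeidemadj}, the composite $(\bomf,\bpt,\mf{bf}\psi,\mf{bf}\varphi)$ has the idempotency property.

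It remains to translate this back into the language of the statement. By definition a finitary biframe is bispatial exactly when the counit $\mf{bf}\varphi_{\ca{L}}$ is an isomorphism, i.e. exactly when $\ca{L}\in\mf{Fix}_{\bomf\circ\bpt}(\bd{BiFrm_{fin}}^{op})$; and a bispace is bisober exactly when the unit $\mf{bf}\psi_X$ is an isomorphism, i.e. exactly when $X\in\mf{Fix}_{\bpt\circ\bomf}(\bd{BiTop})$. The idempotency property says precisely that the first of these classes is $\{\ca{L}:\ca{L}\cong\bomf(X)\text{ for some bispace }X\}$ and the second is $\{X:X\cong\bpt(\ca{L})\text{ for some finitary biframe }\ca{L}\}$, which is the ``in particular'' clause of the proposition (reading ``of the form $\bomf(X)$'' as ``isomorphic to one of that form'', as usual). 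I do not expect a genuine obstacle here: all the substance sits in the already-proven composition theorem and in the two idempotency checks — the first classical, the second a restatement of the fact that $\eta$ is a natural isomorphism — and the only point needing care, which is the closest thing to an obstacle, is lining up the opposite-category conventions so that the two factors compose in the correct order, a matter the composition theorem has already settled.
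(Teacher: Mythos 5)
Your proposal is correct and follows essentially the same route as the paper's own proof: reduce via Lemma \ref{composeidemadj} to the idempotency of the two factor adjunctions, cite the known idempotency of the classical biframe adjunction, and verify idempotency of the coreflection $i\dashv\mc{fin}$ by observing that its unit and counit are (iso)morphisms given by pairs of identity maps on the relevant fixpoints. Your write-up is in fact slightly more careful than the paper's about which of $\eta$, $\varepsilon$ is the unit and which the counit, but the substance is identical.
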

\begin{proof}
It suffices to show that the adjunction $\mc{fin}:\bd{BiFrm}\lra \bd{BiFrm_{fin}}:i$ has the idempotency property. We show this by appealing to Lemma \ref{composeidemadj}. In light of this lemma, to prove the desired claim it suffices to show that both the adjunctions $(\bOm,\bpt,\mf{b}\psi,\mf{b}\varphi)$ and $(\mc{fin},i,\eta,\varepsilon)$ have the idempotency property. We already know this to be true of the adjunction $(\bOm,\bpt,\mf{b}\psi,\mf{b}\varphi)$. For the other adjunction, we recall that the unit $\eta$ is defined as the pair of identity maps $\eta_{\ca{L}}:i(\mc{fin}(\ca{L}))\ra \ca{L}$ for every biframe $\ca{L}$. If $\ca{L}$ is of the form $i(\ca{L}')$, that it, if it is finitary, then this map is an isomorphism, as the finitary interior $\mc{fin}(C_L)$ is $C_L$ itself. We recall that the counit $\varepsilon$ is defined as the pair of identity maps $\varepsilon_{\ca{L}}: \ca{L}\ra \mc{fin}(i(\ca{L}))$ for any finitary biframe $\ca{L}$. We have observed already that this is always an isomorphism. In particular, all of the finitary biframes of the form $\mc{fin}(\ca{L})$ for some biframe $\ca{L}$ are such that $\eta_{\mc{fin}(\ca{L})}$ is an isomorphism. Then, the adjunction has the idempotency property.
\end{proof}

\subsection{Bisobriety and other separation axioms}\label{bisobriety}

We begin our more concrete exploration of bisobriety, and we show how it is related to other properties of bispaces. So far, we know by Corollary \ref{composeadjconseq} that it is stronger than patch-sobriety and weaker than bisobriety. Let us compare it with other separation axioms for bispaces.

\begin{lemma}\label{almostdsober}
If $X$ is a bispace and $\pa{f}$ is a bipoint of $\bomf(X)$, then 
\begin{itemize}
    \item if $U^+\cap U^-=\emptyset$, either $f^+(U^+)=0$ or $f^-(U^-)=0$,
    \item if $V^+\cup V^-=X$, either $f^+(U^+)=1$ or $f^-(V^-)=1$.
\end{itemize}

\end{lemma}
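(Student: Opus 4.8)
The plan is to reduce everything to the defining condition of a bipoint of $\bomf(X)$, instantiated at two trivial set inclusions. First I would recall that, since the main component of $\bomf(X)$ is $\cp{\Om^+(X)}{\Om^-(X)}$ modulo the relation $R_{\Om_{\fin}(X)}=\{(\syw{U}{U},\syv{V}{V}):U^+\cap U^-\se V^+\cup V^-\}$, a pair $\pa{f}$ being a bipoint of $\bomf(X)$ is equivalent to the following: for all $U^+,V^+\in\Om^+(X)$ and $U^-,V^-\in\Om^-(X)$ with $U^+\cap U^-\se V^+\cup V^-$, at least one of $f^+(U^+)=0$, $f^-(U^-)=0$, $f^+(V^+)=1$, $f^-(V^-)=1$ holds. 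This is precisely the form in which the bipoint condition was unwound earlier, in the proof that $\pa{N_x}$ is always a bipoint of $\bomf(X)$.

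For the first item I would take $U^+,U^-$ with $U^+\cap U^-=\emptyset$ and instantiate the above with $V^+=V^-=\emptyset$, so that $U^+\cap U^-\se V^+\cup V^-$ holds trivially. The bipoint condition then forces one of $f^+(U^+)=0$, $f^-(U^-)=0$, $f^+(\emptyset)=1$, $f^-(\emptyset)=1$; but $f^+$ and $f^-$ are frame maps, hence preserve the bottom element (the empty open), so $f^+(\emptyset)=f^-(\emptyset)=0$ and the last two options drop out, leaving $f^+(U^+)=0$ or $f^-(U^-)=0$.

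For the second item I would symmetrically take $V^+,V^-$ with $V^+\cup V^-=X$ and instantiate with $U^+=U^-=X$, so that $U^+\cap U^-=X\se V^+\cup V^-$. The bipoint condition forces one of $f^+(X)=0$, $f^-(X)=0$, $f^+(V^+)=1$, $f^-(V^-)=1$; since frame maps preserve the top element $X$, we have $f^+(X)=f^-(X)=1$, so the first two options drop out, leaving $f^+(V^+)=1$ or $f^-(V^-)=1$ (the statement's $f^+(U^+)$ in this item being an evident typo for $f^+(V^+)$).

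I do not expect any genuine obstacle: the only thing to get right is the bookkeeping that makes the two set-theoretic hypotheses fit the exact syntactic shape $a^+\we a^-\leq b^+\ve b^-$ demanded by the bipoint condition — achieved by padding with $\emptyset,\emptyset$ in one case and $X,X$ in the other — together with the elementary observation that a frame map cannot send the empty open to $1$ nor the whole space to $0$.
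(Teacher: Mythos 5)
Your proof is correct and follows essentially the same route as the paper's: both instantiate the bipoint condition at the padded inclusions $U^+\cap U^-\subseteq\emptyset\cup\emptyset$ and $X\cap X\subseteq V^+\cup V^-$ and eliminate two of the four disjuncts using the fact that frame maps preserve top and bottom. Your reading of $f^+(U^+)$ as a typo for $f^+(V^+)$ in the second item is also consistent with the paper's intent.
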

\begin{proof}
Suppose that $X$ is a bispace and that $\pa{f}\in \bpt(\bomf(X))$.
\begin{itemize}
    \item If $U^+\cap U^-\se \emptyset$, then by definition of bipoint at least one of the following four conditions must hold.
    \[
     \bcases{ 
     f^+(U^+)=0\cr 
     f^-(U^-)=0\cr
     f^+(\emptyset)=1\cr
     f^-(\emptyset)=1}
     \]
    The only ones which do not lead to contradiction are the first and the second, since for every frame map $f^+:\Om^+(X)\ra 2$ we must have that $f^+(\emptyset)=0$, and similarly on the negative component.
    \item If $X\se U^+\cup U^-$, then by definition of bipoint at least one of the following four conditions must hold.
     \[
     \bcases{ 
     f^+(X)=0\cr 
     f^-(X)=0\cr
     f^+(U^+)=1\cr
     f^-(U^-)=1}
     \]
    The only ones which do not lead to contradiction are the third and the fourth, since for every frame map $f^+:\Om^+(X)\ra 2$ we must have that $f^+(X)=1$, and similarly on the negative component.\qedhere
\end{itemize}
\end{proof}

We follow \cite{salbany74} in defining a bispace to be \tc{pairwise} $T_1$, or simply $\tc{bi}-T_1$, if whenever we have $x,y\in X$ with $x\neq y$ there is some $U\in \Om^+(X)\cup \Om^-(X)$ such that it contains $x$ and omits $y$. Still following \cite{salbany74}, we define bitopological space $X$ is said to be \tc{pairwise} $T_2$, or \tc{bi}-$T_2$, or \tc{pairwise Hausdorff}, if whenever we have $x,y\in X$ with $x\neq y$ there are disjoint opens $U^+\in \Om^+(X)$ and $U^-\in \Om^-(X)$ such that $x\in U^+$ and $y\in U^-$, or there are disjoint opens $U^+\in \Om^+(X)$ and $U^-\in \Om^-(X)$ such that $y\in U^+$ and $x\in U^-$.

The following proof is adapted from the proof of of Theorem 4.13 of \cite{jung06}, a result which states that a d-frame analogue of Hausdorffness implies d-sobriety. Notice how in that proof the classical notion of pairwise Hausdorffness needs to be strengthened, but replacing d-sobriety with bisobriety enables us to replicate the proof with pairwise Hausdorffness instead of its strong version.

\begin{lemma}\label{pairwisehausdorff}
Pairwise Hausdorff bispaces are bisober.
\end{lemma}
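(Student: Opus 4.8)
The plan is to prove that the unit $\psi_X\colon X\to\bpt(\bomf(X))$ is a bihomeomorphism. Since $\psi_X$ is always bicontinuous and satisfies $\psi_X^{-1}(\varphi^+_{\bomf(X)}(U^+))=U^+$ (and likewise on the negative side), once we know that $\psi_X$ is a bijection we get $\psi_X(U^+)=\varphi^+_{\bomf(X)}(U^+)$, so $\psi_X$ carries the positive opens of $X$ bijectively onto the positive opens of $\bpt(\bomf(X))$, and similarly for the negative ones; hence it is a bihomeomorphism. Injectivity is immediate from pairwise Hausdorffness: if $x\neq y$, pick (say) disjoint $U^+\in\Om^+(X)$ and $U^-\in\Om^-(X)$ with $x\in U^+$ and $y\in U^-$; then $y\notin U^+$, so $N^+_x(U^+)=1\neq 0=N^+_y(U^+)$ and $\pa{N_x}\neq\pa{N_y}$. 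The real content is surjectivity, i.e.\ that every bipoint $\pa{f}$ of $\bomf(X)$ equals $\pa{N_x}$ for some $x\in X$.

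Given a bipoint $\pa{f}$, I would let $P^+:=\bigcup\{U\in\Om^+(X):f^+(U)=0\}$ and define $P^-$ analogously; since $f^\pm$ preserves joins, $f^\pm(P^\pm)=0$ and $P^\pm$ is the largest open mapped to $0$. Put $S^\pm:=X\setminus P^\pm$. Then $S^\pm\neq\emptyset$ because $f^\pm(X)=1$; and $S^+\cap S^-\neq\emptyset$, for otherwise $P^+\cup P^-=X$ and the second clause of Lemma~\ref{almostdsober} (applied to this covering pair) would give $f^+(P^+)=1$ or $f^-(P^-)=1$, a contradiction. Moreover $S^+\cap S^-$ has at most one point: if $x\neq y$ both belonged to it, pairwise Hausdorffness would separate them by disjoint opens, one positive and one negative, and the first clause of Lemma~\ref{almostdsober} would force one of these two opens to be mapped to $0$, putting one of $x,y$ inside $P^+$ or inside $P^-$ and contradicting membership in $S^+\cap S^-$. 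So $S^+\cap S^-=\{x\}$ for a unique $x$, which is the candidate point.

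To finish surjectivity I would show $\pa{f}=\pa{N_x}$. Since $f^+(U)=0$ implies $U\subseteq P^+$ and hence $x\notin U$ (as $x\in S^+$), we always have $(f^+)^{-1}(0)\subseteq(N^+_x)^{-1}(0)$, and symmetrically on the negative side; so it suffices to prove that every positive open $W^+$ with $x\notin W^+$ is mapped to $0$ by $f^+$ (and the negative analogue). Assume towards a contradiction that $x\notin W^+$ but $f^+(W^+)=1$. Each $y\in W^+$ satisfies $y\neq x$, so pairwise Hausdorffness separates $x$ and $y$; running the same two-case analysis as before with the first clause of Lemma~\ref{almostdsober}, and using that $x\notin P^+$ and $x\notin P^-$, in the case that puts a positive open around $y$ we obtain $y\in P^+$, and in the case that puts a negative open around $y$ we obtain $y\in P^-$. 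Hence $W^+\subseteq P^+\cup P^-$. Applying the definition of bipoint to the inequality $W^+\cap X\subseteq P^+\cup P^-$ (an inequality between binary elements of $\Om_{\fin}(X)$, hence just a set inclusion in $X$), one of $f^+(W^+)=0$, $f^-(X)=0$, $f^+(P^+)=1$, $f^-(P^-)=1$ must hold; the last three are impossible, so $f^+(W^+)=0$ --- the desired contradiction. The negative case is symmetric, so $\pa{f}=\pa{N_x}$ and $\psi_X$ is surjective.

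The step I expect to be the crux is the last one: combining pairwise Hausdorffness with the bipoint axiom correctly. Hausdorffness does not separate the candidate point $x$ from a ``bad'' positive open $W^+$ directly, but it does separate $x$ from every \emph{point} of $W^+$, and --- because $x$ lies in both $S^+$ and $S^-$, which rules out one horn of each separation --- this forces all of $W^+$ inside $P^+\cup P^-$; the ``totality'' instance of the bipoint axiom (the one with the negative coordinate equal to $X$) then does the rest. This is also where one sees why plain pairwise Hausdorffness is enough here: $\bomf(X)$ retains every inclusion $U^+\cap U^-\subseteq V^+\cup V^-$, not merely the disjointness and covering pairs kept by $\dOm(X)$, so the bipoint axiom can be invoked at the non-trivial inclusion $W^+\subseteq P^+\cup P^-$ produced above --- something the d-point axiom could not do, which is why the d-frame version of this result needs a strengthened Hausdorff hypothesis.
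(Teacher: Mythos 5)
Your proposal is correct and follows essentially the same route as the paper: define the pair of meet-primes $P^\pm$ associated with $\pa{f}$, use the totality clause of Lemma \ref{almostdsober} to see $P^+\cup P^-\neq X$, use pairwise Hausdorffness with the consistency clause to pin the complement down to a single point $x$, and then apply the bipoint condition to the inclusion of a positive open missing $x$ (your $W^+$, the paper's $P^+_x$) into $P^+\cup P^-$ to conclude $\pa{f}=\pa{N_x}$. The only cosmetic difference is that your last step re-derives $W^+\se P^+\cup P^-$ from Hausdorffness, when it already follows from the established equality $P^+\cup P^-=X\sm\{x\}$.
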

\begin{proof}
Suppose that $X$ is a pairwise Hausdorff space. Since no two points can have the same pair of neighborhood filters, injectivity of the bisobrification map is immediate. For surjectivity, suppose that $\pa{f}\in \bpt(\bomf(X))$ is a bipoint. 
We need to show that there is $x\in X$ with $\pa{f}$ as its neighborhood map. Let $\pa{P}\in \Om^+(X)\times \Om^-(X)$ be the pair of meet primes associated with $\pa{f}$, that is, let $P^+=\bve (f^+)^{-1}(0)$, and define similarly the negative open $P^-$. We cannot have that $P^+\cup P^-=X$, or by Lemma \ref{almostdsober} we could not have that $f^+(P^+)=f^-(P^-)=0$. Then, let $x\in (P^+\cup P^-)^c$. We will first show that $(P^+\cup P^-)^c=\{x\}$, and then that $\pa{f}$ is the neighborhood map of $x$. Suppose towards contradiction that there is some $y\in (P^+\cup P^-)^c$ with $y\neq x$. By pairwise Hausdorffness, without loss of generality we may assume that there are disjoint $U^+$ and $U^-$ with $x\in U^+$ and $y\in U^-$. By Lemma \ref{almostdsober}, we must have that either $f^+(U^+)=0$ or $f^-(U^-)=0$. These facts imply, respectively, that $U^+\se P^+$ and that $U^-\se P^-$, but both these are contradictions with witnesses $x$ and $y$, respectively. Then, we must indeed have that $(P^+\cup P^-)^{c}=\{x\}$. Let $\pa{N_x}$ be the pair of neighborhood maps of $x$, and let $\pa{P_x}$ be the pair of primes associated with it. We have $P^+\se P^+_x$ and $P^-\se P^-_x$. We want to show the remaining inequalities $P^+_x\se P^+$ and $P^-_x\se P^-$. Since $P^+\cup P^-=X{\sm}\{x\}$ we must have that $(X\cap P^+_x)\se P^+\cup P^-$. Since $\pa{f}$ is a bipoint, we have that one of the following four conditions hold.
\[
\bcases{ 
f^+(P_x^+)=0\cr 
f^-(X)=0\cr
f^+(P^+)=1\cr
f^-(P^-)=1}
\]
The only one which does not lead to contradiction is $f^+(P^+_x)=0$, which by definition of $P^+$ implies $P^+_x\se P^+$. Similarly one shows $P^-_x\se P^-$.
\end{proof}

We now show that bisobriety is incomparable with pairwise $T_1$. The following example is adapted from the counterexample given in \cite{johnstone82} that not all $T_1$ spaces are sober. There, it is noticed that the space with underlying set $\mathbb{N}$ equipped with the cofinite topology is $T_1$ but it is not sober.

\begin{example}
Consider the bispace $(\mathbb{N},\mf{CoFin}(\mathbb{N}),\mathcal{I})$,
where $\mf{CoFin}(\mathbb{N})$ is the cofinite topology on $\mathbb{N}$ and $\mathcal{I}$ is the indiscrete topology. Since the positive topology is $T_1$ already, the space is bi-$T_1$. On the other hand, the patch topology is $(\mathbb{N},\mf{CoFin}(\mathbb{N}))$, and this is not sober. Since a bisober bispace is always patch-sober by Corollary \ref{composeadjconseq}, the bispace $(\mathbb{N},\mf{CoFin}(\mathbb{N}),\mathcal{I})$ cannot be bisober.
\end{example}

We will use the following lemma to find a counterexample for the converse implication.

\begin{lemma}\label{veryeasypatch}
If $X=(X,\Om^+(X),\ca{I})$ is a bispace where $\mathcal{I}$ is the indiscrete topology, its finitary biframe of opens is $(\Om^+(X),2,\Om^+(X))$. Its bisobrification is bihomeomorphic the bispace $(\pt(\Om^+(X)),\Om(\pt(\Om^+(X))),\mathcal{I})$.
\end{lemma}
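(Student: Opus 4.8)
The lemma has two independent assertions, and I would establish them in turn. For the first, the plan is to use that the negative topology $\ca{I}=\{\emptyset,X\}$ is (isomorphic to) the initial frame $2$, so that the frame coproduct $\cp{\Om^+(X)}{\Om^-(X)}$ occurring in the definition of $\Om_{\fin}(X)$ is canonically isomorphic to $\Om^+(X)$, via the map that is the identity on the positive generators $\syp{U}$ and sends $\sym{\emptyset}\mapsto\emptyset$ and $\sym{X}\mapsto X$. I would then trace the defining relation $R_{\Om_{\fin}(X)}=\{(\syw{U}{U},\syv{V}{V}):U^+\cap U^-\se V^+\cup V^-\}$ through this isomorphism and check that it lands in a set of pairs $(a,b)$ all of which already satisfy $a\le b$ in $\Om^+(X)$: this is a short four-way case split on whether $U^-$ and $V^-$ equal $\emptyset$ or $X$, the only non-vacuous branch being $U^-=X$, $V^-=\emptyset$, which yields the pair $(U^+,V^+)$ exactly when $U^+\se V^+$. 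Hence the congruence generated by $R_{\Om_{\fin}(X)}$ is trivial, so $\Om_{\fin}(X)\cong\Om^+(X)$ and $\bomf(X)=(\Om^+(X),2,\Om^+(X))$, with subframe inclusions the identity and the unique frame map $2\ra\Om^+(X)$. (One could instead cite $\bomf=\mc{fin}\circ\bOm$ from Lemma~\ref{finitaryconnect} together with $\Om^+(X)\ve\ca{I}=\Om^+(X)$ computed in the lattice of topologies, but the direct argument is just as quick.)

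For the second assertion I would compute $\bpt(\bomf(X))=\bpt(\Om^+(X),2,\Om^+(X))$ straight from the definition of the bispectrum. First, $\pt(2)$ is a one-point space, since there is exactly one frame map out of the initial frame; write $!$ for its element. Using the description of bipoints of a finitary biframe recalled above, I would check that $(f^+,!)$ is a bipoint for every $f^+\in\pt(\Om^+(X))$: in the bipoint condition applied to an inequality $a^+\we a^-\le b^+\ve b^-$ the negative coordinates $a^-,b^-$ lie in $\{0,1\}$, and one of the four disjuncts holds outright unless $a^-=1$ and $b^-=0$, in which case the inequality reduces to $a^+\le b^+$ and monotonicity of $f^+$ supplies $f^+(a^+)=0$ or $f^+(b^+)=1$. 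So the underlying set of $\bpt(\bomf(X))$ is the bijective image of $\pt(\Om^+(X))$ under $f^+\mapsto(f^+,!)$.

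It remains to match the two topologies across this bijection. A basic positive open $\{\pa{f}:f^+(a^+)=1\}$ of $\bpt(\bomf(X))$, for $a^+\in\Om^+(X)$, corresponds to $\{f^+:f^+(a^+)=1\}$, which is precisely a generic basic open of the spectrum $\pt(\Om^+(X))$; hence the positive topology is $\Om(\pt(\Om^+(X)))$. A basic negative open $\{\pa{f}:f^-(a^-)=1\}$ with $a^-\in 2$ is the whole space when $a^-=1$ and is empty when $a^-=0$; hence the negative topology is the indiscrete one $\ca{I}$. Since the bijection carries each topology onto the corresponding topology of $(\pt(\Om^+(X)),\Om(\pt(\Om^+(X))),\ca{I})$, it is a bihomeomorphism, which is the claim.

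There is no serious obstacle here — the lemma is a pure unwinding of definitions. The one thing to watch is the bookkeeping: keeping the abstract frame $\Om^-(X)\cong 2$ notationally distinct from its image $e_L^-(2)=\{\emptyset,X\}$ inside the main component, so that $\syw{U}{U}$, the relation $R_{\Om_{\fin}(X)}$, and the bipoint condition are all evaluated correctly. Getting the four cases in the first step right, and remembering that $\pt(2)$ is a singleton in the second, are the only places a slip could occur.
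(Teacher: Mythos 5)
Your proof is correct and follows essentially the same route as the paper's: both use that $2$ is initial (hence the coproduct unit) to identify $\Om_{\fin}(X)$ with $\Om^+(X)$ after checking that the finitary relation collapses to inequalities already holding in $\Om^+(X)$, and both then compute the bispectrum by identifying bipoints with points of $\Om^+(X)$ and matching the two topologies. Your version merely spells out the four-way case split and the bipoint verification that the paper leaves implicit.
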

\begin{proof}
Since $2$ is the initial object in the category of frames, it is the neutral element of the frame coproduct. Then, for a bispace $(X,\Om^+(X),\ca{I}$ we have that its finitary frame of opens is $\Om^+(X)\oplus 2\cong \Om^+(X)$ quotiented by the inclusions of finitary subsets in $X$. As $\ca{I}$ is the indiscrete topology, its opens are also opens in $X$, and so there are no subset inclusions of finitary opens in $X$ which are not inclusions of positive opens. Then, the finitary biframe of our bispace is indeed isomorphic to $(\Om^+(X),2,\Om^+(X))$. To calculate the bispectrum of this, we notice that its bipoints are in bijection with the points of the main component, i.e. the underlying set of points is $|\pt(\Om^+(X))|$. A typical positive open of this bispace is $\{f\in \pt(\Om^+(X)):f^+(U^+)=1\}$ for some $U^+\in \Om^+(X)$, and so the positive topology is homeomorphic to the topology of $\pt(\Om^+(X))$. As for the negative topology, we notice that the only opens are $\{f\in \pt(\Om^+(X)):f(1)=1\}=\pt(\Om^+(X))$ and $\{f\in \Om^+(X):f(1)=0\}=\emptyset$. 
\end{proof}
\begin{example}
For a natural number $n\in \mathbb{N}$ with $1<n$, consider the bispace \[
(\{1,...,n\},\{\emptyset\}\cup \{\da m\leq n\},\mathcal{I}).
\]
This is not a bi-$T_1$ space: for any $m$ such that $1<m\leq n$ we have that there is no open in any of the two topologies which contains $m$ and omits $1$. However, it is a bisober space: by Lemma \ref{veryeasypatch} its biframe of opens is isomorphic to $(C_{n+1},2,C_{n+1})$, where $C_{n+1}$ is the $n+1$-elements chain. The bispectrum of this, by Lemma \ref{veryeasypatch}, is bihomeomorphic to the starting bispace.
\end{example}

Finally, it is clear that if a bispace $X$ is bisober then it is also pairwise $T_0$. If a bispace $X$ is bisober, in particular the bisobrification map $x\mapsto \pa{N_x}$ must be injective, and so distinct points must differ by their positive or their negative open nieghborhood filter.

\section{The assembly of a finitary biframe}

For any frame $L$ and for any subset $S\se L$ following \cite{Wilson1994TheAT} we define the frame $\mf{A}_S(L)$ is the subframe of $\mf{A}(L)$ generated by the collection
\[
\{\na(x):x\in L\}\cup \{\del(s):s\in S\}
\]
For a biframe $\ca{L}$, we abbreviate as $\mf{A}_{\fin}(L)$ the frame $\mf{A}_{\fin(L)}(L)$, and we call it as the \tc{finitary assembly} of $L$. Notice that the definition of the finitary assembly depends on the bitopological structure of $\ca{L}$. In \cite{schauerte92}, it is proven that

\begin{observation}\label{frmclo6}
Suppose that $L$ is a frame. The map $\mathit{frm}:\ca{P}(L)\ra \ca{P}(L)$, assigning to each subset of $L$ the subframe generated by it, is a closure operator.
\end{observation}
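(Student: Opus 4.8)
The plan is to realise $\mathit{frm}(S)$ as an intersection of subframes and to deduce the three closure-operator axioms formally from that description.

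First I would record the basic fact that an arbitrary intersection $\bca_i F_i$ of subframes of $L$ is again a subframe of $L$. Indeed, being a subframe of $L$ amounts to being a subset closed under finite meets and under arbitrary joins (so, taking the empty meet and the empty join, such a subset automatically contains $1$ and $0$); each of these requirements is a condition of the form ``for every family of members of the set, a certain derived element is again a member'', and any condition of this shape is inherited by intersections. Hence, for any $S \se L$, the family of subframes of $L$ that contain $S$ is nonempty (it contains $L$ itself) and closed under intersection, so $\mathit{frm}(S) = \bca\{F : F \text{ a subframe of } L,\ S \se F\}$ is well defined and is the least subframe of $L$ containing $S$; this is exactly the subframe generated by $S$.

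With this characterisation in hand, the three axioms are routine. Extensivity, $S \se \mathit{frm}(S)$, holds because every $F$ occurring in the intersection contains $S$. For monotonicity, if $S \se T$ then any subframe containing $T$ also contains $S$, so the family of subframes defining $\mathit{frm}(T)$ is a subfamily of the one defining $\mathit{frm}(S)$, and intersecting over a smaller index set yields a larger set: $\mathit{frm}(S) \se \mathit{frm}(T)$. For idempotency, $\mathit{frm}(S) \se \mathit{frm}(\mathit{frm}(S))$ is just extensivity applied to the set $\mathit{frm}(S)$, while conversely $\mathit{frm}(S)$ is itself a subframe containing $\mathit{frm}(S)$ and hence appears among the sets intersected to form $\mathit{frm}(\mathit{frm}(S))$, forcing $\mathit{frm}(\mathit{frm}(S)) \se \mathit{frm}(S)$; equality follows.

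I do not expect a genuine obstacle here. The only delicate point is pinning down the precise notion of ``subframe'' in play --- whether one demands preservation of top and bottom explicitly --- but since closure under finite (hence empty) meets and arbitrary (hence empty) joins already forces $0$ and $1$ into any subframe, the intersection argument is insensitive to this choice. An alternative would be to construct $\mathit{frm}(S)$ by an explicit transfinite iteration of the operation ``close under finite meets and arbitrary joins'' and verify the axioms by hand, but that introduces bookkeeping with no compensating benefit, so I would keep the intersection-of-subframes route.
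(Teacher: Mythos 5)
Your proof is correct. The paper gives no argument at all for this statement --- it is recorded as an \emph{observation}, treated as a standard fact --- so there is nothing to diverge from; your realisation of $\mathit{frm}(S)$ as the intersection of all subframes containing $S$ (using that subframes are closed under arbitrary intersections, since closure under finite meets and arbitrary joins is an intersection-stable condition and the empty meet and join supply $1$ and $0$) is exactly the canonical verification, and the three closure-operator axioms follow formally as you say.
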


\begin{lemma}\label{manysubframes6} For any biframe $\ca{L}$, the following are all descriptions of the finitary assembly of a frame $L$.
\begin{enumerate}
    \item The subframe of $\mf{A}(L)$ generated by all closed congruences of $L$ and the open ones of the form $\del (\sy{x^+}\ve \sy{x^-})$.
    
     \item The subframe of $\mf{A}(L)$ generated by the collection
    \[
    \{\na (\sy{x^+}):x^+\in L^+\}\cup \{\del (\sy{x^-}):x^-\in L^-\}\cup \{\na (\sy{x^-}):x^-\in L^-\}\cup \{\del (\sy{x^+}):x^+\in L^+\}.
    \]
    
    \item The subframe of $\mf{A}(L)$ generated by $\sy{\mf{A}(L^+)}\cup \sy{\mf{A}(L^-)}$.

    \item The ordered collection of congruences of $L$ of the form $ \bve_i \na (\sy{x^+_1}\we \sy{x^-_i})\cap\del (\sy{y^+_1}\ve \sy{y^-_i})$.
   
\end{enumerate}
\end{lemma}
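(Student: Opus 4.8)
The plan is to show that all four collections coincide, as subframes of $\mf{A}(L)$, with the finitary assembly $\mf{A}_{\fin}(L)=\mf{A}_{\fin(L)}(L)$, which by definition is the subframe of $\mf{A}(L)$ generated by $\{\na(x):x\in L\}\cup\{\del(s):s\in\fin(L)\}$. I will use freely the standard properties of the assembly: $\na:L\ra\mf{A}(L)$ is a frame homomorphism, so its image $\na(L)$ — the subframe of closed congruences — is generated by any generating family of $L$; $\del$ converts arbitrary joins into meets and binary meets into binary joins, with $\del(0)=\na(1)=\top$; and $\mf{A}(M)$ is generated as a frame by $\{\na(m):m\in M\}\cup\{\del(m):m\in M\}$, so that $\sy{\mf{A}(L^+)}$, the image in $\mf{A}(L)$ of the map induced by $\syp{-}:L^+\ra L$, is the subframe generated by $\{\na(\syp{x}):x^+\in L^+\}\cup\{\del(\syp{x}):x^+\in L^+\}$, and symmetrically for $L^-$.

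First I would treat descriptions (1), (2) and (3). Since $L$ is generated as a frame by $\{\syp{x}\}\cup\{\sym{x}\}$ and $\na$ is a frame map, $\na(L)$ is generated by $\{\na(\syp{x})\}\cup\{\na(\sym{x})\}$. On the other side, every $s\in\fin(L)$ is of the form $\bve_i\syp{x_i}\we\sym{x_i}$, so $\del(s)=\bwe_i\bigl(\del(\syp{x_i})\ve\del(\sym{x_i})\bigr)$; conversely $\syp{x}=\syp{x}\we\sym{1}$ and $\sym{x}=\syp{1}\we\sym{x}$ are themselves finitary, so each $\del(\syp{x})$ and $\del(\sym{x})$ already occurs among the $\del(s)$. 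Hence the subframe generated by $\{\del(s):s\in\fin(L)\}$ equals that generated by $\{\del(\syp{x})\}\cup\{\del(\sym{x})\}$, and therefore
\[
\mf{A}_{\fin}(L)=\mathit{frm}\bigl(\{\na(\syp{x}),\na(\sym{x}),\del(\syp{x}),\del(\sym{x})\}\bigr),
\]
which is precisely (2), and — by the description of $\sy{\mf{A}(L^{\pm})}$ recalled above — precisely (3). For (1): the closed congruences are $\na(L)$, and $\del(\syp{x}\ve\sym{x})=\del(\syp{x})\we\del(\sym{x})$, while $\del(\syp{x})=\del(\syp{x}\ve\sym{0})$ and $\del(\sym{x})=\del(\syp{0}\ve\sym{x})$; so the generating set in (1) and the displayed one generate the same subframe of $\mf{A}(L)$.

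Finally I would treat (4), where the actual work lies. Write $A$ for the collection of congruences of the form $\bve_i\,\na(\syp{x_i}\we\sym{x_i})\cap\del(\syp{y_i}\ve\sym{y_i})$ (reading the subscripts in the statement uniformly). The inclusion $A\se\mf{A}_{\fin}(L)$ is clear, since $\na(\syp{x}\we\sym{x})=\na(\syp{x})\we\na(\sym{x})$ and $\del(\syp{y}\ve\sym{y})=\del(\syp{y})\we\del(\sym{y})$ both lie in the subframe identified above, which is closed under joins and meets. For the reverse inclusion it suffices to show that $A$ is a subframe of $\mf{A}(L)$ containing the four families of generators above. Those generators lie in $A$ by padding: $\na(\syp{x})=\na(\syp{x}\we\sym{1})\cap\del(\syp{0}\ve\sym{0})$ since $\del(0)=\top$, and $\del(\syp{x})=\na(\syp{1}\we\sym{1})\cap\del(\syp{x}\ve\sym{0})$ since $\na(1)=\top$, and symmetrically on the negative side; moreover $\top=\na(\syp{1}\we\sym{1})\cap\del(\syp{0}\ve\sym{0})\in A$ and $\bot\in A$ as the empty join. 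Closure of $A$ under arbitrary joins is immediate. The one substantial point — and the main obstacle — is closure under binary meets: given two members of $A$, distribute the meet over the two joins to obtain a join of terms $\na(a_i)\cap\del(b_i)\cap\na(c_j)\cap\del(d_j)$; since $\na$ preserves meets and $\del$ sends joins to meets, rewrite such a term as $\na(a_i\we c_j)\cap\del(b_i\ve d_j)$; and since $e^+_L$ and $e^-_L$ are frame homomorphisms, $a_i\we c_j$ is again of the form $\syp{z}\we\sym{z}$ and $b_i\ve d_j$ of the form $\syp{w}\ve\sym{w}$, so the whole expression is again a member of $A$. This gives $A=\mf{A}_{\fin}(L)$ and completes the proof. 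Keeping every intermediate expression in the prescribed ``join of one closed and one open finitary congruence'' shape is the only genuinely technical step, but it follows routinely from frame distributivity and the fact that $e^+_L$ and $e^-_L$ preserve finite meets and joins.
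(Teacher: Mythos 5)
Your proof is correct and follows essentially the same route as the paper's: both reduce everything to the subframe generated by the four families $\{\na(\syp{x})\}$, $\{\na(\sym{x})\}$, $\{\del(\syp{x})\}$, $\{\del(\sym{x})\}$ (using that $\na$ is a frame map, that $\del$ turns joins into meets and finite meets into joins, and the closure-operator property of $\mathit{frm}$ for item (3)), and both establish item (4) by showing the displayed collection is a subframe via the identity $\na(a)\cap\del(b)\cap\na(c)\cap\del(d)=\na(a\we c)\cap\del(b\ve d)$ together with closure of the shapes $\syp{x}\we\sym{x}$ and $\syp{y}\ve\sym{y}$ under meet and join respectively. The only difference is organizational (you prove (2) first and derive (1) and (3) from it), and your write-up of item (4) is, if anything, cleaner than the paper's.
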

\begin{proof}
Let us prove each item in turn. For a collection $C\se L$, let us denote as $\mathit{frm}(C)$ the subframe of $\SLop$ generated by $C$.
\begin{enumerate}

\item The finitary assembly of $L$ is the subframe of $\SLop$ generated by the closed congruences of $L$ together with the finitary open ones. Since $C_1=\{\na (x):x\in L\}\cup \{\del (x^+\ve x^-):x^+\in L^+,x^-\in L^-\}$ is a subset of this collection, we clearly have $\mathit{frm}(C_1)\se \mf{A}_{\fin}(L)$. For the other subframe inclusion, we first claim that whenever $f\in L$ is a finitary element, we have that $\del (f)\in \mathit{frm}(C_1)$. This holds because opens $\del (f)$ from finitary elements $f\in L$ are finite joins in $\mf{A}(L)$ of open congruences of the form $\del (\sy{x^+}\ve \sy{x^-})$. Then, we may deduce that we have the inclusion $\{\na(x):x\in L\}\cup \{\del (f):f\in \fin(L)\}\se \mathit{frm}(C_1)$. Clearly this also implies the subframe inclusion $\mathit{frm}(\{\na(x):x\in L\}\cup \{\del (f):f\in \fin(L)\})\se \mathit{frm}(C_1)$, that is, $\mf{A}_{\fin}(L)\se \mathit{frm}(C_1)$.

\item Let us denote as $S^+$ the collection $\{\del (\sy{x^+}):x^+\in L^+\}\cup \{\na(\sy{x^+}):x^+\in L^+\}$ and let us denote as $S^-$ the similar collection of elements if $L^-$. It is clear that we have $S^+\cup S^-\se \{\na(x):x\in L\}\cup \{\del (f):f\in \fin(L)\}$, and so we immediately get the subframe inclusion $\mathit{frm}(S^+\cup S^-)\se \mf{A}_{\fin}(L)$. For the other set inclusion, we show that for every $x\in L$ we have that $\na(x)\in \mathit{frm}(S^+\cup S^-)$. For this, we recall that the map $\na:L\ra \mf{A}(L)$ is a frame map, and so for every arbitrary element $\bve_i (\sy{x^+_i}\we \sy{x^-_i})\in L$ its closed congruence is indeed a join of finite meets of closed congruences of the form $\na(x^+)$ and $\na(x^-)$. Then, $\{\na(x):x\in L\}\se \mathit{frm}(S^+\cup S^-)$. We now show that every $\del (f)$, for $f\in L$ finitary, is in $\mathit{frm}(S^+\cup S^-)$. For this, it suffices to notice that every such open congruence is the finite meet of finite joins of open congruences in $S^+\cup S^-$. Then, we have $\{\na(x):x\in L\}\cup \{\del (f):f\in \fin(L)\}\se \mathit{frm}(S^+\cup S^-)$, and so we are done.

\item Since the map $\mf{A}(i^+):\mf{A}(L^+)\ra \mf{A}(L)$ is a frame map, and since we know that open and closed sublocales congruences any assembly as a frame, we have that
\[
\sy{\mf{A}(L^+)}=\mathit{frm}(\{\del (\sy{x^+}):x^+\in L^+\}\cup \{\na (\sy{x^+}):x^+\in L^+\})=\mathit{frm}(S^+).
\] 
We have a similar result for the assembly of the negative frame $L^-$.  Substituting this into the equality we wish to claim, we obtain that this is equivalent to 
\[
\mf{A}_{\fin}(L)=\mathit{frm}(\mathit{frm}(S^+)\cup\mathit{frm}(S^-)).
\]
We use the fact that $\mathit{frm}$ is a closure operator, which is Observation \ref{frmclo6}, to deduce that this equality  amounts to $\mf{A}_{\fin}(L)=\mathit{frm}(S^+\cup S^-)$. This is item (2).

\item Let us denote as $F$ the collection of congruences of $L$ of the form $ \bve_i \na (\sy{x^+_1}\we \sy{x^-_i})\cap \del (\sy{y^+_1}\ve \sy{y^-_i})$. To prove the desired claim, we will show that $F=\mathit{frm}(\{\na(x):x\in L\}\cup \{\del(f):f\in \fin(L)\})$. Let us now define
\[
F'=\{\na (\sy{x^+}\we \sy{x^-}):x^+\in L^+,x^-\in L^-\}\cup \{\del (\sy{x^+}\ve \sy{x^-}):x^+\in L^+,x^-\in L^-\}
\]
Since every element in $F$ is the join of elements in $F'$, and since clearly $F'\se \{\na(x):x\in L\}\cup \{\del(f):f\in \fin(L)\}$, we obtain the chain of inclusions $F\se \mathit{frm}(F')\se \Sfop$. To prove the reverse inclusion, we first notice that the collection $\{\na(x):x\in L\}$ is a subframe of $\mf{A}(L)$ and that it is generated by the collection $\{\na(\sy{x^+}\we \sy{x^-}):x^+\in L^+,x^-\in L^-\}$. Additionally, every open of the form $\del (f)$ for $f\in L$ a finitary element is the finite meets of elements of the form $\del (\sy{x^+}\ve \sy{x^-})$. We then have the chain of inclusions $\mf{A}_{\fin}(L)=\mathit{frm}(\{\na(x):x\in L\}\cup \{\del (f):f\in \fin(L)\})\se \mathit{frm}(F')$. Closing the family $F$ under finite meets yields the family $\{\na(\sy{x^+}\we \sy{x^-})\ve \del(\sy{y^+}\ve \sy{y^-}):x^+,y^+\in L^+,x^-,y^-\in L^-\}$. Closing this under arbitrary joins clearly yields $F$, by definition of $F$, and so $\mathit{frm}(F')=F$. \qedhere
\end{enumerate}
\end{proof}

With this section, we move on to looking at the lattice of quotients of a finitary biframe. The first step is to look at the lattice of the finitary congruences on $L$ for some biframe $\ca{L}$.  

\begin{lemma}\label{AfinLisSL}
For a biframe $\ca{L}$, the ordered collection of finitary congruences of $L$ a subframe of $\mf{A}(L)$. In particular, it is $\mf{A}_{\fin}(L)$.
\end{lemma}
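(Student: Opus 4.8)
The plan is to prove that the ordered collection of finitary congruences on $L$ coincides, as a sub-poset of $\mf{A}(L)$, with $\mf{A}_{\fin}(L)$; since $\mf{A}_{\fin}(L)=\mf{A}_{\fin(L)}(L)$ is by construction a subframe of $\mf{A}(L)$, this equality at once yields that the collection of finitary congruences is a subframe. Write $\theta(a,b)$ for the congruence of $L$ generated by the single pair $(a,b)$. I will use two elementary facts about $\mf{A}(L)$. (i) $\theta(a,b)=\theta(a\we b,a\ve b)$, since $a$ and $b$ lie in a common congruence block exactly when $a\we b$ and $a\ve b$ do (blocks of a lattice congruence are convex sublattices). (ii) For all $u,v\in L$, the meet in $\mf{A}(L)$ (which is intersection of congruences) satisfies $\na(u)\cap\del(v)=\theta(u\we v,\,u)$. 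To see (ii), first note $\na(u)\cap\del(v)=\na(u)\cap\del(u\we v)$: one inclusion is clear because $\del$ is order-reversing and $u\we v\le v$, and for the other, if $x\ve u=y\ve u$ and $x\we(u\we v)=y\we(u\we v)$, then distributivity gives $x\we v=(x\we y\we v)\ve(x\we u\we v)$ and symmetrically $y\we v=(x\we y\we v)\ve(y\we u\we v)$, which agree by the second hypothesis; then, since $u\we v\le u$, the congruence $\na(u)\cap\del(u\we v)$ is exactly the principal congruence $\theta(u\we v,u)$ (the standard description of principal congruences: $\theta(c,d)=\na(d)\cap\del(c)$ for $c\le d$).

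For the inclusion of finitary congruences into $\mf{A}_{\fin}(L)$, let $C$ be finitary, generated by some $R\se\fin(L)\times\fin(L)$, so that $C=\bve_{(a,b)\in R}\theta(a,b)$, the join taken in $\mf{A}(L)$. By (i) and (ii), $\theta(a,b)=\theta(a\we b,a\ve b)=\na(a\ve b)\cap\del(a\we b)$. Since $a\ve b\in L$ and $a\we b\in\fin(L)$ (the set $\fin(L)$ is closed under binary joins and meets), both $\na(a\ve b)$ and $\del(a\we b)$ belong to $\mf{A}_{\fin}(L)$ directly from its definition as the subframe generated by all $\na(x)$, $x\in L$, together with all $\del(s)$, $s\in\fin(L)$. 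As $\mf{A}_{\fin}(L)$ is a subframe, it is closed under the displayed binary meets and under arbitrary joins, so $C\in\mf{A}_{\fin}(L)$.

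Conversely, by Lemma~\ref{manysubframes6} every element of $\mf{A}_{\fin}(L)$ has the form $D=\bve_i\big(\na(\syw{x_i}{x_i})\cap\del(\syv{y_i}{y_i})\big)$. By (ii) each summand equals $\theta\big((\syw{x_i}{x_i})\we(\syv{y_i}{y_i}),\ \syw{x_i}{x_i}\big)$, a congruence generated by a single pair both of whose entries lie in $\fin(L)$, hence a finitary congruence; and an arbitrary join in $\mf{A}(L)$ of finitary congruences is finitary, because that join is generated by the union of the generating relations, which again lies in $\fin(L)\times\fin(L)$. Thus $D$ is finitary. Combining the two inclusions gives the equality, and hence the statement. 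The only step that is more than bookkeeping is identity (ii), but it reduces to a routine use of frame distributivity, so I expect no real obstacle; the rest is a matter of matching the descriptions of $\mf{A}_{\fin}(L)$ in Lemma~\ref{manysubframes6} against the definition of a finitary congruence.
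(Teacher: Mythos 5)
Your proof is correct and follows essentially the same route as the paper: both arguments identify a finitary congruence with a join of meets $\na(-)\cap\del(-)$ taken at finitary elements and then match this against Lemma \ref{manysubframes6} (resp.\ the definition of $\mf{A}_{\fin}(L)$ as a generated subframe). The only difference is one of detail: you spell out both inclusions and justify the principal-congruence identity $\theta(c,d)=\na(d)\cap\del(c)$, which the paper simply takes as known.
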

\begin{proof}
A congruence of $L$ is finitary exactly when it is generated by a relation of the form $\{(\syw{a_i}{a_i},\syv{b_i}{b_i})\}$, that is, if it is of the form 
\[
\bve_i \na(\syw{a_i}{a_i})\cap \del(\syv{b_i}{b_i})=\bve_i \na(\sy{a^+_i})\cap \na(\sy{a^-_i})\cap \del(\sy{b^+_i})\cap \del(\sy{b^-_i}),
\]
in which we have used the fact that $\na:L\ra \mf{A}(L)$ preserves the lattice operations and $\del:L\ra \mf{A}(L)$ reverses them. By the last item of Lemma \ref{manysubframes6}, the one we obtained is exactly the canonical form of a finitary congruence of $L$.
\end{proof}

Let us now take the second step of our analysis: using the notion of finitary congruence to define the notion of ``biquotient" of a finitary biframe. A quotient of a biframe $\ca{L}$ is a biframe of the form $(L^+/C^+,L^-/C^-,L/C)$, where $C$ is a congruence on $L$, and $C^+$ and $C^-$ are the congruences on $L^+$ and $L^-$ corresponding to the restriction of $C$ to the subframes $\syp{L}$ and $\sym{L}$, respectively. We denote this quotient as $\ca{L}/C$. For a relation $R$ on $L$, we denote as $\ca{L}/R$ the biframe quotient $(L^+/(\overline{R})^+,L^-/(\overline{R})^-,L/R)$, where $\overline{R}$ is the congruence on $L$ generated by $R$. For a finitary biframe $\ca{L}$ and a relation $R\se L\times L$, we call $\ca{L}\ra \ca{L}/R$ a \tc{biquotient} if the relation $R$ induces a finitary congruence on $L$. For a finitary biframe $\ca{L}$, we denote as $\mf{S}(\ca{L})$ the ordered collection of biquotients of $\ca{L}$, under the order defined as $\ca{L}/C\leq \ca{L}/D$ if and only if $D\se C$ for any two congruences $C,D\se L\times L$. We now see that biquotients are just biframe quotients such that the target is a finitary biframe.

\begin{lemma}\label{witness}
Suppose that we have a frame $L$, a relation $R$ on $L$, and a relation $S$ on $L/R$. Suppose that for every $R$-equivalence class we have a selected witness $w([x]_R)\in [x]_R$ of the equivalence class. Let $w(S)=\{(w([a]_R),w([b]_R)):([a]_R,[b]_R)\in S\}$. We have that
\[
(L/R)/S=L/(R\cup w(S)).
\]
\end{lemma}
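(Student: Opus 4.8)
\emph{Proof proposal.} The plan is to regard both sides as quotients of $L$ and to show that the two canonical frame surjections out of $L$ have the same kernel, by constructing mutually inverse frame maps between the codomains that intertwine these surjections. Write $q_R\colon L\epi L/R$, $q_S\colon L/R\epi (L/R)/S$ for the canonical quotient maps, and $q\colon L\epi L/(R\cup w(S))$ for the canonical quotient onto the right-hand side; recall the paper's convention that a relation $T$ used to form a quotient $M/T$ forces the inequalities $(x,y)\in T\Rightarrow q_T(x)\leq q_T(y)$, and that a frame map ``respects'' $T$ when it sends each such pair to a pair in the $\leq$ relation. The only bookkeeping fact I will need about the chosen witnesses is that $w([x]_R)\in [x]_R$, hence $q_R(w([x]_R))=[x]_R$ for every $R$-class.

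First I would check that the composite $q_S\circ q_R\colon L\ra (L/R)/S$ respects the relation $R\cup w(S)$. It respects $R$ because $q_R$ does. For a pair $(w([a]_R),w([b]_R))\in w(S)$ arising from $([a]_R,[b]_R)\in S$, we have $q_R(w([a]_R))=[a]_R$ and $q_R(w([b]_R))=[b]_R$, and since $q_S$ respects $S$ this gives $q_S(q_R(w([a]_R)))\leq q_S(q_R(w([b]_R)))$. Applying Lemma \ref{relinclu9} to $\id_L$ (with $R\cup w(S)$ on the source and the generated congruence of $(L/R)/S$ on the target), there is a unique frame map $h\colon L/(R\cup w(S))\ra (L/R)/S$ with $h\circ q=q_S\circ q_R$.

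Conversely, $q\colon L\ra L/(R\cup w(S))$ respects $R$ (as $R\se R\cup w(S)$), so it factors as $q=\bar q\circ q_R$ for a unique frame map $\bar q\colon L/R\ra L/(R\cup w(S))$. Then $\bar q$ respects $S$: for $([a]_R,[b]_R)\in S$ we have $\bar q([a]_R)=\bar q(q_R(w([a]_R)))=q(w([a]_R))$, similarly $\bar q([b]_R)=q(w([b]_R))$, and since $(w([a]_R),w([b]_R))\in w(S)\se R\cup w(S)$ this yields $q(w([a]_R))\leq q(w([b]_R))$. Hence $\bar q$ factors through $q_S$, producing a frame map $k\colon (L/R)/S\ra L/(R\cup w(S))$ with $k\circ q_S\circ q_R=q$.

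Finally I would verify $h$ and $k$ are mutually inverse. On one hand $h\circ k\circ q_S\circ q_R=h\circ q=q_S\circ q_R$; since $q_S\circ q_R$ is a composite of surjections it is epi in $\bd{Frm}$, so $h\circ k=\id$. On the other hand $k\circ h\circ q=k\circ q_S\circ q_R=q$, and $q$ is epi, so $k\circ h=\id$. Thus $h$ is an isomorphism with $h\circ q=q_S\circ q_R$, so the two frame surjections out of $L$ differ only by post-composition with an isomorphism, i.e.\ they are the same quotient of $L$, which is the asserted equality $(L/R)/S=L/(R\cup w(S))$. I do not expect a genuine obstacle here: every step is an instance of the universal property of frame quotients already packaged in Lemma \ref{relinclu9}, and the one place requiring attention is simply keeping track of the witness identities $q_R(w([x]_R))=[x]_R$.
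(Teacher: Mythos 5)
Your proof is correct. The paper takes a slightly different, though ultimately equivalent, route: rather than constructing the two comparison maps $h$ and $k$ and checking that they are mutually inverse, it characterizes each quotient by the class of frame maps $f\colon L\ra M$ that factor through it and shows that these two classes coincide --- the nontrivial inclusion being exactly your witness computation, namely that a map respecting $R$ satisfies $f(a)=f(w([a]_R))\leq f(w([b]_R))=f(b)$ whenever $([a]_R,[b]_R)\in S$. Both arguments rest on the same universal property of frame quotients and the same bookkeeping identity $q_R(w([x]_R))=[x]_R$; yours has the advantage of making explicit what ``equality of quotients'' means (an isomorphism intertwining the canonical surjections out of $L$), at the cost of the extra epi-cancellation step, while the paper's version avoids constructing any maps at all by comparing membership conditions of two sets of morphisms. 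One small remark: your appeal to Lemma \ref{relinclu9} with $f=\id_L$ works, but it is really just the universal property of the quotient $L/(R\cup w(S))$ applied directly to $q_S\circ q_R$.
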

\begin{proof}
To prove our statement, we show that the frame maps $f:L\ra M$ which factors through one quotients coincide with those that factor through the other. The set of morphisms $f:L\ra M$ which factor through the quotient $(L/R)/S$ is the set
\[
\{f\in \bd{Frm}(L,M): f\mb{ respects }R,f(a)\leq f(b)\mb{ whenever }([a]_R,[b]_R)\in S\}.
\]
Now, suppose that for every $R$-equivalence class $[x]_R$ we select a witness $w([x]_R)\in [x]_R$. We have that the set of morphisms which factors through the quotient $L/(R\cup w(S))$ is
\begin{align*}
    & \{f\in \bd{Frm}(L,M): f\mb{ respects }R,f(w([a]_R))\leq f(w[b]_R)\mb{ whenever }([a]_R,[b]_R)\in S\}.
\end{align*}

The membership condition of the first set is formally stronger than the second, as we require $f(a')\leq f(b')$ for all elements $a'\in [a]_R$ and all elements $b'\in [b]_R$ for every pair of equivalence classes $([a]_R,[b]_R)\in S$, and not just for selected witnesses. So, the first set is included in the second. For the reverse inclusion, suppose that $f$ is in the second set. We then have that, whenever $a,b\in L$ are such  that $([a]_R,[b]_R)\in S$, we have 
\[
f(a)=f(w([a]_R))\leq f(w([b]_R))=f(b),
\]
where for the two equalities we have used the fact that $f$ respects $R$. So, the map $f:L\ra M$ also satisfies the membership condition for the first set.
\end{proof}

\begin{proposition}\label{whenbiquotient}
For a finitary biframe $\ca{L}$ we have that a biframe quotient $f:\ca{L}\epi \ca{L}/R$ is a biquotient if and only if the biframe $\ca{L}/R$ is finitary.
\end{proposition}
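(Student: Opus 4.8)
The plan is to reduce both conditions to statements about finitary congruences on frame coproducts and to shuttle between iterated and single quotients using Lemma~\ref{witness}. Write $L$ as $(\cpu{L})/C_L$ with $C_L$ finitary, which is possible since $\ca{L}$ is finitary, and write $\ca{L}/R=(M^+,M^-,M)$, so that $M^+=L^+/C^+$, $M^-=L^-/C^-$ and $M=L/\overline{R}$. There are two canonical surjections onto $M$ to keep track of: the map $\kappa\colon M^+\oplus M^-\epi M$, whose kernel is finitary precisely when the biframe $\ca{L}/R$ is finitary; and the composite $\pi\colon\cpu{L}\xrightarrow{q_{C_L}}L\epi M$, which also factors as $\cpu{L}\xrightarrow{p}M^+\oplus M^-\xrightarrow{\kappa}M$, where $p$ is induced by the component quotients $L^\pm\epi L^\pm/C^\pm$. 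By definition, $\ca{L}\epi\ca{L}/R$ is a biquotient precisely when $\overline{R}$ is a finitary congruence on $L$.

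The one auxiliary fact I would isolate first is that a frame surjection underlying a biframe surjection sends finitary elements \emph{onto} finitary elements; concretely, both $q_{C_L}\colon\cpu{L}\epi L$ and $p\colon\cpu{L}\epi M^+\oplus M^-$ restrict to surjections $\fin(\cpu{L})\epi\fin(L)$ and $\fin(\cpu{L})\epi\fin(M^+\oplus M^-)$, since they carry generators to generators and preserve finite joins and finite meets. The upshot is that whenever Lemma~\ref{witness} is applied along one of these maps, the selected witnesses can be chosen to be finitary; so a finitary generating relation downstream always lifts to a finitary generating relation upstream, and conversely a finitary generating relation upstream pushes forward to one downstream.

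For the forward direction, assume $\overline{R}$ is generated by a relation $R'\subseteq\fin(L)\times\fin(L)$, so that $M=L/R'=((\cpu{L})/C_L)/R'$. Applying Lemma~\ref{witness} along $q_{C_L}$, with the witnesses of $R'$ chosen finitary and combined with a finitary generating relation for $C_L$, presents $M$ as $(\cpu{L})/G$ for a relation $G\subseteq\fin(\cpu{L})\times\fin(\cpu{L})$ with $\overline{G}=\ker\pi$; hence $\ker\pi$ is finitary. Since $\ker p\subseteq\ker\pi$ and $M^+\oplus M^-\cong(\cpu{L})/\ker p$ canonically, a second application of Lemma~\ref{witness}, now along $p$ and with the pairs of $G$ serving as their own witnesses, shows that $\ker\kappa$ is generated by $p(G)\subseteq\fin(M^+\oplus M^-)\times\fin(M^+\oplus M^-)$; so $\ca{L}/R$ is finitary. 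The backward direction reverses this: from a finitary generating relation $H$ for $\ker\kappa$, choose finitary witnesses $\widetilde{H}\subseteq\fin(\cpu{L})\times\fin(\cpu{L})$ for its pairs, which together with a finitary generating relation for $\ker p$ (finitary since $\ker p$ is generated by pairs of single generators) present $\ker\pi$ as a finitary congruence; pushing these pairs forward along $q_{C_L}$ and appealing to Lemma~\ref{witness} once more exhibits $\overline{R}$ as generated by a subset of $\fin(L)\times\fin(L)$, so that $\ca{L}\epi\ca{L}/R$ is a biquotient.

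The main obstacle is bookkeeping rather than mathematics: one has to verify that $L$, $M^+\oplus M^-$ and $\cpu{L}$ all present the same frame $M$ via \emph{mutually compatible} canonical surjections --- in particular that $\kappa\circ p=(L\epi M)\circ q_{C_L}$, and that $\ker\kappa$ corresponds to $\ker\pi$ under the identification $M^+\oplus M^-\cong(\cpu{L})/\ker p$ --- so that the relation shuttled back and forth by Lemma~\ref{witness} genuinely witnesses finitariness of the intended congruence, namely $\ker\kappa$ for finitariness of $\ca{L}/R$ and $\overline{R}\subseteq L\times L$ for the biquotient condition. Once this compatibility is pinned down, the rest is routine manipulation of generating relations.
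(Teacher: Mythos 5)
Your proof is correct and follows essentially the same route as the paper's: identify $L$ with $(\cpu{L})/C_L$ and use Lemma~\ref{witness}, with witnesses chosen finitary, to pass back and forth between iterated and one-step presentations of the main component of $\ca{L}/R$. The only substantive difference is that you explicitly track the kernel of $M^+\oplus M^-\epi M$ (which is what the definition of $\ca{L}/R$ being finitary actually refers to), whereas the paper's argument only exhibits a finitary congruence on $\cpu{L}$ and implicitly relies on the fact --- proved only later, as Lemma~\ref{finitarylemma9} --- that a finitary quotient of $(L^+,L^-,\cpu{L})$ is a finitary biframe; your version closes that small gap, and in exchange the paper's converse uses the canonical join-form of finitary congruences, which you avoid.
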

\begin{proof}
For simplicity, we will identify $L$ with its isomorphic copy $(\cpu{L})/C_L$. Suppose that $\ca{L}$ is a finitary biframe, and that we have a biframe quotient $\ca{L}/R$. First, suppose that $R$ induces a finitary congruence on $(\cpu{L})/C_L$. Let this finitary congruence be that induced by 
\[
\{(\sy{\syp{a_i}}\we \sy{\sym{a_i}},\sy{\syp{b_i}}\ve \sy{\sym{b_i}}):i\in I\}.
\]
Now, for every equivalence class $[\syp{a}]_{C_L}=\sy{\syp{a}}$ we select the witness $\syp{a}\in [\syp{a}]_{C_L}$, and similarly for the negative component. By Lemma \ref{witness}, we have that $\ca{L}/R$ is the same as the quotienting of $(L^+,L^-,\cpu{L})$ by the relation
\[
C_L\cup \{(\syp{a_i}\we \sym{a_i},\syp{b_i}\ve \sym{b_i}):i\in I\}
\]
on $\cpu{L}$. Since $C_L$ is a finitary congruence, then, this relation induces a finitary congruence on $\cpu{L}$. So, the biframe $\ca{L}/R$ is finitary. For the converse, suppose that the biframe $\ca{L}/R$ is finitary. Then, we have that $\ca{L}/R$ is obtained by quotienting $(L^+,L^-,\cpu{L})$ by some finitary congruence $C$ larger than $C_L$. We write this finitary congruence in canonical form, as the join
\[
C=\bve_i \na(\sy{a^+_i})\cap \na(\sy{a^-_i})\cap \del(\sy{b^+_i})\cap \del(\sy{b^-_i})
\]
of all congruences of the form $\na(a^+)\cap \na(a^-)\cap \del(b^+)\cap \del(b^-)$ below it. Since the congruence $C_L$ is finitary (by our initial assumption) and is contained in $C$, we must have that $C_L=\bve _{j\in J}\na(\sy{a^+_j})\cap \na(\sy{a^-_j})\cap \del(\sy{b^+_j})\cap \del(\sy{b^-_j})$ for some $J\se I$. Now, we may write
\[
C=C_L\ve \bve _{i\in I{\sm}J}\na(\sy{a^+_i})\cap \na(\sy{a^-_i})\cap \del(\sy{b^+_i})\cap \del(\sy{b^-_i}).
\]
If we set that every element of the form $\syp{a}$ as a witness for an equivalence class $[\syp{a}]_{C_L}=\sy{\syp{a}}$ once again, by Lemma \ref{witness} we obtain that the biframe $\ca{L}/R$ is the biframe $(L^+,L^-,L)$ quotiented by the relation
\[
\bve _{i\in I{\sm}J}\na(\sy{\sy{a^+_i}})\cap \na(\sy{\sy{a^-_i}})\cap \del(\sy{\sy{b^+_i}})\cap \del(\sy{\sy{b^-_i}}).
\]
This congruence must be finitary, and by definition of the biframe $\ca{L}/R$, this congruence is also that generated by $R$.
\end{proof}

With the next proposition, we examine the bispectra of biquotients of a finitary biframe $\ca{L}$. For a relation $R$ on $L$, we regard the bispectrum $\bpt(\Lq{R})$ as the set of points of $\ca{L}$ which factors through this quotients, equipped with the bitopology inherited from $\bpt(\ca{L})$.

\begin{proposition}\label{manyfacts9}
For a finitary biframe $\ca{L}$, we have the following facts.
\begin{enumerate}
    \item $|\bpt(\ca{L}/\del(\syp{a}))|=\va{L}{+}{a}$.
    \item $|\bpt(\Lq{\na(\syp{a})})|=\va{L}{+}{a}^c$.
    \item $|\bpt(\Lq{(\na(\syp{a})\cap \na(\sym{a})\cap \del(\syp{b})\cap \del(\sym{b}))})|=\va{L}{+}{a}^c\cup \va{L}{-}{a}^c\cup \va{L}{+}{b}\cup \va{L}{-}{b}$.
    \item $|\bpt(\Lq{\bve_i C_i})|=\bca_i |\bpt(\Lq{C_i})|$ for any collection $C_i$ of finitary congruences on $L$.
    \item $|\bpt(\Lq{C})|=\bca\{\va{L}{+}{a}^c\cup \va{L}{+}{b}\cup \va{L}{-}{a}^c\cup \va{L}{-}{b}:\less{a^+}{b^+}\cap \less{a^-}{b^-}\se C\}$ for any finitary congruence $C$ on $L$.
\end{enumerate}
\end{proposition}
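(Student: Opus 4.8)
The plan is to reduce all five items to one observation: for a congruence $C$ on $L$, a bipoint $\pa{f}$ of $\ca{L}$ lies in $|\bpt(\Lq{C})|$ if and only if $C\se\ker(f)$, where $f:L\ra 2$ is the unique frame map with $f\circ e^+_L=f^+$ and $f\circ e^-_L=f^-$, and $\ker(f)=\{(x,y):f(x)=f(y)\}$. This holds because a bipoint of a biframe is the same datum as a frame map from its main component to $2$; the main component of $\Lq{C}$ is $L/C$ and the canonical map $\ca{L}\ra\Lq{C}$ is the quotient $q_C:L\ra L/C$ there; precomposition with a surjection is injective; and the bitopology on $\bpt(\Lq{C})$ is the one inherited from $\bpt(\ca{L})$, as recalled before the statement, so only the underlying sets need to be identified. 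I would also note that $\ker(f)$ is itself a frame congruence, since $f$ preserves arbitrary joins and finite meets.

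For (1) and (2) I would compute directly when $f$ respects the generating relations. The open congruence $\del(\syp{a})$ is $\{(x,y):x\we\syp{a}=y\we\syp{a}\}$; testing the pair $(1,\syp{a})$ shows $f$ respects it iff $f(\syp{a})=1$, i.e. iff $f^+(a^+)=1$, i.e. iff $\pa{f}\in\va{L}{+}{a}$. Dually the closed congruence $\na(\syp{a})=\{(x,y):x\ve\syp{a}=y\ve\syp{a}\}$ is respected by $f$ iff $f(\syp{a})=0$, i.e. iff $\pa{f}\in\va{L}{+}{a}^c$; the same arguments apply on the negative component.

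For (3), write $C=\na(\syp{a})\cap\na(\sym{a})\cap\del(\syp{b})\cap\del(\sym{b})$. One inclusion is easy: $C$ is below each of its four factors, so any bipoint in $|\bpt(\Lq{\na(\syp{a})})|$, $|\bpt(\Lq{\na(\sym{a})})|$, $|\bpt(\Lq{\del(\syp{b})})|$ or $|\bpt(\Lq{\del(\sym{b})})|$ lies in $|\bpt(\Lq{C})|$, and by (1)--(2) these four sets are $\va{L}{+}{a}^c$, $\va{L}{-}{a}^c$, $\va{L}{+}{b}$, $\va{L}{-}{b}$. For the reverse inclusion, suppose $\pa{f}$ lies in none of them, so $f(\syp{a})=f(\sym{a})=1$ and $f(\syp{b})=f(\sym{b})=0$. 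Set $p=\syp{a}\we\sym{a}$ and $q=\syp{b}\ve\sym{b}$, so $f(p)=1$ and $f(q)=0$. Using only absorption, the pair $(p,p\we q)$ lies in all four congruences above — for instance $p\ve\syp{a}=\syp{a}=(p\we q)\ve\syp{a}$ because $p\we q\le p\le\syp{a}$, and $p\we\syp{b}=p\we(q\we\syp{b})=(p\we q)\we\syp{b}$ because $q\we\syp{b}=\syp{b}$ — hence $(p,p\we q)\in C$, while $f(p)=1\neq 0=f(p\we q)$. So $C\not\se\ker(f)$ and $\pa{f}\notin|\bpt(\Lq{C})|$. Producing this witnessing pair is the only genuinely nontrivial step.

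For (4), since $\ker(f)$ is a congruence we have $\bve_iC_i\se\ker(f)$ iff $\bigcup_iC_i\se\ker(f)$ iff $C_i\se\ker(f)$ for all $i$; by the first paragraph this says exactly that $\pa{f}\in|\bpt(\Lq{\bve_iC_i})|$ iff $\pa{f}\in\bca_i|\bpt(\Lq{C_i})|$ (and $\bve_iC_i$ is again finitary, so $\Lq{\bve_iC_i}$ is a biquotient). For (5), by Lemma \ref{AfinLisSL} together with item (4) of Lemma \ref{manysubframes6}, every finitary congruence $C$ is the join of all congruences of the form $\less{a^+}{b^+}\cap\less{a^-}{b^-}$ that it contains — and such a congruence is $\na(\syp{a})\cap\del(\syp{b})\cap\na(\sym{a})\cap\del(\sym{b})$, exactly the one handled in (3). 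Applying (4) to this join and then (3) to each joinand gives $|\bpt(\Lq{C})|=\bca\{\va{L}{+}{a}^c\cup\va{L}{+}{b}\cup\va{L}{-}{a}^c\cup\va{L}{-}{b}:\less{a^+}{b^+}\cap\less{a^-}{b^-}\se C\}$, as claimed.
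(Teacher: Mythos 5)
Your proof is correct and follows essentially the same route as the paper's: identify the points of $\Lq{C}$ with the bipoints whose induced map $f:L\ra 2$ respects $C$, compute this condition for the generating closed and open congruences, and assemble items (3)--(5) from (1), (2) and the join-compatibility in (4). The only presentational difference is in item (3), where the paper invokes the standard facts that $\na$ preserves and $\del$ reverses the lattice operations and that $\na(x)\cap\del(y)$ is generated by $\{(x,y)\}$, while you verify the same thing by hand with the explicit witnessing pair $(p,p\we q)$.
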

\begin{proof}
Let us prove each item in turn. 
\begin{enumerate}
\item The underlying set of $\pt(\Lq{\del(\syp{a})})$ consists of all those bipoints $\pa{f}:\ca{L}\ra 2$ such that the map $f:L\ra 2$ generated by $f^+$ and $f^-$ respects the congruence $\del(\syp{a})$. This is the congruence generated by the relation $\{(1,\syp{a})\}$, and so for $\pa{f}$ to be in $\bpt(\Lq{\del(\syp{a})})$ it suffices for it to respect this relation, that is, it suffices to have $f^+(a^+)=1$. Then, $|\bpt(\Lq{\del(\syp{a})})|=\{\pa{f}\in \bpt(\ca{L}):f^+(a^+)=1\}=\va{L}{+}{a}$.
\item The argument is similar to the one for the first item. For $\syp{a}\in L$, we notice that the points of $\bpt(\Lq{\na(\syp{a})})$ is the set of bipoints $\pa{f}$ with $f^+(a^+)=0$, and that this is $\va{L}{+}{a}^c$.
\item Since the map $\na:L\ra \mf{A}(L)$ preserves the lattice operations, and the map $\del:L\ra \mf{A}(L)$ reverses them, we have $C:=\na(\syp{a})\cap \na(\sym{a})\cap \del(\syp{b})\cap \del(\sym{b})=\na(\syw{a}{a})\cap \del(\syv{b}{b})$. Furthermore, we know that in general a congruence $\na(a)\cap \del(b)$ is the congruence generated by the relation $\{(a,b)\}$. So the congruence we are considering on $L$ is that generated by the relation $\{(\syw{a}{a},\syv{b}{b})\}$. The points of $\bpt(\Lq{C})$ are those bipoints $\pa{f}$ such that the map $f:L\ra 2$ satisfies $f(\syw{a}{a})\leq f(\syv{b}{b})$, that is, such that either $f(\syw{a}{a})=0$ or $f(\syv{b}{b})=1$. As in $2$ the top element is join prime and the bottom one is meet prime, this condition amounts to having either $f^+(a^+)=0$, or $f^-(a^-)=0$, or $f^+(b^+)=1$, or $f^-(b^-)=1$. By definition of the bitopology on $\bpt(\ca{L})$, the set of points satisfying this is $\va{L}{+}{a}^c\cup \va{L}{-}{a}^c\cup \va{L}{+}{b}\cup \va{L}{-}{b}$.
\item If $C_i$ is a collection of finitary congruences on $L$, then the bipoints of $\bpt(\Lq{\bve_i C_i})$ are the collection of bipoints $\pa{f}\in \bpt(\ca{L})$ such that the map $f:L\ra 2$ respects all of the congruences in $\{C_i:i\in I\}$.
\item This follows from items (3) and (4). \qedhere
\end{enumerate}
\end{proof}

We would now like to show that for any finitary biframe $\ca{L}$ every bisober bisubspace of $\bpt(\ca{L})$ is of the form $\bpt(\ca{L}/C)$ for some finitary congruence $C$ on $L$, that is, that any bisober bisubspace of $\bpt(\ca{L})$ is the bispectrum of some biquotient of $\ca{L}$. To do this, we appeal to very general categorical facts.

\begin{lemma}\label{simplelemmaiso3}
Suppose that $\ca{C}$ is a category, and $i:X\ra Y$ is an isomorphism in $\ca{C}$, and $j:Y\ra X$ is such that $j\circ i=1_{X}$. Then $i\circ j=1_Y$.
\end{lemma}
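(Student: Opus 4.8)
The statement is the standard fact that a one-sided inverse of an isomorphism is automatically the two-sided inverse. The plan is to exploit the inverse $i^{-1}:Y\ra X$ that comes with the hypothesis that $i$ is an isomorphism, and to show that the given $j$ must coincide with $i^{-1}$; the conclusion $i\circ j=1_Y$ then follows at once.

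\textbf{Key steps.}
First I would record that, since $i:X\ra Y$ is an isomorphism, there is a morphism $i^{-1}:Y\ra X$ with $i^{-1}\circ i=1_X$ and $i\circ i^{-1}=1_Y$. Next I would compute, using only the identity and associativity axioms of a category,
\[
j=j\circ 1_Y=j\circ(i\circ i^{-1})=(j\circ i)\circ i^{-1}=1_X\circ i^{-1}=i^{-1}.
\]
Finally, substituting this into $i\circ j$ gives $i\circ j=i\circ i^{-1}=1_Y$, which is the desired equality.

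\textbf{Main obstacle.}
There is essentially no obstacle here: the only thing to be careful about is that one is genuinely allowed to invoke a two-sided inverse for $i$ (which is exactly what ``$i$ is an isomorphism'' grants), rather than circularly assuming $j$ itself is two-sided. Once the two-sided inverse $i^{-1}$ is in hand, the argument is a one-line cancellation using associativity, and no further hypotheses on $\ca{C}$ are needed. (This lemma is of course the symmetric counterpart of Lemma~\ref{simplelemmaiso3}, and could equally be deduced from it by reversing the roles of $X$ and $Y$.)
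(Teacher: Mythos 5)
Your proof is correct and is essentially the argument the paper gives: both identify $j$ with the two-sided inverse $i^{-1}$, the paper by cancelling $i$ on the right (citing that isomorphisms are epimorphisms) from $j'\circ i = 1_X = j\circ i$, and you by unfolding that same cancellation as an explicit associativity computation. The only blemish is the closing parenthetical, which refers to the lemma as its own ``symmetric counterpart''; that self-reference should be deleted.
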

\begin{proof}
Since we know that $i:X\ra Y$ is an isomorphism, we know that an inverse $j':Y\ra X$ exists. Then, we must have that $j'\circ i=1_X=j\circ i$. Since all isomorphisms are epimorphisms, this implies that $j=j'$ and, in particular, that $i\circ j=1_Y$.
\end{proof}

 \begin{lemma}\label{unitbecomesiso3}
 Suppose that we have an adjunction $F:\ca{C}\lra \ca{D}:G$ with $F\dashv G$, with $\eta:1\ra G\circ F$ the unit and $\varepsilon:F\circ G\ra 1$ the counit. Suppose also that this restricts to an equivalence of the full subcategories determined by $\{F(X):X\in \mf{Obj}(\ca{C})\}$ and $\{G(Y):Y\in \mf{Obj}(\ca{D})\}$. Then, for every $X\in \mf{Obj}(\ca{C})$ and every object $Y\in \mf{Obj}(\ca{D})$ the maps $F(\eta_X)$ and $G(\varepsilon_Y)$ are isomorphisms.
 \end{lemma}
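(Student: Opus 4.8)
The plan is to first unwind exactly what the hypothesis "restricts to an equivalence" says in terms of the unit $\eta$ and the counit $\varepsilon$, and then to read off both conclusions directly from the two triangle identities of the adjunction $F\dashv G$, with essentially no computation.

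First I would make the translation precise. The adjunction $F\dashv G$ restricts to an adjunction between the full subcategory $\ca{C}_0\se \ca{C}$ on the objects $\{G(Y):Y\in \mf{Obj}(\ca{D})\}$ and the full subcategory $\ca{D}_0\se \ca{D}$ on the objects $\{F(X):X\in \mf{Obj}(\ca{C})\}$: the functor $F$ carries $\ca{C}_0$ into $\ca{D}_0$ (since $F(G(Y))$ is visibly of the form $F(-)$), dually $G$ carries $\ca{D}_0$ into $\ca{C}_0$, the hom-set bijections restrict because both subcategories are full, and the unit and counit of this restricted adjunction are just the components $\eta_{G(Y)}$ and $\varepsilon_{F(X)}$ of the original natural transformations. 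Since an adjunction is an equivalence of categories precisely when its unit and its counit are natural isomorphisms, the assumption that the restricted adjunction is an equivalence says exactly that $\varepsilon_{F(X)}$ is an isomorphism for every $X\in \mf{Obj}(\ca{C})$ and that $\eta_{G(Y)}$ is an isomorphism for every $Y\in \mf{Obj}(\ca{D})$.

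Next I would fix $X\in \mf{Obj}(\ca{C})$ and invoke the triangle identity $\varepsilon_{F(X)}\circ F(\eta_X)=1_{F(X)}$. As $\varepsilon_{F(X)}$ is an isomorphism by the previous step, $F(\eta_X)$ is a one-sided inverse of an isomorphism, hence equals $\varepsilon_{F(X)}^{-1}$ and is therefore an isomorphism; this is Lemma \ref{simplelemmaiso3} (applied with the roles of the two maps interchanged, i.e. its evident dual). Dually, for $Y\in \mf{Obj}(\ca{D})$ the triangle identity $G(\varepsilon_Y)\circ \eta_{G(Y)}=1_{G(Y)}$ together with $\eta_{G(Y)}$ being an isomorphism gives $G(\varepsilon_Y)=\eta_{G(Y)}^{-1}$, an isomorphism. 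The only step that requires any care is the first one — recognizing that "restricts to an equivalence" is the same as saying that all the units $\eta_{G(Y)}$ and all the counits $\varepsilon_{F(X)}$ are isomorphisms, which rests on the standard facts that the restriction of an adjunction along full replete-enough subcategories is again an adjunction with the evident unit and counit, and that an adjunction is an equivalence exactly when its unit and counit are natural isomorphisms. Once this is established, the two triangle identities close the argument at once.
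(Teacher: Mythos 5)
Your proof is correct and follows essentially the same route as the paper: translate the hypothesis into the statement that $\varepsilon_{F(X)}$ and $\eta_{G(Y)}$ are isomorphisms, then read off $F(\eta_X)=\varepsilon_{F(X)}^{-1}$ and $G(\varepsilon_Y)=\eta_{G(Y)}^{-1}$ from the two triangle identities via (the dual of) Lemma \ref{simplelemmaiso3}. If anything, your write-up is slightly more careful than the paper's, since it spells out why the restriction is again an adjunction with the evident unit and counit before invoking the characterization of equivalences.
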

 \begin{proof}
 Suppose that $X\in \mf{Obj}(\ca{C})$. We know by assumption that the counit $\varepsilon_{F(X)}$ is an isomorphism. Additionally, the following unit-counit triangle commutes.
\[
\begin{tikzcd}
{} & F(G(F(X))) \arrow{dr}{\varepsilon_{F(X)}} \\
F(X) \arrow{ur}{F(\eta_{X})} \arrow{rr}{1_{F(X)}} && F(X)
\end{tikzcd}
\]
Then, by Lemma \ref{simplelemmaiso3}, we must have that $F(\eta_X)\circ \varepsilon_{F(X)}=1_{F(G(F(X)))}$. Since it is the inverse of an isomorphism, the map $F(\eta_X)$ must be an isomorphism too. The argument for maps of the form $G(\varepsilon_Y)$ is analogous.
 \end{proof}

\begin{lemma}\label{bisobersarequotients}
For a finitary biframe $\ca{L}$, any bisober bisubspace inclusion $\bpt(\ca{M})\inclu \bpt(\ca{L})$ is, up to bihomeomorphism, and inclusion of the form $\bpt(\Lq{C})\se \bpt(\ca{L})$ for some finitary congruence $C$ on $L$. 
\end{lemma}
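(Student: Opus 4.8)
The plan is to transport the given bisubspace across the adjunction $\bomf\dashv\bpt$ to obtain a biframe surjection out of $\ca{L}$, and then to read off the required congruence from it. Write the given bisubspace as $Y=\bpt(\ca{M})$, with inclusion $\iota:Y\inclu\bpt(\ca{L})$ (a bicontinuous injection under which $Y$ carries the two subspace topologies). Since $Y$ is bisober --- automatic when $\ca{M}$ is finitary by Proposition \ref{finitaryadjidempotency}, and in any case part of the hypothesis --- the unit $\psi_Y:Y\ra\bpt(\bomf(Y))$ is a bihomeomorphism. Let $g:\ca{L}\ra\bomf(Y)$ be the transpose of $\iota$ under $\bomf\dashv\bpt$; explicitly $g=\bomf(\iota)\circ\varphi_{\ca{L}}$, where $\varphi_{\ca{L}}:\ca{L}\ra\bomf(\bpt(\ca{L}))$ is the bispatialization and $\bomf(\iota)=(\iota^{-1},\iota^{-1}):\bomf(\bpt(\ca{L}))\ra\bomf(Y)$ is the pair of restriction maps.

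First I would check that $g$ is a biframe surjection with finitary target. Each frame component is onto: $\varphi_{\ca{L}}$ is a pair of frame surjections, and both restriction maps $\iota^{-1}$ are onto since opens of a subspace are traces of opens of the ambient space. Because the main component of any finitary biframe is generated as a frame by the images of its two subframe inclusions, it follows that $g$ is onto on main components as well, so $g:\ca{L}\epi\bomf(Y)$ is a biframe surjection; and $\bomf(Y)$ is finitary since $\bomf$ takes values in $\bd{BiFrm_{fin}}$. Hence $g$ exhibits $\bomf(Y)$ as a biframe quotient $\ca{L}/R$ of the finitary biframe $\ca{L}$ with finitary target, where $R$ is the congruence on $L$ induced by $g$. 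By Proposition \ref{whenbiquotient} this quotient is therefore a biquotient, i.e.\ $R$ induces a finitary congruence $C$ on $L$, and $\ca{L}/C\cong\bomf(Y)$ as biframes. Applying $\bpt$, and using that $g$ is a pair of frame surjections, $\bpt(g):\bpt(\bomf(Y))\ra\bpt(\ca{L})$ is injective, its image is exactly the set of bipoints of $\ca{L}$ that factor through $C$, and it carries the two subspace topologies; so, up to the isomorphism $\bomf(Y)\cong\ca{L}/C$, the map $\bpt(g)$ is the bisubspace inclusion $\bpt(\Lq{C})\se\bpt(\ca{L})$ in the sense fixed just before Proposition \ref{manyfacts9}.

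Finally I would identify this inclusion with $\iota$. The defining triangle identity of the adjunction $\bomf\dashv\bpt$ gives $\bpt(g)\circ\psi_Y=\iota$, since $g$ is the transpose of $\iota$; and as $\psi_Y$ is a bihomeomorphism, $\bpt(g)=\iota\circ\psi_Y^{-1}$ is a bihomeomorphism onto the bisubspace $Y$ of $\bpt(\ca{L})$. Comparing with the previous paragraph, the bisubspaces $Y$ and $\bpt(\Lq{C})$ of $\bpt(\ca{L})$ coincide up to bihomeomorphism, with $C$ a finitary congruence on $L$, which is the assertion. I expect the delicate point to be this last identification: one must keep the adjunction bookkeeping straight so that the finitary congruence $C$ extracted from $\bomf(Y)$ genuinely cuts out the given bisubspace and not merely an abstractly bihomeomorphic copy of it. The auxiliary claim that $g$ is onto on main components (and not just on the two frame components) is the other small step that should be spelled out rather than skipped.
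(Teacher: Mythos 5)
Your proposal is correct and follows essentially the same route as the paper: dualize the inclusion, precompose with the bispatialization $\varphi_{\ca{L}}$ to get a surjection $\ca{L}\epi\bomf(Y)$ onto a finitary biframe, invoke Proposition \ref{whenbiquotient} to extract the finitary congruence, and dualize back, identifying the result with the original inclusion via the adjunction (the paper uses Lemma \ref{unitbecomesiso3} where you use the triangle identity and bisobriety of $Y$, which is the same bookkeeping). Your explicit attention to surjectivity of $g$ on main components and to the final identification fills in steps the paper's proof passes over quickly, but the argument is the same.
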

\begin{proof}
Consider an inclusion of the form $i:\dpt(\ca{M})\inclu \dpt(\ca{L})$ for some finitary biframe $\ca{M}$. Dualizing this we obtain a surjection of finitary biframes $\Om_{\fin}(i):\Om (\bpt(\ca{L}))\epi \Om_{\fin} (\bpt(\ca{M}))$, and so by Proposition \ref{whenbiquotient} there is also a biquotient $q_{R'}:\Om_{\fin} (\bpt(\ca{L}))\epi \Om_{\fin} (\dpt(\ca{L}))/R'$ relative to this surjection. Let us denote as $q_{\phi}:\ca{L}\epi \ca{L}/R_{\phi}$ the bispatialization quotient of $\ca{L}$, and note that $\ca{L}/R_{\phi}$ is isomorphic to $\Om_{\fin}(\bpt(\ca{L}))$. We then may compose these two quotients to obtain the quotient $q_{R'}\circ q_{\phi}: \ca{L}\epi \ca{L}/R$, with $R=R'\cup R_{\phi}$. Dualizing this, we obtain the bisubspace inclusion $\bpt(q_{R'}\circ q_{\phi}): \bpt(\Lq{R})\inclu \bpt(\ca{L})$ and, by functoriality of $\bpt$, this is $\bpt(q_{R'})\circ \bpt(q_{\phi})$. Since $\bpt(q_{\phi})$ is a bihomeomorphism by Lemma \ref{unitbecomesiso3}, this bisubspace inclusion is bihomeomorphic to $\dpt(q_{R'}):\bpt(\Lq{R'})\inclu \bpt(\ca{L})$ and, by definition of the quotient $q_{R'}$, this is also bihomeomorphic to the inclusion $i:\bpt(\ca{M})\inclu \bpt(\ca{L})$.
\end{proof}

This gives us a way of describing an arbitrary bisober bisubspace of a spectrum $\bpt(\ca{L})$ for some finitary biframe $\ca{L}$.
\begin{corollary}\label{generalbisobersub}
If $\ca{L}$ is a finitary biframe, the bisober bisubspaces of $\bpt(\ca{L})$ are exactly those whose underlying sets are of the form 
\[
\bca_i \va{L}{+}{a_i}^c\cup \va{L}{+}{b_i}\cup \va{L}{-}{a_i}^c\cup \va{L}{-}{b_i}
\]
for some $a^+,b^+\in L^+$ and some $a^-,b^-\in L^-$.
\end{corollary}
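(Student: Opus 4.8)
The plan is to derive this corollary as a direct packaging of Lemma~\ref{bisobersarequotients} together with the explicit computations in Proposition~\ref{manyfacts9}. First I would recall that, by Lemma~\ref{bisobersarequotients}, every bisober bisubspace of $\bpt(\ca{L})$ is, up to bihomeomorphism, an inclusion of the form $\bpt(\Lq{C})\se\bpt(\ca{L})$ for some finitary congruence $C$ on $L$; in particular its underlying set, as a subset of $|\bpt(\ca{L})|$, is $|\bpt(\Lq{C})|$. Conversely, for any finitary congruence $C$ on $L$ the biframe $\ca{L}/C$ is finitary (Proposition~\ref{whenbiquotient}), so by Proposition~\ref{finitaryadjidempotency} the bispace $\bpt(\ca{L}/C)$ is bisober, and it is a bisubspace of $\bpt(\ca{L})$ since the quotient $\ca{L}\epi\ca{L}/C$ dualizes to an inclusion. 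Hence the statement reduces to showing that a subset of $|\bpt(\ca{L})|$ equals $|\bpt(\Lq{C})|$ for some finitary congruence $C$ if and only if it has the displayed form.

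For this I would invoke the canonical form of finitary congruences recorded in Lemma~\ref{AfinLisSL} (equivalently the last item of Lemma~\ref{manysubframes6}): every finitary congruence $C$ on $L$ can be written as a join
\[
C=\bve_i\;\na(\syp{a_i})\cap\na(\sym{a_i})\cap\del(\syp{b_i})\cap\del(\sym{b_i})
\]
for suitable families $a_i^+,b_i^+\in L^+$ and $a_i^-,b_i^-\in L^-$, and conversely every such join is a finitary congruence. Given $C$ in this form, item~(4) of Proposition~\ref{manyfacts9} yields $|\bpt(\Lq{C})|=\bca_i|\bpt(\Lq{\na(\syp{a_i})\cap\na(\sym{a_i})\cap\del(\syp{b_i})\cap\del(\sym{b_i})})|$, and item~(3) of the same proposition rewrites each factor as $\va{L}{+}{a_i}^c\cup\va{L}{-}{a_i}^c\cup\va{L}{+}{b_i}\cup\va{L}{-}{b_i}$, so $|\bpt(\Lq{C})|$ has exactly the displayed form. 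Reading items~(3) and~(4) in the reverse direction, any subset of that form is $|\bpt(\Lq{C})|$ for the congruence $C$ written as above, which is finitary by Lemma~\ref{AfinLisSL}. Combining this with the reduction of the first paragraph gives both inclusions.

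I do not anticipate a genuine obstacle, since all the substantive work is in the cited results and what remains is assembly. The points that need care are mostly bookkeeping: the ``$a^+,b^+\in L^+$ and $a^-,b^-\in L^-$'' in the statement must be read as ranging over a common index set $i$, matching the shape of the canonical decomposition; one must pass through the canonical form from Lemma~\ref{AfinLisSL} rather than only the principal congruences handled directly by item~(3) of Proposition~\ref{manyfacts9}, with item~(4) supplying the passage from principal to arbitrary finitary congruences; and one should note that the bihomeomorphism produced by Lemma~\ref{bisobersarequotients} respects the inclusions into $\bpt(\ca{L})$, so it really identifies underlying subsets of $|\bpt(\ca{L})|$ and not merely abstract bispaces.
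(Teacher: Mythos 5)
Your proposal is correct and takes essentially the same route as the paper: the paper's proof also reduces to Lemma \ref{bisobersarequotients} and then cites item (5) of Proposition \ref{manyfacts9}, which is itself just the combination of items (3) and (4) that you assemble by hand via the canonical form of finitary congruences. Your version is somewhat more explicit about the converse inclusion and about the bihomeomorphism preserving underlying subsets, but the substance is identical.
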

\begin{proof}
The bisober bisubspaces of $\bpt(\ca{L})$, by Lemma \ref{bisobersarequotients}, may be identified with the subspaces of $\bpt(\ca{L})$ of the form $\bpt(\Lq{C})$ for some finitary congruence $C$ on $L$. The desired result follows from item (5) of Proposition \ref{manyfacts9}.  
\end{proof}

For a bispace $X$, we define its \tc{Skula bispace} as the bispace with the same underlying set of $X$, whose positive opens are the topology generated by the positive opens and the negative closed sets of $X$, and whose negative opens are the topology generated by the negative opens and the positive closed sets of $X$. We denote the Skula bispace of $X$ as $Sk(X)$. Our new notion of bitopological sobriety enables us to prove a very close analogue of the classical result that the sober subspaces of a sober space coincide with those whose underlying set is Skula-closed. We stress that the analogue of this result is false for both the classical theory of biframes and the theory of d-frames.

\begin{theorem}\label{skulabisobers}
For any finitary biframe $\ca{L}$, the bisober bisubspaces of $\bpt(\ca{L})$ are exactly those such that their underlying sets are patch-closed sets of $Sk(\bpt(\ca{L}))$.
\end{theorem}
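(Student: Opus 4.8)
The plan is to deduce this from Corollary~\ref{generalbisobersub}, which already says that the bisober bisubspaces of $\bpt(\ca{L})$ are exactly those whose underlying set has the form
\[
\bca_i\ \va{L}{+}{a_i}^c\cup \va{L}{+}{b_i}\cup \va{L}{-}{a_i}^c\cup \va{L}{-}{b_i}
\]
for suitable $a^+_i,b^+_i\in L^+$ and $a^-_i,b^-_i\in L^-$. So it is enough to prove the purely set-theoretic statement that a subset of $\bpt(\ca{L})$ is patch-closed in $Sk(\bpt(\ca{L}))$ if and only if it admits such a presentation.

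First I would unwind the patch topology of $Sk(\bpt(\ca{L}))$. By definition of the Skula bispace, its positive opens are generated by the positive opens together with the negative closed sets of $\bpt(\ca{L})$, and its negative opens by the negative opens together with the positive closed sets; hence the patch topology of $Sk(\bpt(\ca{L}))$ is the topology on $|\bpt(\ca{L})|$ generated by all positive opens, all negative opens, all positive closed sets and all negative closed sets of $\bpt(\ca{L})$. Since by construction every positive open of $\bpt(\ca{L})$ is of the form $\va{L}{+}{a}$ and every negative open is of the form $\va{L}{-}{a}$, a subbasis for this topology is
\[
\{\va{L}{+}{a}:a^+\in L^+\}\cup\{\va{L}{-}{a}:a^-\in L^-\}\cup\{\va{L}{+}{a}^c:a^+\in L^+\}\cup\{\va{L}{-}{a}^c:a^-\in L^-\}.
\]
Taking complements in the usual description of a topology generated by a subbasis, a patch-closed subset of $Sk(\bpt(\ca{L}))$ is precisely an arbitrary intersection $\bca_j W_j$ in which each $W_j$ is a finite union of members of this subbasis (complements of subbasic members are again subbasic members).

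It remains to normalise each finite union $W_j$ into the four-term shape $\va{L}{+}{a}^c\cup \va{L}{+}{b}\cup \va{L}{-}{c}^c\cup \va{L}{-}{d}$. For this I would use that $\varphi^+_{\ca{L}}$ and $\varphi^-_{\ca{L}}$ are frame maps: the positive-open summands of $W_j$ collapse to a single $\va{L}{+}{b}$ because $\varphi^+_{\ca{L}}$ preserves finite joins; the positive-closed summands $\va{L}{+}{a_1}^c\cup\cdots\cup\va{L}{+}{a_n}^c$ equal $(\va{L}{+}{a_1}\cap\cdots\cap\va{L}{+}{a_n})^c$, which is a single set of the form $\va{L}{+}{a}^c$ because $\varphi^+_{\ca{L}}$ preserves finite meets; and symmetrically on the negative side. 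If a summand of one of the four types is absent from $W_j$ we pad with the empty set, realised as $\va{L}{+}{0}=\emptyset$ or as $\va{L}{+}{1}^c=\emptyset$ (and their negative analogues), which leaves the union unchanged. Intersecting the normalised $W_j$ over $j$ then gives exactly the canonical form appearing in Corollary~\ref{generalbisobersub}; conversely a set of that form is manifestly an intersection of finite unions of subbasic complemented sets, hence patch-closed in $Sk(\bpt(\ca{L}))$. Combining this set-theoretic equivalence with Corollary~\ref{generalbisobersub} yields the theorem.

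I do not expect a genuine obstacle here: all the work is in getting the subbasis of the Skula patch topology right and in the De Morgan bookkeeping of the normalisation. The one point to keep in mind is that the positive and negative opens of $\bpt(\ca{L})$ are exactly the sets $\va{L}{+}{a}$ and $\va{L}{-}{a}$, so the Skula patch subbasis is genuinely parametrised by elements of $L^+$ and $L^-$; without this the reduction to Corollary~\ref{generalbisobersub} would not go through.
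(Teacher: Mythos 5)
Your proposal is correct and follows exactly the paper's route: the paper's own proof simply says the theorem follows from Corollary~\ref{generalbisobersub} together with the definition of the Skula bispace, and your argument supplies precisely the missing set-theoretic bookkeeping (identifying the subbasis of the Skula patch topology and normalising finite unions via the fact that $\varphi^+_{\ca{L}}$ and $\varphi^-_{\ca{L}}$ preserve finite meets and joins). No gaps.
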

\begin{proof}
This follows from Corollary \ref{generalbisobersub} and from the definition of Skula bispace.
\end{proof}

\subsection{The assembly functor for finitary biframes and its spatial counterpart}

Recall that for a biframe $\ca{L}$ we define its assembly to be the structure
\[
\mf{A}(\ca{L})=(\mf{A}_{\na^+\del^-}(L),\mf{A}_{\na^-\del^+}(L),\mf{A}_{\fin}(L))
\]
where $\mf{A}_{\na^+\del^-}(L)$ is the subframe of $\mf{A}_{\fin}(L)$ generated by the collection $\{\na(a^+):a^+\in L^+\}\cup \{\del(a^-):a^-\in L^-\}$ and $\mf{A}_{\na^-\del^+}(L)$ is defined similarly. This structure is introduced in \cite{schauerte92}, where it is also shown that there is a canonical biframe embedding $\na=\pa{\na}:\ca{L}\ra \mf{A}(\ca{L})$ assigning to each $a^+$ the corresponding closed congruence $\na(a^+)$, and defined similarly on the negative component. We begin this section by showing that the assembly of a biframe is isomorphic to a certain free construction. From now on, we will abbreviate the filter completion functor $\mf{Filt}:\bd{Distr}\ra \bd{Frm}$ as simply $\mf{F}$. We will show that the assembly of a biframe $\ca{L}$ is isomorphic to the biframe quotient
\renewcommand{\Fi}{\mf{F}}
\renewcommand{\Fip}{\mf{F}(L^+)}
\renewcommand{\Fim}{\mf{F}(L^-)}
\[
(L^+\oplus \Fim,L^-\oplus \Fip,L^+\oplus \Fim\oplus L^-\oplus \Fip)/(C_L\cup \mc{Com}^+_L\cup \mc{Com}^-_L),
\]
where name ``$\mc{Com}^+_L$" stands for ``complementation", and denotes the relation 
\begin{align*}
    & \{(\syp{a}\we \syp{\up a},0),(1,\syp{a}\ve\syp{\up a}):a^+\in L^+\}
\end{align*}
and the relation $\mc{Com}^-_L$ is defined similarly for negative generators $a^-\in L^-$. We will denote as $\mf{A}^m(\ca{L})$ the biframe $(L^+\oplus \Fim,L^-\oplus \Fip,L^+\oplus \Fim\oplus L^-\oplus \Fip)$. Above, we have slightly abused notation by regarding $C_L$ as a congruence on $\mf{A}^m(\ca{L})$ rather than on $\cpu{L}$. Let us check that this structure is indeed isomorphic to the biframe assembly, by showing that these structures share the same universal property.  Let us recall that for a biframe $\ca{L}$ we have the notion of \tc{bipseudocomplement}. The bicomplement of a generator $x^+\in L^+$ is denoted as $\sim x^+$, and it is $\bve \{x^-\in L^-:\syw{x}{x}=0\mb{ in }L\}$. Bipseudocomplements of negative generators are defined similarly. When a bipseudocomplement $\sim x^+$ is such that $\syv{x}{x}=1$ in $L$, the element $\sim x^+$ is called a \tc{bicomplement} of $x^+$. Recall that in \cite{schauerte92} it is shown the assembly of a biframe $\ca{L}$ has the universal property that whenever we have a biframe map $f:\ca{L}\ra \ca{M}$ such that it provides complements for all elements of $L^+\cup L^-$ there is a unique biframe map $\tilde{f}:\Ad\ra \ca{M}$ making the following diagram commute.
\[
\begin{tikzcd}
{} 
& \Ad
\arrow{dr}{\tilde{f}} \\
\ca{L}
\arrow{ur}{\na} 
\arrow{rr}{f} 
&& \ca{M}
\end{tikzcd}
\]
In order to show the claim that the biframe assembly is isomorphic to the free construction introduced above, it will suffice to show that this free constructions has this universal property. 

\begin{lemma}\label{isalatticemap9}
Suppose that $f:\ca{L}\ra \ca{M}$ is a biframe map such that it provides complements to all elements of $L^+\cup L^-$. Then, the maps $\sim f^+:L^+\ra M^-$ and $\sim f^-:L^-\ra M^+$ are lattice maps.
\end{lemma}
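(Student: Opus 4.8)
The plan is to reduce the statement to the uniqueness of complements in the distributive lattice $M$, using only that $f^+$ and $f^-$ are frame homomorphisms and that $e^+_M,e^-_M$ are injective frame homomorphisms. First I would record the elementary facts about complements in a distributive lattice $D$: an element has at most one complement, and if $a,b\in D$ have complements $a',b'$ then $a\we b$ has complement $a'\ve b'$, $a\ve b$ has complement $a'\we b'$, $\bot$ has complement $\top$, and $\top$ has complement $\bot$ (the first two identities use distributivity). A frame is in particular a distributive lattice, so all of this applies to $M$.

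Next I would unwind the hypothesis. Fix $x^+\in L^+$. The bipseudocomplement $\sim f^+(x^+)=\bve\{y^-\in M^-:e^+_M(f^+(x^+))\we e^-_M(y^-)=0\}$ always satisfies $e^+_M(f^+(x^+))\we e^-_M(\sim f^+(x^+))=0$, because $e^-_M$ preserves joins and $M$ obeys the frame distributive law. The assumption that $f$ provides complements to all elements of $L^+\cup L^-$ says precisely that in addition $e^+_M(f^+(x^+))\ve e^-_M(\sim f^+(x^+))=1$; equivalently, $e^-_M(\sim f^+(x^+))$ is the (unique) complement of $e^+_M(f^+(x^+))$ in $M$. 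Symmetrically, $e^+_M(\sim f^-(x^-))$ is the unique complement of $e^-_M(f^-(x^-))$ in $M$ for every $x^-\in L^-$.

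For the main step I would take $x^+,y^+\in L^+$. Since $f^+$ is a frame map, $f^+(x^+\we y^+)=f^+(x^+)\we f^+(y^+)$, so $e^-_M(\sim f^+(x^+\we y^+))$ is the unique complement in $M$ of $e^+_M(f^+(x^+))\we e^+_M(f^+(y^+))$; by the distributive-lattice identity this complement equals $e^-_M(\sim f^+(x^+))\ve e^-_M(\sim f^+(y^+))$, which, as $e^-_M$ is a frame map, equals $e^-_M\big(\sim f^+(x^+)\ve\sim f^+(y^+)\big)$ with the join formed in $M^-$. Uniqueness of complements and injectivity of $e^-_M$ then give $\sim f^+(x^+\we y^+)=\sim f^+(x^+)\ve\sim f^+(y^+)$. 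The same argument with $\we$ and $\ve$ interchanged gives $\sim f^+(x^+\ve y^+)=\sim f^+(x^+)\we\sim f^+(y^+)$, and applying it to $f^+(\bot)=\bot$ and $f^+(\top)=\top$ gives $\sim f^+(\bot)=\top$ and $\sim f^+(\top)=\bot$ in $M^-$. Hence $\sim f^+$ carries finite meets to finite joins, finite joins to finite meets, and swaps the two bounds — this is the sense in which it is a lattice map (equivalently, a morphism of bounded distributive lattices $L^+\ra (M^-)^{\mathrm{op}}$, which is the form needed when one later extends it along the principal-filter embedding $L^+\ra\mf{F}(L^+)$). Replaying the whole argument with $f^-$ for $f^+$ and $e^+_M$ for $e^-_M$ handles $\sim f^-:L^-\ra M^+$.

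I do not anticipate a genuine obstacle; the only point needing care is keeping track of whether a given meet, join, or complement is computed in $M$ or in one of the subframes $M^+,M^-$. This is handled throughout by invoking that $e^+_M$ and $e^-_M$ are injective frame homomorphisms — so they preserve the lattice operations and reflect equalities — and by remembering that both the complement-of-a-meet identity and the check that $\sim f^+(x^+)$ is a genuine complement use the (infinitary) frame distributive law of $M$, not merely that $M$ is a lattice.
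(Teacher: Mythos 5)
Your proposal is correct and follows essentially the same route as the paper's proof: both identify the image of $\sim f^+(x^+)$ in $M$ as the complement of the image of $f^+(x^+)$, invoke the fact that complementation in a distributive lattice interchanges finite meets and joins, and transport the resulting identity back along the subframe inclusions. The only cosmetic difference is that you appeal to uniqueness of complements together with injectivity of $e^-_M$ where the paper phrases the same step as ``bicomplements are bipseudocomplements.''
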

\begin{proof}
First, we notice that the map $\sim f^+:L^+\ra M^-$ is well-defined by definition of bicomplementation. We show that this map turns finite meets of $L^+$ into finite joins of $L^-$, as the argument for the reversing of finite joins is analogous. For every $x^+\in L^+$, we have that $\sy{f^+(x^+)}$ and $\sy{\sim f^+(x^+)}$ are complements of each other in $M$. Recall that complemented elements of a frame form a sublattice, and that - when restricted to this sublattice - complementation reverses the lattice operations. This means that whenever $x^+_n\in L^+$ we have that the elements $\sy{f^+(x^+_1)}\we ...\we \sy{f^+(x_n^+)}$ and $\sy{\sim f^+(x^+_1)}\ve...\ve \sy{\sim f^+(x_n^+)}$ are complements of each other in $M$. As $\sim x_1^+\ve...\ve \sim x_n^+\in L^-$, we have that the elements $f^+(x_1^+)\we...\we f^+(x_n^+)$ and $\sim f^+(x_1^+)\ve... \sim f^+(x^+_n)$ are bicomplements of each other. Since bicomplements are bipseudocomplements, this implies the desired result, as we have the chain of equalities $\sim f^+(x_1^+\we...\we x_n^+)=\sim (f^+(x_1^+)\we...\we f^+(x_n^+))=\sim f^+(x_1^+)\ve...\ve \sim f^+(x_n^+)$.
\end{proof}

\begin{proposition}\label{assemblyiso}
For a biframe $\ca{L}$ we have an isomorphism of biframes
\[
\Ad\cong (L^+\oplus \Fim,L^-\oplus \Fip,L^+\oplus \Fim\oplus L^-\oplus \Fip)/(C_L\cup \mc{Com}^+_L\cup \mc{Com}^-_L),
\]
defined on positive generators as 
\begin{align*}
    & \na(x^+)\mapsto \sy{\syp{x}},\\
    & \del(x^-)\mapsto \sy{\sym{\up a}},
\end{align*}
and defined similarly on negative generators.
\end{proposition}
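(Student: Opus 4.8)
The plan is to prove that the biframe
\[
\Bl:=\mf{A}^m(\ca{L})/(C_L\cu \mc{Com}^+_L\cu \mc{Com}^-_L),
\]
equipped with a canonical embedding $\na'\colon\ca{L}\ra \Bl$, satisfies exactly the universal property of the biframe assembly recalled above from \cite{schauerte92}; the stated isomorphism, and the fact that it is given by the displayed formulas, then follow by the usual uniqueness argument for objects with a prescribed universal property (using Lemma \ref{functoriso}).

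First I would define $\na'=(\na'^+,\na'^-)\colon\ca{L}\ra \Bl$ on generators by $\na'^+(x^+)=\sy{x^+}$, the image in $\Bl$ of the $L^+$-coprojection $L^+\inclu L^+\oplus\Fim=\Bl^+$, and dually $\na'^-(x^-)=\sy{x^-}$. Since the main component of $\mf{A}^m(\ca{L})$ is a frame coproduct into which $\cpu{L}$ embeds, and since we have quotiented out $C_L$, the composite $\cpu{L}\ra L^+\oplus\Fim\oplus L^-\oplus\Fip\ra \Bl$ factors through $L\cong(\cpu{L})/C_L$; together with Corollary \ref{pairismorbifrm9} this shows $\na'$ is a biframe map. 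It provides complements for all of $L^+\cu L^-$: by the relation $\mc{Com}^+_L$ the generator $\sy{x^+}\in\Bl^+$ is complemented in $\Bl$, its complement being the image of the principal filter $\up x^+\in\Fip\se L^-\oplus\Fip=\Bl^-$, and symmetrically for $L^-$ via $\mc{Com}^-_L$, with complement the image of $\up x^-\in\Fim\se\Bl^+$.

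The heart of the argument is to verify the universal property of $(\Bl,\na')$: given a biframe map $f\colon\ca{L}\ra\ca{M}$ providing complements for all of $L^+\cu L^-$, I must produce a unique biframe map $\tilde f\colon\Bl\ra\ca{M}$ with $\tilde f\circ\na'=f$. I would first build a frame map $g$ out of the main component $L^+\oplus\Fim\oplus L^-\oplus\Fip$ of $\mf{A}^m(\ca{L})$ by prescribing it on the four coproduct summands: on $L^+$ and $L^-$ take $e^+_M\circ f^+$ and $e^-_M\circ f^-$; on $\Fip$ and $\Fim$ use the universal property of the filter completion applied to $e^-_M\circ(\sim f^+)\colon L^+\ra M$ and $e^+_M\circ(\sim f^-)\colon L^-\ra M$, which are lattice maps of the appropriate variance by Lemma \ref{isalatticemap9} and hence extend uniquely to frame maps $\Fip\ra M$, $\Fim\ra M$ sending $\up x^+\mapsto\sy{\sim f^+(x^+)}$, resp. $\up x^-\mapsto\sy{\sim f^-(x^-)}$. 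The map $g$ respects $C_L$ because $f$ is a biframe map, and it respects $\mc{Com}^+_L$ and $\mc{Com}^-_L$ precisely because $\sim f^+(x^+)$ is a \emph{bicomplement} (not just a bipseudocomplement) of $f^+(x^+)$ in $\ca{M}$ — this is exactly where the hypothesis that $f$ provides complements enters — so that $\sy{f^+(x^+)}\we\sy{\sim f^+(x^+)}=0$ and $\sy{f^+(x^+)}\ve\sy{\sim f^+(x^+)}=1$ in $M$, and dually. Hence $g$ descends to the main component of $\Bl$, restricts to frame maps on $\Bl^+$ and $\Bl^-$, and the biframe squares commute (Corollary \ref{pairismorbifrm9}), yielding $\tilde f$ with $\tilde f\circ\na'=f$. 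Uniqueness is immediate since the images of $\na'^\pm$ together with their necessarily unique complements generate $\Bl$ as a frame.

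Finally, applying the universal property of $\Ad$ to the complement-providing map $\na'\colon\ca{L}\ra\Bl$ yields $u\colon\Ad\ra\Bl$ with $u\circ\na=\na'$, and applying the universal property of $(\Bl,\na')$ just established to the complement-providing map $\na\colon\ca{L}\ra\Ad$ (the complement of $\na(x^\pm)$ being $\del(x^\pm)$, which lies in the relevant component of $\Ad$ by Lemma \ref{manysubframes6}) yields $v\colon\Bl\ra\Ad$ with $v\circ\na'=\na$; the two triangle identities and the uniqueness clauses force $v\circ u=\id_{\Ad}$ and $u\circ v=\id_{\Bl}$, so $u$ is an isomorphism. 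Tracking $u$ on generators gives $u(\na(x^+))=\na'^+(x^+)=\sy{x^+}$, and since $u$ preserves complements, $u(\del(x^-))$ is the complement in $\Bl$ of $\sy{x^-}$, namely the image of $\up x^-\in\Fim$ — exactly the formulas in the statement. I expect the main obstacle to be the middle step, and within it the bookkeeping around the filter completion: matching the ``qua'' structure over which $\mf{F}$ is left adjoint with the variance established in Lemma \ref{isalatticemap9} so that $\sim f^+$ genuinely induces a frame map $\Fip\ra M$, and then checking carefully that the assembled $g$ annihilates all three of $C_L$, $\mc{Com}^+_L$, $\mc{Com}^-_L$; the biframe housekeeping via Corollary \ref{pairismorbifrm9} and the closing inverse argument are routine.
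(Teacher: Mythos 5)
Your proposal is correct and follows essentially the same route as the paper: both arguments establish that the free construction satisfies the universal property of the biframe assembly by using Lemma \ref{isalatticemap9} to turn $\sim f^{\pm}$ into lattice maps, extending them via the universal property of the filter completion, checking that the assembled map respects $C_L\cup \mc{Com}^+_L\cup \mc{Com}^-_L$ (with the complement relations handled exactly by the hypothesis that $f$ provides bicomplements), and then reading off the isomorphism on generators. Your write-up is more explicit about the two-triangle uniqueness argument and about verifying that the canonical map into the quotient is itself a complement-providing biframe map, but these are elaborations of the paper's argument rather than a different proof.
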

\begin{proof}
To show that the two biframes are isomorphic, we show that they share the same universal property. Suppose, then, that there is a biframe map $f:\ca{L}\ra \ca{M}$. For a lattice map $h:L_1\ra L_2$ from a distributive lattice to a frame, we denote as $\overline{h}:\Id(L_1)\ra L_2$ the frame map given by the universal property of the ideal completion. To show that there is a map as desired, we will show that the four frame maps
\begin{align*}
    & f^+:L^+\ra M^+\\
    & \overline{\sim f^-}:\Fi(L^-)\ra M^+\\
    & f^-:L^-\ra M^-\\
    & \overline{\sim f^+)}:\Fi(L^-)\ra M^+
\end{align*}
are such that the pairing $(f^+\oplus\overline{\sim f^-},f^-\oplus \overline{\sim f^+)}$ respects the relation $\mc{Com}^+_L\cup \mc{Com}^-_L\cup \sy{C_L}$. We know that the second and fourth map indeed are frame maps by the universal property of the ideal completion of a distributive lattice, and by Lemma \ref{isalatticemap9}. Respecting the union of these three relations amounts to the following conditions.
\[
\begin{cases}
\sy{f^+(x^+)}\we \sy{\overline{\sim f^+}(\up x^+)}=0\mb{ whenever }x^+\in L^+\\
\sy{f^+(x^+)}\ve \sy{\overline{\sim f^+}(\up x^+)}=1\mb{ whenever }x^+\in L^+\\
\sy{f^+(a^+)}\we \sy{f^-(a^-)}\leq \sy{f^+(b^+)}\ve \sy{f^-(b^-)}\mb{ when }
 \syw{a}{a}\leq \syv{b}{b}\mb{ in }L,
\end{cases}
\]
as well as the analogues of the first two equalities for generators as well as the analogues of the first two equalities for generators $x^-\in L^-$. The first two equalities hold because $\overline{\sim f^+}(\up x^+)=\sim f^+(x^+)$, and the elements $f^+(x^+)$ and $\sim f^+(x^+)$ are indeed bicomplements of each other in $\ca{M}$, by the assumption that $f$ provides bicomplements to all elements of $L^+\cup L^-$. For the last inequality, we observe that, as $f$ is assumed to be a biframe map, indeed we have that the map $f:L\ra M$ generated by $f^+$ and $f^-$ is such that it respects the inequalities induced by $C_L$.

Now, let us show that the isomorphism between the two biframes is given by the map described in the statement. Indeed, by the fact that our two biframes share the same universal property, with the canonical embedding $\na:\ca{L}\ra \Ad$ corresponding to the syntactic map $\sy{\sy{-}}:\ca{L}\ra \mf{A}^m(\ca(L))/(\mc{Com}^+_L\cup \mc{Com}^-_L\cup \sy{C_L})$, we know that the isomorphism acts as $\na(x^+)\mapsto \sy{\sy{x^+}}$ on positive generators, and similarly on the negative ones. Since it is an isomorphism, it must also respect complements, from which we deduce that this isomorphism is also such that $\del(x^-)\mapsto \sy{\sym{\up a}}$.
\end{proof}

\begin{lemma}\label{finitarylemma91}
If $L^+$ and $L^-$ are frames, and if $C^+$ is a congruence on $L^+$ and $C^-$ a congruence on $L^-$, we have an isomorphism of frames
\[
\cp{(L^+/C^+)}{(L^-/C^-)}\cong(\cpu{L})/\syv{C}{C},
\]
where $\syv{C}{C}=\{(\syp{x}),\syp{y}):(x^+,y^+)\in L^+\}\cup \{(\sym{x},\sym{y}):(x^-,y^-)\in C^-\}$.
\end{lemma}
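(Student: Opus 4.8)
The plan is to establish the isomorphism by showing that both frames enjoy the same universal property, namely that of the coproduct of the two frames $L^+/C^+$ and $L^-/C^-$ in $\bd{Frm}$. Concretely, I would exhibit a cocone and check it is universal. First I would define frame maps $j^+:L^+/C^+\ra (\cpu{L})/\syv{C}{C}$ and $j^-:L^-/C^-\ra (\cpu{L})/\syv{C}{C}$. To get $j^+$, note that the composite $L^+\xrightarrow{\sy{-}}\cpu{L}\xrightarrow{q}(\cpu{L})/\syv{C}{C}$ sends any pair $(x^+,y^+)\in C^+$ to a pair in $\syv{C}{C}$, hence is respected by $q$; so by the universal property of the quotient $L^+/C^+$ (and Lemma \ref{relinclu9}, taking the relation on $L^+$ to be $C^+$ and noting the empty relation is a subrelation of $\syv{C}{C}$ after applying $\sy{-}$) there is a unique frame map $j^+$ with $j^+\circ q_{C^+}=q\circ \sy{-}$. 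Define $j^-$ symmetrically.

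Next I would check that $(j^+,j^-)$ is the universal cocone. Given any frame $M$ and frame maps $g^+:L^+/C^+\ra M$, $g^-:L^-/C^-\ra M$, the composites $g^+\circ q_{C^+}:L^+\ra M$ and $g^-\circ q_{C^-}:L^-\ra M$ induce, by the universal property of the coproduct $\cpu{L}$, a unique frame map $h:\cpu{L}\ra M$ with $h\circ\sy{-}=g^+\circ q_{C^+}$ on the positive side and analogously on the negative side. This $h$ sends every pair in $\syv{C}{C}$ to an equality in $M$: a positive-side pair $(\syp{x},\syp{y})$ with $(x^+,y^+)\in C^+$ maps to $(g^+(q_{C^+}(x^+)),g^+(q_{C^+}(y^+)))$, which is a genuine equality since $q_{C^+}(x^+)=q_{C^+}(y^+)$; the negative case is symmetric. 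Hence $h$ factors uniquely through $q$ as $\bar h:(\cpu{L})/\syv{C}{C}\ra M$, and one checks $\bar h\circ j^+=g^+$, $\bar h\circ j^-=g^-$. Uniqueness of $\bar h$ follows because the images of $j^+$ and $j^-$ generate $(\cpu{L})/\syv{C}{C}$ as a frame (since the images of $\sy{-}$ generate $\cpu{L}$, and $q$ is a surjection). Since $\cp{(L^+/C^+)}{(L^-/C^-)}$ is by definition the frame with this universal property, the two frames are canonically isomorphic.

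I do not expect a serious obstacle here; this is essentially the standard fact that "quotient of a coproduct by the join of two congruences on the summands $=$ coproduct of the quotients", and the only thing to be a little careful about is bookkeeping with the two canonical injections $\sy{-}:L^+\ra \cpu{L}$ and $\sy{-}:L^-\ra \cpu{L}$ and the correspondence between the pairs of $C^\pm$ and their images under these injections. If one prefers an arrow-free argument, an alternative is to verify directly that $q\circ\sy{-}:L^+\ra (\cpu{L})/\syv{C}{C}$ has kernel congruence exactly $C^+$ (using that $\syv{C}{C}$ adds no new identifications among positive generators beyond $C^+$, because $\sy{-}$ is injective and positive and negative generators interact freely in the coproduct), and then invoke the explicit construction of the frame coproduct as generated by the two injected copies subject only to the frame laws; but the universal-property route above is cleaner and avoids computing kernels. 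The mild subtlety to flag is just the claim that $\syv{C}{C}$ does not collapse positive generators more than $C^+$ does, which is what makes $j^+$ (and its analogue) injective and ultimately makes the map an isomorphism rather than merely a surjection with a section.
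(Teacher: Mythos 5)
Your proof is correct and takes essentially the same approach as the paper: both arguments establish the isomorphism by showing the two frames share a universal property, the only difference being that you verify the coproduct universal property for $(\cpu{L})/\syv{C}{C}$ while the paper verifies the quotient universal property for $\cp{(L^+/C^+)}{(L^-/C^-)}$. The injectivity subtlety you flag at the end is not actually needed: once the universal property is verified, the canonical comparison map is automatically an isomorphism.
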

\begin{proof}
To show that the two frames are isomorphic to each other, we show that they share the same universal property. In particular, we will show that coproduct pairing of the two canonical maps of generators $[-]_{C^+}\oplus [-]_{C^-}:\cpu{L}\ra \cp{(L^+/C^+)}{(L^-/C^-)}$ is such that, whenever there is frame map $f:\cpu{L}\ra M$ which respects the congruence $\syv{C}{C}$, there is a unique frame map $\Tilde{f}:\cp{(L^+/C^+)}{(L^-/C^-)}\ra M$ such that the following triangle commutes. We have used the fact that, by the universal property of the frame coproduct, we may assume that the map $f$ is a coproduct pairing $\cpu{f}$.
\[
\begin{tikzcd}
{} & \cp{(L^+/C^+)}{(L^-/C^-)} 
\arrow{dr}{\Tilde{f}} \\
\cpu{L}
\arrow{ur}{[-]_{C^+}\oplus [-]_{C^-}}
\arrow{rr}{\cpu{f}} 
&& M
\end{tikzcd}
\]
Suppose that $(x^+,y^+)\in C^+$. This implies that $(\syp{x},\syp{y})\in \syp{C}$, and so $f(\syp{x})=f(\syp{y})$, by assumption on the map $f$. Since we have that $f=\cpu{f}$, this also implies that $f^+(x^+)=f^+(y^+)$ whenever $(x^+,y^+)\in C^+$. Then, by definition of frame quotient, there is a frame map $\tilde{f}^+:L^+/C^+\ra M$ such that $f^+(x^+)=\tilde{f}^+([x^+]_{C^+})$ for every $x^+\in L^+$. Consider the coproduct pairing $\cp{\Tilde{f}^+}{\Tilde{f}^-}:\cp{(L^+/C^+)}{(L^-/C^-)}\ra M$ of the map $\tilde{f}^+$ with the analogous one from the negative quotient. This is such that, for every $x^+\in L^+$, we have that $f(x^+)=f^+(x^+)=\Tilde{f}^+([x^+]_{C^+})$. The action of this map on each element is completely determined since we have its action on all generators. Then, it is the unique morphism making the triangle commute.
\end{proof}

\begin{lemma}\label{finitarylemma92}
When we have a biframe $\ca{L}$ and two relations $R$ and $S$ on $L$,
 we have that
 \[
 \ca{L}/(R\cup S)=(\ca{L}/R)/[S]_R,
 \]
 where $[S]_R=\{([x]_R,[y]_R):(x,y)\in S\}$. Furthermore, if $S$ is finitary, so is $[S]_R$.
\end{lemma}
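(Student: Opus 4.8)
The plan is to establish the asserted equality as a canonical biframe isomorphism --- which is the sense in which the paper, as in Lemma~\ref{witness} and Lemma~\ref{finitarylemma91}, treats such quotients as identified --- by comparing universal properties, and then to deal with the finitary clause by a direct observation about the quotient map. First I would recall that for a relation $R$ on $L$ the biframe $\ca{L}/R=(L^+/(\overline{R})^+,L^-/(\overline{R})^-,L/R)$ has the following universal property: a biframe map $g:\ca{L}\ra\ca{M}$ factors (uniquely) through the quotient map $q_R:\ca{L}\ra\ca{L}/R$ if and only if the frame map $g:L\ra M$ on main components respects $R$, i.e.\ $(x,y)\in R$ implies $g(x)\leq g(y)$. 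Indeed, the frame quotient universal property provides the factorization $\bar{g}:L/R\ra M$ of $g$ through $L/R$, and since $\overline{R}$ is a congruence $g$ identifies all $\overline{R}$-related elements, in particular all elements related by the restrictions $(\overline{R})^+$ and $(\overline{R})^-$; hence $g^+$ and $g^-$ factor through $(\overline{R})^+$ and $(\overline{R})^-$, so $\bar{g}$ restricts correctly to the positive and negative subframes and is a biframe map.

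With this in hand, I would compare the two sides. A biframe map $g:\ca{L}\ra\ca{M}$ factors through $\ca{L}/(R\cu S)$ exactly when its main component respects $R\cu S$, i.e.\ respects both $R$ and $S$. On the other hand, $g$ factors through $(\ca{L}/R)/[S]_R$ exactly when it first factors through $\ca{L}/R$ --- that is, when $g$ respects $R$, yielding $\bar{g}:\ca{L}/R\ra\ca{M}$ with $\bar{g}\circ q_R=g$ --- and in addition $\bar{g}$ respects $[S]_R$; since $\bar{g}([x]_R)=g(x)$, this last condition says precisely that $g(x)\leq g(y)$ for every $(x,y)\in S$, i.e.\ that $g$ respects $S$. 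Thus both $\ca{L}/(R\cu S)$ and $(\ca{L}/R)/[S]_R$ are characterized by the same universal property --- factorization of exactly those biframe maps whose main component respects both $R$ and $S$ --- and so there is a unique biframe isomorphism between them commuting with the canonical quotients, which is the identification in the statement.

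For the finitary clause, I would observe that $[S]_R=\{(q_R(x),q_R(y)):(x,y)\in S\}$, where $q_R:L\ra L/R$ is the main component of the biframe quotient map $\ca{L}\ra\ca{L}/R$. Being a biframe map, $q_R$ sends generators to generators, i.e.\ $\sy{x^+}\mapsto\sy{[x^+]_{(\overline{R})^+}}$ and likewise on negative generators; and being a frame map it preserves finite joins and finite meets. Hence $q_R$ maps $\fin(L)$ into $\fin(L/R)$, so that if $S\se\fin(L)\times\fin(L)$ then $[S]_R\se\fin(L/R)\times\fin(L/R)$, i.e.\ $[S]_R$ is finitary. The only point requiring care is that the universal property of $\ca{L}/R$ be applied at the level of biframes rather than only of frames --- one must check the factorizing map restricts correctly to the two subframes --- and that ``finitary'' be read with respect to the generators of the relevant main component; both are routine once made explicit. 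Alternatively, the main equality can be extracted from Lemma~\ref{witness} by choosing the witness $w([x]_R):=x$ for each pair appearing in $S$, but the universal-property comparison avoids the bookkeeping of such choices.
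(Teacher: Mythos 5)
Your proposal is correct and follows essentially the same route as the paper: both arguments identify the two quotients by showing that a biframe map $\pa{f}:\ca{L}\ra\ca{M}$ factors through $\ca{L}/(R\cup S)$ exactly when its main component respects both $R$ and $S$, which is exactly the condition for factoring through $(\ca{L}/R)/[S]_R$, and both handle the finitary clause by observing that the quotient map $[-]_R$ preserves the frame operations and hence sends finitary elements to finitary elements. The only difference is presentational (you phrase the comparison as a shared universal property, the paper as an equality of sets of factoring morphisms), so there is nothing substantive to add.
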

\begin{proof}
To show that the two structures are the same biframe, it suffices to show that they are the same quotient of $\ca{L}$. To show this, we will show that a morphism $f:\ca{L}\ra \ca{M}$ factors through one quotient if and only if it factors through the other one. The set of biframe morphisms in $\bd{BiFrm}(\ca{L},\ca{M})$ which factors through the first quotient is
\[
\{\pa{f}:f:L\ra M\mb{ respects $R$ and $S$}\},
\]
while that of those which factor through the second is
\[
\{\pa{f}:f\mb{ respects $R$},f(x)\leq f(y)\mb{ whenever }([x]_R,[y]_R)\in [S]_R\}.
\]
If a morphism is in the first set, then whenever we have that $([x]_R,[y]_R)\in [S]_R$, by definition of this relation we must have a pair $(x',y')\in S$ such that $(x,x'),(y,y')\in R$. Since the morphism is in the first set, this implies that $f(x)=f(x')\leq f(y')=f(y)$, and so it is in the second set, too. For the converse, suppose that $f$ is in the second set. If we have $(x,y)\in S$, then we also have that $([x]_R,[y]_R)\in [S]_R$, and so, by assumption on $f$, we must also have $f(x)\leq f(y)$, too. Since $f$ respects $S$, it is in the first set. For the second part of the claim, suppose that $S$ is finitary. To show the desired claim, it suffices to show that $[S]_R$ is only constituted of pairs of finitary elements. A typical element of $[S]_R$ is of the form $([x]_R,[y]_R)$ for some $(x,y)\in S$, by definition. The quotient $[-]_R:L\ra L/R$ preserves all the frame operations, and so it maps finitary elements to finitary elements. Then, indeed the pairs in $[S]_R$ are all pairs of finitary elements.  
\end{proof}

\begin{lemma}\label{finitarylemma9}
If we have two frames $L^+$ and $L^-$ and a finitary congruence $C$ on $\cpu{L}$, the biframe $(L^+,L^-,\cpu{L})/C=(L^+/C^+,L^-/C^-,(\cpu{L})/C)$ is finitary.
\end{lemma}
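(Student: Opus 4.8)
The plan is to present the main component $(\cpu{L})/C$ as a quotient $(M^+\oplus M^-)/C_M$ of the coproduct of the two frame components $M^{\pm}$ of the quotient biframe, by means of the canonical map, and then to check that $C_M$ is a \emph{finitary} congruence; this is precisely what it means for $(M^+,M^-,(\cpu{L})/C)$ to be a finitary biframe.

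\emph{Setup and the canonical surjection.} Write $M^+=L^+/C^+$ and $M^-=L^-/C^-$, so that by definition the biframe in question is $(M^+,M^-,M)$ with $M=(\cpu{L})/C$ and structure maps $e^{\pm}_M$ determined by $[x^+]_{C^+}\mapsto[\syp{x}]_C$ and the analogue on the negative side. Let $\pi\colon\cpu{L}\epi M^+\oplus M^-$ be the coproduct pairing of the two canonical quotient maps $L^+\epi M^+$ and $L^-\epi M^-$; by Lemma~\ref{finitarylemma91} its kernel congruence is the congruence $D$ on $\cpu{L}$ generated by $\{(\syp{x},\syp{y}):(x^+,y^+)\in C^+\}\cup\{(\sym{x},\sym{y}):(x^-,y^-)\in C^-\}$. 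Every generating pair of $D$ already lies in $C$, since $C^{+}$ and $C^{-}$ are the restrictions of $C$ to the two subframes of $\cpu{L}$; hence $D\se C$, and there is a further quotient $q\colon(\cpu{L})/D\epi(\cpu{L})/C=M$. Composing $q$ with the isomorphism $M^+\oplus M^-\cong(\cpu{L})/D$ of Lemma~\ref{finitarylemma91} gives a frame surjection $\rho\colon M^+\oplus M^-\epi M$ with $\rho(\sy{[x^+]_{C^+}})=[\syp{x}]_C=e^{+}_M([x^+]_{C^+})$ and the analogous identity on negative generators; so $\rho$ is the canonical structure surjection of $(M^+,M^-,M)$, and $C_M:=\ker\rho$ is the congruence I must show is finitary.

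\emph{Finitariness of $C_M$.} Since $C$ is finitary, fix $R\se\fin(\cpu{L})\times\fin(\cpu{L})$ with $\overline{R}=C$. The map $\pi$ sends each generator $\syp{x}$ of $\cpu{L}$ to the generator $\sy{[x^+]_{C^+}}$ of $M^+\oplus M^-$ (and similarly on the negative side), and it is a frame map, so it carries $\fin(\cpu{L})$ into $\fin(M^+\oplus M^-)$, because a finitary element is a finite join of finite meets of generators and $\pi$ preserves $\we$ and $\ve$. Now $\rho\circ\pi=q_C$, the quotient map $\cpu{L}\epi(\cpu{L})/C$; hence a frame map $g\colon M^+\oplus M^-\to N$ kills $C_M$ iff $g\circ\pi$ kills $C$, and since $\ker\pi=D\se C$ and $\overline{R}=C$ this happens iff $g\circ\pi$ kills $R$, iff $g$ kills $\pi(R):=\{(\pi(a),\pi(b)):(a,b)\in R\}$. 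Comparing these universal properties identifies $C_M$ with the congruence generated by $\pi(R)$. As $\pi(R)\se\fin(M^+\oplus M^-)\times\fin(M^+\oplus M^-)$, the congruence $C_M$ is finitary, so $(M^+,M^-,M)$ is a finitary biframe. (Equivalently, one can reach this by feeding Lemmas~\ref{finitarylemma91} and~\ref{finitarylemma92} into one another, viewing $C$ as the congruence $D$ enlarged by the finitary relation $R$.)

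\textbf{The step needing care} is the identification $C_M=\overline{\pi(R)}$: one must check that the universal-property comparison produces the \emph{canonical} isomorphism $(\cpu{L})/C\cong(M^+\oplus M^-)/\overline{\pi(R)}$ (a third-isomorphism-theorem argument for frame congruences, where the hypothesis $D=\ker\pi\se C$ is exactly what is used). Everything else — the generator computations witnessing that $\rho$ is the canonical structure surjection, and the fact that frame quotients carry finitary elements to finitary elements — is routine.
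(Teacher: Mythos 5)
Your proof is correct and follows essentially the same route as the paper's: both present the quotient biframe's main component as a quotient of $(L^+/C^+)\oplus(L^-/C^-)$ via Lemma \ref{finitarylemma91} and then check that the residual congruence is generated by the (still finitary) image of a finitary relation generating $C$. The only difference is that where the paper invokes Lemma \ref{finitarylemma92} for the iterated-quotient identification and the preservation of finitariness, you verify these directly by a universal-property argument and by noting that the canonical surjection sends generators to generators -- a variation you yourself flag as equivalent in your closing parenthetical.
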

\begin{proof}
To show our claim, we will show that the biframe $\ca{L}/C$ is isomorphic to a finitary biframe. We have that $C=\syp{C}\ve \sym{C}\ve C'$, for some congruence $C'$ on $\cpu{L}$, where $\syp{C}=\{(\syp{x},\syp{y}):(x^+,y^+)\in C^+\}$, and the congruence $\sym{C}$ is defined similarly. By \ref{finitarylemma92}, this biframe is the same as $(\ca{L}/(\syv{C}{C}))/[C']_{(\sy{C^+}\cup \sy{C^-})}$. The congruence $[C']_{(\sy{C^+}\cup \sy{C^-})}$ must be finitary, by the second part of the claim of Lemma \ref{finitarylemma92}. By Lemma \ref{finitarylemma91}, we also have that 
\[
(L^+/C^+,L^-/C^-,\cp{(L^+/C^+)}{(L^-/C^-)})\cong (\ca{L}/(\syv{C}{C})),
\]
and so indeed our biframe is isomorphic to the biframe $(L^+/C^+,L^-/C^-,\cp{(L^+/C^+)}{(L^-/C^-)})$ quotiented by a finitary congruence. 
\end{proof}

We are finally ready to show that we have a well-defined assignment $\mf{A}:\mf{Obj}(\bd{BiFrm_{fin}})\ra \mf{Obj}(\bd{BiFrm_{fin}})$.

\begin{proposition}
The assembly of a finitary biframe is finitary.
\end{proposition}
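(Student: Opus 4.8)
The plan is to deduce this directly from the free-construction description of the biframe assembly established in Proposition~\ref{assemblyiso}. That proposition gives an isomorphism of biframes $\Ad\cong \mf{A}^m(\ca{L})/(C_L\cup\mc{Com}^+_L\cup\mc{Com}^-_L)$, and the biframe $\mf{A}^m(\ca{L})=(L^+\oplus\Fim,\,L^-\oplus\Fip,\,L^+\oplus\Fim\oplus L^-\oplus\Fip)$ has exactly the shape $(M^+,M^-,M^+\oplus M^-)$ with $M^+:=L^+\oplus\Fim$ and $M^-:=L^-\oplus\Fip$: its main component is \emph{literally} the frame coproduct of its two side components. So, writing $D$ for the congruence on $M^+\oplus M^-$ generated by $C_L\cup\mc{Com}^+_L\cup\mc{Com}^-_L$ (all three read, as in Proposition~\ref{assemblyiso}, through the subframe inclusions into $M^+\oplus M^-$), we have $\Ad\cong(M^+,M^-,M^+\oplus M^-)/D$. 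By Lemma~\ref{finitarylemma9}, it then suffices to prove that $D$ is a \emph{finitary} congruence on $M^+\oplus M^-$: that lemma will immediately give that $(M^+,M^-,M^+\oplus M^-)/D$ is a finitary biframe, hence so is the isomorphic biframe $\Ad$ (finitariness of a biframe being visibly invariant under biframe isomorphism).

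To show $D$ is finitary I would exhibit a generating relation for it all of whose members are pairs of finitary elements of $M^+\oplus M^-$. Replacing the congruence $C_L$ by a generating relation, the candidate is $\mc{fin}(R_L)\cup\mc{Com}^+_L\cup\mc{Com}^-_L$, where $\mc{fin}(R_L)=\{(\syw{a}{a},\syv{b}{b}):a^+\we a^-\leq b^+\ve b^-\mb{ in }L\}$. Every member of $\mc{Com}^+_L$ is of the form $(\syp{a}\we\syp{\up a},0)$ or $(1,\syp{a}\ve\syp{\up a})$ for $a^+\in L^+$: here $0$ and $1$ are the empty join and meet, and the other entry is a binary meet, respectively a binary join, of generators --- so each of these pairs lies in $\fin(M^+\oplus M^-)\times\fin(M^+\oplus M^-)$, and symmetrically for $\mc{Com}^-_L$. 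The point where the hypothesis on $\ca{L}$ enters is the treatment of $C_L$: since $\ca{L}$ is a finitary biframe we have $C_L=\mc{fin}(C_L)$, so by Lemma~\ref{alsorel9} the congruence $C_L$ is exactly the one generated by the binary relation $\mc{fin}(R_L)$, and each member of $\mc{fin}(R_L)$ is again a binary meet paired with a binary join of generators of $M^+\oplus M^-$. Hence $\mc{fin}(R_L)\cup\mc{Com}^+_L\cup\mc{Com}^-_L$ is a finitary relation that generates $D$, so $D$ is finitary, and we are done.

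I expect the genuine content to be concentrated in that last point --- the finitariness of $C_L$, which is essentially the definition of $\ca{L}\in\bd{BiFrm_{fin}}$ together with the canonical-form description of Lemma~\ref{alsorel9}; for a general biframe $C_L$ is only presentable using arbitrary joins and need not be finitary, so the assembly of a non-finitary biframe will in general not be finitary, and this is the single place where the hypothesis does real work. Everything else is bookkeeping already contained in the cited results: that the quotient of $\mf{A}^m(\ca{L})$ by $D$ restricts on the side components to $\mf{A}_{\na^+\del^-}(L)$ and $\mf{A}_{\na^-\del^+}(L)$ (part of Proposition~\ref{assemblyiso}), and that a quotient of a biframe of the form $(M^+,M^-,M^+\oplus M^-)$ by a finitary congruence is finitary (Lemma~\ref{finitarylemma9}). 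I foresee no further obstacle.
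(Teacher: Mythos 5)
Your proposal is correct and follows essentially the same route as the paper: combine Proposition~\ref{assemblyiso} with Lemma~\ref{finitarylemma9} and observe that the relation $C_L\cup\mc{Com}^+_L\cup\mc{Com}^-_L$ presenting the assembly is finitary. You are in fact slightly more careful than the paper at the one point where the hypothesis is used, replacing the congruence $C_L$ by the finitary generating relation $\mc{fin}(R_L)$ via Lemma~\ref{alsorel9}, which is exactly the justification implicit in the paper's remark that all three disjuncts involve only finitary elements.
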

\begin{proof}
This follows from Proposition \ref{assemblyiso} and Lemma \ref{finitarylemma9}, and from the observation that for any finitary biframe $\ca{L}$ the congruence induced by the relation $C_L\cup \mc{Com}^+_L\cup \mc{Com}^-_L$ is finitary, as these three disjuncts are all relations involving only finitary elements.
\end{proof}

Let us now extend the definition of $\mf{A}$ to morphisms. For every biframe map $f:\ca{L}\ra \ca{M}$, let us denote as $f:L\ra M$ the frame map generated by $f^+$ and $f^-$. By functoriality of the assembly construction on frames, we have a map $\mf{A}(f):\mf{A}(L)\ra \mf{A}(M)$, defined as $\na(x)\mapsto \na(f(x))$ and $\del(x)=\del(f(x))$ on generators. With the next lemma, we show that there is a map $\mf{A}_{\fin}(L)\ra \mf{A}_{\fin}(M)$ defined analogously, and that this map is the main frame component of the required map $\mf{A}(f):\Ad\ra \mf{A}(\ca{M})$.  

\begin{lemma}
When we have a biframe map $f:\ca{L}\ra \ca{M}$, there is a biframe map $\mf{A}(f):\mf{A}(\ca{L})\ra \mf{A}(\ca{M})$ defined as $\na(\syp{x})\mapsto \na(\sy{f^+(x^+)})$ and $\del(\sym{x})\mapsto \del(\sy{f^-(x^-)})$ on positive generators, and similarly on negative ones.
\end{lemma}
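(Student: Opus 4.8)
The plan is to obtain $\mf{A}(f)$ from the universal property of the biframe assembly recalled above, rather than building it by hand on generators. First I would form the composite biframe map $\na_{\ca{M}}\circ f\colon\ca{L}\ra\mf{A}(\ca{M})$, where $\na_{\ca{M}}\colon\ca{M}\ra\mf{A}(\ca{M})$ is the canonical embedding; being a composite of biframe maps, this is again a biframe map. The one substantive thing to check is that $\na_{\ca{M}}\circ f$ provides bicomplements to every element of $L^+\cup L^-$. For $x^+\in L^+$ its positive component sends $x^+$ to $\na(\sy{f^+(x^+)})\in\mf{A}_{\na^+\del^-}(M)$; I would exhibit the open congruence $\del(\sy{f^+(x^+)})$ as a bicomplement of it, noting that $\del(\sy{f^+(x^+)})$ lies in the negative component $\mf{A}_{\na^-\del^+}(M)$ (it is an open congruence built from a positive generator of $M$), and that $\na(\sy{f^+(x^+)})$ and $\del(\sy{f^+(x^+)})$ are complements of one another already in $\mf{A}_{\fin}(M)$, since in any assembly $\na(c)$ and $\del(c)$ are complements. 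The case $x^-\in L^-$ is symmetric, using $\del(\sy{f^-(x^-)})\in\mf{A}_{\na^+\del^-}(M)$.

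With this hypothesis verified, the universal property of $\mf{A}(\ca{L})$ delivers a unique biframe map, which I call $\mf{A}(f)\colon\mf{A}(\ca{L})\ra\mf{A}(\ca{M})$, with $\mf{A}(f)\circ\na_{\ca{L}}=\na_{\ca{M}}\circ f$. Unwinding this identity gives at once $\mf{A}(f)(\na(\syp{x}))=\na(\sy{f^+(x^+)})$ for $x^+\in L^+$ and $\mf{A}(f)(\na(\sym{x}))=\na(\sy{f^-(x^-)})$ for $x^-\in L^-$, i.e.\ the prescribed action on the closed generators. To get the action on the open generators I would use that the main-component map of $\mf{A}(f)$ is a frame map and hence preserves complementation: since $\del(\sym{x})$ is the complement of $\na(\sym{x})$ in $\mf{A}_{\fin}(L)$, its image is the complement of $\na(\sy{f^-(x^-)})$ in $\mf{A}_{\fin}(M)$, which by uniqueness of complements is $\del(\sy{f^-(x^-)})$; symmetrically $\mf{A}(f)(\del(\syp{x}))=\del(\sy{f^+(x^+)})$. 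As the displayed families generate the positive and negative components of $\mf{A}(\ca{L})$ by definition, this pins down $\mf{A}(f)$ completely and shows it is the map named in the statement.

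The only delicate point in the whole argument is the bicomplement check in the first paragraph; everything after that is bookkeeping with the universal property and with unique complementation in frames. A longer, essentially equivalent alternative would avoid the universal property and instead transport the problem through the isomorphism of Proposition \ref{assemblyiso}, presenting $\mf{A}(\ca{L})$ as $\mf{A}^m(\ca{L})/(C_L\cup\mc{Com}^+_L\cup\mc{Com}^-_L)$ and applying Corollary \ref{pairismorbifrm9} to the evident pair of component maps; but appealing directly to the universal property is shorter and makes the formulas on generators transparent.
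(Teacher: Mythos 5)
Your argument is correct, but it takes a genuinely different route from the paper's. The paper starts from the already-established functoriality of the assembly at the level of frames: the frame map $f\colon L\ra M$ generated by $f^+$ and $f^-$ induces $\mf{A}(f)\colon\mf{A}(L)\ra\mf{A}(M)$ with $\na(x)\mapsto\na(f(x))$ and $\del(x)\mapsto\del(f(x))$, and the proof then consists of a single verification: writing a finitary congruence in the canonical form $\bve_i\na(\sy{x^+_i})\cap\na(\sy{x^-_i})\cap\del(\sy{y^+_i})\cap\del(\sy{y^-_i})$ and observing that its image is again of that form, so that $\mf{A}(f)$ restricts to $\mf{A}_{\fin}(L)\ra\mf{A}_{\fin}(M)$. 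You instead manufacture the map from the universal property of the biframe assembly, by checking that $\na_{\ca{M}}\circ f$ provides bicomplements and then reading off the action on generators (closed generators from the commuting triangle, open generators from uniqueness of complements under a frame map). Your bicomplementation check is sound --- $\del(\sy{f^+(x^+)})$ does lie in $\mf{A}_{\na^-\del^+}(M)$ and is a complement of $\na(\sy{f^+(x^+)})$ in the main component, and the existence of any complement in the opposite component forces the bipseudocomplement to be a bicomplement. What your approach buys is that the biframe-map property of $\mf{A}(f)$ and the functoriality of $\mf{A}$ (preservation of identities and composition, which the paper asserts afterwards by inspection) come for free from uniqueness in the universal property; what the paper's approach buys is brevity, since it leans on the frame-level functor and the canonical form of finitary congruences already established in Lemma \ref{AfinLisSL}, at the cost of leaving the restriction to the two component subframes somewhat implicit. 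Your closing remark that one could alternatively route the argument through Proposition \ref{assemblyiso} and Corollary \ref{pairismorbifrm9} is also accurate.
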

\begin{proof}
To show this it suffices to show that the frame map $\mf{A}(f):\mf{A}(L)\ra \mf{A}(M)$ is such that it maps finitary congruences to finitary congruences. A typical finitary congruence of $L$ is of the form $\bve_i \na(\syp{x})\cap \na(\sym{x})\cap \del(\syp{y})\cap \del(\sym{y})$. Since $\mf{A}(f)$ is a frame map, and by its definition, we know that it maps this congruence to $\bve_i \na(\sy{f^+(x^+)})\cap \na(\sy{f^-(x^-)})\cap \del(\sy{f^+(y^+)})\cap \del(\sy{f^-(y^-)})$. By definition, this is a finitary congruence of $M$.
\end{proof}

By definition of the assignment $\mf{A}:\mf{Mor}(\bd{BiFrm_{fin}})\ra \mf{Mor}(\bd{BiFrm_{fin}})$, this respects identities and compositions. We have then proved the following fact. 

\begin{proposition}
The assignment $\mf{A}:\bd{BiFrm_{fin}}\ra \bd{BiFrm_{fin}}$ is a functor.
\end{proposition}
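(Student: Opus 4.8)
The plan is to observe that essentially all the substantive work has already been carried out: the preceding proposition shows that $\mf{A}$ sends a finitary biframe to a finitary biframe, and the preceding lemma shows that for a biframe map $f:\ca{L}\ra\ca{M}$ the pair of frame maps $\mf{A}(f)$ is a biframe map $\mf{A}(\ca{L})\ra\mf{A}(\ca{M})$, acting on positive generators as $\na(\syp{x})\mapsto\na(\sy{f^+(x^+)})$ and $\del(\sym{x})\mapsto\del(\sy{f^-(x^-)})$, and similarly on negative generators. Hence it remains only to verify the two functoriality axioms, and for this I would argue directly on generators, using the fact that each component of a biframe map is a frame homomorphism and so is uniquely determined by its action on any generating set; in particular two biframe maps with the same domain and codomain coincide as soon as their main components agree on the generators $\na(\syp{x})$, $\del(\sym{x})$, $\na(\sym{x})$, $\del(\syp{x})$, which by Lemma \ref{manysubframes6} generate $\mf{A}_{\fin}(L)$ as a frame.

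First I would check preservation of identities. For a finitary biframe $\ca{L}$ the identity $1_{\ca{L}}$ is the pair $(\id,\id)$, so by the defining formula $\mf{A}(1_{\ca{L}})$ sends $\na(\syp{x})\mapsto\na(\syp{x})$ and likewise fixes each of the other three families of generators; therefore $\mf{A}(1_{\ca{L}})=1_{\mf{A}(\ca{L})}$. Next I would check preservation of composition: given biframe maps $f:\ca{L}\ra\ca{M}$ and $g:\ca{M}\ra\ca{N}$, the composite $g\circ f$ has components $g^+\circ f^+$ and $g^-\circ f^-$, so $\mf{A}(g\circ f)$ sends $\na(\syp{x})\mapsto\na(\sy{g^+(f^+(x^+))})$, whereas $\mf{A}(g)\circ\mf{A}(f)$ sends $\na(\syp{x})$ first to $\na(\sy{f^+(x^+)})$ and then to $\na(\sy{g^+(f^+(x^+))})$; the same comparison works for the other three families of generators. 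Since the two biframe maps agree on a generating set of the main component of $\mf{A}(\ca{L})$, they are equal, giving $\mf{A}(g\circ f)=\mf{A}(g)\circ\mf{A}(f)$.

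There is no real obstacle here; the only point worth stating carefully is why agreement on the listed generators suffices, which is Lemma \ref{manysubframes6} together with the fact that a frame homomorphism is determined by its restriction to any generating set. Alternatively, the proposition can be deduced from functoriality of the frame-level assembly functor $\mf{A}:\bd{Frm}\ra\bd{Frm}$: the main component of $\mf{A}(f)$ is by construction the (co)restriction of the frame map $\mf{A}(f):\mf{A}(L)\ra\mf{A}(M)$ to the subframes $\mf{A}_{\fin}(L)$ and $\mf{A}_{\fin}(M)$, and the side components arise analogously, so identities and composites are inherited componentwise from $\bd{Frm}$.
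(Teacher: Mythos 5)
Your proposal is correct and follows essentially the same route as the paper, which simply observes that by definition of the assignment on morphisms it respects identities and compositions; you merely make explicit the generator-level verification (and the reduction to functoriality of the frame-level assembly) that the paper leaves implicit. No gaps.
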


Let us now work towards relating the bispectrum of a finitary biframe with the bispectrum of its assembly.

\begin{lemma}\label{bijectionpoints9}
There is a bijection $\alpha_{\ca{L}}:\bpt(\ca{L})\cong \bpt(\mf{A}(\ca{L}))$ assigning to $f:\ca{L}\ra \bd{2}$ the unique map $\Tilde{f}$ making the following diagram commute.
\[
\begin{tikzcd}
{} 
& \Ad
\arrow{dr}{\tilde{f}} \\
\ca{L}
\arrow{ur}{\na} 
\arrow{rr}{f} 
&& \bd{2}
\end{tikzcd}
\]
\end{lemma}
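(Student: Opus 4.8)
The plan is to exploit the universal property of the assembly established in the previous section (Proposition \ref{assemblyiso} and the surrounding discussion from \cite{schauerte92}): a biframe map $f:\ca{L}\ra \bd{2}$ extends to a biframe map $\tilde{f}:\mf{A}(\ca{L})\ra \bd{2}$ along $\na:\ca{L}\ra \mf{A}(\ca{L})$ \emph{if and only if} $f$ provides bicomplements to all elements of $L^+\cup L^-$. The key observation is that \emph{every} biframe map $f:\ca{L}\ra \bd{2}$ has this property automatically, because $\bd{2}=(2,2,2)$ and every element of the two-element frame is complemented. Concretely, for $a^+\in L^+$ we have $f^+(a^+)\in\{0,1\}$, and taking $\sim f^+(a^+)$ to be $1$ or $0$ accordingly gives an element of the negative component $2$ whose image is the complement of $f^+(a^+)$ in the main component $2$; the defining equations $\sy{f^+(a^+)}\we\sy{\sim f^+(a^+)}=0$ and $\sy{f^+(a^+)}\ve\sy{\sim f^+(a^+)}=1$ then hold trivially in $2$. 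So the hypothesis of the universal property is vacuous for maps into $\bd{2}$.

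Given this, the steps are as follows. First, I would note that for each $f\in\bpt(\ca{L})$ the extension $\tilde f$ exists and is unique by the universal property, so the assignment $\alpha_{\ca{L}}:f\mapsto\tilde f$ is a well-defined function $\bpt(\ca{L})\ra\bpt(\mf{A}(\ca{L}))$; here I use that $\mf{A}(\ca{L})$ is finitary (the proposition just proved) so that $\tilde f$ really is a point of the finitary biframe $\mf{A}(\ca{L})$, i.e. an element of $\bpt(\mf{A}(\ca{L}))$. Second, to produce an inverse I would precompose with $\na$: define $\beta_{\ca{L}}:\bpt(\mf{A}(\ca{L}))\ra\bpt(\ca{L})$ by $g\mapsto g\circ\na$. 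This is a biframe map into $\bd{2}$, hence a bipoint of $\ca{L}$, so $\beta_{\ca{L}}$ is well-defined. Third, I would check $\beta_{\ca{L}}\circ\alpha_{\ca{L}}=\id$: by construction $\tilde f\circ\na=f$, which is exactly the commuting triangle in the statement. Fourth, for $\alpha_{\ca{L}}\circ\beta_{\ca{L}}=\id$, I would take $g\in\bpt(\mf{A}(\ca{L}))$, set $f=g\circ\na$, and observe that $g$ itself is a biframe map $\mf{A}(\ca{L})\ra\bd{2}$ with $g\circ\na=f$; by the \emph{uniqueness} clause of the universal property, $g$ must equal the canonical extension $\tilde f=\alpha_{\ca{L}}(f)$. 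Hence $\alpha_{\ca{L}}$ and $\beta_{\ca{L}}$ are mutually inverse bijections.

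The only genuine content is the first observation — that every map to $\bd{2}$ vacuously provides bicomplements — together with correctly invoking both halves (existence and uniqueness) of the Schauerte universal property recalled before Lemma \ref{isalatticemap9}. I do not expect a real obstacle here; the one point requiring a little care is making sure that $\tilde f$, a priori just a biframe map, genuinely counts as an element of $\bpt(\mf{A}(\ca{L}))$ — but this is immediate once we know $\mf{A}(\ca{L})$ is a finitary biframe and that $\bpt$ on finitary biframes is simply the bispectrum functor applied to them. (One could also phrase the whole argument as: $\bpt$ sends the bicoreflection-type adjunction $\na$ to a bijection on underlying point sets, but the direct two-sided-inverse argument above is cleaner.) Note that the statement claims only a bijection of underlying sets, not a bihomeomorphism, so no topology needs to be checked at this stage.
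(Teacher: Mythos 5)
Your proposal is correct and follows essentially the same route as the paper, whose entire proof is the observation that any biframe map $f:\ca{L}\ra\bd{2}$ automatically provides bicomplements to all elements of $L^+\cup L^-$, so the universal property of the assembly applies. You merely spell out the details the paper leaves implicit (the inverse given by precomposition with $\na$ and the use of the uniqueness clause), which is a faithful elaboration rather than a different argument.
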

\begin{proof}
The claim holds by the universal property of the assembly of a biframe: observe that any biframe map $f:\ca{L}\ra \bd{2}$ provides bicomplements to all elements of $L^+\cup L^-$. 
\end{proof}

\begin{lemma}\label{lemmabihomeo9}
For every biframe $\ca{L}$ we have the following.
\begin{itemize}
    \item $\alpha_{\ca{L}}[\va{L}{+}{x}]=\varphi^{+}_{\Ad}(\na(\syp{x}))$,
    \item $\alpha_{\ca{L}}[\va{L}{-}{x}^c]=\varphi^{+}_{\Ad}(\del(\sym{x}))$,
    \item $\alpha_{\ca{L}}[\va{L}{-}{x}]=\varphi^{-}_{\Ad}(\na(\sym{x}))$,
    \item $\alpha_{\ca{L}}[\va{L}{+}{x}^c]=\varphi^{-}_{\Ad}(\del(\syp{x}))$.
\end{itemize}
\end{lemma}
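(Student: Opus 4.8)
The plan is to deduce all four equalities from one computation of the values that $\tilde{f} := \alpha_{\ca{L}}(f)$ takes on the generating congruences of $\mf{A}(\ca{L})$, for an arbitrary bipoint $f = \pa{f}$ of $\ca{L}$. Recall that $\tilde{f}$ is the unique biframe map $\mf{A}(\ca{L}) \ra \bd{2}$ with $\tilde{f} \circ \na = f$, and that its main component $\tilde{f} : \mf{A}_{\fin}(L) \ra 2$ restricts to $\tilde{f}^+$ on the subframe $\mf{A}_{\na^+\del^-}(L)$ and to $\tilde{f}^-$ on $\mf{A}_{\na^-\del^+}(L)$. Since $\alpha_{\ca{L}}$ is a bijection, each of the four items amounts to computing the image under $\alpha_{\ca{L}}$ of a basic (co)open of $\bpt(\ca{L})$, so by the definition of the bitopology on $\bpt(\mf{A}(\ca{L}))$ it suffices to check, for every $x^+ \in L^+$ and $x^- \in L^-$, the identities of truth values
\begin{align*}
\tilde{f}^+(\na(\syp{x})) &= f^+(x^+), & \tilde{f}^+(\del(\sym{x})) &= \neg f^-(x^-),\\
\tilde{f}^-(\na(\sym{x})) &= f^-(x^-), & \tilde{f}^-(\del(\syp{x})) &= \neg f^+(x^+).
\end{align*}

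First I would settle the two $\na$-identities: the positive component of $\na : \ca{L} \ra \mf{A}(\ca{L})$ sends $x^+$ to $\na(\syp{x})$, so $\tilde{f}^+(\na(\syp{x})) = (\tilde{f}^+ \circ \na^+)(x^+) = f^+(x^+)$, and the negative one is symmetric. For the two $\del$-identities I would use that a frame map into the Boolean frame $2$ carries complementary pairs to complementary pairs: since $\na(\sym{x})$ and $\del(\sym{x})$ are complements in $\mf{A}_{\fin}(L)$ — this is exactly why $\na$ preserves and $\del$ reverses the lattice operations, and is also encoded by the relation $\mc{Com}^-_L$ of Proposition \ref{assemblyiso} — we get $\tilde{f}^+(\del(\sym{x})) = \tilde{f}(\del(\sym{x})) = \neg\,\tilde{f}(\na(\sym{x})) = \neg f^-(x^-)$, and symmetrically $\tilde{f}^-(\del(\syp{x})) = \neg f^+(x^+)$.

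Granting these, each item is immediate: for instance $\alpha_{\ca{L}}[\va{L}{+}{x}] = \{\tilde{f} : f^+(x^+)=1\} = \{\tilde{f} : \tilde{f}^+(\na(\syp{x}))=1\} = \varphi^+_{\Ad}(\na(\syp{x}))$, and $\alpha_{\ca{L}}[\va{L}{-}{x}^c] = \{\tilde{f} : f^-(x^-)=0\} = \{\tilde{f} : \tilde{f}^+(\del(\sym{x}))=1\} = \varphi^+_{\Ad}(\del(\sym{x}))$; the third and fourth items are obtained by interchanging the roles of the two components throughout. The only point requiring genuine care — and the step I expect to be the main (if minor) obstacle — is the bookkeeping of which subframe of $\mf{A}(\ca{L})$ each of $\na(\syp{x}), \del(\sym{x}), \na(\sym{x}), \del(\syp{x})$ belongs to, so that the component restrictions of $\tilde{f}$ are invoked on the correct side and the negative generators are recognised as lying in the positive component and vice versa; once this is laid out the argument is a short calculation.
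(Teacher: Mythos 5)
Your proposal is correct and follows essentially the same route as the paper: both arguments reduce each item to evaluating $\tilde{f}=\alpha_{\ca{L}}(f)$ on the generating congruences, using $\tilde{f}\circ\na=f$ for the $\na$-cases and the complementarity of $\na(\sym{x})$ and $\del(\sym{x})$ in $\Ad$ (so that a map into $\bd{2}$ negates the value) for the $\del$-cases, together with surjectivity of $\alpha_{\ca{L}}$ to identify the image sets. Your explicit bookkeeping of which component subframe each generator lies in is a slightly more careful organisation of the same computation the paper carries out for the first two items and leaves symmetric for the rest.
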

\begin{proof}
Let us prove the first two items.
\begin{itemize}
    \item For $x^+\in L^+$, we have the following chain of equalities. For the equality between the second and the third line, we have used the fact that by Lemma \ref{bijectionpoints9} we know that $\alpha_{\ca{L}}:f\mapsto \tilde{f}$ is a bijection between the points of $\ca{L}$ and those of $\Ad$, and so it is surjective. 
    \begin{align*}
        & \alpha_{\ca{L}}[\va{L}{+}{x}]=\\
        & =\{\tilde{f}\in \bpt(\Ad): f\in \bpt(\ca{L}),f^+(x^+)=1\}=\\
        & =\{\tilde{f}\in \bpt(\Ad):\tilde{f}(\na(\syp{x}))=1\}=\\
        & =\varphi^{+}_{\Ad}(\na(\syp{x})).
    \end{align*}
    \item For $x^-\in L^-$, we have the following chain of equalities. Similarly as above, we have used the fact that $\alpha_{\ca{L}}$ is surjective for the equality between the second and the third line. We have also used the fact that, since $\na(\sym{x})$ and $\del(\sym{x})$ are bicomplements of each other in $\Ad$, we have that $\tilde{f}(\na(\syp{x}))=f^-(x^-)=1$ if and only if $\tilde{f}(\del(\sym{x}))=0$.
     \begin{align*}
        & \alpha_{\ca{L}}[\va{L}{-}{x}^c]=\\
        & =\{\tilde{f}: f\in \bpt(\ca{L}),f^-(x^-)=0\}=\\
        & =\{\tilde{f}\in \bpt(\Ad):\tilde{f}(\del(\syp{x}))=1\}=\\
        & =\varphi^{+}_{\Ad}(\del(\sym{x})).\qedhere
    \end{align*}
\end{itemize}
\end{proof}
\begin{proposition}\label{assemblyskula9}
For every biframe $\ca{L}$ we have that the bijection $\alpha_{\ca{L}}:Sk(\bpt(\ca{L}))\cong \bpt(\mf{A}(\ca{L}))$ is a bihomeomorphism.
\end{proposition}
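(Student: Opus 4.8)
The plan is to read off the result directly from the bijection $\alpha_{\ca{L}}$ of Lemma~\ref{bijectionpoints9} together with the four identities of Lemma~\ref{lemmabihomeo9}, using the elementary fact that a bijection of underlying sets which carries a subbasis of the domain topology onto a subbasis of the codomain topology is a homeomorphism. Concretely, first I would record the relevant subbases. By definition of the Skula bispace, the positive topology of $Sk(\bpt(\ca{L}))$ is generated by $\{\va{L}{+}{x}:x^+\in L^+\}\cup\{\va{L}{-}{x}^c:x^-\in L^-\}$ and the negative topology by $\{\va{L}{-}{x}:x^-\in L^-\}\cup\{\va{L}{+}{x}^c:x^+\in L^+\}$. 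On the other side, recall from the start of this subsection that the positive component of $\mf{A}(\ca{L})$ is the subframe $\mf{A}_{\na^+\del^-}(L)$ of $\mf{A}_{\fin}(L)$ generated by $\{\na(\syp{x}):x^+\in L^+\}\cup\{\del(\sym{x}):x^-\in L^-\}$, and the negative component is generated symmetrically; since each spatialization map $\varphi^{\pm}_{\mf{A}(\ca{L})}$ is a surjective frame map onto the respective topology of $\bpt(\mf{A}(\ca{L}))$, that topology is generated by the image under $\varphi^{\pm}_{\mf{A}(\ca{L})}$ of those generating sets.

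Next I would invoke Lemma~\ref{lemmabihomeo9}. Its first two items say exactly that $\alpha_{\ca{L}}$ sends the chosen subbasis of the positive topology of $Sk(\bpt(\ca{L}))$ bijectively onto $\{\varphi^{+}_{\mf{A}(\ca{L})}(\na(\syp{x}))\}\cup\{\varphi^{+}_{\mf{A}(\ca{L})}(\del(\sym{x}))\}$, which by the previous paragraph is a subbasis of the positive topology of $\bpt(\mf{A}(\ca{L}))$; the last two items do the same for the negative topologies. Since $\alpha_{\ca{L}}$ is a bijection of underlying sets by Lemma~\ref{bijectionpoints9}, the image operator $\alpha_{\ca{L}}[-]$ commutes with arbitrary unions and with finite intersections of subsets, hence carries the topology generated by a subbasis onto the topology generated by the image subbasis; the same applies to $\alpha_{\ca{L}}^{-1}$. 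Therefore $\alpha_{\ca{L}}$ restricts to a homeomorphism of the positive topologies and to a homeomorphism of the negative topologies, i.e.\ it is a bihomeomorphism.

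I do not expect a genuine obstacle here: all of the real work is already packaged in Lemma~\ref{lemmabihomeo9} and in the description of the components of $\mf{A}(\ca{L})$. The only point that deserves to be stated carefully is the passage from ``$\varphi^{\pm}_{\mf{A}(\ca{L})}$ applied to a generating set of the frame component'' to ``a subbasis of the corresponding topology of $\bpt(\mf{A}(\ca{L}))$'', which uses that $\varphi^{\pm}_{\mf{A}(\ca{L})}$ is a frame map onto the open-set lattice and so turns finite meets and arbitrary joins of generators into finite intersections and arbitrary unions of the associated opens; together with the definition of $\bpt$ this shows those opens do generate the topology. With that remark in place the argument is immediate, so the proof will be short and will read essentially as ``this follows from Lemma~\ref{bijectionpoints9}, Lemma~\ref{lemmabihomeo9}, and the definitions of $Sk$, of $\mf{A}(\ca{L})$, and of the bispectrum.''
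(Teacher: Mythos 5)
Your proposal is correct and follows essentially the same route as the paper's proof: both arguments use that the components of the spatialization map are frame maps, so the images of the generators of $\mf{A}_{\na^+\del^-}(L)$ and $\mf{A}_{\na^-\del^+}(L)$ form subbases of the respective topologies of $\bpt(\mf{A}(\ca{L}))$, and then apply Lemma~\ref{lemmabihomeo9} to match these subbases with those of $Sk(\bpt(\ca{L}))$ under the bijection of Lemma~\ref{bijectionpoints9}. Your write-up is, if anything, slightly more explicit than the paper's about why a bijection carrying a subbasis onto a subbasis is a homeomorphism.
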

\begin{proof}
To show that the bijection $\alpha_{\ca{L}}$ is a bihomeomorphism, it suffices to show that the subbasic opens of each topology of $\bpt(\Ad)$ is the forward image of the subbasic opens of the corresponding topology of $Sk(\bpt(\ca{L}))$. Each component of the bispatialization map turns finite meets into finite intersections and arbitrary joins into arbitrary unions. This means that a subbasis of the positive topology of $\bpt(\Ad)$ is given by 
\[
\{\varphi_{\Ad}^{+}(\na(\syp{x})):x^+\in L^+\}\cup \{\varphi_{\Ad}^{+}(\del(\sym{x})):x^-\in L^-\}.
\]
By definition of Skula bispace, and by Lemma \ref{lemmabihomeo9}, these indeed are the forward images under $\alpha_{\ca{L}}$ of the subbasic opens of the positive topology of $Sk(\bpt(\ca{L}))$.
\end{proof}

\begin{theorem}
The following diagram commutes up to natural isomorphism.
\[
\begin{tikzcd}[row sep=large, column sep = large]
\bd{BiFrm_{fin}}^{op} 
\arrow{r}{\mf{bpt}} 
\arrow[swap]{d}{\mf{A}} 
& \bd{BiTop} 
\arrow{d}{Sk} \\
\bd{BiFrm_{fin}}^{op}   
\arrow{r}{\mf{bpt}} & 
\bd{BiTop}
\end{tikzcd}
\]
\end{theorem}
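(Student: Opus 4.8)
The plan is to take the component of the natural isomorphism at a finitary biframe $\ca{L}$ to be precisely the bihomeomorphism
\[
\alpha_{\ca{L}}\colon Sk(\bpt(\ca{L}))\;\cong\;\bpt(\mf{A}(\ca{L}))
\]
produced in Proposition \ref{assemblyskula9}, so that all of the ``topological'' content of the theorem is already done. Since $\mf{A}$ is an endofunctor of $\bd{BiFrm_{fin}}$ and $\bpt$ is the spectrum functor, the two composites around the square are the endofunctors $Sk\circ\bpt$ and $\bpt\circ\mf{A}$ of $\bd{BiTop}$ (regarding $\mf{A}$ now as an endofunctor of $\bd{BiFrm_{fin}}^{op}$), and since each $\alpha_{\ca{L}}$ is already an isomorphism in $\bd{BiTop}$, the only thing left to establish is naturality of the family $(\alpha_{\ca{L}})_{\ca{L}}$.

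First I would record the (routine) fact that $Sk$ is a genuine endofunctor of $\bd{BiTop}$ acting as the identity on underlying functions: if $g\colon X\ra Y$ is bicontinuous, then preimages under $g$ of positive opens of $Y$ are positive opens of $X$ and, taking complements, preimages of negative closed sets of $Y$ are negative closed sets of $X$; hence $g^{-1}$ sends a subbasis of the positive Skula topology of $Y$ into the positive Skula topology of $X$, and dually on the negative side, so $g\colon Sk(X)\ra Sk(Y)$ is bicontinuous.

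Next, for a biframe map $f\colon \ca{L}\ra \ca{M}$ of finitary biframes I would verify commutativity of
\[
\begin{tikzcd}[row sep=large, column sep=large]
Sk(\bpt(\ca{M})) \arrow{r}{\alpha_{\ca{M}}} \arrow[swap]{d}{Sk(\bpt(f))} & \bpt(\mf{A}(\ca{M})) \arrow{d}{\bpt(\mf{A}(f))} \\
Sk(\bpt(\ca{L})) \arrow{r}{\alpha_{\ca{L}}} & \bpt(\mf{A}(\ca{L}))
\end{tikzcd}
\]
by chasing a bipoint $p\colon \ca{M}\ra \bd{2}$. Going along the top and then down yields $\bpt(\mf{A}(f))(\tilde p)=\tilde p\circ \mf{A}(f)$, where $\tilde p\colon\mf{A}(\ca{M})\ra\bd{2}$ is the unique biframe map with $\tilde p\circ\na_{\ca{M}}=p$ (Lemma \ref{bijectionpoints9}); going down and then along the bottom yields $\alpha_{\ca{L}}(p\circ f)=\widetilde{p\circ f}$, the unique biframe map $\mf{A}(\ca{L})\ra\bd{2}$ with $\widetilde{p\circ f}\circ\na_{\ca{L}}=p\circ f$. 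By the uniqueness clause in the universal property of $\mf{A}(\ca{L})$, it then suffices to show that $(\tilde p\circ\mf{A}(f))\circ\na_{\ca{L}}=p\circ f$. But the definition of $\mf{A}(f)$ on generators is exactly the statement $\mf{A}(f)\circ\na_{\ca{L}}=\na_{\ca{M}}\circ f$ (i.e. naturality of $\na$), whence $(\tilde p\circ\mf{A}(f))\circ\na_{\ca{L}}=\tilde p\circ\na_{\ca{M}}\circ f=p\circ f$, as needed. This proves the square commutes, and so $\alpha$ is a natural isomorphism.

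I do not expect a real obstacle: the genuinely hard step is Proposition \ref{assemblyskula9}, which is already available. The only points requiring care are bookkeeping ones --- keeping straight the direction of arrows in the opposite categories $\bd{BiFrm_{fin}}^{op}$, and noting that ``naturality of $\na$'' is built into how $\mf{A}$ was defined on morphisms. (One could also check naturality separately on the positive and negative subframes of the assemblies, but since a bipoint of $\mf{A}(\ca{L})$ is determined by its underlying frame map on the main component, the computation above already suffices.)
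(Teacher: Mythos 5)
Your proposal is correct and follows essentially the same route as the paper: take $\alpha_{\ca{L}}$ from Proposition \ref{assemblyskula9} as the component of the natural isomorphism and verify the naturality square by chasing a bipoint, reducing the required equality $\tilde{p}\circ\mf{A}(f)=\widetilde{p\circ f}$ to the fact that both sides precompose with $\na$ to give $p\circ f$ (the paper phrases this as checking agreement on the generators $\na(\syp{x})$, which is the same use of the universal property's uniqueness clause, since a bipoint of the assembly is determined by its values on closed congruences). Your explicit check that $Sk$ is functorial and acts as the identity on underlying functions is a small, harmless addition that the paper leaves implicit.
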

\begin{proof}
Suppose that there is a morphism $f:\ca{L}\ra \ca{M}$ in $\bd{BiFrm_{fin}}$. The naturality square is as follows.
\[
\begin{tikzcd}[row sep=large, column sep = large]
Sk(\bpt(\ca{M}))
\arrow{r}{\alpha_{\ca{M}}} 
\arrow[swap]{d}{Sk(\bpt(f))} 
& \bpt(\mf{A}(\ca{M}))
\arrow{d}{\bpt(\mf{A}(f))} \\
Sk(\bpt(\ca{L}))  
\arrow{r}{\alpha_{\ca{L}}} & 
\bpt(\mf{A}(\ca{L}))
\end{tikzcd}
\]
Since this amounts to an equality of functions, commutativity of this diagram amounts to commutativity of the following square in $\bd{Set}$. We have also used the definition of the functor $\bpt$.
\[
\begin{tikzcd}[row sep=large, column sep = large]
\bpt(\ca{M})
\arrow{r}{\alpha_{\ca{M}}} 
\arrow[swap]{d}{-\circ f} 
& \bpt(\mf{A}(\ca{M}))
\arrow{d}{-\circ\mf{A}(f)} \\
\bpt(\ca{L})) 
\arrow{r}{\alpha_{\ca{L}}} & 
\bpt(\mf{A}(\ca{L}))
\end{tikzcd}
\]
Suppose, then, that $g\in \bpt(\ca{M})$. We need to check that $\alpha_{\ca{M}}(g)\circ \mf{A}(f)=\alpha_{\ca{L}}(g\circ f)$. To check that this is an equality of points of $\Ad$, it suffices to show that these two maps agree on where they map $\na(\syp{x})$, since every bipoint $h:\Ad\ra \bd{2}$ is completely determined to where it maps the closed congruences. For $x^+\in L^+$, we have the following chain of equalities.
\begin{align*}
    & \alpha_{\ca{M}}(g)^+(\mf{A}(f)^+(\na(\syp{x})))=\\
    & =\alpha_{\ca{M}}(g)^+(\na(\sy{f^+(x^+)}))=\\
    & =\tilde{g}^+(\na(\sy{f^+(x^+)}))=\\
    & =g^+(f^+(x^+))=\\
    & =\widetilde{g\circ f}^+(\na(\syp{x})).\qedhere
\end{align*}
\end{proof}

Having now analyzed the definition of the functor $\mf{A}$ and its relation with the bitopological Skula functor $Sk:\bd{BiTop}\ra \bd{BiTop}$, we speak about its connection with the collection of biquotients of a finitary biframe. With the following theorem, we summarize what we already know on this matter. In the next theorem we summarize the importance of the assembly in the theory of finitary biframes.

\begin{theorem}
For any finitary biframe $\ca{L}$, we have the following.
\begin{itemize}
    \item The biframe $\mf{A}(\ca{L})$ has the universal property that it provides bicomplements freely to all elements of $L^+\cup L^-$. 
    \item The main component of $\Ad$ is anti-isomorphic to the coframe $\mf{S}(\ca{L})$ of all biquotients of $\ca{L}$.
    \item There is a bihomeomorphism $\alpha_{\ca{L}}:Sk(\bpt(\ca{L}))\cong \bpt(\Ad)$
    \item The patch-closed sets of $\bpt(\Ad)$, under the bijection $|\bpt(\ca{L})|\cong|\bpt(\mf{A}(\ca{L}))|$, coincide with the underlying sets of the bisober bisubspaces of $\bpt(\ca{L})$.
\end{itemize}
\end{theorem}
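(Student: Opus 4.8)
The proof collects results established above: each of the four items follows from a previously proved statement, the only one needing a short extra remark being the second. For the first item, the universal property is exactly the one recalled from \cite{schauerte92} in the discussion preceding Proposition~\ref{assemblyiso}, and Proposition~\ref{assemblyiso} exhibits $\Ad$ as the free construction realising it; since the assembly of a finitary biframe is again finitary, the universal arrow into any finitary biframe providing bicomplements exists inside $\bd{BiFrm_{fin}}$. The third item is Proposition~\ref{assemblyskula9}, so nothing further is needed there.

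For the second item, I would combine Lemma~\ref{AfinLisSL} with Proposition~\ref{whenbiquotient}. By Lemma~\ref{AfinLisSL}, the main component $\mf{A}_{\fin}(L)$ of $\Ad$ is, as an ordered set, the collection of finitary congruences on $L$ ordered by inclusion. By Proposition~\ref{whenbiquotient}, the biquotients of $\ca{L}$ are precisely the quotients $\ca{L}/C$ with $C$ a finitary congruence on $L$. Hence $C\mapsto \ca{L}/C$ is a bijection from finitary congruences onto biquotients, and by the definition of the order on $\mf{S}(\ca{L})$ (namely $\ca{L}/C\leq\ca{L}/D$ iff $D\se C$) this bijection reverses order. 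As $\mf{A}_{\fin}(L)$ is a subframe of $\mf{A}(L)$ and so is a frame, its order-dual is a coframe, and the order-reversing bijection just described is the asserted anti-isomorphism of the main component of $\Ad$ onto $\mf{S}(\ca{L})$.

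For the fourth item, I would chain Theorem~\ref{skulabisobers} with the bihomeomorphism of the third item: by Theorem~\ref{skulabisobers} the bisober bisubspaces of $\bpt(\ca{L})$ are exactly those whose underlying sets are patch-closed in $Sk(\bpt(\ca{L}))$, and the bihomeomorphism $\alpha_{\ca{L}}:Sk(\bpt(\ca{L}))\cong\bpt(\Ad)$ carries patch-closed sets to patch-closed sets while restricting to the bijection $|\bpt(\ca{L})|\cong|\bpt(\Ad)|$; transporting the previous description along $\alpha_{\ca{L}}$ gives the claim. The only point demanding genuine (though very mild) thought is inside the second item, namely the observation that the correspondence between finitary congruences and biquotients is order-reversing for the chosen order on $\mf{S}(\ca{L})$, which is what turns the frame $\mf{A}_{\fin}(L)$ into the coframe $\mf{S}(\ca{L})$; the remaining items are direct appeals to named results above.
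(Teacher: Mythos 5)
Your proposal is correct and takes essentially the same approach as the paper, whose proof likewise just assembles the four items from the previously established results: the universal property from \cite{schauerte92} for the first item, Lemma \ref{AfinLisSL} (together with Proposition \ref{assemblyiso}) for the anti-isomorphism, and Proposition \ref{assemblyskula9} together with Corollary \ref{generalbisobersub} for the last two. The slight differences in which named results you invoke (Proposition \ref{whenbiquotient} and Theorem \ref{skulabisobers} in place of Proposition \ref{assemblyiso} and Corollary \ref{generalbisobersub}) do not change the substance, and your explicit remark that the correspondence $C\mapsto \ca{L}/C$ reverses the chosen order on $\mf{S}(\ca{L})$ is a welcome clarification rather than a deviation.
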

\begin{proof}
The first part of the claim follows from the fact that the biframe assembly has the universal property proven in \cite{schauerte92}. The second part of the claim follows from Corollary \ref{assemblyiso} and Lemma \ref{AfinLisSL}. The third and fourth parts of the claim follow from Proposition \ref{assemblyskula9} and Corollary \ref{generalbisobersub}.
\end{proof}

\section{The relation between bisubspaces and biquotients}

In this section we explore the relation between the biquotients of a finitary biframe and the bisubspaces of its bispectrum. In the course of this analysis, we will encounter a bitopological version of the $T_D$ axiom, and we will argue that this is a natural generalization of the classical $T_D$ axiom in a point-set fashion; afterwards we will see that its role in the finitary bitopological duality is analogous to the role of the $T_D$ axiom in the classical frame duality. Let us start, then by working towards describing bispatialization as a kind of interior operator on $\mf{S}(\ca{L})$, for a finitary biframe $\ca{L}$. Bispatialization, as we know, is a map $\ca{L}\epi \mc{fin}(\mf{b}\Om(\bpt(\ca{L})))$, obtain by composing the classical biframe spatialization with the coreflector $\mc{fin}$. Recall also that for a biframe $\ca{L}$ we have that the lattice $\mf{S}(\ca{L})$ is a coframe, anti-isomorphic to the finitary assembly $\mf{A}_{\fin}(L)$ of the main component of $\ca{L}$. 

\begin{lemma}\label{halfconverse3}
If $L$ is a complete lattice and $I\se L$ is a collection closed under arbitrary joins, the operator $\iota:L\ra L$ defined as $\iota:x\mapsto \bve \{y\in I:y\leq x\}$ is an interior operator.
\end{lemma}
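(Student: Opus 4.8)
The plan is to verify the three defining properties of an interior operator for the map $\iota:x\mapsto \bve\{y\in I:y\leq x\}$, namely that it is monotone, deflationary, and idempotent. Throughout I will use only that $L$ is a complete lattice (so arbitrary joins exist) and that $I$ is closed under arbitrary joins (so in particular $\bve\emptyset=0\in I$, which incidentally shows $\iota$ is well-defined even when no element of $I$ lies below $x$).

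\medskip

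\textbf{Deflationary.} For any $x\in L$, every $y$ appearing in the join defining $\iota(x)$ satisfies $y\leq x$, so $\iota(x)=\bve\{y\in I:y\leq x\}\leq x$ since $x$ is an upper bound of that set.

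\textbf{Monotone.} Suppose $x\leq x'$. Then $\{y\in I:y\leq x\}\se\{y\in I:y\leq x'\}$, and a join over a larger set is larger, so $\iota(x)\leq \iota(x')$.

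\textbf{Idempotent.} Since $\iota$ is deflationary, $\iota(\iota(x))\leq \iota(x)$, so it remains to show $\iota(x)\leq \iota(\iota(x))$. The key observation is that $\iota(x)\in I$: by hypothesis $I$ is closed under arbitrary joins, and $\iota(x)$ is precisely the join of the family $\{y\in I:y\leq x\}$, all of whose members lie in $I$. Hence $\iota(x)$ is itself an element of $I$ which is below $\iota(x)$, so it is one of the elements in the family $\{z\in I:z\leq \iota(x)\}$ whose join is $\iota(\iota(x))$; therefore $\iota(x)\leq \iota(\iota(x))$. Combining the two inequalities gives $\iota(\iota(x))=\iota(x)$.

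\medskip

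There is no real obstacle here: the only point that requires any thought is idempotency, and it hinges entirely on the closure of $I$ under arbitrary joins, which is exactly what guarantees $\iota(x)\in I$. (One should note this is the ``half'' of an adjunction/Galois-insertion situation flagged by the lemma name \ref{halfconverse3}: $\iota$ is the coreflection onto the subposet $I$, and idempotency is the statement that $I$ is exactly its set of fixpoints.) Since all three properties have been checked, $\iota$ is an interior operator on $L$.
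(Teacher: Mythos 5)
Your proof is correct and follows essentially the same route as the paper's: monotonicity from inclusion of the defining families, deflationarity from $x$ being an upper bound, and idempotency from the observation that $I$ being closed under arbitrary joins forces $\iota(x)\in I$, so that $\iota(x)$ is itself a member (indeed the maximum) of the family defining $\iota(\iota(x))$. Nothing is missing.
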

\begin{proof}
For monotonicity, we notice that if $x\leq z$ we must have $\{y\in I:y\leq x\}\se \{y\in I:y\leq z\}$ and so $\iota(x)\leq \iota(z)$. It is clear that $\iota(x)\leq x$ for every $x\in L$. Since the collection $I$ is closed under arbitrary joins, we have that $\iota(x)\in I$ for every $x\in L$. Then, $\iota(x)$ is the maximum of the collection $\{y\in I:y\leq \iota(x)\}$. By definition, then, $\iota(\iota(x))=\iota(x)$. Then, $\iota$ is monotonic, decreasing, and idempotent. 
\end{proof}

\begin{lemma}\label{interiorsubframe}
If $L$ is a frame, and $F\se L$ is a subframe of $L$, the interior operator $\iota_F:a\mapsto \bve \{f\in F:f\leq a\}$ preserves finite meets.
\end{lemma}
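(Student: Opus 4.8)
The plan is to lean on Lemma~\ref{halfconverse3}. Since $F$ is a subframe of $L$, it is closed under arbitrary joins (as well as under finite meets, and it contains $0$ and $1$); in particular $F$ is a subset of the complete lattice $L$ closed under arbitrary joins, so Lemma~\ref{halfconverse3} applies with $I=F$ and already tells us that $\iota_F$ is an interior operator, and moreover that $\iota_F(a)\in F$ for every $a\in L$.

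First I would dispatch the inequality $\iota_F(a\we b)\le \iota_F(a)\we \iota_F(b)$: this is immediate from monotonicity of the interior operator $\iota_F$, since $a\we b\le a$ and $a\we b\le b$. For the reverse inequality, the key observation is that $\iota_F(a)\we \iota_F(b)$ is itself a member of $F$ lying below $a\we b$. Indeed, $\iota_F(a)$ and $\iota_F(b)$ both belong to $F$ by the remark above, and $F$ is closed under binary meets, so $\iota_F(a)\we \iota_F(b)\in F$; and $\iota_F(a)\le a$, $\iota_F(b)\le b$ because $\iota_F$ is deflationary, so $\iota_F(a)\we \iota_F(b)\le a\we b$. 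Hence $\iota_F(a)\we \iota_F(b)$ is one of the elements over which the join defining $\iota_F(a\we b)$ is taken, giving $\iota_F(a)\we \iota_F(b)\le \iota_F(a\we b)$. Together with the first inequality this yields $\iota_F(a\we b)=\iota_F(a)\we \iota_F(b)$. It remains to handle the empty meet: since $1\in F$, the top element lies in $\{f\in F:f\le 1\}$, so $\iota_F(1)=1$; thus $\iota_F$ preserves all finite meets.

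I do not anticipate a genuine obstacle. The only subtlety worth flagging is that the argument for the nontrivial inequality uses that $F$ is closed under \emph{both} finite meets and arbitrary joins --- the former to know $\iota_F(a)\we \iota_F(b)\in F$, the latter (via Lemma~\ref{halfconverse3}) to know $\iota_F(a)\in F$ in the first place --- so it is worth making the definition of ``subframe'' explicit at the outset.
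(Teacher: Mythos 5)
Your proof is correct, but it is not the route the paper takes. The paper's proof rewrites the index set of the join: it shows $\{f\in F:f\leq a_1\we\dots\we a_n\}=\{f_1\we\dots\we f_n: f_m\in F,\ f_m\leq a_m\}$ and then appeals to the frame distributivity law to turn the join of that set of meets into the meet $\bve\{f\in F:f\leq a_1\}\we\dots\we\bve\{f\in F:f\leq a_n\}$. You instead prove the two inequalities separately: the easy direction by monotonicity of the interior operator, and the nontrivial direction by exhibiting $\iota_F(a)\we\iota_F(b)$ as an element of $\{f\in F:f\leq a\we b\}$, which uses only that $F$ is closed under arbitrary joins (so that $\iota_F(a)\in F$, as recorded in the proof of Lemma~\ref{halfconverse3}) and under binary meets. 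Your argument therefore makes no use of frame distributivity at all and would go through for any complete lattice $L$ with $F$ a sub-complete-lattice closed under finite meets, whereas the paper's argument genuinely invokes the frame law for the second equality in its chain. You also treat the nullary meet ($\iota_F(1)=1$ because $1\in F$) explicitly, which the paper leaves implicit. Both proofs are sound; yours is slightly more elementary and more general, the paper's is a one-line computation once the set identity is accepted.
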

\begin{proof}
The map $\iota_F$ defined in the claim is an interior operator by Lemma \ref{halfconverse3}. Consider $a_1,...,a_n\in L$. We now show that $\{f\in F:f\leq a_1\we ...\we a_n\}=\{f_1\we ...\we f_n:f1,...,f_n\in F, f_m\leq a_m\}$. The inclusion of the first set into the second is obvious. For the converse, we notice that if $f_m\leq a_m$ then also $f_1\we ...\we f_n\leq a_1\we ...\we a_n$. We then have that $\iota_F(a_1\we ...\we a_n)=\bve \{f\in F:f\leq a_1\we ...\we a_n\}=\bve \{f_1\we ...\we f_n:f1,...,f_n\in F,f_m\leq a_m\}=\bve \{f\in F:f\leq a_1\}\we ...\we \bve \{f\in F:f\leq a_n\}=\iota_F(a_1)\we ...\we \iota_F(a_n)$. We have used the frame distributivity law for the second equality of this chain.
\end{proof}

In \cite{picadopultr2011frames}, the following is shown. There, the result is phrased in terms of sublocales; below we simply rephrase it in terms of assemblies and congruences.
\begin{proposition}\label{spaconucleus}
Spatialization $\spa:\mf{A}(L)\ra \mf{A}(L)$ is a nucleus: it is a closure operator which preserves finite meets.
\end{proposition}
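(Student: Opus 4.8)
The plan is to replace the sublocale formulation of \cite{picadopultr2011frames} by an explicit description of $\spa$ in terms of points. For $p\in\pt(L)$ let $C_p$ be the kernel of the associated frame surjection $p:L\epi 2$, so that $L/C_p\cong 2$. The first step is to record that $\spa$, which sends a congruence $C$ on $L$ to the kernel of the composite $L\epi L/C\epi\Om(\pt(L/C))$, is given by
\[
\spa(C)=\bca\{C_p:p\in\pt(L),\ C\se C_p\},
\]
the empty intersection being understood as the top congruence. This uses only that $\pt(L/C)=\{p\in\pt(L):C\se C_p\}$ and that the kernel of $M\epi\Om(\pt(M))$ identifies $a,b$ exactly when $q(a)=q(b)$ for all $q\in\pt(M)$. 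Under the anti-isomorphism $\mf{A}(L)\cong\mf{S}(L)^{op}$ this is the operator taking a sublocale to the join of all one-point sublocales it contains, i.e. to its largest spatial sublocale, which matches the statement in \cite{picadopultr2011frames}.

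Given the displayed formula, the closure-operator part is immediate: monotonicity, inflationarity and idempotence all reduce to the identity $\{p:\spa(C)\se C_p\}=\{p:C\se C_p\}$, which follows from $C\se\spa(C)\se C_p$. Equivalently, the \emph{spatial} congruences -- those equal to an intersection of $C_p$'s -- are closed under arbitrary intersections, hence form a closure system in the frame $\mf{A}(L)$, and $\spa$ is its associated closure operator; this is the order dual of Lemma~\ref{halfconverse3}.

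The real content is preservation of finite meets, $\spa(C\cap D)=\spa(C)\cap\spa(D)$. By the displayed formula it suffices to prove the set equality
\[
\{p\in\pt(L):C\cap D\se C_p\}=\{p:C\se C_p\}\cup\{p:D\se C_p\},
\]
since then the two intersections coincide; the nullary case ($\spa$ of the top congruence) is trivial, and the inclusion $\supseteq$ is clear. The inclusion $\se$ is the main obstacle: it expresses that $2$ is prime among the quotients of $L$, and I would spell it out by an explicit element construction. Concretely, assume $C\not\se C_p$ and $D\not\se C_p$; choosing a pair $(a,b)\in C$ with $p(a)\ne p(b)$ and passing to $(a\ve b,a\we b)\in C$, one obtains $(u,v)\in C$ with $v\le u$, $p(u)=1$, $p(v)=0$, and likewise $(u',v')\in D$ with $v'\le u'$, $p(u')=1$, $p(v')=0$. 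Setting
\[
w_1=u\we u',\qquad w_0=(v\we u')\ve(u\we v'),
\]
the identities $(u\we u')\ve(u\we v')=u\we(u'\ve v')=u\we u'$ (using $v'\le u'$) and its mirror on the $D$-side, together with the fact that congruences respect $\we$ and $\ve$, give $(w_1,w_0)\in C\cap D$; but $p(w_1)=p(u)\we p(u')=1\ne 0=(p(v)\we p(u'))\ve(p(u)\we p(v'))=p(w_0)$, contradicting $C\cap D\se C_p$. Hence $C\se C_p$ or $D\se C_p$, which proves the set equality and therefore that $\spa$ is a nucleus. I expect this last element manipulation -- not the closure axioms -- to be the only delicate point; in particular it genuinely uses distributivity in $L$, and it cannot be obtained by the subframe argument of Lemma~\ref{interiorsubframe}, since on the coframe $\mf{S}(L)$ the distributive law that argument relies on is unavailable.
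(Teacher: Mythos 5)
Your proof is correct. Note first that the paper itself gives no proof of this proposition: it is imported verbatim from \cite{picadopultr2011frames}, where it is established in the language of sublocales (the spatial sublocales form a sublocale of $\mf{S}(L)$, so the associated closure/interior pair is (co)localic). What you supply is a self-contained, elementary replacement. Your point-kernel formula $\spa(C)=\bca\{C_p:C\se C_p\}$ is the right description of spatialization on $\mf{A}(L)$, and it does make the closure-operator axioms immediate. The genuine content, as you correctly isolate, is the ``primality'' statement that $C\cap D\se C_p$ forces $C\se C_p$ or $D\se C_p$; your element construction checks out: $(w_1,w_0)\in C$ because meeting $(u,v)$ with $u'$ and then joining with $(u\we v',u\we v')$ gives first component $(u\we u')\ve(u\we v')=u\we(u'\ve v')=u\we u'$ by distributivity and $v'\le u'$, and symmetrically for $D$, while $p(w_1)=1\neq 0=p(w_0)$. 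This is exactly the congruence-side translation of the Picado--Pultr fact that a point lying in the join of two sublocales lies in one of them, which they derive from the explicit formula for joins in $\mf{S}(L)$; your route trades that formula for a direct two-element computation and makes visible where frame distributivity enters. Your closing remark is also apt: the formal subframe/interior argument of Lemma~\ref{interiorsubframe} cannot be dualized here, so an argument of this kind (or the citation) is genuinely needed.
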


Let us exploit this result to show that something analogous holds for the frame of biquotients of a finitary biframe.

\begin{proposition}\label{conucleusagain}
For a finitary biframe $\ca{L}$, the operator $\bisp:\mf{S}(\ca{L})\ra \mf{S}(\ca{L})$ is an interior operator which preserves finite joins. 
\end{proposition}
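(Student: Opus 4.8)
The plan is to carry the statement across the anti-isomorphism $\mf{S}(\ca{L})\cong\mf{A}_{\fin}(L)^{op}$ (Lemma~\ref{AfinLisSL}) and reduce it to the classical fact that spatialization is a nucleus (Proposition~\ref{spaconucleus}). Since $\mf{A}_{\fin}(L)$ is the frame of finitary congruences of $L$, the operator $\bisp$ corresponds under this anti-isomorphism to an operator $\nu\colon\mf{A}_{\fin}(L)\ra\mf{A}_{\fin}(L)$, and the assertion ``$\bisp$ is an interior operator preserving finite joins'' is exactly ``$\nu$ is a closure operator preserving finite meets'', i.e. a nucleus on $\mf{A}_{\fin}(L)$. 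So the whole proposition reduces to the claim that $\nu=\mc{fin}\circ\spa$, where $\spa\colon\mf{A}(L)\ra\mf{A}(L)$ is the spatialization nucleus and $\mc{fin}\colon\mf{A}(L)\ra\mf{A}(L)$ is the finitary interior operator.

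I would prove this identification as follows. Fix a finitary congruence $C$ on $L$. The biquotient $\bisp(\ca{L}/C)$ is the composite $\ca{L}\epi\ca{L}/C\epi\bomf(\bpt(\ca{L}/C))$ of $\ca{L}\epi\ca{L}/C$ with the bispatialization of $\ca{L}/C$; this is a genuine biquotient by Proposition~\ref{whenbiquotient}, since $\bomf$ of a bispace is always finitary. By the idempotency of the adjunction $\bomf\dashv\bpt$ (Proposition~\ref{finitaryadjidempotency}) together with naturality of the counit, $\bomf(\bpt(\ca{L}/C))$ has the same bispectrum as $\ca{L}/C$ and is, via its counit, a quotient of every biquotient of $\ca{L}$ with that bispectrum; hence in $\mf{S}(\ca{L})$ it is the least such biquotient. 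Dually, $\nu(C)$ is then the largest finitary congruence of $L$ contained in $\bca\{\ker p : p\in\pt(L),\ \ker p\supseteq C\}$; but this intersection is precisely $\spa(C)$ (the congruence of the spatial reflection of $L/C$), and any finitary congruence $\se\spa(C)$ automatically contains $C$, so $\nu(C)$ is the largest finitary congruence contained in $\spa(C)$, that is, $\nu(C)=\mc{fin}(\spa(C))$. The delicate point here — and the main obstacle of the proof — is exactly this matching of $\bomf(\bpt(\ca{L}/C))$ with the finitary interior of $\spa(C)$: one must check carefully which biframe quotient of $\ca{L}$ the construction $\bomf(\bpt(\ca{L}/C))$ represents.

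Granting $\nu=\mc{fin}\circ\spa$, the conclusion is formal. By Lemma~\ref{AfinLisSL} the finitary congruences form a subframe of $\mf{A}(L)$, and since $\mc{fin}(E)$ is visibly the largest finitary congruence contained in $E$, we have $\mc{fin}=\iota_{\mf{A}_{\fin}(L)}$ in the notation of Lemma~\ref{interiorsubframe}; hence $\mc{fin}$ preserves finite meets. As $\spa$ also preserves finite meets (Proposition~\ref{spaconucleus}), so does $\nu=\mc{fin}\circ\spa$. Moreover $\nu$ is monotone (a composite of monotone maps); it is inflationary, because $C$ finitary and $C\se\spa(C)$ give $C=\mc{fin}(C)\se\mc{fin}(\spa(C))=\nu(C)$; and it is idempotent, by the squeeze $\mc{fin}(A)\se\spa(\mc{fin}(A))\se A$ with $A=\spa(C)$, which on applying the monotone $\mc{fin}$ and using idempotency of $\mc{fin}$ and $\spa$ forces $\nu(\nu(C))=\nu(C)$. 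Thus $\nu$ is a nucleus on $\mf{A}_{\fin}(L)$, and dually $\bisp\colon\mf{S}(\ca{L})\ra\mf{S}(\ca{L})$ is an interior operator preserving finite joins.
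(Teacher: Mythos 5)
Your proposal is correct and takes essentially the same route as the paper: the paper also identifies $\bisp$ with $\mc{fin}\circ\spa$ on $\mf{A}_{\fin}(L)$ and then combines Proposition~\ref{spaconucleus} with Lemma~\ref{interiorsubframe} to get preservation of finite meets, with the inflationary property coming from $C=\mc{fin}(C)\se\mc{fin}(\spa(C))$; in fact you are more careful than the paper, which simply asserts the identification and disposes of idempotency with one sentence, whereas your squeeze $\mc{fin}(A)\se\spa(\mc{fin}(A))\se A$ for $A=\spa(C)$ is a genuine improvement. One small caveat: your sentence ``any finitary congruence $\se\spa(C)$ automatically contains $C$'' is false as stated (the identity congruence is a counterexample); what you actually need is only that the \emph{largest} finitary congruence contained in $\spa(C)$, namely $\mc{fin}(\spa(C))$, contains $C$, which follows from $C$ being finitary and $C\se\spa(C)$ — a fact you correctly establish later, so the slip is harmless.
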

\begin{proof}
Let $\ca{L}$ be a finitary biframe. For a congruence $C$ on the main component $L$, let us denote as $\spa (C)$ the congruence corresponding to the frame theoretical spatialization quotient $q_{\varphi_{L}}:L\epi \spa (L)$. To show our claim, we will show that $\bisp=\mc{fin}\circ \spa:\mf{A}_{\fin}(L)\ra \mf{A}_{\fin}(L)$ is a closure operator which preserves finite meets. We recall that since $\mf{A}_{\fin}(L)$ is a subframe of $\mf{A}(L)$ the finite meets in $\mf{A}_{\fin}(L)$ are computed as set theoretical intersections. To see that $\mc{fin}\circ \spa$ a closure operator on the finitary congruences, we consider an arbitrary finitary congruence $C$ on $L$ and we observe that for a congruence $C$ we have $C\se \spa(C)$; we then recall that $\mc{fin}:\mf{A}(L)\ra \mf{A}(L)$ is monotone, and so $\mc{fin}(C)=C\se \mc{fin}(\spa (C))$. For idempotency, we observe that bispatial finitary biframes are isomorphic to their bispatialization. We recall (see Proposition \ref{spaconucleus}) that for a frame $M$ and congruences $C_m$ on it, we have that $\spa(C_1\cap ...\cap C_n)=\spa (C_1)\cap ...\cap (C_n)$. This means that for congruences $C_m$ on our main component $L$, we have that $\spa:\mf{A}(L)\ra \mf{A}(L)$ preserves finite meets. Since the finitary congruences of $L$ form a subframe of $\mf{A}_{\fin}(L)$, by Lemma \ref{interiorsubframe} above the interior $\mc{fin}:\mf{A}(L)\ra \mf{A}(L)$ preserves finite meets. So, when we have a collection $C_m$ of finitary congruences of $L$, we have $\bisp(C_1\cap ...\cap C_n)=\mc{fin}(\spa (C_1\cap ...\cap C_n))=\bisp(C_1)\cap ...\cap \bisp (C_n)$.
\end{proof}

\begin{corollary}
For any finitary biframe $\ca{L}$ the ordered collection $\bisp[\mf{S}(\ca{L})]$ is a coframe, and the map $\bisp:\mf{S}(\ca{L})\ra \bisp[\mf{S}(\ca{L})]$ is a coframe surjection.
\end{corollary}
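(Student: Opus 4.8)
The plan is to deduce the corollary by dualizing the classical fact that the fixpoint set of a nucleus on a frame is again a frame, with the corestricted nucleus a frame surjection.

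First I would set up the order-theoretic dictionary. By Lemma \ref{AfinLisSL} the frame $\mf{A}_{\fin}(L)$ is a subframe of the assembly $\mf{A}(L)$, hence a frame in its own right, and as recalled in the text $\mf{S}(\ca{L})$ is anti-isomorphic to $\mf{A}_{\fin}(L)$; in particular $\mf{S}(\ca{L})$ is a coframe. Under this anti-isomorphism $\mf{S}(\ca{L}) \cong \mf{A}_{\fin}(L)^{op}$ the operator $\bisp$ on $\mf{S}(\ca{L})$ transports to an operator $j$ on $\mf{A}_{\fin}(L)$, and Proposition \ref{conucleusagain} --- which states that $\bisp$ is an interior operator preserving finite joins --- says precisely that $j$ is a closure operator on $\mf{A}_{\fin}(L)$ that preserves finite meets, i.e. a nucleus.

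Next I would invoke the standard theory of nuclei (see \cite{picadopultr2011frames}): for a nucleus $j$ on a frame $M$, the fixpoint set $j[M]$ is a frame --- with meets computed as in $M$ and joins given by applying $j$ to the join taken in $M$ --- and the corestriction $j\colon M \epi j[M]$ is a surjective frame homomorphism. Applied to $j$ on $\mf{A}_{\fin}(L)$, this gives that $j[\mf{A}_{\fin}(L)]$ is a frame and that $j$ corestricts to a frame surjection. Transporting back along $\mf{S}(\ca{L}) \cong \mf{A}_{\fin}(L)^{op}$, the image $\bisp[\mf{S}(\ca{L})]$ is a coframe and $\bisp\colon \mf{S}(\ca{L}) \ra \bisp[\mf{S}(\ca{L})]$ is a coframe surjection, which is the assertion. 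The only point that needs care --- rather than a genuine obstacle --- is bookkeeping the three orders involved (the congruence order on $\mf{A}_{\fin}(L)$, the reversed order defining $\mf{S}(\ca{L})$, and the passage to opposites) so that ``interior operator preserving finite joins'' on the coframe side corresponds to ``nucleus'' on the frame side; once this is in place the result is immediate from Proposition \ref{conucleusagain} and the cited theory of nuclei.
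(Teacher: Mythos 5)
Your proposal is correct and follows essentially the same route as the paper: both arguments reduce the statement to Proposition \ref{conucleusagain} together with the classical fact that the fixpoints of a nucleus form a sublocale (hence a frame) onto which the nucleus corestricts as a frame surjection, and then dualize. If anything, your explicit bookkeeping of the anti-isomorphism $\mf{S}(\ca{L})\cong \mf{A}_{\fin}(L)^{op}$ is slightly more careful than the paper's proof, which directly calls the join-preserving interior operator on $\mf{S}(\ca{L})$ a ``nucleus''.
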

\begin{proof}
By Proposition \ref{conucleusagain}, we have that $\bisp:\mf{S}(\ca{L})\ra \mf{S}(\ca{L})$ is a nucleus. Then, we have that the fixpoints of this operators are a sublocale of $\mf{S}(\ca{L})$, in particular they are a frame. Furthermore, since they are a sublocale we have that $\bisp:\mf{S}(\ca{L})^{op}\ra \bisp[\mf{S}(\ca{L})]^{op}$ is a frame surjection.
\end{proof}

In the same way, we may see bisobrification as a closure operator on the ordered collection of all bisubspaces of a bispace. Let $\ca{L}$ be a finitary biframe, and let us as $\ca{P}(\bpt(\ca{L}))$ the powerset of its bispectrum. We call $\bisob:\ca{P}(\bpt(\ca{L}))\ra \ca{P}(\bpt(\ca{L}))$ the map 
\[
Y\mapsto \bca \{Z\se \bpt(\ca{L}):Z \mb{ is bisober, }Y\se Z\}.
\]
Since we know that bisober bisubspaces are closed under arbitrary intersections (as they coincide with those whose underlying set is patch-closed sets of $Sk(\bpt(\ca{L}))$), the bisubspace $\bisob(Y)$ is always bisober for any bisubspace $Y$. It is also clear that the assignment $\bisob:Y\mapsto \bisob (Y)$ is increasing and idempotent. So, it is a closure operator. The inclusion $Y\se \bisob(Y)$ is in fact the bisobrification map, but we omit the proof of this fact as it is not crucial for our investigation. Let us state as a proposition the result that we do need.
\begin{proposition}
For a finitary biframe $\ca{L}$, the map $\bisob:\ca{P}(\bpt(\ca{L}))\ra \ca{P}(\bpt(\ca{L}))$ is a closure operator. Its fixpoints are a subcoframe of $\ca{P}(\bpt(\ca{L}))$.
\end{proposition}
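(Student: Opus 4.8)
The plan is to reduce the statement to results already in hand. The closure-operator part is almost entirely done in the paragraph preceding the statement: $\bisob$ is inflationary and idempotent by construction, and I would only need to add that it is monotone, which is immediate since $Y\se Y'$ forces every bisober bisubspace containing $Y'$ to contain $Y$, so $\bisob(Y)\se\bisob(Y')$. Hence $\bisob$ is a closure operator on the complete lattice $\ca{P}(\bpt(\ca{L}))$, and its fixpoints therefore automatically form a complete lattice (with meets computed as in $\ca{P}(\bpt(\ca{L}))$); the real content of the proposition is that these fixpoints form a \emph{subcoframe}.

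I would first pin down the fixpoints: $\bisob(Y)=Y$ exactly when $Y$ is a bisober bisubspace of $\bpt(\ca{L})$. One direction is that a bisober $Y$ lies in the family defining $\bisob(Y)$, so $\bisob(Y)\se Y$, while the reverse inclusion holds because $\bisob$ is inflationary; the other direction is that $\bisob(Y)$ is always bisober, being an intersection of bisober bisubspaces, which are closed under arbitrary intersections. Then, by Theorem \ref{skulabisobers}, the fixpoints of $\bisob$ are precisely the underlying sets of the patch-closed subsets of the Skula bispace $Sk(\bpt(\ca{L}))$.

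The last step is to observe that the closed sets of any topological space form a subcoframe of its powerset, applied to $Sk(\bpt(\ca{L}))$. Here $\ca{P}(\bpt(\ca{L}))$ is a complete Boolean algebra, hence in particular a coframe, whose arbitrary meets are intersections and whose finite joins are unions; the closed sets of $Sk(\bpt(\ca{L}))$ are closed under arbitrary intersections and finite unions, these being computed exactly as in $\ca{P}(\bpt(\ca{L}))$, so the coframe distributive law transfers from $\ca{P}(\bpt(\ca{L}))$. I do not expect a genuine obstacle here; the one point requiring care is that a subcoframe is only required to be closed under \emph{finite} joins and \emph{arbitrary} meets — arbitrary joins of closed sets are closures of unions, not unions — so the operations must be checked against the coframe structure of $\ca{P}(\bpt(\ca{L}))$, not its frame structure.
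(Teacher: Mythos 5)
Your proposal is correct and follows essentially the same route as the paper: the closure-operator part is discharged by the discussion preceding the statement, the fixpoints are identified with the bisober bisubspaces, and Theorem \ref{skulabisobers} reduces the subcoframe claim to the fact that the closed sets of a topology (here the patch topology of $Sk(\bpt(\ca{L}))$) form a subcoframe of the powerset. Your added remarks on monotonicity and on checking the operations against the coframe (rather than frame) structure of $\ca{P}(\bpt(\ca{L}))$ only make explicit what the paper leaves implicit.
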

\begin{proof}
We have argued above why the assignment is a closure operator. The fixpoints of this operators are clearly the bisober bisubspaces of $\bpt(\ca{L})$. By Theorem \ref{skulabisobers}, these form a subcoframe of $\ca{P}(\bpt(\ca{L}))$, as their underlying sets coincide with the closed sets of some topology on $\bpt(\ca{L})$.
\end{proof}

It is also clear that the fixpoints of the operator $\bisob:\ca{P}(\bpt(\ca{L}))\ra \ca{P}(\bpt(\ca{L}))$ are exactly the bisober bisubspaces of $\bpt(\ca{L})$. let us exploit the finitary bitopological duality to show that the coframe of bisober bisubspaces of a bispectrum $\bpt(\ca{L})$ is isomorphic to the coframe of bispatial biquotients of $\ca{L}$. In order to connects biquotients and bisubspaces, we wish to see the bispectrum functor $\bpt$ as connecting the lattice $\mf{S}(\ca{L})$ of biquotients of a finitary biframe $\ca{L}$ with the lattice $\ca{P}(\bpt(\ca{L}))$ of all bisubspaces of its bispectrum. We abuse notation slightly, and define a map $\bpt:\mf{S}(\ca{L})\ra \ca{P}(\bpt(\ca{L}))$ defined as
\begin{align*}
    & \bpt:\mf{S}(\ca{L})\ra \ca{P}(\bpt(\ca{L}))\\
    & \ca{L}/R\mapsto \bpt(q_R)[\bpt(\Lq{R})].
\end{align*}
In other words, this operator maps each biquotient $\Lq{R}$ to the concrete bisubspace of $\bpt(\ca{L})$ determined by the dualization $\bpt(q_R):\bpt(\Lq{R})\inclu \bpt(\ca{L})$ of the quotient map $q_R:\ca{L}\ra \Lq{R}$. 

\begin{proposition}
The assignment $\bpt:\mf{bisp}[\mf{S}(\ca{L})]\ra \mf{bisob}[\ca{P}(\bpt(\ca{L}))]$ is an isomorphism of coframes.
\end{proposition}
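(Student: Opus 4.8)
The plan is to produce an explicit two-sided inverse of $\bpt$ from the coreflector and the bispatialization map, and then to note that an order-isomorphism of complete lattices is automatically a coframe isomorphism. Recall from Proposition~\ref{finitaryadjidempotency} that the adjunction $\bomf\dashv\bpt$ is idempotent, so $\bisp[\mf{S}(\ca{L})]$ is exactly the poset of bispatial biquotients of $\ca{L}$ (the fixpoints of the interior operator $\bisp$) and $\bisob[\ca{P}(\bpt(\ca{L}))]$ is exactly the poset of bisober bisubspaces of $\bpt(\ca{L})$; both are coframes, the first by the corollary to Proposition~\ref{conucleusagain} and the second by Theorem~\ref{skulabisobers}, its members being the fixpoints of the closure operator $\bisob$. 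First I would check that $\bpt$ is well defined and monotone. For a finitary congruence $C$ on $L$ the biframe surjection $q_C\colon\ca{L}\epi\ca{L}/C$ dualizes to a bitopological embedding $\bpt(q_C)\colon\bpt(\ca{L}/C)\inclu\bpt(\ca{L})$ whose image carries the subspace bitopology, and $\bpt(\ca{L}/C)$ is the bispectrum of a finitary biframe, hence bisober by Proposition~\ref{finitaryadjidempotency}; thus $\bpt$ carries $\mf{S}(\ca{L})$ --- in particular $\bisp[\mf{S}(\ca{L})]$ --- into $\bisob[\ca{P}(\bpt(\ca{L}))]$. It is monotone: if $\ca{L}/C\leq\ca{L}/D$, i.e.\ $D\se C$, then every bipoint respecting $C$ respects $D$, so $\bpt(\ca{L}/C)\se\bpt(\ca{L}/D)$ as bisubspaces. (On all of $\mf{S}(\ca{L})$ the map $\bpt$ is not injective in general, which is precisely why one restricts to $\bisp[\mf{S}(\ca{L})]$.)

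Next I would construct the candidate inverse $\theta\colon\ca{P}(\bpt(\ca{L}))\ra\mf{S}(\ca{L})$, sending a subset $Z\se\bpt(\ca{L})$ with inclusion $\iota_Z\colon Z\inclu\bpt(\ca{L})$ to the biframe quotient of $\ca{L}$ determined by the composite surjection
\[
h_Z\colon\ \ca{L}\ \xrightarrow{\ \mf{bf}\varphi_{\ca{L}}\ }\ \bomf(\bpt(\ca{L}))\ \xrightarrow{\ \bomf(\iota_Z)\ }\ \bomf(Z).
\]
Since $\bomf(Z)$ is a finitary biframe, $h_Z$ is a biquotient by Proposition~\ref{whenbiquotient}, and since $\bomf(Z)$ is of the form $\bomf(Y)$ for a bispace $Y$ it is bispatial by Proposition~\ref{finitaryadjidempotency}, so $\theta$ lands in $\bisp[\mf{S}(\ca{L})]$. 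It is monotone: if $Z_1\se Z_2$ then $\iota_{Z_1}$ factors through $\iota_{Z_2}$, so $h_{Z_1}$ factors through $h_{Z_2}$ and hence $\theta(Z_1)\leq\theta(Z_2)$.

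The heart of the proof is that $\bpt$ and $\theta$ restrict to mutual inverses between the two coframes. For $Z$ bisober, $\bpt(h_Z)=\bpt(\mf{bf}\varphi_{\ca{L}})\circ\bpt(\bomf(\iota_Z))$; by Lemma~\ref{unitbecomesiso3} the map $\bpt(\mf{bf}\varphi_{\ca{L}})$ is a bihomeomorphism, and by the triangle identity it is inverse to $\mf{bf}\psi_{\bpt(\ca{L})}$, while naturality of $\mf{bf}\psi$ along $\iota_Z$ gives $\bpt(\bomf(\iota_Z))\circ\mf{bf}\psi_Z=\mf{bf}\psi_{\bpt(\ca{L})}\circ\iota_Z$ with $\mf{bf}\psi_Z$ an isomorphism since $Z$ is bisober; combining, $\bpt(h_Z)=\iota_Z\circ\mf{bf}\psi_Z^{-1}$, a bihomeomorphism onto $Z$, so $\bpt(\theta(Z))=Z$. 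Conversely, let $\ca{M}=\ca{L}/R$ be a bispatial biquotient and $Z=\bpt(\ca{M})\se\bpt(\ca{L})$, so that $\bpt(q_R)=\iota_Z$ once $\bpt(\ca{M})$ is identified with its image; naturality of $\mf{bf}\varphi$ along $q_R$ gives $\mf{bf}\varphi_{\ca{M}}\circ q_R=\bomf(\bpt(q_R))\circ\mf{bf}\varphi_{\ca{L}}=h_Z$, and since $\ca{M}$ is bispatial $\mf{bf}\varphi_{\ca{M}}$ is an isomorphism, so $h_Z$ and $q_R$ have the same kernel congruence and therefore $\theta(Z)=\ca{L}/R=\ca{M}$ in $\mf{S}(\ca{L})$.

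Consequently $\bpt$ and $\theta$ are mutually inverse monotone bijections between $\bisp[\mf{S}(\ca{L})]$ and $\bisob[\ca{P}(\bpt(\ca{L}))]$. An order-isomorphism between complete lattices preserves arbitrary joins and meets, hence the coframe law $(\bwe_i y_i)\ve x=\bwe_i(y_i\ve x)$, so $\bpt$ is an isomorphism of coframes. The hardest part will be the third step above: unwinding the naturality squares for $\mf{bf}\psi$ and $\mf{bf}\varphi$ together with the identifications ``$\bpt(q_R)=\iota_Z$'' and ``$\mf{bf}\psi_Z$ invertible'', i.e.\ extracting from the idempotency of $\bomf\dashv\bpt$ precisely the statement that $\bpt$ and $\theta$ undo one another on the fixed parts; the remaining points --- well-definedness, monotonicity, and the passage from order-isomorphism to coframe isomorphism --- are routine.
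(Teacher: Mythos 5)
Your proof is correct, and it is organized differently from the paper's. The paper argues by showing the map is injective, surjective, and order preserving and reflecting: injectivity is obtained by factoring two biquotients $q_R$, $q_S$ through the bispatialization quotient $q_{\phi}$, cancelling the isomorphism $\bpt(q_{\phi})$ (Lemma \ref{unitbecomesiso3}), and then invoking \emph{faithfulness} of the restricted equivalence between bispatial finitary biframes and bisober bispaces to deduce $q_R=q_S$; surjectivity is delegated to Lemma \ref{bisobersarequotients}; and the order-theoretic part is dispatched in one sentence by appeal to contravariance of $\bpt$. You instead build an explicit two-sided inverse $\theta$ and verify both round trips by hand, using the triangle identity and the naturality squares for $\mf{bf}\psi$ and $\mf{bf}\varphi$ together with the fact that these are isomorphisms on the fixed parts of the idempotent adjunction. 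Your $\theta$ is essentially the construction hidden inside the paper's Lemma \ref{bisobersarequotients} (dualize the inclusion and precompose with the bispatialization), so the underlying ideas coincide; what your version buys is that it replaces the appeal to faithfulness with a concrete computation ($h_Z=\mf{bf}\varphi_{\ca{M}}\circ q_R$, hence equal kernel congruences), and it makes fully explicit the monotonicity in both directions and the passage from order-isomorphism to coframe isomorphism, which the paper leaves terse. The one point worth flagging is the identification of $\bpt(\ca{M})$ with its image in $\bpt(\ca{L})$; this is harmless here because the paper explicitly adopts the convention that $\bpt(\ca{L}/R)$ denotes the set of bipoints of $\ca{L}$ factoring through the quotient, with the inherited bitopology.
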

\begin{proof}
We begin by pointing out that, since the functor $\bpt$ establishes a dual equivalence of categories between bispatial finitary biframes and bisober bispaces, when seen as a map between these two categories it is full, faithful, and essentially surjective. Let us show that the map defined in the claim is injective. We need to show that if the subspace inclusions $\bpt(q_R):\bpt(\Lq{R})\inclu \bpt(\ca{L})$ and $\bpt(q_S):\bpt(\Lq{S})\inclu \bpt(\ca{L})$ correspond to the same bisubspace of $\bpt(\ca{L})$ for two biquotient maps $q_R:\ca{L}\epi \Lq{R}$ and $q_S:\ca{L}\epi \Lq{S}$ to bispatial finitary biframes, we must also have $q_R=q_S$. Suppose, then, that the antecedent holds. Then, there must be an isomorphism $h:\bpt(\Lq{R})\cong \bpt(\Lq{S})$ such that the following triangle commutes.
\[
\begin{tikzcd}
    \bpt(\Lq{R}) 
    \arrow{rr}{h(\cong)} 
    \arrow[dr,swap,"\bpt(q_R)",hookrightarrow] 
    & & \bpt(\Lq{S}) 
    \arrow[dl,"\bpt(q_S)",hookrightarrow] \\
    & \bpt(\ca{L})  
\end{tikzcd}
\]
Suppose also that $q_{\phi}:\ca{L}\ra \bisp (\ca{L})$ is the bispatialization biquotient map. Let $\overline{q_R}:\bisp(\ca{L})\ra \Lq{R}$ and $\overline{q_S}:\bisp(\ca{L})\ra \Lq{S}$ be the unique maps such that $\overline{q_R}\circ q_{\phi}=q_R$ and $\overline{q_S}\circ q_{\phi}=q_S$, which exist as we have assumed the two biquotients to be bispatial. We start from the equality $\bpt(q_R)\circ h=\bpt(q_S)$, which is true by commutativity of the triangle above, and deduce that $\bpt(\overline{q_R}\circ q_{\phi})\circ h=\bpt(\overline{q_S}\circ q_{\phi})$, that is $\bpt(\overline{q_R})\circ \bpt(q_{\phi})\circ h=\bpt(\overline{q_S})\circ \bpt(q_{\phi})$. By Lemma \ref{unitbecomesiso3}, the map $\bpt(q_{\phi})$ is an isomorphism. Since isomorphisms are always epimorphisms, we deduce $\bpt(\overline{q_R})=\bpt(\overline{q_S})$. Since $\overline{q_R}$ and $\overline{q_S}$ are maps between bispatial finitary biframes, we may use faithfulness of $\bpt$ to deduce $\overline{q_R}=\overline{q_S}$. Then, $\overline{q_R}\circ q_{\phi}=\overline{q_S}\circ q_{\phi}$, that is, $q_R=q_S$.  

On the other hand, by Lemma \ref{bisobersarequotients} every bisubspace inclusion $i:\bpt(\ca{M})\inclu \bpt(\ca{L})$ is, up to bihomeomorphism, a bisubspace inclusion of the form $\bpt(q_R):\bpt(\Lq{R})\ra \bpt(\ca{L})$, for some relation $R$ on $L$ and for $\Lq{R}$ bispatial. Since the bispectrum functor is contravariant, the assignment from bispatial biquotients to bisober bisubspaces that we have defined is order preserving and reflecting.  
\end{proof}

We can begin the investigation of the relation between bisubspaces and biquotients by looking at the following diagram. The following is a commutative diagram in the category of coframes. 

  \begin{center}  
    \begin{tikzcd}[row sep=large, column sep = large]
        \bisp [\mf{S}(\ca{L})]  
        \arrow{r}{\bpt(\cong)}   
        & \bisob[\ca{P}(\bpt(\ca{L}))] 
        \arrow[hookrightarrow]{d}  \\
     \mf{S}(\ca{L}) 
     \arrow{r}{\bpt} 
     \arrow[twoheadrightarrow]{u}{\bisp} 
     & \ca{P}(\bpt(\ca{L}))  
    \end{tikzcd}
 \end{center}

We wish to explore what are those finitary biframes such that the left vertical arrow of the diagram above is an isomorphism of coframes. The question of characterizing the finitary biframes such that the right vertical arrow is an isomorphism (the ``totally spatial" finitary biframes), and also that of characterizing the finitary biframes where both vertical arrows are isomorphisms, for now remain open.

\subsection{The bi-$T_D$ axiom}\label{bitd}

We begin the analysis of the relation between bisubspaces and biquotients by introducing a bitopological version of the $T_D$ axiom for spaces, which in the duality of finitary biframes will play a similar role to that played by its monotopological counterpart in the classical frame duality. We define a bispace to be \tc{bi}-$T_D$ if every singleton is of the form $U^+\cap U^-\cap V^+{}^c\cap V^-{}^c$ for positive opens $U^+,V^+$ and negative opens $U^-,V^-$. 

\begin{lemma}\label{inclusioninduces}
For a bispace $X$ and a bisubspace $Y\se X$, we have that the dualization of the bisubspace inclusion $i:Y\se X$ is the map $-\cap Y:\bomf(X)\ra \bomf(Y)$. 
\end{lemma}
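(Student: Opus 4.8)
The statement asserts that for a bispace $X$ and a bisubspace inclusion $i \colon Y \se X$, the biframe map $\bomf(i) \colon \bomf(X) \to \bomf(Y)$ obtained by applying the functor $\bomf$ is precisely the pair of restriction maps $(-\cap Y, -\cap Y)$, together with the induced map on the main components. The natural approach is to unwind the two definitions and check they agree component by component, then argue that agreement on the two frame components forces agreement on the whole biframe map.

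\emph{First}, I would recall from the construction of $\bomf$ on morphisms that for any bicontinuous map $f \colon X \to Y$ we set $\bomf(f) = (f^{-1}, f^{-1})$, with the induced frame map on main components being the quotient of $\cp{f^{-1}}{f^{-1}}$ through the relations $R_{\Om_{\fin}(-)}$ (this is exactly the content of the earlier lemma showing $(f^{-1},f^{-1})$ is a well-defined biframe map). Here $f = i \colon Y \inclu X$ is the inclusion of a bisubspace, so $i$ is bicontinuous by definition of the subspace bitopology: the positive (resp.\ negative) opens of $Y$ are exactly the sets $U^+ \cap Y$ for $U^+ \in \Om^+(X)$ (resp.\ $U^- \cap Y$). \emph{Second}, I would observe that the preimage map $i^{-1} \colon \Om^+(X) \to \Om^+(Y)$ sends $U^+ \mapsto i^{-1}(U^+) = U^+ \cap Y$, and likewise on the negative component; so both frame components of $\bomf(i)$ are literally the restriction maps $-\cap Y$, as claimed.

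\emph{Third}, I would note that on the main components the induced frame map $\bomf(i) \colon \Om_{\fin}(X) \to \Om_{\fin}(Y)$ is, by Corollary \ref{pairismorbifrm9} and the construction, the unique frame map determined on generators by $\syp{U}\mapsto \sy{U^+\cap Y}$ and $\sym{U}\mapsto \sy{U^-\cap Y}$; since a biframe map is fully determined by its action on the two subframe components (via the defining commuting squares for biframe morphisms recalled in Section \ref{bifrmdual0}), there is nothing further to check — the pair $(-\cap Y, -\cap Y)$ together with this induced map \emph{is} the biframe map $\bomf(i)$. The only point requiring a word of care is that $i^{-1}$ really does preserve the relation $R_{\Om_{\fin}(X)}$, landing in $R_{\Om_{\fin}(Y)}$, but this is immediate from monotonicity and the fact that preimage commutes with $\cap$ and $\cup$, exactly as in the already-proven lemma that $(f^{-1},f^{-1})\colon \bomf(Y)\to\bomf(X)$ is a biframe map.

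\emph{Main obstacle.} There is no genuine obstacle here; the statement is essentially a bookkeeping identity recording how the functor $\bomf$ acts on the specific morphism that is a bisubspace inclusion. The only mild subtlety is notational: $\bomf$ is contravariant, so one must be careful that $\bomf(i)$ goes $\bomf(X)\to\bomf(Y)$ (not the other way), and that the earlier lemmas about $\bomf$ on morphisms are phrased for a general $f\colon X \to Y$ giving $(f^{-1},f^{-1})\colon\bomf(Y)\to\bomf(X)$ — instantiating with $f = i \colon Y \to X$ yields precisely the direction and the restriction maps asserted. Writing this out carefully to keep the variances straight is the whole task.
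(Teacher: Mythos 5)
Your proposal is correct and follows essentially the same route as the paper: the paper's proof simply unwinds the definition of $\bomf$ on morphisms and observes that $i^{-1}(U^+)=\{y\in Y:i(y)\in U^+\}=U^+\cap Y$. The extra care you take with the main component and with the contravariance is sound but not needed beyond what the paper records.
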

\begin{proof}
Suppose that $X$ is a bispace, and consider a bisubspace inclusion $i:Y\se X$. By definition of the functor $\bomf$, we have that the map $\bomf(i):\bomf(X)\ra \bomf(Y)$ is defined for a positive open $U^+\se X$ as $i^{-1}(U^+)=\{y\in Y:i(y)\in U^+\}=U^+\cap Y$. 
\end{proof}

Let us see the first proposition in which we list equivalent characterizations of bi-$T_D$ spaces, which re-trace classical monotopological ones of the $T_D$ axiom.  

\begin{proposition}\label{bitdcharacterize}
For a bispace $X$ the following are equivalent.
\begin{enumerate}
    \item The bispace $X$ is bi-$T_D$.
    \item The patch of $Sk(X)$ is discrete.
    \item Whenever we have distinct bisubspaces $Y,Z\se X$ they induce a different biquotient on $\bomf(X)$.
    \item Whenever we have a point $x\in X$ the dualization of the inclusion $i:X{\sm}\{x\}\se X$ is not an isomorphism of finitary biframes.
\end{enumerate}
\end{proposition}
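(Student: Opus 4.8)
The plan is to prove the cycle of implications $(1)\Rightarrow(2)\Rightarrow(3)\Rightarrow(4)\Rightarrow(1)$, mirroring the classical proof that the $T_D$ axiom is equivalent to the Skula topology being discrete and to the injectivity of the ``subspace induces sublocale'' assignment.

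First I would do $(1)\Leftrightarrow(2)$. By definition of the Skula bispace $Sk(X)$, the positive opens of $Sk(X)$ are generated by $\Om^+(X)$ together with the negative closed sets, and the negative opens by $\Om^-(X)$ together with the positive closed sets. Hence the patch topology of $Sk(X)$ is generated by $\Om^+(X)\cup\Om^-(X)$ together with the complements of positive and negative opens, so its basic opens are exactly the sets of the form $U^+\cap U^-\cap (V^+)^c\cap (V^-)^c$ with $U^+,V^+\in\Om^+(X)$ and $U^-,V^-\in\Om^-(X)$. Therefore the patch of $Sk(X)$ is discrete precisely when every singleton is such an intersection, which is exactly the bi-$T_D$ condition.

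Next, for $(2)\Rightarrow(3)$: suppose $Y,Z\se X$ are distinct bisubspaces, say without loss of generality there is $x\in Y\sm Z$. By Lemma \ref{inclusioninduces} the biquotients that $Y$ and $Z$ induce on $\bomf(X)$ are $-\cap Y$ and $-\cap Z$. If these coincided, then $U\cap Y=U\cap Z$ for every open $U$ of $Sk(X)$ --- here one uses that the congruences on $\Om_{\fin}(X)$ generated by these restrictions are determined by the sets of the form $W^+=U^+\cap Y$, $W^-$, etc., and that two bisubspaces agree as bisober quotients iff they agree on all such intersections; equivalently, using Proposition \ref{manyfacts9} and the description of biquotients, the induced finitary congruence records precisely which patch-opens of $Sk(X)$ meet the subspace. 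Since $(2)$ says $\{x\}$ itself is open in the patch of $Sk(X)$, from $\{x\}\cap Y=\{x\}$ and $\{x\}\cap Z=\emptyset$ we would get a contradiction. Hence distinct bisubspaces induce distinct biquotients. For $(3)\Rightarrow(4)$ it is immediate: if the dualization of $i:X\sm\{x\}\se X$ were an isomorphism, then $X\sm\{x\}$ and $X$ would be two distinct bisubspaces inducing the same biquotient (the identity, up to iso), contradicting $(3)$.

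Finally $(4)\Rightarrow(1)$: fix $x\in X$ and suppose the dualization $-\cap(X\sm\{x\}):\bomf(X)\ra\bomf(X\sm\{x\})$ is not an isomorphism. A pair of frame maps which is surjective on both components fails to be a biframe isomorphism exactly when it identifies two elements of the main component $\Om_{\fin}(X)$, i.e.\ when there are finitary elements $a=\bigcup_{k}U^+_k\cap U^-_k$ and $b=\bigcup_{l}V^+_l\cap V^-_l$ with $a\not\se b$ in $\Om_{\fin}(X)$ but $a\cap(X\sm\{x\})\se b\cap(X\sm\{x\})$; reducing to a single term via the remark that every inclusion of finitary elements splits into inclusions $U^+\cap U^-\se V^+\cup V^-$, this yields positive opens $U^+,V^+$ and negative opens $U^-,V^-$ with $U^+\cap U^-\not\se V^+\cup V^-$ but $(U^+\cap U^-)\sm\{x\}\se V^+\cup V^-$, forcing $U^+\cap U^-\cap (V^+)^c\cap(V^-)^c=\{x\}$. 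Hence $\{x\}$ is of the required form and $X$ is bi-$T_D$.

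The main obstacle I expect is the bookkeeping in $(2)\Rightarrow(3)$ and $(4)\Rightarrow(1)$: one must be careful about how a bisubspace inclusion $Y\se X$ translates into a finitary congruence on $\Om_{\fin}(X)$, and in particular that the relevant data of the induced biquotient is precisely ``which intersections $U^+\cap U^-\cap(V^+)^c\cap(V^-)^c$ are met by $Y$'' --- this is where Lemma \ref{inclusioninduces}, Proposition \ref{whenbiquotient}, and the explicit computations of Proposition \ref{manyfacts9} must be combined, and where the earlier remark that every inclusion between finitary elements decomposes into inclusions of the form $U^+\cap U^-\se V^+\cup V^-$ does the real work of passing between the multi-term and single-term versions.
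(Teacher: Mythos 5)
Your proposal is correct and follows essentially the same route as the paper's proof: it establishes $(1)\Leftrightarrow(2)$ by unwinding the definition of the Skula bispace, then runs the cycle through $(3)$ and $(4)$ using exactly the same two key observations, namely that the biquotient induced by a bisubspace $Y$ via Lemma \ref{inclusioninduces} is determined by which inclusions $U^+\cap U^-\se V^+\cup V^-$ hold after intersecting with $Y$, and that a non-identity finitary congruence arising from deleting a point $x$ forces $\{x\}=U^+\cap U^-\cap (V^+)^c\cap (V^-)^c$. The only cosmetic difference is that you derive $(3)$ from $(2)$ rather than from $(1)$, which is immaterial once the equivalence of $(1)$ and $(2)$ is in place.
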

\begin{proof}
Item (1) implies item (2) by definition of the Skula bitopology of a bispace. Conversely, if (2) holds, then every singleton $\{x\}$ must be of the form $\bcu_i U^+_i\cap U^-_i\cap V_i^+{}^c\cap V_i^-{}^c$, but since it is a singleton if it is of this form it must be $U^+_i\cap U^-_i\cap V_i^+{}^c\cap V_i^-{}^c$ for some $i\in I$, and so (1) holds. Now, suppose that (1) holds. Let $Y,Z\se X$ be bisubspaces such that $Y\nsubseteq Z$; in particular, let $x\in Y{\sm}Z$. Let $\{x\}=U^+\cap U^-\cap V^+{}^c\cap V^-{}^c$. We have that the biquotient induced by $Y$ is that induced by the biframe surjection $-\cap Y:\Om_{\fin}(X)\epi \Om_{\fin}(Y)$, and similarly for $Z$, by Lemma \ref{inclusioninduces}. Then, these two cannot be the same, as we have that $U^+\cap U^-\cap Y\nsubseteq (V^+\cup V^-)\cap Y$ but $U^+\cap U^-\cap Z\se (V^+\cup V^-)\cap Z$. We see that item (3) implies item (4) by setting $Y=X$ and $Z=X{\sm}\{x\}$. Suppose that item (4) holds, and let $x\in X$. By assumption, we must have that the dualization $\Om(i):\bomf(X)\ra \bomf(X{\sm}\{x\})$ of the inclusion $i:X{\sm}\{x\}\se X$ induces a finitary congruence on $\bomf(X)$ other than the identity. Since finitary congruences are induced by relations of the form $\{(\sy{U^+_i\cap U^-_i},\sy{V^+_i\cup V^-_i}):i\in I\}$ on $\Om^+(X)\oplus \Om^-(X)$, we must have that $U^+\cap U^-\nsubseteq V^+\cup V^-$ and $(U^+\cap U^-)\cap (X{\sm}\{x\})\subseteq (V^+\cup V^-)\cap (X{\sm}\{x\})$ for some positive opens $U^+,V^+$ and negative opens $U^-,V^-$. This means that we must have $\{x\}=U^+\cap U^-\cap V^+{}^c\cap V^-{}^c$.
\end{proof}
We observe that the bi-$T_D$ axiom can be seen as a natural bitopological version of the classical $T_D$ axiom for spaces, consider for instance these two arguments. 
\begin{itemize}
    \item A topological space $X$ is $T_1$ if and only if every singleton is closed. It is discrete if and only if every singleton is open. The $T_D$ axiom weakens both these in that every singleton is the intersection of an open and a closed set. Classically, pairwise bispaces are exactly those such that every singleton is a closed set of the form $U^+{}^c\cap U^-{}^c$. Discrete bispaces are those such that their patch-is discrete, that is, those such that every singleton is of the form $V^+\cap V^-$. We obtain the bi-$T_D$ property by weakening both these in the same way. A bi-$T_D$ space is one where every singleton is of the form $U^+{}^c\cap U^-{}^c\cap V^+\cap V^-$.
    \item We have already seen that the bitopological version $Sk:\bd{BiTop}\ra \bd{BiTop}$ of the Skula construction is assigns to a bispace $X$ the bispace whose underlying set is $|X|$, whose positive opens are the topology generated by the positive opens and the closed sets of $X$, and whose negative opens are the topology generated by the negative opens of $X$ and its positive closed sets. It is one of the main characterizations of the classical $T_D$ axiom that a space $X$ is $T_D$ if and only if the Skula space of $X$ is discrete. A bispace $X$ is bi-$T_D$ if and only if $Sk(X)$ is patch-discrete. If we accept that $Sk:\bd{BiTop}\ra \bd{BiTop}$ is a good bitopological version of the Skula construction, then we ought to also accept that the bi-$T_D$ property is a good bitopological version of the $T_D$ one.
\end{itemize}

The following lemma shows that the bi-$T_D$ axiom interacts with bisobriety as $T_D$ does with sobriety, suggesting that if the bi-$T_D$ property is a natural bitopological version of the $T_D$ one, then bisobriety is a natural bitopological version of sobriety.

\begin{proposition}\label{allbisubarebisob}
For a d-frame $\ca{L}$ the following are equivalent.
\begin{enumerate}
    \item All bisubspaces of $\bpt(\ca{L})$ are bisober.
    \item The collection of bisober subspaces of $\bpt(\ca{L})$ is closed under arbitrary unions.
     \item The collection of bisober subspaces of $\bpt(\ca{L})$ is closed under complementation.
     \item The bisubspaces of the form $\bpt(\ca{L}){\sm}\{f\}$ are bisober.
    \item The space $\bpt(\ca{L})$ is bi-$T_D$.
\end{enumerate}
\end{proposition}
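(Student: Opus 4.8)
The plan is to push everything through the Skula bispace of $X:=\bpt(\ca{L})$. For a d-frame $\ca{L}$ read $X=\bpt(\Gamma(\ca{L}))=\dpt(\ca{L})$, where $\Gamma(\ca{L})$ is a finitary biframe by Lemma~\ref{finitaryconnect}, so that Theorem~\ref{skulabisobers} and Proposition~\ref{bitdcharacterize} both apply to $X$. Theorem~\ref{skulabisobers} says that a subset of $X$ underlies a bisober bisubspace exactly when it is a patch-closed set of $Sk(X)$, and Proposition~\ref{bitdcharacterize}(2) says that $X$ is bi-$T_D$ exactly when the patch topology of $Sk(X)$ is discrete. So all five clauses become statements about the lattice of patch-closed sets of $Sk(X)$. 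The one auxiliary input I would record first is that every singleton bisubspace $\{f\}\se X$ is bisober: with its induced pair of subspace topologies it is the one-point bispace $\bpt(\bd{2})$ (where $\bd{2}=(2,2,2)$ is a finitary biframe), hence bisober by Proposition~\ref{finitaryadjidempotency}; equivalently one computes $\bomf(\{f\})\cong\bd{2}$ directly.

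First I would prove $(1)\Leftrightarrow(5)$. If $X$ is bi-$T_D$ then the patch of $Sk(X)$ is discrete, so every subset of $X$ is patch-closed in $Sk(X)$, hence every bisubspace of $X$ is bisober, giving $(5)\Rightarrow(1)$. Conversely, if every bisubspace of $X$ is bisober, then every subset of $X$ is patch-closed in $Sk(X)$; but a topology all of whose subsets are closed is discrete (complements of closed sets are closed, hence all sets are open), so the patch of $Sk(X)$ is discrete, giving $(1)\Rightarrow(5)$.

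Next, $(1)\Rightarrow(2)$, $(1)\Rightarrow(3)$ and $(1)\Rightarrow(4)$ are immediate, since under $(1)$ the collection of bisober bisubspaces is all of $\ca{P}(\bpt(\ca{L}))$. For $(2)\Rightarrow(1)$ I would invoke the auxiliary fact: each singleton is a bisober bisubspace and an arbitrary $S\se X$ is the union $\bcu_{f\in S}\{f\}$, so if the bisober bisubspaces are closed under arbitrary unions then $S$ is bisober, which is $(1)$. Then $(3)\Rightarrow(4)$ is immediate (the complement $X\sm\{f\}$ of the bisober singleton $\{f\}$ is bisober), and $(4)\Rightarrow(5)$ goes as follows: if each $X\sm\{f\}$ is bisober it is patch-closed in $Sk(X)$, so each $\{f\}$ is patch-open, and since every subset of $X$ is a union of singletons, every subset is patch-open, i.e.\ the patch of $Sk(X)$ is discrete. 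Chaining $(3)\Rightarrow(4)\Rightarrow(5)\Rightarrow(1)$ together with the implications above closes the cycle of equivalences.

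The steps are all routine once the reduction is in place; there is no genuine obstacle, because the substantive work — identifying "bisober bisubspace" with "patch-closed set of $Sk(X)$" — was already carried out in Theorem~\ref{skulabisobers}. The only point that needs a moment of care is the auxiliary observation that singleton bisubspaces are bisober, which is what lets one pass from the ``closed under unions/complements'' formulations back to ``all subsets are patch-closed in $Sk(X)$'', i.e.\ to bi-$T_D$.
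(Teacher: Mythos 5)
Your proof is correct, and it closes the cycle of equivalences by a somewhat different route than the paper. The paper handles $(2)\Rightarrow(1)$, $(3)\Rightarrow(1)$ and $(4)\Rightarrow(1)$ directly (using that singletons are bisober together with closure of bisober bisubspaces under arbitrary intersections), and then argues $(4)\Rightarrow(5)$ via the equivalence of categories between bisober bispaces and bispatial finitary biframes, and $(5)\Rightarrow(1)$ by dualizing a hypothetical non-bisober bisubspace inclusion to a biquotient and comparing items of Proposition~\ref{bitdcharacterize}. You instead funnel everything through Theorem~\ref{skulabisobers}: once ``bisober bisubspace'' is identified with ``patch-closed set of $Sk(\bpt(\ca{L}))$'' and bi-$T_D$ with ``patch of $Sk(\bpt(\ca{L}))$ discrete'' (Proposition~\ref{bitdcharacterize}(2)), all five clauses become elementary statements about a topology in which a given family of sets is closed, and the implications $(1)\Leftrightarrow(5)$, $(4)\Rightarrow(5)$, $(3)\Rightarrow(4)$ reduce to ``a topology whose closed sets include all singletons' complements, or all subsets, is discrete.'' This buys you two things: you avoid re-running the duality arguments (your $(5)\Rightarrow(1)$ is a one-liner where the paper's requires constructing a competing biquotient), and you supply a justification for the fact that singletons are bisober --- namely that a one-point bisubspace is bihomeomorphic to $\bpt(\bd{2})$ and hence bisober by Proposition~\ref{finitaryadjidempotency} --- which the paper uses but does not prove. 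Your preliminary remark that the statement's ``d-frame'' should be read through $\Gamma$ as a finitary biframe (so that Theorem~\ref{skulabisobers} applies) is also a point the paper glosses over.
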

\begin{proof}
It is immediate that (1) implies (2), (3), and (4). Let us show the three reverse directions. Item (2) implies item (1) because singletons are all bisober, and every bisubspace is a union of singletons. To see that (3) implies (1), we observe that an arbitrary bisubspace is of the form $S=(\bcu_i \{f_i\})^c=\bca_i \{f_i\}^c$. Because the collection of bisober subspaces is closed under arbitrary intersections, and because singletons are all d-sober, if each complement $\{f_i\}^c$ is bisober then $S$ is bisober, too. The same argument also shows that (4) implies (1). Finally, let us show that (4) implies (5). Suppose that every bisubspace of the form $\bpt(\ca{L})$ is d-sober. Then, for every point $f\in \bpt(\ca{L})$ we cannot possibly have that the inclusion $i:\bpt(\ca{L})\se \bpt(\ca{L})$ when dualized becomes a d-frame isomorphism, or we would contradict that $\bomf$ is part of an equivalence of categories between bisober bispaces and bispatial finitary biframes. This means that the space $\bpt(\ca{L})$ is bi-$T_D$, by comparing items (1) and (3) of Proposition \ref{bitdcharacterize}. Then, (4) implies (5). Finally, let us show that (5) implies (1). Suppose that there is a bisubspace with inclusion $i_Y:Y\se \bpt(\ca{L})$ which is not bisober. Dualizing this we and precomposing with the spatialization map we get a biframe surjection $ \bomf(i_Y)\circ\varphi_{\ca{L}}:\ca{L}\ra \bomf(Y)$. Let $q_R:\ca{L}\ra\ca{L}/R$ be the corresponding biquotient. We know that $\bpt(\Lq{R})$ is a subspace of $\ca{L}$. It cannot be $Y$, as this is assumed to not be bisober. By its construction it also induces on $\bomf(\bpt(\ca{L}))$ the same quotient as $Y$. By comparing items (1) and (2) of Proposition \ref{bitdcharacterize}, we see that this means that $\bpt(\ca{L})$ is not bi-$T_D$. 
\end{proof}

In the space $\bpt(\ca{L})$, we are going to have biquotients with many points, and these are the biquotients which are generated by finitary relations only containing one pair. A typical finitary relation is the union of relations of the form $\{(a^+\we a^-,b^+\ve b^+)\}$. We will now see that a bispace $\bpt(\ca{L})$ is bi-$T_D$ if and only if every such biquotient takes away from $\bpt(\ca{L})$ only one point.

\begin{lemma}
For a finitary biframe $\ca{L}$ and a point $f\in \bpt(\ca{L})$ we have that $\{f\}=\va{L}{+}{a}\cap \va{L}{-}{a}\cap \va{L}{+}{b}^c\cap \va{L}{-}{b}^c$ if and only if $\{f\}^c=\bpt(\ca{L}/\{(a^+\we a^-,b^+\ve b^-)\})$. 
\end{lemma}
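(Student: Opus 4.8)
The plan is to reduce the claim directly to item (3) of Proposition \ref{manyfacts9} together with De Morgan's laws. First I would recall, as established in the proof of Proposition \ref{manyfacts9}(3), that since $\na:L\ra \mf{A}(L)$ preserves finite meets and $\del:L\ra \mf{A}(L)$ reverses them, we have
\[
\na(\syp{a})\cap \na(\sym{a})\cap \del(\syp{b})\cap \del(\sym{b}) = \na(\syw{a}{a})\cap \del(\syv{b}{b}),
\]
and that this congruence is precisely the one generated by the singleton relation $\{(\syw{a}{a},\syv{b}{b})\}$. Hence $\ca{L}/\{(a^+\we a^-,b^+\ve b^-)\}$ is the biquotient of $\ca{L}$ by this congruence, and item (3) of Proposition \ref{manyfacts9} gives
\[
|\bpt(\ca{L}/\{(a^+\we a^-,b^+\ve b^-)\})| = \va{L}{+}{a}^c\cup \va{L}{-}{a}^c\cup \va{L}{+}{b}\cup \va{L}{-}{b}.
\]

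Next I would observe that both $\{f\}^c$ and $\bpt(\ca{L}/\{(a^+\we a^-,b^+\ve b^-)\})$ are regarded as subsets of $|\bpt(\ca{L})|$, so the asserted equality is an equality of subsets of $|\bpt(\ca{L})|$. Taking complements in $|\bpt(\ca{L})|$ and applying De Morgan, the equation $\{f\}^c = \va{L}{+}{a}^c\cup \va{L}{-}{a}^c\cup \va{L}{+}{b}\cup \va{L}{-}{b}$ holds if and only if
\[
\{f\} = \big(\va{L}{+}{a}^c\cup \va{L}{-}{a}^c\cup \va{L}{+}{b}\cup \va{L}{-}{b}\big)^c = \va{L}{+}{a}\cap \va{L}{-}{a}\cap \va{L}{+}{b}^c\cap \va{L}{-}{b}^c.
\]
Combining the two displayed facts yields the biconditional in the statement.

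There is essentially no obstacle here; the only point requiring a little care is making explicit that $\bpt(\Lq{R})$ for a relation $R$ on $L$ is being regarded as a subset of $|\bpt(\ca{L})|$ (as fixed earlier in this section), so that the equality $\{f\}^c = \bpt(\ca{L}/\{(a^+\we a^-,b^+\ve b^-)\})$ is literally an equality of underlying sets, together with the already-recorded fact that the congruence generated by the single pair $(a^+\we a^-, b^+\ve b^-)$ is the finitary congruence $\na(\syw{a}{a})\cap \del(\syv{b}{b})$, which is exactly the content recalled from the proof of Proposition \ref{manyfacts9}(3).
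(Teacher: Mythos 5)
Your proof is correct and follows essentially the same route as the paper: identify the biquotient by the single pair $(a^+\we a^-,b^+\ve b^-)$ with the quotient by the congruence $\na(a^+\we a^-)\cap\del(b^+\ve b^-)$, read off its underlying set of bipoints from Proposition \ref{manyfacts9}, and conclude by complementation. The only (cosmetic) difference is that you invoke item (3) of that proposition where the paper cites item (5); item (3) is in fact the more directly applicable one here, so this is fine.
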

\begin{proof}
This follows from the fact that the relation $\{(a^+\we a^-,b^+\ve b^-)\}$ induces on $\ca{L}$ the congruence $\na(a^+\we a^-)\cap \del(b^+\ve b^-)$, and by item (5) of Proposition \ref{manyfacts9}.
\end{proof}

We obtain the following theorem.

\begin{theorem}
For a biframe $\ca{L}$, the following are equivalent.
\begin{enumerate}
    \item The space $\bpt(\ca{L})$ is bi-$T_D$.
    \item The bispace $\bpt(\Ad)\cong Sk(\bpt(\ca{L}))$ is patch-discrete.
    \item The coframe $\bisp[\mf{S}(\ca{L})]$ of bispatial biquotients is a Boolean algebra.
    \item All the bisubspaces of $\bpt(\ca{L})$ are bisober.
    \item The map $\bpt:\bisp[\mf{S}(\ca{L})]\ra\ca{P}(\bpt(\ca{L}))$ is an isomorphism.
\end{enumerate}
\end{theorem}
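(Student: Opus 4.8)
The plan is to establish the five equivalences via a cycle of implications, reusing heavily the machinery of Propositions \ref{bitdcharacterize} and \ref{allbisubarebisob} and the commutative square in the category of coframes displayed above. The cleanest route is: (1) $\Leftrightarrow$ (2) is essentially immediate from Proposition \ref{bitdcharacterize} together with the bihomeomorphism $\alpha_{\ca{L}}:Sk(\bpt(\ca{L}))\cong \bpt(\Ad)$ of Proposition \ref{assemblyskula9} --- item (1) of Proposition \ref{bitdcharacterize} says $X$ is bi-$T_D$ iff the patch of $Sk(X)$ is discrete, and transporting along $\alpha_{\ca{L}}$ replaces $Sk(\bpt(\ca{L}))$ by $\bpt(\Ad)$. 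For (1) $\Leftrightarrow$ (4), I would invoke Proposition \ref{allbisubarebisob} directly: its items (1) and (5) are precisely items (4) and (1) of the present theorem (modulo the fact that a bispace of the form $\bpt(\ca{L})$ is bisober by Proposition \ref{finitaryadjidempotency}, so ``all bisubspaces bisober'' is the relevant notion). This is pure bookkeeping.

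The substantive content is (1) $\Leftrightarrow$ (5) and (1) $\Leftrightarrow$ (3). For (5), recall the commutative square of coframes: the top arrow $\bpt:\bisp[\mf{S}(\ca{L})]\to \bisob[\ca{P}(\bpt(\ca{L}))]$ is always an isomorphism, the right-hand arrow is the inclusion of bisober bisubspaces into all bisubspaces, and the left-hand arrow is the surjection $\bisp$. The bottom arrow $\bpt:\mf{S}(\ca{L})\to \ca{P}(\bpt(\ca{L}))$ is an isomorphism precisely when $\bisp$ is the identity on $\mf{S}(\ca{L})$ and the right inclusion is an equality --- but both of these are equivalent to ``every bisubspace of $\bpt(\ca{L})$ is bisober'', i.e. to item (4), which we have already tied to (1). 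More carefully: if all bisubspaces are bisober then $\bisob[\ca{P}(\bpt(\ca{L}))]=\ca{P}(\bpt(\ca{L}))$ and the composite $\bpt:\mf{S}(\ca{L})\to\ca{P}(\bpt(\ca{L}))$ factors as an isomorphism $\bisp$ followed by an isomorphism; conversely if $\bpt$ is an isomorphism it is in particular surjective onto $\ca{P}(\bpt(\ca{L}))$, forcing every bisubspace to be the image of a biquotient, hence bisober by Lemma \ref{bisobersarequotients}. For (3), I would argue that $\bisp[\mf{S}(\ca{L})]$ is a coframe isomorphic, via $\alpha_{\ca{L}}$ and the duality, to the coframe of patch-closed sets of $Sk(\bpt(\ca{L}))\cong\bpt(\Ad)$; this coframe is a Boolean algebra iff every subset of $\bpt(\Ad)$ is patch-closed iff $\bpt(\Ad)$ is patch-discrete, which is item (2). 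The ``only if'' direction uses that a complemented element in a coframe of closed sets means the complement is also closed, and closure under complementation of all closed sets forces discreteness.

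I expect the main obstacle to be item (3), specifically pinning down exactly which coframe $\bisp[\mf{S}(\ca{L})]$ is isomorphic to and verifying that ``Boolean'' is equivalent to ``patch-discrete'' rather than to some weaker condition. The delicate point is that $\bisp[\mf{S}(\ca{L})]$ is a sub-coframe of $\mf{S}(\ca{L})$, which itself is anti-isomorphic to $\mf{A}_{\fin}(L)$; one must check that the fixpoints of $\bisp$ correspond under $\alpha_{\ca{L}}$ to the patch-closed subsets of $\bpt(\Ad)$, using Corollary \ref{generalbisobersub} and Theorem \ref{skulabisobers}, and that the coframe operations match up. Once that identification is secured, the Boolean-vs-discrete equivalence is the standard fact that a frame (here the opposite of our coframe) all of whose elements are complemented, and which arises as the topology of a space, must be a powerset, i.e. the space is discrete. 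Everything else is assembling already-proved lemmas in the right order; the only real care needed is in the chain (1) $\Leftrightarrow$ (2) $\Leftrightarrow$ (3) that routes through the Skula bispace and its identification with $\bpt(\Ad)$.
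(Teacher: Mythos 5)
Your overall route is the paper's: (1)$\Leftrightarrow$(2) via Proposition \ref{bitdcharacterize} and the bihomeomorphism $\alpha_{\ca{L}}$, (1)$\Leftrightarrow$(4) via Proposition \ref{allbisubarebisob}, and (4)$\Leftrightarrow$(5) via the commutative square of coframes. But two steps do not hold as written. First, in (4)$\Rightarrow$(5) you claim that when all bisubspaces are bisober the left vertical arrow $\bisp:\mf{S}(\ca{L})\ra\bisp[\mf{S}(\ca{L})]$ is an isomorphism (``$\bisp$ is the identity on $\mf{S}(\ca{L})$''). That is the assertion that every biquotient is bispatial, which is precisely the question the paper explicitly leaves open just before Subsection \ref{bitd}; it is neither needed for (5) nor known to follow from (4). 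The map in item (5) has domain $\bisp[\mf{S}(\ca{L})]$, not $\mf{S}(\ca{L})$: it is the composite of the top horizontal arrow (always an isomorphism) with the right-hand inclusion, so it is an isomorphism exactly when $\bisob[\ca{P}(\bpt(\ca{L}))]=\ca{P}(\bpt(\ca{L}))$, i.e.\ exactly when (4) holds. Your ``conversely'' half is correct once the domain is fixed; the forward half should drop the claim about the left vertical arrow altogether.

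Second, your treatment of (3) rests on the ``standard fact'' that a spatial frame all of whose elements are complemented must be a powerset, so that the underlying space is discrete. This is false without a separation hypothesis: the topology of a two-point indiscrete space is the two-element Boolean algebra, yet the space is not discrete. In the situation at hand, Booleanness of the coframe of patch-closed sets of $Sk(\bpt(\ca{L}))$ only gives that every patch-closed set is patch-clopen; to conclude patch-discreteness you would still need to note that this patch topology is $T_0$ (hence, being generated by clopen sets, $T_1$, so that singletons are closed and therefore open), which you do not do. The paper sidesteps this entirely: since $\bisob[\ca{P}(\bpt(\ca{L}))]$ is a subcoframe of $\ca{P}(\bpt(\ca{L}))$ in which finite meets and joins are set-theoretic intersections and unions, Booleanness means the bisober bisubspaces are closed under set-theoretic complementation, and item (3)$\Rightarrow$(1) of Proposition \ref{allbisubarebisob} then yields (4) directly. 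Either repair works, but as written your step from (3) to (2) has a genuine gap.
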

\begin{proof}
The equivalence between (1) and (4) is part of Proposition \ref{allbisubarebisob}. The equivalence between (2) and (4) is part of Proposition \ref{bitdcharacterize}. For the equivalence between (4) and (5) we look at the diagram before Subsection \ref{bitd}. Since the top horizontal arrow is an isomorphism, we have that $\bpt:\bisp[\mf{S}(\ca{L})]\ra \ca{P}(\bpt(\ca{L}))$ is an isomorphism if and only if the right vertical arrow is an isomorphism, that is, if and only if all bisubspaces of $\bpt(\ca{L})$ are bisober. Finally, it is clear that (5) implies (3). For the converse, suppose that $\bisp[\mf{S}(\ca{L})]$ is a Boolean algebra. Then, $\bisob[\ca{P}(\bpt(\ca{L}))]$, too, is a Boolean algebra, as it is isomorphic to it. In the coframe $\bisob[\ca{P}(\bpt(\ca{L}))]$ finite meets and joins are finite unions and finite intersections, respectively, since this coframe is a subcoframe of $\ca{P}(\bpt(\ca{L}))$. Then, $\bisob[\ca{P}(\bpt(\ca{L}))]$ being a Boolean algebra implies that every bisober bisubspace of $\bpt(\ca{L})$ has a set-theoretical complement. In other words, the collection of bisober bisubspaces is closed under complementation. By Proposition \ref{allbisubarebisob}, this implies that all bisubspaces of $\bpt(\ca{L})$ are bisober.
\end{proof}

\bibliographystyle{elsarticle-harv}
\bibliography{bib}

\end{document}